\documentclass[11pt]{amsart}
\usepackage{amsmath, amssymb}
\usepackage[colorlinks=true,linkcolor=blue,urlcolor=blue,citecolor=blue, pagebackref]{hyperref}
\usepackage[alphabetic]{amsrefs}
\usepackage{amsfonts, epsfig,color,wrapfig,epstopdf}
\usepackage{ xypic,verbatim,amscd,color}
\usepackage{youngtab, ytableau}
\usepackage{graphicx}
\usepackage{colordvi}
\topmargin=-10pt \textheight=626pt     \textwidth=474pt
\oddsidemargin=-3pt   \evensidemargin=-3pt


\newcommand\op{\operatorname}
\newcommand\bull{{\bullet}}

\newcommand\frh{\mathfrak{h}}

\newcommand\symbo{\sqrt{-1}}

\newcommand\Sh{\operatorname{Sh}}

\newcommand\mf{\mathcal{F}}

\newcommand\mk{\mathcal{K}}
\newcommand\pone{\Bbb{P}^1}

\newcommand\Pic{\operatorname{Pic}}

\newcommand\me{\mathcal{E}}

\newcommand\mt{\mathcal{T}}
\newcommand\rigid{\mathcal{T}}

\newcommand\Fl{\operatorname{Fl}}

\newcommand\mv{\mathcal{V}}
\newcommand\tensor{\otimes}
\newcommand\ml{\mathcal{L}}

\newcommand\ma{\mathcal{A}}
\newcommand\shom{\mathcal{H}{\operatorname{om}}}
\newcommand\codim{\operatorname{codim}}
\newcommand\mb{\mathcal{B}}

\newcommand\mg{\mathcal{G}}

\newcommand\im{\operatorname{im}}
\newcommand\mw{\mathcal{W}}

\newcommand\mq{\mathcal{Q}}
\newcommand\rk{\operatorname{rk}}
\newcommand\ms{\mathcal{S}}

\newcommand\mn{\mathcal{N}}
\newcommand\ParL{\mathcal{P}\text{ar}_L}

\newcommand\Hom{\operatorname{Hom}}

\newcommand\Par{\mathcal{P}\text{ar} }

\newcommand\Gr{\operatorname{Gr}}
\newcommand\SL{\operatorname{SL}}

\newcommand{\leto}[1]{\stackrel{#1}{\to}}

\newcommand\Bun{\operatorname{Bun}}
\newcommand\Quot{\operatorname{Quot}}

\newtheorem{theorem}{Theorem}[section]

\newtheorem{remark}[theorem]{ Remark}

\newtheorem{corollary}[theorem]{Corollary}
\newtheorem{question}[theorem]{Question}
\newtheorem{proposition}[theorem]{Proposition}

\newtheorem{lemma}[theorem]{Lemma}

\newtheorem{definition}[theorem]{Definition}
\newtheorem{definition/lemma}[theorem]{Definition/Lemma}
\newtheorem{defi}[theorem]{Definition}

\numberwithin{theorem}{subsection}

\newtheorem{example}[theorem]{\bf Example}

\begin{document}
\title[Vertices of eigenpolytopes and rigid local systems]{Rigid local systems and  the multiplicative eigenvalue problem}
\author{Prakash Belkale}
\maketitle

\begin{abstract}
We give a construction which produces irreducible complex rigid local systems on $\Bbb{P}_{\Bbb{C}}^1-\{p_1,\dots,p_s\}$ via quantum Schubert calculus and strange duality. These local systems are unitary and arise from a study of vertices in the polytopes controlling  the multiplicative eigenvalue problem for the special unitary groups $\op{SU}(n)$ (i.e., determination of the possible eigenvalues of a product of unitary matrices given the eigenvalues of the matrices). Roughly speaking, we show that the strange duals of the simplest vertices  of these polytopes give all possible unitary irreducible rigid local systems. As a consequence we obtain  that the ranks of unitary irreducible rigid local systems, including those with finite global monodromy, on $\Bbb{P}^1-S$ are bounded above if we fix the cardinality of the set $S=\{p_1,\dots,p_s\}$ and require that the local monodromies have orders which divide $n$, for a fixed $n$.  Answering a question of N. Katz, we show that there are no irreducible rigid local systems  of rank greater than one, with finite global monodromy, all of whose local monodromies have orders dividing $n$, when $n$ is a prime number.

We also show that all unitary irreducible rigid local systems on $\Bbb{P}^1_{\Bbb{C}} -S$ with finite local monodromies arise as solutions to the Knizhnik-Zamalodchikov  equations on conformal blocks for the special linear group. Along the way, generalising previous works of the author and J. Kiers, we give an inductive mechanism for determining all vertices in the multiplicative eigenvalue problem for $\op{SU}(n)$.

\end{abstract}

\setcounter{tocdepth}{1}
\tableofcontents

\section{Introduction}
\subsection{Rigid local systems}
A rank $\ell$ local system of complex vector spaces on $\Bbb{P}^1_{\Bbb{C}}-S$ where $S=\{p_1,\dots,p_s\}$, a collection of distinct points,  is equivalent to an $s$-tuple $(A_1,\dots,A_s)$ of matrices $A_i\in \op{GL}(\ell,\Bbb{C})$ with product $A_1A_2\cdots A_s= I_{\ell}$, the identity. Such tuples are taken up to conjugacy, i.e.,  $(A_1,\dots,A_s)\sim (CA_1C^{-1}, CA_2C^{-1},\dots, CA_sC^{-1})$ where $C\in  \op{GL}(\ell,\Bbb{C})$.

The connection to local systems arises from the standard description of the fundamental group of an $s$-punctured Riemann sphere $\pi_1=\pi_1(\Bbb{P}^1-\{p_1,\dots,p_s\},b)$ at any base point $b$ as the free group on generators $\gamma_{1},\dots,\gamma_{s}$ given by suitable loops around the punctures, modulo the relation $\gamma_{1}\gamma_{2}\cdots\gamma_s=1$.  Rank $\ell$ local systems can then be identified with $\ell$-dimensional representations $\rho:\pi_1\to \op{GL}(\ell,\Bbb{C})$, with $\rho(\gamma_i)=A_i$. The conjugacy class of $A_i$ is
called the local monodromy at the point $p_i$ of the local system, and the image of $\rho$ is called  the (global) monodromy group of the local system.

Such a local system is {\em irreducible} if the representation of $\pi_1$ is irreducible. It is called {\em rigid} if any other local system with the same conjugacy classes of  local monodromies is isomorphic to it, i.e., if
$B_1B_2\cdots B_s=I_{\ell}$ and there exist $C_i\in \op{GL}(\ell,\Bbb{C})$ such that $B_i=C_iA_iC_i^{-1}$ then there exists a $C\in \op{GL}(\ell,\Bbb{C})$ such that $B_i=CA_iC^{-1}$ for $i=1,\dots,s$ \cite{Katz}.

Several classical examples of differential equations have local systems of solutions which are rigid and irreducible, namely the standard generalization of the hypergeometric functions  ${}_{\ell}F_{\ell-1}$ and Pochhammer differential equations (see e.g., \cite{BH} and \cite{Haraoka}).  Katz  \cite{Katz} has given an inductive procedure of constructing all  rigid irreducible local systems on  $\Bbb{P}^1-S$ via two operations, called middle convolution and tensoring. Simpson has classified rigid irreducible local systems with the conjugacy class of at least one of the matrices semisimple with generic distinct eigenvalues \cite{SimOld}. In this paper we produce a new family of rigid irreducible local systems from enumerative geometry (quantum Schubert calculus), see Corollary \ref{existence} and Sections \ref{qSC} and \ref{qSC2}. These local systems are unitary (see below).

Any irreducible rigid local system on $\Bbb{P}^1-\{p_1,\dots,p_s\}$ with quasi-unipotent local monodromies (i.e., the $A_i$ have roots of unity as eigenvalues)  is conjugate to the (usual) dual of its complex conjugate, and hence carries a non-degenerate hermitian form
\cite[Section 9.5] {Katz}. The signature of this hermitian form  is determined  in the corresponding classical cases in \cite{BH} and \cite{Haraoka}. There is a unique complex variation of Hodge structures that underlies an irreducible rigid local system, and the corresponding Hodge numerical data have been determined by Dettweiler and Sabbah \cite{DS} in a complemented Katz algorithm (which keeps track of additional local and global Hodge data), which then determines the signature of the form in general in an algorithmic procedure.

The case of unitary irreducible rigid local systems, i.e., those for which the form is positive or negative definite, is of special interest because it includes all irreducible rigid local systems with finite global monodromy (with a suitable converse, applied to all Galois conjugates, see e.g., \cite[Theorem 11]{BLocal}). We note here that for an irreducible unitary local system, it is the same to require that it is rigid in the sense of Katz as it is to require that it is rigid as a unitary representation, see Remark \ref{RigidRem} (and these are both equivalent to a numerical condition \eqref{eqRi}).

The problem of listing all rigid local systems with finite  global monodromy groups has classical origins. The global monodromy group of an irreducible local system is finite  if and only if  the corresponding differential equation (with regular singularities) has algebraic solutions. For  Gaussian hypergeometric functions, which correspond to rigid local systems of rank $2$, the finiteness problem was solved in the $19$th century by Schwarz \cite{Schwarz}. More recently \cite{BH} and \cite{Haraoka} solved the finiteness problems for  the hypergeometric functions  ${}_{\ell}F_{\ell-1}$ and Pochhammer differential equations, respectively. For Goursat-II rigid local systems \cite{EGoursat} of rank $4$ the question of when the global monodromy group is finite has been analysed recently in \cite{Goursat}.

Fix a natural number $n$. In Theorem \ref{egregium} (resp. Corollary \ref{egregiumprime}) we show there are only finitely many unitary (resp. finite global monodromy) irreducible rigid local systems (without fixing the ranks) with eigenvalues of local monodromy $n$th roots of unity, and also parametrize them. Answering a question of N. Katz, we are able to rule out the existence of rigid local systems of rank $>1$ with  finite global monodromy when $n$ is prime (Theorem \ref{KatzQ} below). We do this by relating the existence of such local systems via strange duality to the multiplicative eigenvalue problem for the special unitary group $\op{SU}(n)$, and quantum Schubert calculus, which are described in the next section. In Section
\ref{earlyfeedback}  we discuss what we know about the place of unitary rigid local systems in Katz's algorithm.

\subsection{Vertices in the multiplicative eigenvalue problem}\label{partita}
 Conjugacy classes in the special unitary group $\operatorname{SU}(n)$ are in bijective correspondence with points of the simplex
\begin{equation}\label{defD}
\Delta_n=\Delta(\operatorname{SU}(n))=\{{a}=(a_1,\dots,a_n)\in \Bbb{R}^n\mid  a_1\geq a_2\geq\dots \geq a_n\geq a_1-1,\  \sum_{i=1}^n a_i=0\}
\end{equation}
(The correspondence takes $a$ to the conjugacy class of the diagonal matrix with entries $\exp(2\pi \sqrt{-1}a_i$).

Define $P_n(s)\subset \Delta_n^s$ to be the set of tuples  $\vec{a}=(a^1,a^2,\dots,a^s)$ such that there exist $A_{1}, A_{2}, \dots, A_{s}\in \operatorname{SU}(n)$ with $A_{i}$ in the conjugacy class corresponding to $a^i$ and
$A_{1}A_{2}\cdots A_{s}=I_n\in  \operatorname{SU}(n).$

It is known that $P_n(s)$ is a compact polytope cut out by a finite set of inequalities controlled by quantum Schubert calculus of Grassmannians (Theorem \ref{ABW} below).  A point $\vec{a}=(a^1,a^2,\dots,a^s)$ is in $P_n(s)$ iff there is a representation of the fundamental group $\rho:\pi_1(\Bbb{P}^1-\{p_1,\dots,p_s\},b)\to\operatorname{SU}(n)$ such that the local monodromy around $p_j$, $A_{j}=\rho(\gamma_{j})$ is conjugate to $a^j$, $j=1,\dots,s$, and hence the problem  of determination of $P_n(s)$ is the same as the following: Does there exist a unitary local system on $\Bbb{P}^1-\{p_1,\dots,p_s\}$ with given local monodromy data $\vec{a}$? The polytope $P_n(s)$ also controls non-zero structure constants in the quantum cohomology of Grassmannians, and in the fusion ring of $\operatorname{SL}(n)$, see e.g., \cite{BHorn}. The rational cone over the polytope $P_n(s)$ is a generalization of the fundamental Littlewood-Richardson cone of algebraic combinatorics \cite{FBull,Z} (also Remark \ref{classicall}).

 The problem of determining the vertices of $P_n(s)$ was suggested to the author by Nori in 1996-97. It was noted in the author's PhD thesis (see \cite[Section 7]{BLocal}) that there are vertices of $P_n(s)$ other than the ``trivial'' ones (i,.e,  when $a^1,\dots,a^s$ correspond to $\zeta^{m_i} I_n$ with $\zeta=\zeta_n=\exp(2\pi\sqrt{-1}/n)$  with $n$ dividing $\sum_{i=1}^s m_i$). Thaddeus has  recently given interesting examples of many other non-trivial vertices \cite{thaddy}. The Mehta-Seshadri theorem \cite{MS}  can be used to derive a system of inequalities
 determining $P_n(s)$ \cite{Biswas,AW,BLocal}, see Theorem \ref{ABW} below.

 Note that if $\vec{a}\in P_n(s)$ is a vertex and $\rho$ any unitary representation of the fundamental group
 with local monodromies given by $\vec{a}$, then $\rho$ is reducible (see Lemma \ref{pinky}).
\begin{definition}\label{defFprime}
A  vertex $\vec{a}$ of $P_n(s)$ is called a F-vertex if there is a unique unitary local system with  local monodromies given by $\vec{a}$. In other words the corresponding matrix equation  $A_1A_2\cdots A_s=I_n$, $A_i\in \op{SU}(n)$ has a unique (possibly block diagonal) solution up to conjugacy.
Not all vertices of $P_n(s)$ have this property (see Section \ref{Rex} for an example).
\end{definition}
\subsection{Parametrization of rigid and irreducible unitary local systems}\label{newRigid}
 Suppose we are given an $s$-tuple of semisimple  conjugacy classes $\mathcal{A}=(\bar{A}_1,\dots,\bar{A}_s)$ in $\op{GL}(\ell,\Bbb{C})$, which are all $n$th roots of the identity (i.e., $\bar{A}_i^n=I_{\ell}$) such that  $\prod_{i=1}^s\det \bar{A}_i=1$. Assume  that  the eigenvalues of $\bar{A}_i$ are $\exp(2\pi\sqrt{-1}\mu_j^i/n)$ with integers $0\leq \mu_j^i<n$, so that the $\mu^i=(\mu^i_1,\dots,\mu^i_{\ell})$ are Young diagrams that fit in an $\ell\times n$ box, i.e.,  for $i\in [s], n> \mu^i_1\geq \mu^i_2\geq \dots \geq \mu^i_{\ell}\geq 0$. We will describe below a construction that assigns to $\mathcal{A}$ a point $v(\mathcal{A})$ of $\Delta_{n}^s$.

Consider the vector of transpose Young diagrams $\vec{\lambda}=(\lambda^1,\dots,\lambda^s)$ with $\lambda^i=(\mu^i)^T$. These are Young diagrams that fit in an $n\times\ell$ box.  Define points
${a}^i\in\Delta_{n}$ as follows
$$a^i=\frac{1}{\ell}\bigl(\lambda^i -\frac{1}{n}|\lambda^i|(1,1,\dots,1)\bigr).$$
Here $|\lambda^i|$ is the number of boxes in the Young diagram $\lambda^i$, i.e., $|\lambda^i|=\sum_{j=1}^n \lambda^i_j$. Note that the transpose operation is related to strange duality (see Section \ref{etrange}, and Lemma \ref{reprem}) which is an important technique in our work.
\begin{defi}\label{finiteness}
Define $v(\mathcal{A})=(a^1,\dots,a^s)\in \Delta_{n}^s$.
\end{defi}
\begin{theorem}\label{egregium}
Let $n$ be fixed. Then $\mathcal{A}\mapsto v(\mathcal{A})$  is a bijective correspondence between the following sets
\begin{enumerate}
\item The set of all $\ma$ with varying $\ell$, but fixed $n$ (i.e., eigenvalues of $\bar{A}_i$ are $n$th roots of unity) such that there is a rigid, irreducible unitary local system with local monodromy data $\mathcal{A}$.
\item The set  of F-vertices in $P_n(s)$.
\end{enumerate}
Therefore, since there are only finitely many vertices of $P_n(s)$, there are only finitely many unitary irreducible rigid local systems (without fixing the ranks) with eigenvalues of local monodromy $n$th roots of unity. The ranks $\ell$ of such local systems are therefore bounded above by a constant $C(|S|,n)$.
(Remark \ref{Hilberty} gives a stronger finiteness statement.)
\end{theorem}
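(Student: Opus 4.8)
The plan is to set up the correspondence $\mathcal{A} \mapsto v(\mathcal{A})$ explicitly and verify that it lands in the set of F-vertices, is injective, and is surjective, by translating both sides into the language of parabolic bundles on $\Bbb{P}^1$ and the Mehta--Seshadri correspondence, and then invoking strange duality to pass between the Young diagrams $\mu^i$ (eigenvalue data of the rank-$\ell$ system) and the transposed diagrams $\lambda^i$ (which define the weights $a^i \in \Delta_n$). First I would record the numerical rigidity criterion: an irreducible unitary local system with semisimple local monodromies $\mathcal{A}$ of the given type is rigid (in Katz's sense, equivalently as a unitary representation by Remark \ref{RigidRem}) exactly when the expected dimension of the corresponding moduli space of local systems is zero, i.e.\ when a certain Euler-characteristic count \eqref{eqRi} vanishes. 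I would then show that, under the dictionary $\mu^i \leftrightarrow \lambda^i = (\mu^i)^T$ and the normalisation $a^i = \frac{1}{\ell}(\lambda^i - \frac{1}{n}|\lambda^i|(1,\dots,1))$, this vanishing is precisely the condition that $v(\mathcal{A})$ be a point of $P_n(s)$ at which the space of parabolic-semistable solutions is a single point up to conjugacy — but the presence of a rank-$\ell$ \emph{irreducible} solution on the $\mathcal{A}$ side forces, via Lemma \ref{pinky}, that $v(\mathcal{A})$ is a vertex, and uniqueness of the solution is exactly the F-vertex property of Definition \ref{defFprime}.

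The key steps, in order, would be: (1) attach to $\mathcal{A}$ the tuple $v(\mathcal{A}) \in \Delta_n^s$ and check membership in $P_n(s)$ using the Mehta--Seshadri/Biswas--Agnihotri--Woodward description (Theorem \ref{ABW}) together with strange duality (Section \ref{etrange}, Lemma \ref{reprem}, Lemma \ref{reprem}) — here the transpose on Young diagrams corresponds to the level--rank exchange $\sL_r \leftrightarrow \sL_{\ell+1}$, and a nonzero conformal block / nonzero quantum structure constant on one side produces a point of the polytope on the other; (2) show that the irreducibility and rigidity of the given local system translate into: the corresponding point is a vertex (Lemma \ref{pinky} gives reducibility of any unitary $\rho$ with vertex local monodromy, and conversely the numerical rigidity count forces the dimension of the associated moduli to be $0$, which by the polytope's facet structure can only happen at a vertex), and the solution is unique, i.e.\ an F-vertex; (3) construct the inverse map: given an F-vertex $\vec{a}$, decompose its unique solution into irreducible unitary blocks, and show exactly one block is ``new'' and rigid-irreducible of some rank $\ell$, whose eigenvalue data recovers an $\mathcal{A}$ with $v(\mathcal{A}) = \vec{a}$; (4) check the two constructions are mutually inverse, using that $\lambda \mapsto \lambda^T$ is an involution and that the affine normalisation defining $a^i$ is invertible once $n$ and $\ell$ are known (and $\ell$ is recoverable as $|\lambda^i| / (\text{number of boxes in } \mu^i)$, or more robustly from the block size of the solution). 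The finiteness statement is then immediate: $P_n(s)$ is a compact polytope cut out by finitely many inequalities (Theorem \ref{ABW}), hence has finitely many vertices, a fortiori finitely many F-vertices, so there are finitely many rigid irreducible unitary local systems with $n$th-root-of-unity local monodromies, and the rank $\ell$, being determined by one of these finitely many combinatorial data, is bounded by a constant $C(|S|, n)$.

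I expect the main obstacle to be step (2)-into-(3): cleanly matching the \emph{rigidity} condition on the local-systems side with the \emph{F-vertex} condition on the polytope side. The subtle point is that a vertex $\vec{a}$ of $P_n(s)$ always has \emph{some} (necessarily reducible, by Lemma \ref{pinky}) unitary solution, but it need not be unique — Definition \ref{defFprime} and Section \ref{Rex} flag exactly that non-F-vertices exist — so one must argue that the unique solution at an F-vertex, after stripping off the ``trivial'' scalar blocks $\zeta^{m_i}I$, contains precisely one irreducible block, and that this block being the \emph{only} solution is equivalent to that block being \emph{rigid} in Katz's sense. This requires a careful dimension count: the expected dimension of the character variety with fixed local monodromies in $\op{GL}(\ell,\Bbb{C})$ must be shown to equal twice the expected dimension of the parabolic moduli space attached to $v(\mathcal{A})$ (after the strange-duality transpose and the $\SL_n$ normalisation), so that vanishing of one is vanishing of the other; the compatibility of the two Euler-characteristic formulas under transposition of Young diagrams is the technical heart, and is presumably where the inductive vertex-determination machinery (``generalising previous works of the author and J. Kiers'') and the precise form of \eqref{eqRi} get used. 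Everything else — membership in the polytope, compactness/finiteness, and the bookkeeping of the affine map defining $a^i$ — should be routine given the results quoted from Sections \ref{etrange}, \ref{qSC}, and the statement of Theorem \ref{ABW}.
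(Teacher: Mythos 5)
Your overall frame (Mehta--Seshadri, strange duality, transposes of Young diagrams, finiteness from compactness of $P_n(s)$) is the right one, and your step (1) is essentially Proposition \ref{schubertt}. But there are two genuine gaps at exactly the place you identify as the heart of the argument.

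First, your inverse construction in step (3) is wrong. Given an F-vertex $\vec{a}$, the rank-$\ell$ rigid irreducible local system attached to it is \emph{not} an irreducible block of the unique (reducible) $\op{SU}(n)$ solution at $\vec{a}$. It is the \emph{strange dual}: one passes from $\vec{a}$ to the indivisible grade-zero F-line bundle $\mathcal{B}(\vec{\lambda},\ell)$ on $\Par_{n,\mathcal{O},S}$ (Lemma \ref{corresp}), and then $\ell$ is the \emph{level} of that line bundle and the local monodromies of the rank-$\ell$ system are read off from the transposes $(\lambda^i)^T$. The blocks of the rank-$n$ solution have ranks summing to $n$, whereas $\ell$ is unrelated to $n$ in general (in the Kiers--Orelowitz series $\ell=(k-1)^2+1$ while $n=3k-1$), so no block of the $\op{SU}(n)$ solution can recover $\mathcal{A}$.

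Second, the proposed technical mechanism --- matching expected dimensions of the two character varieties under transposition so that ``vanishing of one is vanishing of the other'' --- cannot detect the F-vertex property. Non-F-vertices exist (Section \ref{Rex}); they are vertices lying on the same regular facets, with the same kind of numerical data, but they carry positive-dimensional families of solutions. A dimension count therefore cannot separate them from F-vertices, and dimension zero of a moduli space does not give uniqueness of the solution in any case. What the paper actually uses, and what is missing from your proposal, is: (i) the \emph{numerical} strange duality $h^0(\Par_{n,\mathcal{O},S},\ml)=h^0(\Par_{\ell,\widetilde{\mathcal{N}},S},\ml')$ of Proposition \ref{BWag}; (ii) quantum saturation and the quantum Fulton conjecture (Proposition \ref{qTheorems}) to upgrade $h^0(\ml)=1$ to $h^0(\ml^N)=1$ for all $N$, hence to the statement that the Mehta--Seshadri moduli space, being $\op{Proj}$ of the section ring, is a single point; and (iii) for irreducibility, the dictionary under which a tensor decomposition $\mathcal{B}(\vec{\lambda},\ell)=\mathcal{B}(\vec{\lambda}',\ell')\otimes\mathcal{B}(\vec{\lambda}'',\ell'')$ into effective factors corresponds, after transposing (column-by-column versus row-by-row addition of weights), to a direct sum $\mt'\oplus\mt''$ on the rank-$\ell$ side (Theorem \ref{christmas}(i)). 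Without (i)--(iii) the equivalence ``rigid irreducible unitary $\Leftrightarrow$ F-vertex'' does not follow.
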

(We do not know whether the conditions of irreducibility, rigidity, and semisimplicity of local monodromies, but without unitarity, imply an upper bound for $\ell$ in terms of $n$, see Section \ref{afterN}.)

The quantum versions of the saturation theorem \cite{BHorn}, a Fulton conjecture  type scaling result (see \cite[Remark 8.5]{BKq}), as well as  a numerical version of strange duality (see Section \ref{etrange}) are essential ingredients in the proof of the above result.  The mapping $\mathcal{A}\mapsto v(\mathcal{A})$ has the following preliminary  property (see Proposition \ref{schubertt}): There is a unitary representation of $\pi_1$ with  local monodromy data $\mathcal{A}$ if and only if
 $v(\ma)$ is a point of $P_n(s)$. Theorem \ref{egregium} is a finer comparison of properties under $\mathcal{A}\mapsto v(\mathcal{A})$.


For rigid local systems with finite global monodromy there is also an exact parametrization:
\begin{defi}\label{aliA}
The simplex  $\Delta_n$ is identified with $\Delta'_n= \{{x}=(x_1,\dots, x_{n-1})\mid x_i\geq 0, \sum x_i=1\}$. The correspondence takes $(a_1,\dots, a_n)$ in Definition \ref{defD} to ${x}$ with $x_i=a_i-a_{i+1}, i\in[n-1]$.  If $\op{gcd}(m,n)=1$, then there is a map $T_m:\Delta'_n \to\Delta'_n$ given by ${x}\mapsto {x}'$ with $x'_{mi \pmod{n}}=x_i$.   We will denote the resulting map $\Delta^s_n\to \Delta^s_n$ also by $T_m$.
\end{defi}
\begin{corollary}\label{egregiumprime}
Let $n$ be fixed. Then  $\mathcal{A}\mapsto v(\mathcal{A})$  is a bijective correspondence between the following sets
\begin{enumerate}
\item The set of all $\mathcal{A}$ with varying $\ell$, but fixed $n$ (i.e., eigenvalues of $\bar{A}_i$ are $n$th roots of unity) such that there is a rigid, irreducible  local system with local monodromy data $\mathcal{A}$ with finite global monodromy.
\item The set  of F-vertices $\vec{a}$ in $P_n(s)$ with the property $T_m(\vec{a})$ is a F-vertex of $P_n(s)$ for all integers $1\leq m\leq n$ relatively prime to $n$.
\end{enumerate}
\end{corollary}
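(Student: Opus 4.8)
The plan is to deduce Corollary \ref{egregiumprime} from Theorem \ref{egregium} by restricting the bijection $\mathcal{A}\mapsto v(\mathcal{A})$ to the indicated subsets. Write $L_{\mathcal{A}}$ for the rigid irreducible local system attached to a datum $\mathcal{A}$ as in part (1) of Theorem \ref{egregium}; it is unique up to isomorphism, since by definition any local system with the same local monodromy conjugacy classes as a rigid one is isomorphic to it. One must then show that $L_{\mathcal{A}}$ has finite global monodromy if and only if $v(\mathcal{A})$ and all of its images under the maps $T_m$ (over $1\leq m\leq n$ with $\gcd(m,n)=1$) are F-vertices. This needs two ingredients: (a) the combinatorial identity $v(\mathcal{A}^{(m)})=T_m(v(\mathcal{A}))$, where $\mathcal{A}^{(m)}$ denotes the datum obtained by raising every local monodromy of $\mathcal{A}$ to the $m$-th power; and (b) the arithmetic criterion that a rigid irreducible local system all of whose local monodromies have order dividing $n$ has finite global monodromy precisely when every one of its Galois conjugates is unitary. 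Ingredient (b) is the genuinely deep point, which we quote from \cite[Theorem 11]{BLocal} (see also \cite[Section 9.5]{Katz}).

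For ingredient (a), observe that in the coordinates of Definition \ref{aliA} the $i$-th component $a^i$ of $v(\mathcal{A})$ is given by
\[
x_r(a^i)=\frac{c^i_r}{\ell},\qquad r=1,\dots,n-1,
\]
where $c^i_r$ is the multiplicity of $\zeta_n^r$ as an eigenvalue of $\bar{A}_i$. Indeed, with $\lambda^i=(\mu^i)^T$ one has $\lambda^i_r-\lambda^i_{r+1}=\#\{k:\mu^i_k=r\}=c^i_r$, whence $a^i_r-a^i_{r+1}=\tfrac1\ell(\lambda^i_r-\lambda^i_{r+1})=c^i_r/\ell$ (the redundant coordinate being $1-\sum_{r\geq1}x_r=c^i_0/\ell$). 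Now $\mathcal{A}^{(m)}$ is again an admissible datum — its eigenvalues are $n$-th roots of unity and $\prod_i\det\bar{A}_i^{(m)}=1$ — and the eigenvalue $\zeta_n^{mr\bmod n}$ of $\bar{A}_i^{(m)}$ occurs with multiplicity $c^i_r$. Comparing with the display, the $(mr\bmod n)$-th coordinate of $v(\mathcal{A}^{(m)})^i$ equals the $r$-th coordinate of $a^i$, which is exactly the recipe defining $T_m$; hence $v(\mathcal{A}^{(m)})=T_m(v(\mathcal{A}))$ for every $m$ coprime to $n$.

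For ingredient (b) and the assembly, let $L=L_{\mathcal{A}}$. By rigidity $L$ is defined over $\overline{\Bbb{Q}}$, and for $\sigma\in\operatorname{Gal}(\overline{\Bbb{Q}}/\Bbb{Q})$ the conjugate $L^\sigma$ is again irreducible, has semisimple local monodromies with unchanged eigenvalue multiplicities but with each eigenvalue raised to the power $m=m(\sigma)$ determined by $\sigma|_{\Bbb{Q}(\zeta_n)}$, and is again rigid because the numerical form of rigidity depends only on $\ell$ and on the centralizer dimensions of the (semisimple) local monodromies, which are unchanged by Galois conjugation. If $m\equiv1\pmod n$ then $L^\sigma$ and $L$ have identical local monodromy conjugacy classes, so $L^\sigma\cong L$ by rigidity; hence, up to isomorphism, the Galois conjugates of $L$ are exactly the rigid irreducible local systems $L^{(m)}$ with local datum $\mathcal{A}^{(m)}$, $m$ ranging over residues coprime to $n$, and by rigidity $L^{(m)}$ is the unique such local system. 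Therefore $L^{(m)}$ is unitary iff $\mathcal{A}^{(m)}$ lies in set (1) of Theorem \ref{egregium}, iff $v(\mathcal{A}^{(m)})=T_m(v(\mathcal{A}))$ is an F-vertex. If $\mathcal{A}$ is as in (1) of the corollary, then $L$ has finite global monodromy, hence so does each conjugate $L^{(m)}$, hence each is unitary, so $v(\mathcal{A})$ and all $T_m(v(\mathcal{A}))$ are F-vertices; conversely, if $\vec a=v(\mathcal{A})$ is an F-vertex with all $T_m(\vec a)$ F-vertices, then all Galois conjugates of $L$ are unitary, so by \cite[Theorem 11]{BLocal} $L$ has finite global monodromy. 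Since set (1) of the corollary is contained in set (1) of Theorem \ref{egregium} (finite global monodromy forces unitarity, by averaging a Hermitian form) and its image under $v$ is precisely set (2) of the corollary, the stated correspondence is bijective.

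The main obstacle is ingredient (b): its ``easy'' direction (finite $\Rightarrow$ unitary for all conjugates) is immediate, but the converse (all Galois conjugates unitary $\Rightarrow$ finite) is the arithmetic input we take from \cite{BLocal}. The subtlety to watch in (a)--(b) is that Galois conjugation acts on the local monodromy datum only through $\operatorname{Gal}(\Bbb{Q}(\zeta_n)/\Bbb{Q})\cong(\Bbb{Z}/n\Bbb{Z})^\times$, which is precisely what allows rigidity to identify the set of Galois conjugates of $L$ with the explicit finite list $\{L^{(m)}\}$ and to match it with the $T_m$-orbit of $v(\mathcal{A})$.
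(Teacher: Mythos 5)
Your proposal is correct and follows essentially the same route as the paper: the identity $v(\sigma(\mathcal{A}))=T_m(v(\mathcal{A}))$ is the paper's Lemma \ref{calcul}(1) (verified by the same multiplicity computation), the forward direction uses that all Galois conjugates of a finite-monodromy rigid local system are unitary and then invokes Theorem \ref{egregium}, and the converse invokes Katz's cyclotomic integrality together with \cite[Theorem 11]{BLocal} exactly as the paper does. The only cosmetic difference is that you spell out why Galois conjugation preserves rigidity and why the conjugates are exhausted by the residues $m$ coprime to $n$, details the paper leaves implicit.
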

We use Corollary \ref{egregiumprime} to answer a question of Katz (personal communication) in the affirmative:
\begin{theorem}\label{KatzQ}
Let $n$ be a prime number. There are no rigid irreducible  local systems $\mt$ of rank $\ell>1$ with finite global monodromy such that the local monodromies of $\mt$ are $n$th roots of unity (i.e., the local monodromies $\bar{A}_i$ satisfy $A_i^n=I_{\ell}$).
\end{theorem}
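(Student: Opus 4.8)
The idea is to feed this into Corollary \ref{egregiumprime} and to show that, when $n$ is prime, the only F-vertices all of whose $T_m$-translates are again F-vertices are the trivial ones --- which are exactly the images under $v$ of rank-one local systems. So assume for contradiction that $\mt$ is a rigid irreducible local system of rank $\ell>1$ with finite global monodromy and local monodromies $A_i$ satisfying $A_i^n=I_\ell$; write $\mathcal{A}=(\bar A_1,\dots,\bar A_s)$ for its tuple of local conjugacy classes and $\vec a=v(\mathcal{A})\in\Delta_n^s$. By Corollary \ref{egregiumprime}, $\vec a$ is an F-vertex of $P_n(s)$ and $T_m(\vec a)$ is an F-vertex for every integer $1\le m\le n$ with $\gcd(m,n)=1$; as $n$ is prime this holds for every $m\in\{1,\dots,n-1\}$.

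Next I would pin down the two ends of the correspondence $v$. A rank-one local system has each $\bar A_i$ a scalar $\zeta^{k_i}$, so $\mu^i$ is a single row (of length $k_i$) and its transpose $\lambda^i$ a single column; a direct computation from Definitions \ref{finiteness} and \ref{defD} then shows $a^i$ is the vertex of $\Delta_n$ attached to the central class $\zeta^{-k_i}I_n$, with $\sum_ik_i\equiv 0\pmod n$, i.e.\ $\vec a$ is a trivial vertex. Conversely, if $v(\mathcal{A})$ is a trivial vertex the same computation, run with the transpose backwards, forces every $\bar A_i$ to be a scalar matrix, whereupon irreducibility of $\mt$ gives $\ell=1$. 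Hence, under our hypothesis $\ell>1$, the F-vertex $\vec a$ is \emph{non-trivial}. Since each $T_m$ is an affine automorphism of $\Delta_n$ permuting its $n$ vertices $\zeta^kI_n$ ($k\in\Bbb{Z}/n$) by $k\mapsto mk$, it permutes the trivial vertices of $P_n(s)$ among themselves, hence the non-trivial ones too; so the entire $(\Bbb{Z}/n)^{\times}$-orbit of $\vec a$ consists of non-trivial F-vertices. It remains to show this cannot happen when $n$ is prime, the primality entering precisely through the fact that $(\Bbb{Z}/n)^{\times}$ then acts transitively on the $n-1$ non-identity central classes.

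To reach a contradiction I would use the inductive description of the vertices of $P_n(s)$ developed in this paper (generalising \cite{BLocal} and the author's work with J.\ Kiers): a non-trivial vertex is assembled from vertex and rigidity data attached to proper Schubert--Levi subdata of some rank $r$ with $0<r<n$, together with auxiliary quantum data, and one must track how a symmetry $T_m$ acts on such a presentation. As a first, already restrictive, case: if $\vec a$ is fixed by \emph{every} $T_m$, transitivity forces each barycentric coordinate of each $a^i$ (with respect to the vertices of $\Delta_n$) to be constant on the two $(\Bbb{Z}/n)^{\times}$-orbits $\{0\}$ and $\{1,\dots,n-1\}$; carrying this through $v$ makes every transpose diagram $\lambda^i$ an arithmetic progression, and then the integrality of $\lambda^i$, the $\ell\times n$ box constraints, and $\mu^i_1<n$ constrain each $\lambda^i$ so tightly that the only possibility compatible with $\vec a$ being an F-vertex for an \emph{irreducible} system is $\ell=1$. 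In general the $T_m$-orbit of $\vec a$ need not be a single point; then one exploits that \emph{all} of its members are F-vertices, while the rank-$r$ data ($r<n$) on which $T_m$ acts carries only the much smaller natural symmetry group, a subquotient of $(\Bbb{Z}/r)^{\times}$, which for $n$ prime cannot receive the transitive order-$(n-1)$ action compatibly with the F-vertex property holding throughout the orbit. This last point is the crux and the main obstacle: one has to control the interaction of $T_m$ with the full inductive construction of a vertex --- the quantum/Schubert bookkeeping, not only the classical Levi part --- using the saturation and Fulton-type scaling results and the numerical version of strange duality that already go into the proof of Theorem \ref{egregium}.
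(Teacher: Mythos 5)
Your opening reduction is fine: by Corollary \ref{egregiumprime} the hypothesis produces a non-trivial F-vertex $\vec a=v(\mathcal{A})$ all of whose $T_m$-translates are F-vertices, and your identification of trivial vertices with rank-one systems is correct. But the step that actually has to do the work --- ruling out such a non-trivial orbit when $n$ is prime --- is not carried out, and the mechanism you propose for it does not hold up. You claim every non-trivial vertex ``is assembled from vertex and rigidity data attached to proper Schubert--Levi subdata of some rank $r<n$,'' and then argue that the transitive order-$(n-1)$ action of $(\Bbb{Z}/n)^{\times}$ cannot be received by the smaller symmetry of the rank-$r$ data. This fails already at the starting point: the basic F-vertices $D(a,j)$ of Theorem \ref{Adagio} are precisely the ones \emph{not} induced from the Levi (they span the complement of $\mf^{(2)}$ in the product decomposition \eqref{productstructure}), so most F-vertices are not presented by Levi subdata at all. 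Moreover $T_m$ acts on $\Delta_n^s$, not on any Levi presentation, and no compatible action on the rank-$r$ side is constructed; the ``subquotient of $(\Bbb{Z}/r)^{\times}$'' claim is unsupported. Your special case ($\vec a$ fixed by all $T_m$) is also only asserted, and it addresses a single point of the orbit space in any event.

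The paper's route is different and essentially local in $i$: for the F-line bundle $\mathcal{B}(\vec\lambda,\ell)$ attached to a unitary rigid $\mt$, Lemma \ref{divisorE} writes its divisor as an enumerative cycle $C(d,r,\mathcal{O},n,\vec J)$, and the class formula (Proposition \ref{enumerative}, parts (C) and (D)) shows that the eigenvalue $\zeta_n^b$ occurs in $\bar A_i$ only if $b\in J^i$ and $b+1\notin J^i$, while $1$ occurs only if $n\in J^i$ and $1\notin J^i$. These conditions are incompatible for two eigenvalues with ratio $\zeta_n$, giving property (P) of Proposition \ref{propP}. Applying this to all Galois conjugates (which remain unitary when the global monodromy is finite) shows no two eigenvalues of any $\bar A_i$ can differ by a primitive $n$th root of unity; for $n$ prime every non-trivial ratio is primitive, so each $\bar A_i$ is scalar and irreducibility forces $\ell=1$. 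If you want to complete your argument you would need either to prove an analogue of this eigenvalue-gap property, or to genuinely equip the full inductive description of F-vertices (including the non-induced basic ones) with a $T_m$-equivariance it is not known to have.
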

Theorem \ref{KatzQ} is proved  in Section \ref{KatzSection} by establishing the following property (P) true of all unitary irreducible rigid local systems. Let $n$ be arbitrary, and $\mt$ a rank $\ell$ unitary irreducible rigid local system with
local monodromy data $\ma=(\bar{A}_1,\dots,\bar{A}_s)$ with $\bar{A}_i^n=I_{\ell}$ for each $i\in[s]$.
Then,
\begin{itemize}
\item[(P)] If $1$ is an eigenvalue of $\bar{A}_1$, then $\zeta_n=\exp(2\pi\sqrt{-1}/n)$ is not an eigenvalue of $\bar{A}_1$.
\end{itemize}
In the above context if $\mt$ has finite global monodromy, then we can apply  (P) to all Galois conjugates (since they are all unitary), and obtain a stronger property that if  $1$ is an eigenvalue of $\bar{A}_1$, then any primitive $n$th root of unity cannot  be an eigenvalue of $\bar{A}_1$, which immediately gives
Theorem \ref{KatzQ} (by the action of permutations and $\tau_n(s)$  below, one can show that any local monodromy has to be a scalar when $n$ is prime).

In Section \ref{smalln}, we give a complete classification of all unitary rigid irreducible local systems, and those with finite global monodromy,  for $n\leq 6$ and $|S|=3$.
\begin{defi}\label{symmetry}
 Use the notation $a\mapsto \alpha\cdot a$ to denote the natural action of $n$th roots of unity $\alpha\in\mu_n=Z(\op{SU}(n))$ on a conjugacy class $a\in \Delta_n$. Define a finite group $\tau_n(s)$ as follows. Here $\zeta=\zeta_n=\exp(\frac{2\pi\sqrt{-1}}{n})\in \Bbb{C}^*$,
$$\tau_n(s)=\{(\zeta^{m_1},\zeta^{m_2},\dots,\zeta^{m_s})\mid \sum_{i=1}^s m_i\equiv 0 \pmod{n}\}.$$
\end{defi}
There is a coordinate by coordinate action of $\tau_n(s)$ on $P_n(s)$. The permutation group of $\{1,\dots,s\}$, denoted $S_s$ also acts on $P_n(s)$. The semidirect product $\tau_n(s)\rtimes S_s$ therefore acts on $P_n(s)$ permuting its vertices.

There is a coordinate by coordinate action of $\tau_n(s)$ also on the set of possible $\mathcal{A}$ with product of $\det(\bar{A}_i)$ equal to one. The correspondences above are compatible under the inverse automorphism $\tau_n(s)\to \tau_n(s)$. There is a similar compatibility  with the semidirect product.
\subsubsection{Determination of vertices}\label{review}
There is a system of inequalities that cuts out the compact polytope $P_n(s)$ (Theorem \ref{ABW}
below). But the nature of the vertices of $P_n(s)$, whether they are ``structured'' quantities, is a priori not clear
from such an approach. Generalising the solution of the classical version of this problem, which is the determination of the extremal rays in the Hermitian eigenvalue problem for arbitrary type \cite{BHermit,BKiers,Kiers}, we approach the problem of determining vertices using the following:
\begin{enumerate}
\item The cone over $P_n(s)$ is the effective cone of a moduli stack of parabolic bundles (Theorem \ref{blacktea} and Proposition \ref{b9} below). The main point is that the Mehta-Seshadri theorem
recasts the description of $P_n(s)$  as existence of semistable points in the moduli stack of parabolic bundles.
\item One can then construct points of $P_n(s)$ by constructing divisors (suitable degeneracy loci)  on moduli stacks of parabolic bundles.
\end{enumerate}
We consider a particular inequality from the system of inequalities given by Theorem \ref{ABW}, and construct some basic vertices (all are F-vertices)  on the corresponding facet  by describing some degeneracy loci on moduli stacks of parabolic bundles (Theorem \ref{Adagio} and \ref{brahms}). An algorithm describing the construction of the corresponding rigid local systems is isolated in Section \ref{qSC}. The local monodromies, and even the ranks, of these local systems are given by Gromov-Witten numbers and quantum Schubert calculus. The set of all F-vertices can be determined without solving the system of inequalities given in Theorem \ref{ABW}, see Proposition \ref{practical}.

The remaining vertices on the facet will arise from a numerically explicit induction process from Levi subgroups (see Section \ref{schumann} below).

\subsection{The inequalities defining $P_n(s)$}\label{ABWin}
Denote the Grassmannian of $r$-dimensional vector subspaces of a vector space $W$ by $\operatorname{Gr}(r,W)$, and  let
$\Gr(r,n)=\operatorname{Gr}(r,\Bbb{C}^n)$. Also fix a set of  $s$ distinct points $S=\{p_1,\dots,p_s\}$ on $\Bbb{P}^1$.
\begin{defi}\label{ineffable}
Let $$E_{\bull}:E_0=0\subsetneq E_1\subsetneq E_2\subsetneq\dots\subsetneq E_n=W$$  be a complete flag of vector subspaces in an $n$-dimensional vector space $W$. Let $I=\{i_1<\dots< i_r\}\subset [n]=\{1,\dots,n\}$ be a subset of cardinality $r$. The open Schubert variety $\Omega^0_I(E_{\bull})\subseteq \Gr(r,W)$ is defined as follows ($i_0=0$, and $i_{r+1}=n$):
$$\Omega^0_I(E_{\bull})=\{V\in \Gr(r,W)\mid \rk (V\cap E_j)=a,\text { for } i_a\leq j<i_{a+1}, a=0,\dots,r\}.$$
The closure, in $\Gr(r,W)$, of $\Omega^0_I(E_{\bull})$ is the closed Schubert variety
$$\Omega_I(E_{\bull})=\{V\in \Gr(r,W)\mid \rk (V\cap E_{i_a})\geq a, a=1,\dots,r\}.$$
\end{defi}
Let $\sigma_I\in H^{2|\sigma_I|}(\Gr(r,W),\Bbb{Z})$ be the cycle class of $\Omega_I(E_{\bull})$. Here
$|\sigma_I|=\sum_{a=1}^r(n-r+a-i_a)$
is the complex codimension of $\Omega_I(E_{\bull})$ in $\Gr(r,W)$.

\begin{defi}\label{GW}
Let $I^1,\dots ,I^s$ be subsets of $[n]$ each of cardinality $r$ and $d$ an integer $\geq 0$.
The Gromov-Witten number $\langle\sigma_{I^1}, \sigma_{I^2},\dots,\sigma_{I^s}\rangle_d$
is the number of maps (count as zero if infinite) $f:\Bbb{P}^1\to \operatorname{Gr}(r,n)$ of degree $d$ such that for $i=1,\dots,s$,
$f(p_i)\in\Omega_{I^j}(F^j_{\bullet})\subseteq \operatorname{Gr}(r,n)$. Here $F^1_{\bullet},\dots,F^s_{\bullet}$ are complete flags on $\Bbb{C}^n$ in general position. It is known that the number does not depend upon the choice of the distinct points $p_1,\dots,p_s$. If $\sum_{a=1}^n\codim \sigma_{I^s}\neq dn +r(n-r)$, then $\langle\sigma_{I^1}, \sigma_{I^2},\dots,\sigma_{I^s}\rangle_d=0$. 

Since maps $f:\Bbb{P}^1\to \operatorname{Gr}(r,n)$ are in one-one correspondence with rank $r$ subbundles $\mv\subseteq \mathcal{O}^{\oplus n}$ on $\Bbb{P}^1$, we can view Gromov-Witten numbers as  enumerative counts of subbundles of a fixed bundle (see Definition \ref{GWgen}).
\end{defi}

The system of inequalities that cuts out $P_n(s)$ inside $\Delta_n$ is the following \cite{AW,BLocal}, which is obtained using the theory of parabolic bundles and the Mehta-Seshadri theorem \cite{MS}:
\begin{theorem}\label{ABW}
Let $\vec{a}=(a^1,\dots,a^s)\in \Delta_n^s$.
Then, $\vec{a}\in P_{n}(s)$ if an only if for any collection of data
$(d,r,n,I^1,\dots,I^s)$ as in Definition \ref{GW}, with $\langle\sigma_{I^1}, \sigma_{I^2},\dots,\sigma_{I^s}\rangle_d= 1$, the inequality
\begin{equation}\label{wall}
\sum_{j=1}^s\sum_{k\in I^j} a_k^{j}\leq d
\end{equation}
holds.
\end{theorem}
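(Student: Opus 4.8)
The plan is to realize $P_n(s)$ as the set of local monodromy data for which a semistable parabolic bundle of the relevant type exists, via the Mehta--Seshadri correspondence, and then translate semistability into the numerical inequalities \eqref{wall}. First I would recall that by Mehta--Seshadri \cite{MS}, irreducible unitary representations $\rho\colon\pi_1(\Bbb{P}^1-S)\to U(n)$ with prescribed local monodromy conjugacy classes $\vec a$ correspond to stable parabolic bundles on $(\Bbb{P}^1,S)$ of parabolic degree zero with parabolic weights read off from $\vec a$; allowing reducible $\rho$ (as is forced at vertices, cf.\ Lemma \ref{pinky}) corresponds to polystable parabolic bundles. Thus $\vec a\in P_n(s)$ if and only if there exists a semistable parabolic bundle of rank $n$, trivial determinant, and the given weights. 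So the theorem reduces to: such a semistable bundle exists iff \eqref{wall} holds for every datum $(d,r,n,I^1,\dots,I^s)$ with $\langle\sigma_{I^1},\dots,\sigma_{I^s}\rangle_d=1$.

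For the direction that membership in $P_n(s)$ forces the inequalities: given a semistable parabolic bundle $\mathcal W$ and any datum with $\langle\sigma_{I^1},\dots,\sigma_{I^s}\rangle_d=1$, I would produce a rank-$r$ subbundle $\mathcal V\subset\mathcal W$ of degree $-d$ whose fiber at each $p_j$ sits in the Schubert position $\Omega_{I^j}$ relative to the flag of $\mathcal W$ at $p_j$ determined by the parabolic structure. Existence of such a $\mathcal V$ is exactly the statement that the Gromov--Witten number (viewed, as in Definition \ref{GW}, as an enumerative count of subbundles of a fixed bundle --- here one needs the version for a general bundle, i.e.\ Definition \ref{GWgen}) is nonzero, which holds since it equals $1$. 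Then the parabolic semistability inequality $\operatorname{par\,deg}(\mathcal V)\le \operatorname{par\,deg}(\mathcal W)=0$, with the parabolic degree of $\mathcal V$ computed as $\deg\mathcal V$ plus the weights contributed along $I^j$ at each point, rearranges precisely into \eqref{wall}. Conversely, if $\vec a$ satisfies all the inequalities, one must exhibit a semistable bundle: take any parabolic bundle with the given generic flags and weights (e.g.\ $\mathcal O^{\oplus n}$ with suitably generic flags), and argue that the inequalities \eqref{wall}, holding for the sharp data with GW number $1$, are enough to rule out every destabilizing subsheaf --- this uses that the maximal destabilizing subsheaf, saturated into a subbundle, realizes some Schubert position for which the corresponding intersection number (quantum structure constant) is at least one, and one reduces to the GW-number-one case by the fact that a nonzero GW number dominates a GW-number-one datum on the same facet.

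The main obstacle is this last reduction on the ``only if'' side: passing from a destabilizing \emph{subsheaf} (whose saturation is a subbundle but of possibly larger degree, with torsion in the quotient) to a clean application of an inequality indexed by a datum with Gromov--Witten number exactly $1$. One must handle the ``level'' bookkeeping for the degrees of subsheaves versus subbundles on $\Bbb{P}^1$ and show that it suffices to test against the extremal (GW number $=1$) inequalities rather than all nonzero ones --- i.e.\ that the inequalities coming from structure constants $>1$ are implied by those with structure constant $1$. This is the content that makes the list in Theorem \ref{ABW} both necessary and sufficient (and minimal up to the usual redundancies), and it is where the references \cite{AW,BLocal} do the real work; in the write-up I would cite those and emphasize that the equivalence with existence of a semistable parabolic bundle is the conceptual core, with the GW-number-one sharpening being the technical refinement.
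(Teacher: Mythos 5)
The paper itself gives no proof of Theorem \ref{ABW}; it quotes the result from \cite{AW,BLocal}, and your Mehta--Seshadri strategy is indeed the one those sources use. However, two of your steps have genuine gaps. In the forward direction you assert that, for the specific semistable parabolic bundle $\mw$ produced by Mehta--Seshadri, the existence of a rank $r$, degree $-d$ subbundle in the open Schubert positions $I^j$ ``is exactly the statement that the Gromov--Witten number is nonzero.'' It is not: nonvanishing of $\langle\sigma_{I^1},\dots,\sigma_{I^s}\rangle_d$ concerns a \emph{general} bundle with \emph{general} flags, and the Mehta--Seshadri bundle is neither. What one actually has is that the proper, representable map $\Omega(d,r,\mathcal{O},n,\vec{I})\to\Par_{n,\mathcal{O},S}$ (Lemma \ref{repres}) is surjective once the generic fiber is nonempty of the expected dimension (Proposition \ref{maya}), so over the given $(\mw,\me,\gamma)$ one gets in general only a \emph{subsheaf} of degree $-d$ satisfying the closed rank conditions, possibly with torsion in the quotient and with degenerated Schubert positions. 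The subsheaf-versus-subbundle bookkeeping you defer entirely to the converse direction is therefore already needed here; one must check that saturation and degeneration of positions only increase the parabolic slope, so that semistability still yields \eqref{wall}.

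Second, your justification for testing only against data with Gromov--Witten number exactly one --- ``a nonzero GW number dominates a GW-number-one datum on the same facet'' --- is not an argument, and it is not proved in the sources you lean on: \cite{AW} and \cite{BLocal} establish the version of the theorem indexed by \emph{all} nonzero Gromov--Witten numbers. The reduction to the GW-number-one list is the quantum generalization of the Horn conjecture \cite{BHorn} (with irredundancy from \cite[Remark 8.6]{BKq}), a substantial induction on $n$ in its own right, not a facet-domination observation. You should either invoke it explicitly or weaken your claim to the all-nonzero-GW version. A minor further point: Mehta--Seshadri requires rational parabolic weights, so for irrational $\vec{a}$ one needs a density/closedness argument that your write-up omits.
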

The system of inequalities in \eqref{wall} is irredundant, see \cite[Remark 8.6]{BKq}, and hence equality in each inequality \eqref{wall} defines a facet of $P_n(s)$. Such facets are called {\em regular}, these are exactly those facets of $P_n(s)$ not contained in one of the alcove walls (i.e., a wall of $\Delta_n^s$).  In Lemma \ref{pinky}, we show that every vertex of $P_n(s)$ lies on  such a facet.

\subsection{Stacks of parabolic bundles}
As noted in Section \ref{review}, the Mehta-Seshadri approach towards describing $P_n(s)$ relies
on studying line bundles on moduli of parabolic bundles, which we review in this section.
\subsubsection{Lie theoretic definitions}\label{loudvoice}
Let $\Delta_{n,\Bbb{Q}}\subseteq \Delta_n$ be the subset of points with rational coordinates and $P_{n,\Bbb{Q}}(s)\subseteq\Delta^s_{n,\Bbb{Q}}$, the rational polytope
 $P_{n}(s)\cap \Delta^s_{n,\Bbb{Q}}$. The convex polytope $P_{n,\Bbb{Q}}(s)\subseteq \frh_{n,\Bbb{Q}}^s$ (which has the same vertex set as $P_n(s)$) is related to the effective cone of a suitable stack, see Proposition \ref{b9}.

Let $\frh_n$ (respectively $\frh_{n,\Bbb{Q}}$)  be the real (resp. rational) vector space of tuples $(a_1,\dots,a_n)$ with $\sum a_i=0$. $\frh_n^*$ is parametrized by tuples $\lambda=(\lambda_1,\dots,\lambda_n)$ with the identification $\lambda=\mu$ if $\lambda_a-\mu_a$ is constant in $a$.  We can normalise the representative by requiring that $\lambda_n=0$.

The Killing form gives an identification
$\kappa:\frh_n^*\to \frh_n$ which takes $\lambda=(\lambda_1,\dots,\lambda_n)$ to $a=(a_1,\dots,a_n)$ where
$a_k=\lambda_k-|\lambda|/n$ with $|\lambda|=\sum_{b=1}^n \lambda_b$. Also recall the positive Weyl chamber
$\frh_{+,n}\subseteq  \frh_{n}$ (resp. $\frh_{+,n,\Bbb{Q}}\subseteq  \frh_{n,\Bbb{Q}}$) defined as the subset of  $(a_1,\dots,a_n)$ with $a_1\ge\dots\ge a_n$.

\subsubsection{Moduli stacks}\label{loudvoice2}
The flag variety $\op{Fl}(T)$ is the variety of complete flags on a vector space $T$. Recall that line bundles on $\op{Fl}(n)=\op{Fl}(\Bbb{C}^n)$ correspond to integral weights $\lambda=(\lambda_1,\dots,\lambda_n)\in \frh_{n,\Bbb{Z}}^*$ (i.e., all $\lambda_i\in\Bbb{Z}$). The fiber of the line bundle $\ml_{\lambda}$ corresponding to $\lambda$ over the full flag $E_{\bull}$ is
$\bigotimes_{a=1}^{n} ((E^p_a/E^p_{a-1})^{\lambda_a})^*.$ The line bundle $\ml_{\lambda}$
 is effective iff $\lambda\in \frh_{+}^*$, defined as the set of $\lambda$ with $\lambda_1\ge\dots\ge \lambda_n$.

\begin{defi}
Let  $S=\{p_1,\dots,p_s\}\subseteq \Bbb{P}^1$ be a fixed collection of $s$ distinct points on $\Bbb{P}^1$.
\begin{enumerate}
\item For a vector bundle $\mathcal{W}$ on $\Bbb{P}^1$, define $\Fl_S(\mw)=\prod_{p\in S} \Fl(\mw_p)$. If $\me\in \Fl_S(\mw)$, we use the notation $\me=(E^p_{\bull}\mid p\in S)$.
\item  Let $\op{Bun}_{\mn}(n)$ be the moduli stack of  pairs   $(\mw,\gamma)$ where $\mw$ is a vector bundle on $\Bbb{P}^1$ of rank $n$,  and $\gamma$ is an isomorphism $\gamma:\det \mw\leto{\sim}\mn$ where $\mn\in \op{Pic}(\Bbb{P}^1)$. $\op{Bun}_{\mn}(n)$ is a smooth Artin stack of dimension $1-n^2$.
\item  Let $\Par_{n,\mn,S}$ be the moduli  stack of triples $(\mw,\me,\gamma)$ where $\mw$ is a vector bundle on $\Bbb{P}^1$ of rank $n$,   $\gamma$ is an isomorphism $\gamma:\det \mw\leto{\sim}\mn$, and $\me\in\Fl_S(\mw)$. The mapping  $\Par_{n,\mn,S}\to\op{Bun}_{\mn}(n)$ is a representable morphism
with $\op{Fl}(n)^s$ as fibers. $\Par_{n,\mathcal{O},S}$ is called the moduli  stack
of quasi-parabolic $\op{SL}(n)$ bundles with parabolic structures at the points $p_1,\dots,p_s$. It is a smooth Artin stack of dimension $\dim \Fl(n)^s + 1-n^2$.
\end{enumerate}
\end{defi}
By the shift operations discussed in Section \ref{shifty}, the stacks $\Par_{n,\mn,S}$ can be identified with each other as we vary $\mn$. Although our primary interest is when $\mathcal{N}=\mathcal{O}$, other choices of $\mathcal{N}$ are needed:
 \begin{itemize}
 \item[-]  In an enumerative (Gromov-Witten) situation of counting suitable subbundles of degree $-d$ of $\mathcal{O}^{\oplus n}$, the subbundle will not have a trivial determinant if $d\neq 0$. This will be important for induction operations.
 \item[-] The shift operation is needed in the derivation of the ``level'' of our line bundles. It is also needed in controlling some enumerative complexities.
 \end{itemize}
 \begin{defi}
We review some notation for irreducible Artin stacks $\mathcal{M}$:
\begin{enumerate}
\item $\Pic(\mathcal{M})$ denotes the Picard group of line bundles on $\mathcal{M}$.
\item Set $\Pic_{\Bbb{Q}}(\mathcal{M})=\Pic(\mathcal{M})\tensor\Bbb{Q}$.
\item Let $\Pic^+(\mathcal{M})\subseteq \Pic(\mathcal{M})$ denote the monoid of line bundles with non-zero global sections.
\item Let $\Pic^+_{\Bbb{Q}}(\mathcal{M})\subseteq \Pic_{\Bbb{Q}}(\mathcal{M})$ denote the subset of elements such that some multiple has a non-zero global section. $\Pic^+_{\Bbb{Q}}(\mathcal{M})$ is also called the cone of effective $\Bbb{Q}$-divisors on the stack $\mathcal{M}$.
\end{enumerate}
\end{defi}
The Picard group of $\Par_{n,\mn,S}$ is generated by the determinant of cohomology line bundle (defined below), and the line bundles that associate to a parabolic bundle, one of the subquotients of the parabolic bundle at one of the points of $S$:
\begin{defi}\label{levell}
There is a mapping
\begin{equation}\label{canonical}
(\bigoplus_{p\in S} \Pic(\Fl(n)))\oplus \Bbb{Z}=(\frh_{\Bbb{Z}}^*)^s\oplus\Bbb{Z}\to \Pic (\Par_{n,\mn,S})
\end{equation}
which sends $(\vec{\lambda},\ell)$ where  $\vec{\lambda}=(\lambda^1,\dots,\lambda^s)$, to
\begin{equation}\label{bruckner}
\mathcal{B}(\vec{\lambda},\ell)=\mathcal{D}^{\ell}\otimes \bigotimes_{j=1}^s\ml_{\lambda^j,p_j}
\end{equation}
where
\begin{enumerate}
\item[(i)] For each $p\in S$ and integral weight $\lambda=(\lambda_1,\lambda_2,\dots, \lambda_n)$, $\ml_{\lambda,p}$ is the line bundle whose fiber over
 $(\mw,\me,\gamma)$ is
  $$\ml_{\lambda}(\mw_p,E^p_{\bull})=\bigotimes_{a=1}^{n} ((E^p_a/E^p_{a-1})^{\lambda_a})^*.$$
\item [(ii)] $\mathcal{D}=\mathcal{D}(\mw)$ is the determinant of cohomology line bundle on $\Par_{n,\mn,S}$, whose fiber over $(\mw,\me,\gamma)$  is
$$\mathcal{D}(\mw)=\det H^0(\Bbb{P}^1,\mw)^*\otimes \det H^1(\Bbb{P}^1,\mw),$$
 where $\det T$  is the top exterior power $\wedge^m T$ of a vector space $T$, $m=\rk T$.
 \end{enumerate}

 Note that the line bundle on $\Par_{n,\mn,S}$ with fiber $\det \mw_p$ over $(\mw,\me,\gamma)$, for any fixed point $p\in\Bbb{P}^1$, is trivial because it is isomorphic to the fixed line $\mn_p$. Hence we may in  any $\lambda^i$ $\vec{\lambda}$ by $\lambda^i+(1,1,\dots,1)$ without changing the isomorphism type of
     $\mathcal{B}(\vec{\lambda},\ell)$.
\end{defi}

\begin{remark}\label{defL}
We call $\ell$ the level of $\mathcal{B}(\vec{\lambda},\ell)$.
\end{remark}

\begin{proposition}\label{LaSo}
The mapping \eqref{canonical} is an isomorphism.
\end{proposition}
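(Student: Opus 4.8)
The plan is to factor \eqref{canonical} through $\Pic(\Bun_{\mn}(n))$ by means of the forgetful morphism $\pi\colon \Par_{n,\mn,S}\to \Bun_{\mn}(n)$, which (as recalled above) is representable with fibres $\Fl(n)^s$ and which is moreover smooth and projective. Here $\Fl(n)^s$ is a smooth projective rational variety with $H^0(\mathcal{O})=\Bbb{C}$ and $H^i(\mathcal{O})=0$ for $i>0$, and $\Bun_{\mn}(n)$ is a smooth connected Artin stack (nonempty: e.g.\ $\mw=\mn\oplus\mathcal{O}^{\oplus(n-1)}$ carries the evident $\gamma$). First I would deduce from this a short exact sequence
\[
0\longrightarrow \Pic(\Bun_{\mn}(n))\xrightarrow{\ \pi^*\ }\Pic(\Par_{n,\mn,S})\xrightarrow{\ \rho\ }\Pic(\Fl(n)^s)\longrightarrow 0,
\]
where $\rho$ is restriction to a chosen fibre. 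Injectivity of $\pi^*$: properness with geometrically connected fibres and $H^0(\mathcal{O}_{\Fl(n)^s})=\Bbb{C}$ give $\pi_*\mathcal{O}_{\Par_{n,\mn,S}}=\mathcal{O}_{\Bun_{\mn}(n)}$, hence $\pi_*\pi^*\mathcal{M}=\mathcal{M}$ by the projection formula, so $\pi^*\mathcal{M}\cong\mathcal{O}$ forces $\mathcal{M}\cong\mathcal{O}$. Surjectivity of $\rho$: by Definition \ref{levell}(i) the line bundle $\ml_{\lambda,p_j}$ restricts on every fibre to the line bundle $\ml_{\lambda}$ on the factor $\Fl(\mw_{p_j})$ (Section \ref{loudvoice2}), and $\Pic(\Fl(n)^s)=\bigoplus_{p\in S}\Pic(\Fl(n))=(\frh_{\Bbb{Z}}^*)^s$ is generated by these. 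Exactness in the middle: the class of $\mathcal{L}|_{\text{fibre over }x}$ is a locally constant function of $x$ since $\Bun_{\mn}(n)$ is connected and $\Pic(\Fl(n)^s)$ is discrete, so triviality on one fibre implies triviality on all; then cohomology and base change makes $\pi_*\mathcal{L}$ a line bundle on $\Bun_{\mn}(n)$ with $\pi^*\pi_*\mathcal{L}\to\mathcal{L}$ an isomorphism.

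Next I would split the sequence: by the same computation $\vec\lambda\mapsto \bigotimes_{j=1}^s\ml_{\lambda^j,p_j}$ is a section of $\rho$, so
\[
\Pic(\Par_{n,\mn,S})\ \cong\ \Pic(\Bun_{\mn}(n))\ \oplus\ (\frh_{\Bbb{Z}}^*)^s .
\]
Since the determinant-of-cohomology bundle $\D$ of Definition \ref{levell}(ii) depends only on $\mw$, we have $\D=\pi^*\D_0$ for the analogously defined line bundle $\D_0$ on $\Bun_{\mn}(n)$; so it remains only to prove $\Pic(\Bun_{\mn}(n))\cong\Bbb{Z}$ with $\D_0$ a generator. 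Granting that, $(\vec\lambda,\ell)\mapsto \D^{\ell}\otimes\bigotimes_{j=1}^s\ml_{\lambda^j,p_j}$ is exactly the map \eqref{canonical}, and it is an isomorphism.

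For the base stack I would invoke the structure theory of $\Bun_G$ for $G$ simply connected. Using the shift operations of Section \ref{shifty} one reduces to $\mn=\mathcal{O}$, i.e.\ to $\Bun_{\mathcal{O}}(n)=\Bun_{\SL(n)}(\Bbb{P}^1)$. By Drinfeld--Simpson uniformization this stack is $\SL(n)[z^{-1}]\backslash\Gr$ with $\Gr$ the affine Grassmannian of $\SL(n)$, pullback along $\Gr\to\Bun_{\SL(n)}(\Bbb{P}^1)$ identifies $\Pic(\Bun_{\SL(n)}(\Bbb{P}^1))$ with $\Pic(\Gr)\cong\Bbb{Z}$ (Kumar--Narasimhan, Beauville--Laszlo, Laszlo--Sorger), and the ample generator corresponds to the determinant-of-cohomology line bundle $\D_0$ — equivalently, the standard representation of $\mathfrak{sl}_n$ has Dynkin index $1$. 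The point to watch in genus $0$ is that rigidifying $\det\mw$ via $\gamma$ is exactly what cuts the Picard group down to rank one (without it, the Picard group of the stack of all rank $n$ bundles on $\Bbb{P}^1$ has rank $2$); the absence of stable $\SL(n)$-bundles on $\Bbb{P}^1$ affects neither the uniformization nor this Picard computation.

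The main obstacle is this last step — establishing $\Pic(\Bun_{\mn}(n))\cong\Bbb{Z}$ with $\D_0$ a \emph{primitive} generator. Everything else is formal: the fibration sequence, its three exactness points, and the splitting use only that $\Fl(n)^s$ is proper and rational with $H^{>0}(\mathcal{O})=0$ and free, discrete Picard group. I would proceed in the order: (1) set up $\pi$ and the short exact sequence; (2) verify its exactness via $\pi_*\mathcal{O}=\mathcal{O}$, the projection formula, the explicit bundles $\ml_{\lambda,p_j}$, and cohomology-and-base-change; (3) produce the splitting $\vec\lambda\mapsto\bigotimes_{j=1}^s\ml_{\lambda^j,p_j}$; (4) identify $\Pic(\Bun_{\mn}(n))$ with $\Bbb{Z}\cdot\D_0$ by reducing to $\mn=\mathcal{O}$ and citing/adapting the $\Bun_{\SL(n)}$ Picard computation.
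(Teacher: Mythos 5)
Your overall strategy is sound but genuinely different from the paper's. The paper's proof is a two-line reduction: the shift maps $\operatorname{Sh}_p$ of Proposition \ref{easily} are isomorphisms of stacks carrying $\mathcal{B}(\vec{\lambda},\ell)$ to $\mathcal{B}(\vec{\mu},\ell)$ by an explicit invertible change of parameters, so the general case follows from $\mn=\mathcal{O}$, which is exactly the Laszlo--Sorger computation of $\Pic$ of the stack of quasi-parabolic $\SL(n)$-bundles. You instead re-derive the parabolic case from the non-parabolic one by fibering over $\op{Bun}_{\mn}(n)$, splitting the resulting short exact sequence of Picard groups via the tautological bundles $\ml_{\lambda,p_j}$, and reducing to $\Pic(\op{Bun}_{\mn}(n))=\Bbb{Z}\cdot\mathcal{D}_0$. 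That fibration-and-splitting argument is correct (the surjectivity of $\rho$ and the well-definedness of your section on $(\frh^*_{\Bbb{Z}})^s$ both rest on the triviality of $\ml_{(1,\dots,1),p}\cong(\mn_p)^*$, which you use implicitly and which the paper records after Definition \ref{levell}), and it is essentially the argument of \cite{LS} themselves; what it buys is a self-contained reduction to the core input $\Pic(\op{Bun}_{\SL(n)}(\Bbb{P}^1))\cong\Bbb{Z}$, at the cost of having to handle that input for every $\mn$ rather than only for $\mn=\mathcal{O}$.

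The one step that does not work as written is your reduction of $\Pic(\op{Bun}_{\mn}(n))$ to $\Pic(\op{Bun}_{\mathcal{O}}(n))$ ``using the shift operations of Section \ref{shifty}'': $\operatorname{Sh}_p$ is defined only on $\Par_{n,\mn,S}$, since the Hecke modification at $p$ requires the choice of $E^p_1$ in the flag, and it does not descend to a morphism $\op{Bun}_{\mn}(n)\to\op{Bun}_{\mn(p)}(n)$. Tensoring by a line bundle only changes $\deg\mn$ modulo $n$, so for $\deg\mn\not\equiv 0\pmod n$ you are left needing the Picard computation for a nontrivially twisted form of $\op{Bun}_{\SL(n)}$ (true, and available via the degree-$e$ component of the affine Grassmannian of $\GL(n)$ or via Biswas--Hoffmann, but an additional citation rather than a formal reduction). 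The cleanest repair is to do the reduction in the opposite order: first use $\operatorname{Sh}_p$ on the parabolic stacks, as the paper does, to reduce the whole Proposition to $\mn=\mathcal{O}$, and only then run your fibration argument over $\op{Bun}_{\mathcal{O}}(n)=\op{Bun}_{\SL(n)}(\Bbb{P}^1)$, where $\Pic\cong\Bbb{Z}\cdot\mathcal{D}_0$ is the standard Kumar--Narasimhan/Beauville--Laszlo statement you cite.
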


When $\mathcal{N}=\mathcal{O}$, $\Par_{n,\mn,S}$ can be identified with the moduli stack of parabolic $G$-bundles for $G=\op{SL}(n)$, and the result follows from \cite{LS}, and the general case is reduced in Section \ref{shifty2} to this case using the shift operations in Section \ref{shifty}.

The following result allows us to rephrase the problem of finding extremal rays of the cone over $P_n(s)$ in terms of the effective cone of $\Par_{n,\mathcal{O},S}$, see e.g., \cite[Theorem 5.2]{BKq} for a proof:
\begin{theorem}\label{blacktea}
Assume $\mn=\mathcal{O}$, and $\ell\neq 0$.
$H^0(\Par_{n,\mathcal{O},S},\mathcal{B}^m(\vec{\lambda},\ell))\neq 0$
for some $m>0$ if and only if the following conditions are satisfied:
\begin{enumerate}
\item $\ell>0$ and $\lambda^j$ are dominant integral weights  of level $\ell$, $j=1,\dots,s$, i.e.,
\begin{equation}\label{weyl}
\lambda^{j}_1\geq \lambda^{j}_2\geq \dots\geq \lambda^{j}_n\geq \lambda^{j}_1-\ell.
\end{equation}
\item The point $\frac{1}{\ell}(\kappa(\lambda^1),\dots,\kappa(\lambda^s)))$ lies in $\Delta_n^s$ by the first condition.
The second condition is that this point is in $P_n(s)$.
\end{enumerate}
\end{theorem}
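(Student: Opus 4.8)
\medskip
\noindent\textbf{Proof proposal.}
The plan is to translate the cohomological statement into one about the effective cone of $\Par_{n,\mathcal{O},S}$ and then to identify that cone, via GIT and the Mehta--Seshadri theorem \cite{MS}, with the cone over $P_n(s)$. By definition of $\Pic^+_{\Bbb{Q}}$, the hypothesis ``$H^0(\Par_{n,\mathcal{O},S},\mathcal{B}^m(\vec{\lambda},\ell))\neq 0$ for some $m>0$'' is the same as $\mathcal{B}(\vec{\lambda},\ell)\in\Pic^+_{\Bbb{Q}}(\Par_{n,\mathcal{O},S})$, and, using $\Pic(\Par_{n,\mathcal{O},S})\cong(\frh_{\Bbb{Z}}^*)^s\oplus\Bbb{Z}$ from Proposition \ref{LaSo}, the theorem becomes a description of this effective cone. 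First I would extract the elementary necessary conditions. Restrict a nonzero section $\sigma$ of $\mathcal{B}^m(\vec{\lambda},\ell)$ along the projection $\pi:\Par_{n,\mathcal{O},S}\to\Bun_{\mathcal{O}}(n)$, whose fibers are $\Fl(n)^s$: since the determinant-of-cohomology bundle $\mathcal{D}$ is pulled back from $\Bun_{\mathcal{O}}(n)$, on a fiber $\mathcal{B}^m(\vec{\lambda},\ell)$ restricts to $\bigotimes_j\pr_j^*\ml_{m\lambda^j}$ (the $\mathcal{D}^{m\ell}$ factor being trivial there), and as $\sigma$ cannot vanish on every fiber of $\pi$ ($\Par_{n,\mathcal{O},S}$ being irreducible) this bundle is effective on $\Fl(n)^s$, forcing each $\lambda^j$ to be dominant, $\lambda^j_1\geq\cdots\geq\lambda^j_n$. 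The sign $\ell>0$ I would get either from the uniformization isomorphism $H^0(\Par_{n,\mathcal{O},S},\mathcal{B}^m(\vec{\lambda},\ell))\cong\Bbb{V}_{\sL_n,m\vec{\lambda},m\ell}$, the right side vanishing for $\ell<0$, or more self-containedly by restricting $\sigma$ to a complete rational curve in $\Par_{n,\mathcal{O},S}$ along which the flags at $S$ stay constant while the underlying bundle varies through an elementary modification, so that $\mathcal{D}$, hence $\mathcal{B}(\vec{\lambda},\ell)=\mathcal{D}^\ell\otimes(\text{flag part})$, acquires there a degree with the sign of $\ell$; since $\ell\neq 0$, this gives $\ell>0$. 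I would not prove the remaining ``level'' inequality $\lambda^j_n\geq\lambda^j_1-\ell$ separately: it says exactly $\tfrac{1}{\ell}\kappa(\lambda^j)\in\Delta_n$, and since $P_n(s)\subseteq\Delta_n^s$ it is subsumed once membership in $P_n(s)$ is established (and its failure is one way the semistable locus below becomes empty).

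The substance is the equivalence, for $\ell>0$ and $\vec{\lambda}$ integral dominant, of $\mathcal{B}(\vec{\lambda},\ell)\in\Pic^+_{\Bbb{Q}}$ with $\tfrac{1}{\ell}(\kappa(\lambda^1),\dots,\kappa(\lambda^s))\in P_n(s)$. I would read $(\vec{\lambda},\ell)$ as a stability condition for parabolic $\op{SL}(n)$-bundles on $\Bbb{P}^1$ with flags at $S$, the parabolic weights being recovered from the points $\tfrac{1}{\ell}\kappa(\lambda^j)\in\Delta_n$; GIT then produces a projective coarse moduli space $M(\vec{\lambda},\ell)$ of semistable such bundles, on which a positive power of $\mathcal{B}(\vec{\lambda},\ell)$ descends to an ample $\Bbb{Q}$-line bundle. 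Two facts then finish the argument. (i) $H^0(\Par_{n,\mathcal{O},S},\mathcal{B}^m(\vec{\lambda},\ell))\cong H^0(M(\vec{\lambda},\ell),(\text{descended bundle})^m)$ for all $m>0$: injectivity is formal (the unstable locus is a proper closed substack of the irreducible stack), and surjectivity --- that every generalized theta function extends over the unstable locus --- is the known structural input, equivalently the uniformization/conformal-blocks description of these section spaces. (ii) By Mehta--Seshadri \cite{MS}, $M(\vec{\lambda},\ell)$ is nonempty if and only if there is a unitary representation $\rho:\pi_1(\Bbb{P}^1-S)\to\op{SU}(n)$ with local monodromy at $p_j$ in the conjugacy class of $\tfrac{1}{\ell}\kappa(\lambda^j)$, which by the defining property of $P_n(s)$ (cf.\ Theorem \ref{ABW}) is exactly $\tfrac{1}{\ell}(\kappa(\lambda^1),\dots,\kappa(\lambda^s))\in P_n(s)$. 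Hence $\mathcal{B}(\vec{\lambda},\ell)\in\Pic^+_{\Bbb{Q}}$ iff $M(\vec{\lambda},\ell)\neq\emptyset$ iff the point lies in $P_n(s)$; reading the chain backwards (a point of $P_n(s)$ yields a semistable bundle, a nonempty projective $M(\vec{\lambda},\ell)$, an ample bundle with sections, which lift to the stack by (i)) gives the converse direction and completes the proof.

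I expect the main obstacle to be fact (i): that the global sections of $\mathcal{B}^m(\vec{\lambda},\ell)$ on the \emph{stack} $\Par_{n,\mathcal{O},S}$ are precisely the generalized theta functions on the GIT quotient --- i.e.\ that they all extend across the locus of unstable parabolic bundles --- together with the precise matching of the GIT stability condition with the Mehta--Seshadri weight data (and the bookkeeping needed when these weights degenerate, i.e.\ when the level inequalities fail). This is where one invokes, rather than reproves, the descent theory for line bundles on stacks of parabolic $\op{SL}(n)$-bundles and the uniformization theorem; the fiberwise restrictions and the appeal to Mehta--Seshadri are routine by comparison.
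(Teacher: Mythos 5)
The paper does not actually prove Theorem \ref{blacktea} itself --- it cites \cite[Theorem 5.2]{BKq} --- and your proposal reconstructs exactly the standard route behind that citation: read $(\vec{\lambda},\ell)$ as a Mehta--Seshadri polarization, match $H^0$ of powers of $\mathcal{B}(\vec{\lambda},\ell)$ on the stack with invariant theta functions on a quot-scheme presentation, and let \cite{MS} convert non-emptiness of the semistable locus into membership in $P_n(s)$. Your fiberwise restriction argument for dominance and your identification of the descent/extension statement (your fact (i)) as the genuine structural input are both correct and well placed.

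There is, however, one concrete logical gap: your treatment of the level inequality $\lambda^j_n\geq\lambda^j_1-\ell$. You propose to get it for free from membership in $P_n(s)$, since $P_n(s)\subseteq\Delta_n^s$. But the chain you use to establish membership in $P_n(s)$ --- build $M(\vec{\lambda},\ell)$ as a Mehta--Seshadri GIT quotient, observe it is nonempty, apply \cite{MS} --- presupposes that $\frac{1}{\ell}\kappa(\lambda^j)$ already lies in the alcove $\Delta_n$, i.e.\ presupposes the level inequality: the parabolic weights must lie in $[0,1)$ for the parabolic-stability dictionary to be the one \cite{MS} proves. When the inequality fails there is no $M(\vec{\lambda},\ell)$ in that sense, and your parenthetical remark that ``its failure is one way the semistable locus becomes empty'' is an assertion, not an argument; as written the forward implication is circular in exactly the degenerate case it needs to exclude. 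The fix is cheap and uses machinery the paper already sets up: the shift $\operatorname{Sh}_p$ of Section \ref{shifty} is an isomorphism of stacks carrying $\mathcal{B}(\vec{\lambda},\ell)$ to $\mathcal{B}(\vec{\mu},\ell)$ with $\mu^j_1=\lambda^j_n+\ell$ and $\mu^j_a=\lambda^j_{a-1}$ (Proposition \ref{easily}(a)); rerunning your own fiberwise-dominance argument on the shifted side forces $\mu^j_1\geq\mu^j_2$, which is precisely $\lambda^j_n+\ell\geq\lambda^j_1$. With that inserted, the remaining appeals to descent and to \cite{MS} are the same ones the cited reference makes, and the argument closes.
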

Theorem \ref{blacktea} implies the following:
\begin{proposition}\label{b9}
\hspace{2em}
\begin{enumerate}
\item $\Pic^+_{\Bbb{Q}}(\Par_{n,\mathcal{O},S})\subseteq (\frh_{\Bbb{Q}}^*)^s\times\Bbb{Q}$ is the cone over $P_{n,\Bbb{Q}}(s)\times 1$ in $\frh^s_{n,\Bbb{Q}}\times \Bbb{Q}$ under the Killing form identification $\frh=\frh^*$.
\item
The extremal rays of $\Pic^+_{\Bbb{Q}}(\Par_{n,\mathcal{O},S})$ correspond to vertices of $P_{n,\Bbb{Q}}(s)$ (which coincide with those of $P_{n}(s)$).
\end{enumerate}
\end{proposition}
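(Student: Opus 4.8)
The plan is to move the question to the lattice $(\frh_{\Bbb{Z}}^*)^s\oplus\Bbb{Z}$ via Proposition \ref{LaSo} and then read both assertions off Theorem \ref{blacktea}. Tensoring the isomorphism of Proposition \ref{LaSo} with $\Bbb{Q}$ gives an identification $\Pic_{\Bbb{Q}}(\Par_{n,\mathcal{O},S})\cong(\frh_{\Bbb{Q}}^*)^s\oplus\Bbb{Q}$ under which $(\vec{\lambda},\ell)\mapsto\mathcal{B}(\vec{\lambda},\ell)$; from \eqref{bruckner} and the fiber formulas one has $\mathcal{B}(\vec{\lambda},\ell)^{\otimes m}=\mathcal{B}(m\vec{\lambda},m\ell)$, so this is compatible with the scaling that defines $\Pic^+_{\Bbb{Q}}$, and a nonzero rational class $(\vec{\lambda},\ell)$ lies in $\Pic^+_{\Bbb{Q}}$ iff, after multiplying by an $N_0>0$ that makes $(N_0\vec{\lambda},N_0\ell)$ integral, we have $H^0(\Par_{n,\mathcal{O},S},\mathcal{B}^m(N_0\vec{\lambda},N_0\ell))\neq 0$ for some $m>0$.

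For classes with $\ell>0$ I would apply Theorem \ref{blacktea} directly: that $H^0$ is nonzero for some $m$ iff $N_0\ell>0$, the $N_0\lambda^j$ are dominant integral of level $N_0\ell$, and $\tfrac{1}{N_0\ell}(\kappa(N_0\lambda^1),\dots,\kappa(N_0\lambda^s))\in P_n(s)$. Since $\kappa$ is linear this point equals $\tfrac1\ell(\kappa(\lambda^1),\dots,\kappa(\lambda^s))$, and because $P_n(s)\subseteq\Delta_n^s$ the dominance-and-level condition is automatic once this point lies in $P_n(s)$. Hence the $\ell>0$ part of $\Pic^+_{\Bbb{Q}}$ is exactly $\{(\vec{\lambda},\ell):\ell>0,\ \tfrac1\ell(\kappa(\lambda^1),\dots,\kappa(\lambda^s))\in P_n(s)\}$, which under the Killing form identification $\frh=\frh^*$ is the $\ell>0$ part of the cone over $P_{n,\Bbb{Q}}(s)\times 1$. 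Classes with $\ell<0$ are not effective, again by Theorem \ref{blacktea}.

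The only boundary case needing a separate argument is $\ell=0$, and it is the sole place where compactness of $P_n(s)$ enters: I claim $\mathcal{B}(\vec{\lambda},0)$ being $\Bbb{Q}$-effective forces $\vec{\lambda}=0$ in $(\frh_{\Bbb{Q}}^*)^s$. First, $\mathcal{D}=\mathcal{B}(\vec 0,1)$ is $\Bbb{Q}$-effective by Theorem \ref{blacktea}, since the origin of $\Delta_n^s$ (all matrices equal to $I_n$) lies in $P_n(s)$. If $\mathcal{B}(\vec{\lambda},0)$ is $\Bbb{Q}$-effective, then tensoring sections shows $\mathcal{B}(k\vec{\lambda},0)\otimes\mathcal{D}^{k'}=\mathcal{B}(k\vec{\lambda},k')$ is $\Bbb{Q}$-effective for all integers $k,k'\geq 0$, so $(k\vec{\lambda},1)\in\Pic^+_{\Bbb{Q}}$ for every $k\geq 0$; by the $\ell>0$ analysis this gives $k(\kappa(\lambda^1),\dots,\kappa(\lambda^s))\in P_n(s)$ for all $k$, and boundedness of $P_n(s)$ forces each $\kappa(\lambda^j)=0$, i.e.\ $\lambda^j$ a multiple of $(1,\dots,1)$, i.e.\ $\vec{\lambda}=0$. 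This finishes (1): $\Pic^+_{\Bbb{Q}}(\Par_{n,\mathcal{O},S})$ is the cone over $P_{n,\Bbb{Q}}(s)\times 1$.

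For (2), note that $P_n(s)$ is cut out by finitely many rational inequalities — those defining $\Delta_n^s$ together with the inequalities \eqref{wall} of Theorem \ref{ABW} — so its vertices are rational and coincide with those of $P_{n,\Bbb{Q}}(s)$. It then remains to invoke the elementary fact that the extremal rays of the cone over a polytope placed at height $1$ (which does not pass through the apex) are precisely the rays through its vertices: a point $(p_0,1)$ spans an extremal ray iff $p_0$ is not a nontrivial convex combination of points of the polytope, which is the definition of a vertex. Applying this to $P_{n,\Bbb{Q}}(s)$ together with part (1) gives the correspondence. I do not anticipate a genuine obstacle; the substance is all in Theorem \ref{blacktea} and Proposition \ref{LaSo}, and the only subtlety in the reduction is the $\ell=0$ case, dispatched above via the boundedness of $P_n(s)$.
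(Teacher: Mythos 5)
Your argument is correct and is exactly the deduction the paper intends: the paper offers no written proof beyond the remark that Proposition \ref{b9} follows from Theorem \ref{blacktea}, and your write-up supplies precisely that reduction via Proposition \ref{LaSo}, including the one genuinely non-automatic point (ruling out nontrivial effective classes at level $\ell=0$ using $\mathcal{D}$ and the compactness of $P_n(s)$), which the paper leaves implicit.
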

\begin{defi}
An element $\ml\in \Pic^+_{\Bbb{Q}}(\Par_{n,\mathcal{O},S})$ is said to lie on a regular face if the corresponding point of  $P_{n}(s)$ lies on a regular facet (see the discussion following Theorem \ref{ABW}).
\end{defi}

We will henceforth consider the problem of describing the extremal rays of $\Pic^+_{\Bbb{Q}}(\Par_{n,\mathcal{O},S})$. The description follows the methods introduced in  \cite{BHermit,BKiers,Kiers}.

\begin{remark}\label{classicall}
There is a natural open embedding of the quotient stack   $[\op{Fl}(n)^s/\op{SL}(n)]$ in $\Par_{n,\mathcal{O},S}$, with $\op{SL}(n)$ acting diagonally on $\op{Fl}(n)^s$, by considering flags on a trivial vector bundle. The complement is a divisor with associated line bundle $\mathcal{D}$ (see Definition \ref{levell}). This gives a surjection of Picard groups
$$\Pic(\Par_{n,\mathcal{O},S})\to \Pic([\op{Fl}(n)^s/\op{SL}(n)])$$
inducing a surjection on the subsets of effective line bundles (using the fact that when the level is large with fixed $\vec{\lambda}$, conformal blocks equal classical invariants).  $\Pic^+([\op{Fl}(n)^s/\op{SL}(n)])$ is the classical Littlewood Richardson cone whose extremal rays (of the corresponding $\Bbb{Q}$-cone) were determined in \cite{BHermit}.
\end{remark}
\subsubsection{F-line bundles}\label{Def_F}
  \begin{defi}\label{defF}
 An effective line bundle $\ml=\mathcal{O}(E)=\mathcal{B}(\vec{\lambda},\ell)$ on $\Par_{n,\mathcal{N},S}$ is called an F-line bundle if
 \begin{itemize}
 \item[-]$h^0(\Par_{n,\mathcal{N},S},\mathcal{L}^m)=1$ for all integers $m\geq 0$ (it is sufficient to verify this condition for $m=1$ by the quantum Fulton conjecture (see Proposition \ref{qTheorems})), and
 \item[-] $E$ is a non-empty, reduced and irreducible divisor on $\Par_{n,\mathcal{N},S}$.
 \end{itemize}
 \end{defi}
 Note that the scaling properties are reminiscent of a conjecture of Fulton (see Section \ref{FSP}, but note that Fulton's conjecture did not include any irreducibility of the zeroes of sections).

There is a bijective correspondence between  F-vertices of $P_{n}(s)$ and  F-line bundles on $\Par_{n,\mathcal{O},S}$ (see Lemma \ref{corresp}).

\subsection{The extremal rays problem}
The cone  $\Pic^+_{\Bbb{Q}}(\Par_{n,\mathcal{O},S})$ is cut inside the space of $(\vec{\lambda},\ell)$ by three kinds of inequalities: (1) the ``affine Weyl alcove''  inequalities \eqref{weyl}, (2)
$\ell\geq 0$ and (3), the eigenvalue inequalities (corresponding to inequalities \eqref{wall}):
\begin{equation}\label{wallnew}
\sum_{j=1}^s\sum_{k\in I^j} \lambda^j_k\leq d\ell +\sum_{j=1}^s \frac{r|\lambda^j|}{n}
\end{equation}
for any collection of data
$(d,r,n,I^1,\dots,I^s)$ as in Definition \ref{GW}, with $\langle\sigma_{I^1}, \sigma_{I^2},\dots,\sigma_{I^s}\rangle_d= 1$.

By Lemma  \ref{pinky} and Proposition \ref{b9}, any extremal ray of $\Pic^+_{\Bbb{Q}}(\Par_{n,\mathcal{O},S})$ lies on a face given by equality in inequality
\eqref{wallnew}  for some $(d,r,n,I^1,\dots,I^s)$ with $\langle\sigma_{I^1}, \sigma_{I^2},\dots,\sigma_{I^s}\rangle_d= 1$. Denote this face of $\Pic^+_{\Bbb{Q}}(\Par_{n,\mathcal{O},S})$ by $\mf=\mf_{d,r,I^1,\dots,I^s}$. This leads us to the problem of describing  the extremal rays of  $\mf=\mf_{d,r,I^1,\dots,I^s}$ which we consider next.
\subsubsection{A description of some extremal rays of $\mf$}\label{basicR}
Let $(a,j)$ be a pair with $a\in[n]$ and $1\leq j\leq s$ such that
\begin{enumerate}
\item $a>1$, $a\in I^j$, and $a-1\not\in I^j$, or
\item $a=1$, $1\in I^j$, $n\not\in I^j$.
\end{enumerate}
We will produce a divisor $D(a,j)\subseteq \Par_{n,\mathcal{O},S}$, such that $\mathcal{O}(D(a,j))$ is an extremal ray of $\Pic^+_{\Bbb{Q}}(\Par_{n,\mathcal{O},S})$
which lies on the face $\mf$. We will also write $\mathcal{O}(D(a,j))=\mathcal{B}(\vec{\lambda},\ell)$ (see Definition \ref{levell}) and give formulas for $\ell$ and $\vec{\lambda}$.
\begin{defi}
Define subsets $J^1,\dots,J^s$ of $[n]$ each of cardinality $r$ and an integer $d'$ as follows:
In case (1) let $J^k=I^k$ if $k\neq j$, $J^j=(I^j-\{a\}) \cup \{a-1\}$ and $d'=d$, and in case (2) let  $J^k=I^k$ if $k\neq j$, $J^j=(I^j-\{1\}) \cup \{n\}$ and $d'=d-1$.

Let $D(a,j)$ denote the set  of $(\mw,\me,\gamma)\in \Par_{n,\mathcal{O},S}$ such that there exists a
coherent subsheaf $\mv\subseteq \mw$ of  rank $r$,  degree $-d'$, and for each $p=p_i\in S$, the map
 $$\mv_p\to \mw_p/E^p_{j^i_k}$$ has a kernel of rank $\geq k$ for $k=1,\dots,r$. We will consider $D(a,j)$ as a reduced closed substack of $\Par_{n,\mathcal{O},S}$.
\end{defi}
\begin{theorem}\label{Adagio}
\hspace{2em}
\begin{enumerate}
\item  $D(a,j)$ is an irreducible divisor in  $\Par_{n,\mathcal{O},S}$.
\item $h^0(\Par_{n,\mathcal{O},S}, \mathcal{O}(mD(a,j)))=1$ for all positive integers $m$. Hence  $\mathcal{O}(D(a,j))$ is an F-line bundle on $\Par_{n,\mathcal{O},S}$.
\item $\Bbb{Q}_{\geq 0}\mathcal{O}(D(a,j))$  is an extremal ray of $\Pic^+_{\Bbb{Q}}(\Par_{n,\mathcal{O},S})$.
\item The extremal ray $\Bbb{Q}_{\geq 0}\mathcal{O}(D(a,j))$ lies on the face  $\mf=\mf_{d,r,I^1,\dots,I^s}$ of $\Pic^+_{\Bbb{Q}}(\Par_{n,\mathcal{O},S})$.
\end{enumerate}
\end{theorem}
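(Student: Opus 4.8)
\textbf{Proof proposal for Theorem \ref{Adagio}.}

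The plan is to analyze the locus $D(a,j)$ by realizing it as a degeneracy locus attached to a universal family of subsheaves, in the spirit of the methods of \cite{BHermit,BKiers,Kiers}. First I would set up the incidence variety: over $\Par_{n,\mathcal{O},S}$, consider the relative Quot scheme parametrizing rank $r$, degree $-d'$ coherent subsheaves $\mv\subseteq\mw$, together with, at each marked point $p_i$, the Schubert-type conditions that $\mv_{p_i}\to \mw_{p_i}/E^{p_i}_{j^i_k}$ drops rank by at least $k$. Because $\langle\sigma_{I^1},\dots,\sigma_{I^s}\rangle_d=1$, the corresponding enumerative count of subbundles of the trivial bundle is one, so the generic fiber of the incidence variety over $\Par_{n,\mathcal{O},S}$ is a single reduced point lying over the open part where the subsheaf is a subbundle in general position; a dimension count (using $\dim\Par_{n,\mathcal{O},S}=\dim\op{Fl}(n)^s+1-n^2$, the dimension of the Quot scheme of subbundles of degree $-d'$, and the codimensions of the Schubert conditions, which sum up correctly precisely because the GW number is a count in the expected dimension) shows the incidence variety is generically finite of degree one onto its image, and that the image has codimension exactly one. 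Then $D(a,j)$ is by definition the image with reduced structure, proving it is a divisor; irreducibility \textbf{(1)} will follow from irreducibility of the incidence variety, which in turn follows from irreducibility of the relevant Quot scheme over $\Bbb{P}^1$ together with the fact that the Schubert conditions cut out irreducible strata generically (one has to check the generic subsheaf is actually a subbundle, so that one is genuinely on the smooth Grassmannian-bundle locus and can invoke Kleiman transversality; the torsion-quotient strata are lower-dimensional and do not produce new components).

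Next, for \textbf{(4)}, I would compute the class $\mathcal{O}(D(a,j))=\mathcal{B}(\vec\lambda,\ell)$ by a localization/restriction argument: restrict to a generic one-parameter family, or better, pair $\mathcal{O}(D(a,j))$ against a complete family of curves in $\Par_{n,\mathcal{O},S}$ to read off $\ell$ and the $\lambda^j$. The expected answer is that $\ell$ and $\vec\lambda$ are exactly the data making the point $\frac1\ell(\kappa(\lambda^1),\dots,\kappa(\lambda^s))$ satisfy equality in \eqref{wallnew} for the datum $(d,r,I^1,\dots,I^s)$; this is forced because $D(a,j)$ is built from rank-$r$ degree-$(-d')$ subsheaves satisfying precisely the Schubert incidences indexed by $J^1,\dots,J^s$, and the numerical shift between $(I^j,d)$ and $(J^j,d')$ in cases (1) and (2) is exactly the shift relating inequality \eqref{wall}/\eqref{wallnew} for $I$ to the parabolic-degree inequality that $D(a,j)$ imposes. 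So \textbf{(4)} reduces to matching the defining inequality of $D(a,j)$ with \eqref{wallnew}, which is a bookkeeping computation with $|\sigma_I|$, $|\lambda^j|$, and the passage $a\mapsto a-1$ (resp. $1\mapsto n$, $d\mapsto d-1$).

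For \textbf{(2)}, the key input is the quantum Fulton conjecture / scaling result (quoted as Proposition \ref{qTheorems} and \cite[Remark 8.5]{BKq}), which reduces the claim $h^0(\mathcal{O}(mD(a,j)))=1$ for all $m$ to the case $m=1$. For $m=1$ the divisor $D(a,j)$ is effective and nonzero by construction, so $h^0\geq 1$; for $h^0\leq 1$ one argues that any section of $\mathcal{O}(D(a,j))$ vanishes on $D(a,j)$, hence is a scalar multiple of the defining section, using that $D(a,j)$ is reduced and irreducible (from (1)) and that the complement $\Par_{n,\mathcal{O},S}\setminus D(a,j)$ has no nonconstant global functions (it is an open substack of a stack whose global functions are constants, since $\Par_{n,\mathcal{O},S}$ is a ``good'' stack — this can be seen via the embedding $[\op{Fl}(n)^s/\op{SL}(n)]\hookrightarrow\Par_{n,\mathcal{O},S}$ of Remark \ref{classicall} and properness considerations, or by noting $H^0$ of every line bundle is finite-dimensional and the section ring has no zero divisors in the relevant degree). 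Granting this, $h^0(\mathcal{O}(D(a,j)))=1$, so $\mathcal{O}(D(a,j))$ is an F-line bundle. Finally \textbf{(3)}: an F-line bundle generates an extremal ray of $\Pic^+_{\Bbb{Q}}$ because if $\mathcal{O}(D(a,j))=\ml_1\otimes\ml_2$ with both $\ml_i$ effective (nonzero in $\Pic_{\Bbb{Q}}^+$), then $h^0(\ml_1)\cdot h^0(\ml_2)\leq h^0(\mathcal{O}(D(a,j)))=1$ forces both to have a section, whose zero divisors are effective and sum to the reduced irreducible divisor $D(a,j)$; by irreducibility one of them is $0$ and the other is $D(a,j)$ up to $\Bbb{Q}_{\geq 0}$-scaling, so the class is extremal. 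Combining with \textbf{(4)} places this extremal ray on $\mf_{d,r,I^1,\dots,I^s}$.

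\textbf{Main obstacle.} The hardest part will be \textbf{(1)} — proving $D(a,j)$ is an irreducible divisor, i.e., that the degeneracy locus has the expected codimension one and exactly one top-dimensional component. This requires genuine control over the Quot scheme of subsheaves of $\mathcal{O}^{\oplus n}$ on $\Bbb{P}^1$: one must show that on a dense open substack of $\Par_{n,\mathcal{O},S}$ the relevant subsheaf is in fact a \emph{subbundle} in general position (so that Kleiman–Bertini applies on the Grassmannian bundle and the GW number $=1$ genuinely computes the degree of the incidence map), and separately that the loci where the quotient acquires torsion, or where extra subsheaves appear, are of strictly smaller dimension and thus contribute neither new components nor a drop in codimension. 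This is exactly the ``subbundles degenerating into subsheaves, with the level of line bundles'' subtlety flagged in Section \ref{review}, and it is where the careful enumerative geometry of quot schemes over $\Bbb{P}^1$ — rather than formal stack manipulations — has to be done.
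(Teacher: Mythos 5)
Your overall strategy (incidence variety over $\Par_{n,\mathcal{O},S}$, class computation by test curves, quantum Fulton for scaling, and the standard F-line-bundle-implies-extremal-ray argument for (3)) is the same as the paper's, but two steps you treat as routine are precisely where the real work lies, and as written they fail. First, in part (1) you assert that "a dimension count shows the incidence variety is generically finite of degree one onto its image, and that the image has codimension exactly one." A dimension count gives only that $\Omega(d',r,\mathcal{O},n,\vec{J})$ has dimension $\dim\Par_{n,\mathcal{O},S}-1$; it cannot rule out that the map to $\Par_{n,\mathcal{O},S}$ has positive-dimensional generic fibers, in which case $D(a,j)$ would have codimension $\geq 2$ and would not be a divisor at all. (Note also that the GW number $1$ pertains to the data $(\vec{I},d)$, whose incidence variety dominates $\Par_{n,\mathcal{O},S}$; the $(\vec{J},d')$ incidence variety has empty generic fiber, so "the generic fiber is a single reduced point" is not the right statement.) The missing idea is the flag-modification argument: one starts at a point of $\Omega^0(d,r,\mathcal{O},n,\vec{I})$ where the birational map $\widehat{\Omega}^0(d,r,\mathcal{O},n,\vec{I})\to\Par_{n,\mathcal{O},S}$ is \'etale, and replaces $E^{p_j}_{a-1}$ by $E^{p_j}_{a-2}+\mv_{p_j}\cap E^{p_j}_{a}$ to produce a point of $\Omega^0(d',r,\mathcal{O},n,\vec{J})$ at which the map is still \'etale, because $E^{p_j}_{a-1}$ does not occur among the rank conditions defining $\widehat{\Omega}^0(d,r,\mathcal{O},n,\vec{I})$. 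This single point forces $D(a,j)$ to be a divisor and the incidence map to be generically unramified of degree one onto it; without it nothing in your write-up controls the fibers.

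Second, your proof of $h^0(\mathcal{O}(D(a,j)))=1$ rests on the claim that $\Par_{n,\mathcal{O},S}\setminus D(a,j)$ has only constant global functions, justified by saying it "is an open substack of a stack whose global functions are constants." That implication is false in general (compare $\Bbb{A}^1\subset\Bbb{P}^1$), and this constancy statement is exactly the hard content of part (2), not a formality. The paper proves instead that $H^0(\Omega^0(d,r,\mathcal{O},n,\vec{I})-\mathcal{R},\mathcal{O})=\Bbb{C}$ (Proposition \ref{rigid}), where $\mathcal{R}$ is the ramification divisor, and shows that $D(a,j)$ lies in the complement of this open substack; the proof of that constancy requires first establishing $h^0(\ParL,\mathcal{O}(m\mathcal{R}_L))=1$ via the partial compactification $\widehat{\Omega}^0$ and Corollary \ref{carole}, and then propagating constancy from $\ParL$ to all of $\Omega^0$ by the Levification (one-parameter degeneration to the Levi) argument. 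None of your suggested substitutes ("properness considerations," "the section ring has no zero divisors") supplies this. Your part (4) is also circular as stated until the class computation of Theorem \ref{brahms} is available, but the paper's alternative — $\mathcal{O}(D(a,j))$ restricts to an effective line bundle on $\ParL$ because $D(a,j)$ does not contain the image of $\ParL$, forcing the center of the Levi to act trivially — again relies on the $\Omega^0-\mathcal{R}$ geometry you have not set up.
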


Set $$\mathcal{O}(D(a,j))=\mathcal{B}(\vec{\lambda},\ell).$$
There are formulas (also see Section \ref{concept}) for $\ell$ and $\vec{\lambda}$, and hence for $\mathcal{B}(\vec{\lambda},\ell)$:
Write $\lambda^i=\sum_{b=1}^{n-1} c_{i}^b\omega_b$, where $\omega_b$ corresponds to $(1,\dots,1,0,\dots,0)$ with $b$ ones and $n-b$ zeros, and is the $b$th fundamental weight.
\begin{theorem}\label{brahms}
\hspace{2em}
\begin{enumerate}
\item Let $J^{s+1}=\{1,n-r+1,n-r+2,\dots,n-1\}$. Then
\begin{equation}\label{formulel}
\ell=\langle\sigma_{J^1}, \sigma_{J^2},\dots,\sigma_{J^s},\sigma_{J^{s+1}}\rangle_{d'+1}.
\end{equation}
\item $c_{i}^{b}=0$ if $b\not\in J^i$, or $b+1\in J^i$. If $b\in J^i$ and $b+1\not\in J^i$, define subsets $J'^1,\dots,J'^s$ of $[n]$
 each of cardinality $r$ as follows: $J'^k=J^k$ if $k\neq i$, and $J'^i=(J^i-\{b\})\cup \{b+1\}$. Then
$$c_i^{b}= \langle\sigma_{J'^1}, \sigma_{J'^2},\dots,\sigma_{J'^s}\rangle_{d'}.$$
\end{enumerate}
\end{theorem}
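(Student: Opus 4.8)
The plan is to compute the class $\mathcal{O}(D(a,j)) \in \Pic(\Par_{n,\mathcal{O},S})$ by intersecting with a one-parameter family of curves that span the dual space, i.e., by evaluating the associated line bundle against test curves in $\Par_{n,\mathcal{O},S}$. Since by Proposition \ref{LaSo} the Picard group is freely generated by $\mathcal{D}$ and the $\ml_{\omega_b,p_i}$, it suffices to compute the degree of $\mathcal{O}(D(a,j))$ on curves dual to this basis. The natural test curves are: (i) for the level $\ell$, a curve on which $\mathcal{D}$ has degree $1$ and all the flag line bundles $\ml_{\lambda^j,p_j}$ have degree $0$ — this is obtained by varying the underlying bundle $\mw$ in a Hecke-type pencil while keeping the flags ``generic'', or more precisely by choosing a family so that the intersection number $D(a,j)\cdot C$ counts exactly the subsheaves appearing in the enumerative problem with one extra flag condition imposed at a moving point, which by Definition \ref{GW} and the dictionary between maps to Grassmannians and subbundles (Definition \ref{GWgen}) is the Gromov–Witten number in \eqref{formulel}; (ii) for each coefficient $c_i^b$, a curve in the fiber direction over $\op{Bun}_{\mathcal{O}}(n)$ that moves only the flag at $p_i$ in the $b$th step (a $\mathbb{P}^1$ of flags differing in $E^{p_i}_b$), on which $\ml_{\omega_{b'},p_{i'}}$ has degree $\delta_{(i,b),(i',b')}$ and $\mathcal{D}$ has degree $0$; the intersection $D(a,j)\cdot C$ then counts subsheaves meeting the modified flag condition $J'^i$, giving $\langle\sigma_{J'^1},\dots,\sigma_{J'^s}\rangle_{d'}$.

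The key steps, in order: First, set up the local computation of $D(a,j)\cdot C$ for test curves of type (ii). Here one restricts the defining incidence conditions of $D(a,j)$ to the pencil of flags; the degeneracy locus becomes transverse (for generic choices of the rest of the data, using that $\langle\sigma_{J^1},\dots,\sigma_{J^s}\rangle_{d'}$ counts subsheaves appearing with multiplicity one on the face $\mf$, per Theorem \ref{Adagio}(4) and the hypothesis $\langle\sigma_{I^1},\dots,\sigma_{I^s}\rangle_d=1$), and the count is precisely the number of subsheaves $\mv$ of rank $r$, degree $-d'$ with the prescribed incidence to the $J'^i$ flag at $p_i$ and $J^k$ at the other points — which is the stated Gromov–Witten number. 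The vanishing statements ($c_i^b=0$ when $b\notin J^i$ or $b+1\in J^i$) follow because in those cases the incidence condition at step $b$ is either vacuous or already implied, so $D(a,j)$ does not meet the corresponding test curve. Second, for the level: choose a Hecke modification family $\mw_t$ over $\mathbb{P}^1$ with $\deg \mathcal{D} = 1$ along it and all flags held in general position; $D(a,j)\cdot C$ counts subsheaves of $\mw_t$ for the special $t$ where the incidence can be realized, and a standard ``adding a point'' argument (the extra condition $\sigma_{J^{s+1}}$ with $J^{s+1}=\{1,n-r+1,\dots,n-1\}$ is exactly the Schubert class cutting out ``the subbundle degenerates across the Hecke point'', raising the degree by one) identifies this with $\langle\sigma_{J^1},\dots,\sigma_{J^s},\sigma_{J^{s+1}}\rangle_{d'+1}$. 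Third, assemble: writing $\mathcal{O}(D(a,j)) = \mathcal{D}^\ell \otimes \bigotimes_j \ml_{\lambda^j,p_j}$ and reading off coefficients against the dual curves gives the formulas, with $\lambda^i = \sum_b c_i^b \omega_b$.

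The main obstacle will be step one's transversality and multiplicity-one analysis: one must show that when the incidence conditions of $D(a,j)$ are restricted to a fiber-direction test curve, the resulting intersection is reduced and equals the naive enumerative count with no excess contributions — in particular that torsion in the quotient $\mw/\mv$ and subsheaves (as opposed to subbundles) degenerating along the curve do not inflate the number. This is where the ``level of line bundles'' subtlety and the quot-scheme general position arguments alluded to after Proposition \ref{b9} enter. I expect this to be handled by a dimension count on the relevant quot scheme over $\mathbb{P}^1 \times C$, combined with the genericity of the flags at the non-moving points and the hypothesis that the face $\mf$ is cut out by a Gromov–Witten number equal to $1$ (so the ``rigidity'' of the enumerative problem, already exploited in Theorem \ref{Adagio}, forces the count to be as clean as possible); a secondary subtlety is correctly tracking the degree shift $d \mapsto d-1$ in case (2), which comes from the wrap-around $1 \mapsto n$ in the Schubert data corresponding to a degree jump in the associated map to the Grassmannian.
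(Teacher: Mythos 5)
Your part (2) is essentially the paper's proof (it is Proposition \ref{enumerative}(B), from which Theorem \ref{brahms} is deduced): the same test curves moving only $E^{p_i}_b$ inside a fixed partial flag, the same vanishing argument when $b\not\in J^i$ or $b+1\in J^i$, and the transversality issue you flag as the main obstacle is exactly the crux. The paper disposes of it by observing that $\pi^{-1}(C)$ for $\pi:\Omega(d,r,\mn,n,\vec{J'})\to\Par_{n,\mn,S}$ is $m$ copies of $\Bbb{P}^1$ (with $m=\langle\sigma_{J'^1},\dots,\sigma_{J'^s}\rangle_{d'}$), each meeting $\Omega(d,r,\mn,n,\vec{J})$ transversally in one point by the local Schubert-calculus lemma \cite[Lemma 8.8]{BHermit}, while the excess contributions from subsheaves with torsion quotients are ruled out by the irreducibility and dimension count of Proposition \ref{maya}. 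One correction to your framing: the needed multiplicity-one statement is this local one on the fibers over $C$, not a consequence of the rigidity hypothesis $\langle\sigma_{I^1},\dots,\sigma_{I^s}\rangle_d=1$; indeed the formula is proved for arbitrary codimension-one cycles $C(d,r,\mn,n,\vec{J})$ with no such hypothesis.

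For part (1) the paper takes a genuinely different and cleaner route: it derives formula (A) from formula (B) rather than pairing against a curve dual to $\mathcal{D}$. One adjoins an auxiliary point $p_{s+1}$ carrying the trivial Schubert condition (which leaves the cycle unchanged by Lemma \ref{propagation}), then applies a shift operation at $p_{s+1}$; by Proposition \ref{easily} this converts the level $\ell$ into the difference $\lambda^{s+1}_{n-1}-\lambda^{s+1}_{n}$ of flag-line-bundle coefficients at the new point, which is then computed by formula (B). The specific class $\sigma_{J^{s+1}}$ with $J^{s+1}=\{1,n-r+1,\dots,n-1\}$ and the degree $d'+1$ fall out of the shift bookkeeping of Section \ref{shifty}. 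Your Hecke-pencil route is plausible in spirit (a shift is a Hecke modification), but the step you leave as a heuristic is precisely the hard part: constructing a projective test curve in the stack on which $\mathcal{D}$ has degree $1$ while every $\ml_{\lambda,p}$ has degree $0$, and then identifying the resulting intersection number with that particular Gromov--Witten invariant. Making the ``the subbundle degenerates across the Hecke point'' claim precise would in effect reproduce the elementary-modification analysis the paper packages into the shift operations, so as written part (1) has a gap that the paper's reduction of (A) to (B) is designed to avoid.
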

We can readily convert the formulas for $c_i^b$ and $\ell$ above into the ranks of suitable conformal blocks for $\mathfrak{sl}_r$  at level $k=n-r$ (by using Witten's theorem, see \cite[Proposition 3.4]{BHorn} and Section \ref{QCB}).
Let $\mu^i=(\lambda^i)^T$. These are Young diagrams that fit into an $\ell\times n$ box. The following corollary, which is one of the main results of this paper, describes a procedure of producing rigid irreducible local systems from quantum Schubert calculus, and includes all known methods of systematically producing unitary irreducible rigid local systems (including the ones in \cite{BH,Haraoka}). For more
discussion see Sections \ref{qSC} and \ref{qSC2}.
\begin{corollary}\label{existence}
There exists a rigid irreducible unitary local system of rank $\ell$ on $\Bbb{P}^1-S$ whose local monodromy at $p_i$ is conjugate to the diagonal matrix with eigenvalues  $\exp(2\pi\sqrt{-1}\mu_j^i/n)$, $j\in[\ell]$ and $i\in[s]$. Furthermore, the following  equality holds:
  $$\sum_{i=1}^s \bigl((\ell-\sum_{b=1}^{n-1} c^b_i)^2 \ + \ \sum_{b=1}^{n-1}(c^b_i)^2\bigr)=(s-2)\ell^2+2.$$
\end{corollary}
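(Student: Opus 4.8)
The plan is to combine the three ingredients already in hand: Theorem~\ref{Adagio}, Theorem~\ref{brahms}, and the passage from parabolic line bundles to rigid local systems via strange duality. First I would unpack the geometric content. By Theorem~\ref{Adagio}, $\mathcal{O}(D(a,j))=\mathcal{B}(\vec{\lambda},\ell)$ is an F-line bundle on $\Par_{n,\mathcal{O},S}$ lying on a regular face, so by Proposition~\ref{b9} and the correspondence between F-line bundles and F-vertices (Lemma~\ref{corresp}), the point $\frac{1}{\ell}(\kappa(\lambda^1),\dots,\kappa(\lambda^s))$ is an F-vertex of $P_n(s)$. Now I apply the parametrization Theorem~\ref{egregium}: since $v$ is a bijection between F-vertices and local monodromy data of rigid irreducible unitary local systems, and since the F-vertex here has exactly the form $a^i=\frac{1}{\ell}(\lambda^i-\frac{1}{n}|\lambda^i|(1,\dots,1))$ prescribed in Definition~\ref{finiteness} (with $\mu^i=(\lambda^i)^T$ the Young diagrams of the eigenvalue data), there exists a rigid irreducible unitary local system of rank $\ell$ with local monodromy at $p_i$ having eigenvalues $\exp(2\pi\sqrt{-1}\,\mu^i_k/n)$. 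That gives the first assertion; one must also check that the $\ell$ coming from strange duality/transposition matches the $\ell$ of the line bundle, i.e.\ that the rank of the local system equals the level — this is the numerical strange duality statement referenced in Section~\ref{etrange} and is where the transpose operation $\mu^i=(\lambda^i)^T$ enters.

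For the numerical identity, the strategy is to recognize both sides as (twice) the dimension of the tangent space to a rigidity problem, equivalently a rigidity/index count. The quantity $\sum_{i=1}^s\bigl((\ell-\sum_b c^b_i)^2+\sum_b(c^b_i)^2\bigr)$ is exactly $\sum_{i=1}^s \dim(\text{centralizer of }\bar A_i \text{ in }\mathfrak{gl}_\ell)$: writing $\lambda^i=\sum_b c^b_i\omega_b$ means the conjugacy class $\bar A_i$ has block sizes given by the parts of the dual partition, namely the multiplicities $c^b_i$ (for $b=1,\dots,n-1$) of the non-trivial eigenvalues together with $\ell-\sum_b c^b_i$ for the eigenvalue $1$, and the dimension of the centralizer of a diagonalizable matrix is the sum of squares of its eigenvalue-multiplicities. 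The condition that an irreducible local system $(A_1,\dots,A_s)$ with $\prod A_i=I_\ell$ be rigid is the cohomological vanishing condition — the Euler characteristic count of Katz, or equivalently the numerical rigidity condition \eqref{eqRi} alluded to after Remark~\ref{RigidRem} — which reads precisely $\sum_{i=1}^s\dim Z(\bar A_i)=(s-2)\ell^2+2$. So the identity is just the statement that the local system produced above is rigid, which we already know.

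Concretely, I would carry it out in these steps: (i) translate Theorems~\ref{Adagio} and \ref{brahms} into the statement that $\frac1\ell(\kappa(\lambda^1),\dots,\kappa(\lambda^s))$ is an F-vertex with the stated eigenvalue data, invoking Lemma~\ref{corresp} and Proposition~\ref{b9}; (ii) apply Theorem~\ref{egregium}(2)$\Rightarrow$(1) to produce the rigid irreducible unitary local system $\mathcal{T}$ of rank $\ell$, checking via the strange-duality normalization that the rank is $\ell$ and not some other quantity; (iii) compute, for each $i$, the eigenvalue multiplicities of $\bar A_i$ from the coefficients $c^b_i$ of Theorem~\ref{brahms}, getting multiplicity $c^b_i$ for $\zeta_n^{\,?}$ at the slots dictated by the transpose and multiplicity $\ell-\sum_b c^b_i$ for the eigenvalue $1$; (iv) invoke the numerical characterization of rigidity for irreducible local systems — $\sum_i\dim Z_{\mathfrak{gl}_\ell}(\bar A_i)=(s-2)\ell^2+2$ — and observe the left side is exactly $\sum_i\bigl((\ell-\sum_b c^b_i)^2+\sum_b(c^b_i)^2\bigr)$.

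The main obstacle I anticipate is bookkeeping in step~(ii): making sure the ``rank'' of the local system — which in the strange-duality picture is the level $\ell$ of the F-line bundle, while the eigenvalue data lives in an $\ell\times n$ box after transposition — is correctly identified with the $\ell$ appearing in $\mathcal{B}(\vec\lambda,\ell)$, so that the formulas of Theorem~\ref{brahms} for $c^b_i$ and $\ell$ plug into the rigidity identity without an off-by-transpose error. Once that matching is pinned down (it is the content of Lemma~\ref{reprem} and the numerical strange duality of Section~\ref{etrange}), steps (iii)–(iv) are purely formal, since the rigidity index for irreducible tuples is a standard computation with $\dim\mathrm{GL}_\ell=\ell^2$ and centralizer dimensions.
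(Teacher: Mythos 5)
Your proposal is correct and follows essentially the same route as the paper: the paper deduces the corollary from the fact that $\mathcal{O}(D(a,j))=\mathcal{B}(\vec{\lambda},\ell)$ is an F-line bundle (Theorem \ref{Adagio}), then applies Theorem \ref{christmas}(ii) (of which your invocation of Theorem \ref{egregium} via Lemma \ref{corresp} is just a repackaging) to get the rank-$\ell$ rigid irreducible unitary local system, and the numerical identity is exactly Corollary \ref{katzkatz}, proved precisely as in your steps (iii)--(iv) by reading off the eigenvalue multiplicities $c^b_i$ and $\ell-\sum_b c^b_i$ from the transpose of $\lambda^i$ and plugging into Katz's rigidity equation \eqref{eqRi}. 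The rank-equals-level bookkeeping you flag is handled in the paper exactly where you expect, via the (A)--(B) correspondence of Lemma \ref{reprem} and the numerical strange duality \eqref{GepnerPP}.
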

\begin{remark}\label{brahms2}
Since $\mathcal{O}(D(a,j))$ is on the face $\mf$ (by Theorem \ref{Adagio}), the numbers $\ell$ and $c^i_b$ satisfy a linear equality (equality in the inequality \eqref{wallnew}) in addition to the quadratic equality in the above corollary.
\end{remark}
Examples of vertices arising from Theorem \ref{brahms} are given in Section \ref{somemore}. These include a vertex of $P_8(3)$ found by Thaddeus \cite{thaddy} and an infinite series of examples found by Kiers and Orelowitz.

\subsection{Induction}\label{schumann}
The basic idea behind the induction operation is to take a  ``general'' parabolic bundle $(\mw,\me,\gamma)$ and locate the subbundle $\mv\subseteq \mw$ corresponding to the enumerative problem  defining the face $\mf$, i.e., $\langle\sigma_{I^1}, \sigma_{I^2},\dots,\sigma_{I^s}\rangle_d= 1$ (Definitions \ref{GW}  and \ref{GWgen}). Let $\mq=\mw/\mv$. Clearly  $\mv$ and $\mq$ are vector bundles of ranks $r$ and $n-r$ respectively, and we have a given isomorphism $\delta:\det\mv \tensor\det\mq\to \mathcal{O}$. We also get induced flags on $\mv$ and $\mq$ at points of $S$ i.e., elements of  $ \Fl_S(\mv)$ and $\Fl_S(\mq)$.  Let $\ParL$ (see Definition \ref{parul}) be the moduli stack parametrizing this data.  $\ParL$ can be considered ``a  degree shift"  of  $\Par_{r,\mathcal{O},S}\times\Par_{n-r,\mathcal{O},S}$ (see Property (I3) in Section \ref{noncan}, and Proposition \ref{easily}).

The above construction leads to a rational morphism
$$\Par_{n,\mathcal{O},S}\to \ParL$$
which  extends to codimension $2$, and hence allows for a pullback operation of line bundles.
Effective line bundles on $\ParL$ correspond to  the extremal  ray problem for smaller groups $\SL(r)$ and $SL(n-r)$. This results in the induction morphism described below which has image in a space $\mf^{(2)}$ ``complementary" to the basic extremal rays from Theorem \ref{Adagio}.

Let $D_1,\dots,D_q$ be the basic extremal ray generators produced as $[D(a,j)]$ in Section \ref{basicR}. The following three results complete our picture of $\mf$:
\begin{enumerate}
\item[(a)]
 There is a rational cone $\mf^{(2)}$ such that
there is a natural product structure (Proposition \ref{Br4})
\begin{equation}\label{productstructure}
\mf\leto{\sim}\prod_{i=1}^q \Bbb{Q}_{\ge 0} D_i \times \mf^{(2)}
\end{equation}
\item[(b)] There is a  surjection (called induction), with explicit formulas (see Section \ref{explicitF}),
\begin{equation}\label{inductiones}
\Pic^+_{\Bbb{Q}}(\Par_{r,\mathcal{O}(-d),S})\times \Pic^+_{\Bbb{Q}}(\Par_{n-r,\mathcal{O}(d),S})\twoheadrightarrow \mf^{(2)}.
\end{equation}
Therefore the extremal rays of $\mf$ are either one of the $\Bbb{Q}_{\ge 0} D_i$ for $1\leq i\leq q$, or extremal rays of $\mf^{(2)}$. From the surjection \eqref{inductiones}, the extremal rays of $\mf^{(2)}$ are images of some of the extremal rays of $\Pic^+_{\Bbb{Q}}(\Par_{r,\mathcal{O}(-d),S})$ (which is a shift of the cone for $\SL(r)$) and $\Pic^+_{\Bbb{Q}}(\Par_{n-r,\mathcal{O}(d),S})$ (which is a shift of the cone for $\SL(n-r)$), although not all such images need give extremal rays of $\mf^{(2)}$.
\item[(c)]
All F-line bundles on the face $\mf$ can be described as follows: They are one of the line bundles $\mathcal{O}(D_i)$ for $1\leq i\leq q$, or lie on $\mf^{(2)}$.  The F-line bundles on $\mf^{(2)}$ can be put in one-one correspondence  (under induction)  with an explicit subset of the disjoint union of F-line bundles on the Levi factors $\Par_{r,\mathcal{O},S}$ and $\Par_{n-r,\mathcal{O},S}$ (See Proposition \ref{condensed}).
\end{enumerate}

For the non F-line bundle extremal rays of  $\mf^{(2)}$, an exact parametrization as in (c) is not known. This is probably related to general patterns of representation theory (e.g., \cite[Chapter 49]{Bump}) in which induction operations  may not preserve irreducibility. Another technical issue is that we are inducing only from maximal parabolics and not all parabolics. However, the induction procedure still produces a set of generating rays of the face, and this has been used by J. Kiers (in a related context) to prove  the saturation conjecture for $\op{Spin}(12)$  \cite{Kiers1}. In this case, the linear programming problem starting from the defining inequalities was prohibitively slower to solve than the linear programming problem starting from the generating set (basic rays given by degeneracy loci and what is obtained from
induction).

\subsection{Strange duality}\label{etrange}
The numbers $h^0(\Par_{n,\mathcal{O},S},\mathcal{B}(\vec{\lambda},\ell))$ are structure coefficients in the Verlinde algebra (also called the fusion ring)
for the Lie algebra $\mathfrak{sl}_n$ at level $\ell$.  We will use strange duality, in its numerical form in genus zero,  which asserts that these Verlinde structure coefficients for the Lie algebra $\mathfrak{sl}_n$ at level $\ell$ are equal to suitable structure coefficients for the Lie algebra $\mathfrak{sl}_{\ell}$ at level $n$. See Section \ref{understanding} for a discussion of the basic  geometric ideas underlying this duality.

The data $\vec{\lambda}$ goes over to the transposed data $\vec{\lambda^T}$ with a central  twist (assume here that $\vec{\lambda}$ consists of Young diagrams that fit into an $n\times\ell$ box).
\begin{itemize}
\item[-] Strange duality takes $(\vec{\lambda},\ell)$ on $\Par_{n,\mathcal{O},S}$ to $(\vec{\lambda^T},n)$ on $\Par_{\ell,\mathcal{O}(-\tilde{d}),S}$ with $-\tilde{d}= \frac{1}{n}(n\ell-\sum |\lambda^i|)$ (assumed to be an integer). The matter of central twisting tells us how to pass to $\tilde{d}=0$, see Section \ref{shifty}.
\end{itemize}

This picture is very general, and applies to many other groups: in the physics picture it accompanies conformal embeddings of Lie algebras (see  \cite{BJDG} and the references therein). In the case of special linear groups there is a simple and vivid way of explicating this numerical relation  using the result of  Witten \cite{Witten}  (also Section \ref{QCB} below) that  the fusion structure coefficients for the Lie algebra $\mathfrak{sl}_n$ at level $\ell$ are equal to, up to some cyclic twists, the  structure constants in the quantum cohomology of the space $\Gr(n,n+\ell)$. Since (by ``Grassmann duality'') $\Gr(n,n+\ell)=\Gr(\ell,n+\ell)$, the strange duality follows.

Theorem \ref{egregium} and Corollary \ref{egregiumprime} are proved using strange duality in Section \ref{SD}. Here we use the fact that the non-zeroness of ranks of line bundles on parabolic moduli stacks control existence of unitary local systems with prescribed monodromies (Propositions \ref{qTheorems}, \ref{kappa}, and \ref{sansD}).
 \subsection{Rigid local systems and KZ equations}
 The Knizhnik-Zamalodchikov equations from conformal field theory are  differential equations (i.e., vector bundles with flat connections)
 on the configuration spaces of points in $\Bbb{A}^1$ corresponding to a Lie algebra and certain data of representations, and a level.  These differential equations are special cases of the Wess-Zumino-Witten (WZW) connection on conformal blocks in any genus \cite{tuy} (which includes Hitchin's connection on spaces of non-abelian theta functions as a special case \cite{laszlo}). The WZW formalism allows for a study of the differential equations on the boundary of moduli spaces (i.e., when the marked points coalesce, and the connection is logarithmic over the boundary).

 There is a geometric realization \cite{Ram,BKZ,BM} of the monodromy of KZ equations in genus zero  on conformal blocks, in terms of cohomology of unramified covers of configuration spaces \cite{SV}, providing conformal blocks with a (Hodge theoretic) unitary structure, preserved by the KZ connection (possibly reducible, and not necessarily rigid, also see Remark \ref{hodge}). In Section \ref{chocolate}, we give  a construction that shows that all unitary irreducible rigid local systems with local monodromies  of finite orders come from KZ equations. This can be interpreted as a kind of ``universality'' statement regarding KZ equations.

\subsubsection{Acknowledgements}
I thank N. Fakhruddin, J. Kiers, S. Kumar, M. Nori, S. Mukhopadhyay and A. Wilson for useful discussions and comments/corrections. I thank N. Katz for asking the question answered in Theorem \ref{KatzQ}, and for his comments on an earlier version of this paper. I thank J. Kiers and G. Orelowitz for showing me  the infinite set of examples given here in Section \ref{KO}.
\section{An overview of the methods}\label{uber}
Using  some notation and definitions from the introduction, we outline the main constructions and the ideas that go into them.

The first input is the Mehta-Seshadri theorem \cite{MS} which recasts existence of unitary local systems on $\Bbb{P}^1-S$, in terms of non-emptiness of semi-stability loci, a condition involving existence of invariant sections of line bundes (in a GIT type situation). This is the same as existence of  non-zero sections of suitable tensor powers of a line bundle on a moduli stack of parabolic bundles (Theorem \ref{blacktea} is formulated for special unitary local systems, but the unitary case can be reduced to this). This condition is of the form
\begin{itemize}
\item
$H^0(\Par_{n,\mathcal{O},S},\mathcal{B}^m(\vec{\lambda},\ell))\neq 0$ for some $m$.
\end{itemize}

The next step is quantum saturation \cite{BHorn}, see  Proposition \ref{qTheorems} (1), which under  suitable conditions (of triviality of actions of isotropy groups), says that the above condition is equivalent to the non-vanishing for $m=1$.
We also have control over the rigidity of the moduli of unitary representations by quantum Fulton Proposition \ref{qTheorems}(2):  The moduli space is a point if and only if the above vector space is one dimensional for $m=1$ (note that the moduli space is the Proj of the section ring).

At this point, we have moved on from $P_n(s)$ the compact polyhedron controlling the unitary multiplicative eigenvalue problem to the cone over it, the effective cone of $\Par_{n,\mathcal{O},S}$ (see Theorem \ref{blacktea} and Proposition \ref{b9}). We now observe that some line bundles on $\Par_{n,\mathcal{O},S}$ satisfying geometrically defined properties give (some) extremal rays of the effective cones. These are the  F-line bundles defined in Definition \ref{defF}, see Lemma \ref{elgar}. The defining property is that they have a unique up to scalars non-zero section (even their tensor powers, but this follows from quantum Fulton), and further the zero scheme of this section is reduced and irreducible. The corresponding vertices of $P_n(s)$ are called F-vertices. Their defining property beyond being vertices of $P_n(s)$ is that the corresponding matrix equation has a unique solution up to  conjugation (See Definition \ref{defFprime}, and Lemma \ref{corresp}). Every line bundle $\mathcal{B}(\vec{\lambda},\ell)$ on $\Par_{n,\mathcal{O},S}$ has a level $\ell$ (Remark \ref{defL}).

The next input is strange duality. This says at the numerical level that $H^0(\Par_{n,\mathcal{O},S},\mathcal{B}(\vec{\lambda},\ell))$ has the same rank as a similar space of sections $H^0(\Par_{\ell,\widetilde{\mathcal{N}},S}\mathcal{B}(\vec{\lambda}^T,n))$ with $n$ and $\ell$ interchanged, and Young diagrams $\lambda_i$ replaced by their transposes, but the vector bundles (now of rank $\ell$)  now no longer have trivial determinants, but of a degree determined by the initial data, as in equation \eqref{GepnerPP}. We return to the history of \eqref{GepnerPP}, and the ideas behind understanding it geometrically in Section \ref{understanding}.

Once we have this, we have an equivalence of existence of a rank $\ell$ unitary local system with local monodromies $n$th roots of unity with that of a rank $n$ special unitary local system with corresponding local monodromies. The notions of F-line bundles and F-vertices are on the rank $n$-side. The notion that we consider on the rank $\ell$-side is Katz's notion of rigidity (the  representation should be irreducible, and the moduli space a point, also see Remark \ref{stretch2}).

The equivalence of the earlier paragraph leads to the question on what it does to rigid local systems on the rank $\ell$-side. This equivalence is non-linear since addition of Young diagrams as representations of $SL(n)$ is a row by row addition, and the transpose operation makes it a column by column operation. The answer is that rigids correspond to F-vertices, Theorem \ref{egregium}. 

We also associate to a rigid rank $\ell$ unitary representation with local monodromies $n$th roots of unity, a divisor on $\Par_{n,\mathcal{O},S }$ with enumerative significance (Lemma \ref{divisorE}, Proposition \ref{propP}), which puts non-trivial conditions (property (P) from the introduction) on the local monodromy of the rank $\ell$ unitary representation. One of the inputs here is a computation of divisor classes (see Section \ref{concept}).
The rigid unitary local system can then be constructed explicitly as a vector bundle of conformal blocks  equipped with the  Knizhnik-Zamalodchikov connection (Theorem \ref{KZe}).

We are therefore led to the question of determining vertices (and F-vertices)  of $P_n(s)$, which is the same as the question of determining the  extremal rays (and F-line bundles) of the cone $\Pic^+_{\Bbb{Q}}(\Par_{n,\mathcal{O},S})$ which is given by a system of inequalities parametrized by Gromov-Witten invariants  (Theorem \ref{ABW}, also see Proposition \ref{b9}). We determine the extremal rays on any regular face (see the discussion following Theorem \ref{ABW}) as follows:
\begin{enumerate}
\item[(a)] Some of extremal rays  correspond to explicit degeneracy loci  on $\Par_{n,\mathcal{O},S}$ (Theorems  \ref{Adagio} and \ref{brahms}). These give F-line bundles with numerical data given by Gromov-Witten invariants.
\item[(b)] The remaining extremal rays  on the face arise from a process of induction arising from a rational map
$\Par_{n,\mathcal{O},S}\to \ParL$ as explained in the introduction (Section \ref{schumann}). This process leads to a bijective parametrization of F-line bundles on the face  ( see Proposition \ref{condensed}).
\end{enumerate}
 The basic divisors in (a) correspond to the codimension 1 components in the na\"ive base locus of the rational map $\Par_{n,\mathcal{O},S}\to \ParL$ (see Remark \ref{explainmore}).
  \subsection{The strange duality construction}\label{understanding}
The equality  \eqref{GepnerPP} is the numerical consequence of a canonical  duality of vector spaces:
\begin{equation}\label{canonD}
H^0(\Par_{n,\mathcal{O},S},\ml)\leto{\sim} H^0(\Par_{\ell,\widetilde{\mathcal{N}},S},\ml')^*
\end{equation}
 This is the genus $0$ case of the strange duality conjecture for curves (\cite{Betrange,MO,Ou} and the surveys  \cite{paulyB, popa}). The strange duality conjecture has origins in mathematical physics (see e.g., \cite{NT}) where it is also called level-rank duality (also see \cite{BJDG}).

There is a natural divisor $D$ (called a theta divisor)  in the product $\Par_{n,\mathcal{O},S}\times \Par_{\ell,\widetilde{\mathcal{N}},S}$ whose associated line bundle is $\ml\boxtimes\ml'$. The divisor is a natural degeneracy locus: A point $(\mw,\me,\gamma)\times (\mt,\mf,\delta)$ is in $D$ if and only if there is a non-zero map $\phi:\mw\to \mt^*$ (or $\phi\in H^0(\Bbb{P}^1,\mt^*\tensor \mw^*)$) such that  $\phi_{p_i}(E^{p_i}_a)\subseteq G^{p_i}(\ell-\lambda^i_a)$ for $a=1,\dots,n$ and $i\in [s]$. Here $\mg\in \Fl_S(\mt^*)$ is induced from the identification  $\Fl_S(\mt^*)= \Fl_S(\mt)$ and the given $\mf\in\Fl_S(\mt)$. The expected dimension of such $\phi$ is zero in the settings considered in strange duality, so that the locus where a non-zero section exists is a divisor (see Section \ref{RamDiv} for one such setting).

The strange duality (or level-rank duality) is the assertion that $D$ results in an isomorphism
\eqref{canonD}. If we fix a point $x\in\Par_{\ell,\widetilde{\mathcal{N}},S}$, we get a corresponding section of $H^0(\Par_{n,\mathcal{O},S},\ml)$. Given the numerical equality of ranks, the strange duality is the assertion that  these sections as $x$ varies span $H^0(\Par_{n,\mathcal{O},S},\ml)$.

The theta divisor construction has been used frequently in the moduli theory of vector bundles (see e.g., \cite{Faltings}). In the non-parabolic (e.g., in higher genus) settings one is in situations where the Euler characteristic $\chi(\mt^*\tensor \mw^*)=0$, so that the locus where $H^0$ is non-zero is a divisor.

We  use only the numerical consequence of this duality. However a full picture of this duality may help in understanding the origin of the numerical equality (which is the first step in the proof of the vector space duality). The algebro-geometric proofs of strange duality proceed first by showing that the representation
theoretic spaces $H^0(\Par_{n,\mathcal{O},S},\ml)$ have ranks that are equal to enumerative numbers for the Grassmannian $\Gr(n,n+\ell)$ (because both are governed by ring structures,  the verification of this
comes down to very few cases, once the parameters are matched). Each point in the enumerative count is made to  give a section of $H^0(\Par_{n,\mathcal{O},S},\ml)$. The transversality in the enumerative count implies that these sections are linearly independent. Together with the numerical equality of the enumerative count and the rank of  $H^0(\Par_{n,\mathcal{O},S},\ml)$, we see that these sections span, and this gives the duality. A particularly simple case of this construction can be seen in the comparison of the structure constants in the representation rings of special linear groups and the intersection theory of Grassmannians \cite{BIMRN} (also see \cite[Section 6.2]{FBull}, and Remark \ref{classicall}).

\subsection{Enumerative computations}\label{concept}
In Section \ref{enume} we have a computation of cycle classes of some natural loci in $\Par_{n,\mathcal{N},S}$.  Consider  the space formed by  the closure of $(\mw,\me,\gamma)$ such that the rank $n$ vector bundle $\mw$ has a subbundle $\mv$ of rank $r$ whose fibers in $\Gr(r,\mw_p)$ lie in given Schubert varieties $\Omega_{J^i}(\me_p)$ with respect to the given flags on $\mw_p$, with $p\in S$. We also assume that the expected dimension of such $\mv$ is $-1$ having fixed $(\mw,\me,\gamma)$. The set of  such $(\mw,\me,\gamma)$ gives  a natural divisor in $\Par_{n,\mathcal{N},S}$ (possibly zero).

In Proposition \ref{enumerative} we compute this divisor class, i.e., the corresponding line bundle $\mathcal{B}(\vec{\lambda},\ell)$. The coefficients that enter $\lambda_i$ and $\ell$ are obtained by weakening the number of conditions by one, so that the expected dimension of such $\mv$ is now zero (and we count over generic $(\mw,\me,\gamma)$). The weakening of conditions can change the degree of the underlying bundle since we allow for subbundles becoming subsheaves. The set of weakenings corresponds to enlargements of one of the Schubert varieties $\Omega_{J^i}(E_{\bull})$  for some $i$ by one dimension. Such enlargements correspond to $\Omega_{I}(E_{\bull})$ with $I$ obtained by replacing some $a\in J^i$ by $a+1$ assuming $a+1\notin J^i$ and $a\neq n$ (if $a=n$ we need a degree shift to handle this). The complexity of this last condition translates to property (P) in relation to the natural divisor on $\Par_{n,\mathcal{O},S}$ associated to a unitary rank $\ell$ rigid local system (as earlier in this section, Lemma \ref{divisorE} and Proposition \ref{propP}).

\section{Quot schemes and some of their basic properties}\label{quot1}
We use notation and definitions  from the introduction.
\begin{defi}Suppose $\mn$ is a line bundle on $\Bbb{P}^1$. Let $\op{Quot}(d,r,\mn,n)\to \op{Bun}_{\mn}(n)$ denote the representable relative quot scheme of quotients, whose fiber over $\mw\in \op{Bun}_{\mn}(n)$ is the quot scheme of flat quotients $\mw\to\mq$ where $\mq$ is a coherent sheaf of rank $n-r$ and degree $d+\deg(\mn)$.
\end{defi}
See \cites{Laumon,CF} for the proof of the following:
\begin{proposition}\label{laumon}
$\Quot(d,r,\mn,n)$ is a smooth and irreducible Artin stack of dimension $r(n-r) +\deg(\mn)r +dn +1-n^2.$
\end{proposition}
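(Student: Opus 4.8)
The plan is to build $\Quot(d,r,\mn,n)$ directly as a relative Quot scheme over $\op{Bun}_{\mn}(n)$ and to compute its dimension and establish smoothness and irreducibility fiberwise, then assemble these properties over the base. First I would recall the classical construction: over a fixed vector bundle $\mw$ of rank $n$ on $\pone$, Grothendieck's Quot scheme $\Quot(\mw; n-r, d+\deg\mn)$ parametrizing coherent quotients $\mw\twoheadrightarrow\mq$ with $\rk\mq = n-r$ and $\deg\mq = d+\deg\mn$ is a projective scheme, and its tangent space at a point $[\mw\twoheadrightarrow\mq]$ with kernel $\mv$ (a rank-$r$ subsheaf of degree $-d$) is $\Hom(\mv,\mq)$, with obstructions lying in $\Ext^1(\mv,\mq)$. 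Since we are on a curve and $\mv$ is a subsheaf of a locally free sheaf, $\mv$ is locally free; $\mq$ need not be, but $\Ext^1(\mv,\mq)=H^1(\pone,\mv^\vee\otimes\mq)$, and I would argue this vanishes — either because a general such quotient has locally free $\mq$ and one checks $H^1$ vanishes generically and then that the Quot scheme is generically smooth of the expected dimension, or by the standard fact (Grothendieck, see \cites{Laumon,CF}) that the relative Quot stack here is smooth because the obstruction space vanishes after passing to the stack of all $\mw$ simultaneously. Concretely, the expected (= actual) dimension of the fiber is
\[
\chi(\mv^\vee\otimes\mq) = \hom(\mv,\mq) = \deg(\mv^\vee\otimes\mq) + \rk(\mv^\vee\otimes\mq)\cdot\chi(\mathcal{O}_{\pone}) = \bigl(r(d+\deg\mn) + (n-r)d\bigr) + r(n-r),
\]
using $\deg(\mv^\vee\otimes\mq) = r\deg\mq - (n-r)\deg\mv = r(d+\deg\mn)+(n-r)d$ and $\chi(\mathcal{O}_{\pone})=1$. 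This simplifies to $dn + r\deg\mn + r(n-r)$.

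Next I would add the dimension of the base: $\op{Bun}_{\mn}(n)$ is a smooth Artin stack of dimension $1-n^2$ (stated in the excerpt). The morphism $\Quot(d,r,\mn,n)\to\op{Bun}_{\mn}(n)$ is representable with fibers of pure dimension $dn+r\deg\mn+r(n-r)$, so $\Quot(d,r,\mn,n)$ is an Artin stack of dimension $(dn+r\deg\mn+r(n-r)) + (1-n^2) = r(n-r)+\deg(\mn)r+dn+1-n^2$, matching the claim. Smoothness of the total stack follows from smoothness of the base together with smoothness of the representable morphism; the latter is exactly the vanishing of the relative obstruction space $\Ext^1_{\pone}(\mv,\mq)$, which I would establish by the deformation-theoretic argument above (the key point being that on $\pone$ one can always realize the needed $H^1$-vanishing, e.g.\ by a dimension count showing the Quot scheme is unobstructed, or by citing the smoothness result in \cites{Laumon,CF}).

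For irreducibility I would use the standard stratification of the Quot scheme by the torsion length of the quotient $\mq$, equivalently by how far the subsheaf $\mv$ is from being a subbundle. The open stratum where $\mq$ is locally free (so $\mv\subseteq\mw$ is a genuine subbundle, i.e.\ a map $\pone\to\Gr$) is irreducible — it fibers over the stack of pairs (rank-$r$ bundle, rank-$(n-r)$ bundle with fixed determinant data) with affine-space-like fibers coming from the extension data, and the Quot scheme of a trivial bundle being irreducible is classical; the boundary strata have strictly smaller dimension and lie in the closure of the open stratum, again a standard fact for Quot schemes on curves. Assembling this over the irreducible base $\op{Bun}_{\mn}(n)$ gives irreducibility of $\Quot(d,r,\mn,n)$. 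I expect the main obstacle to be the clean verification of the obstruction vanishing $\Ext^1(\mv,\mq)=0$ uniformly in families (so that the morphism is smooth, not merely flat with smooth fibers), and relatedly making the closure/irreducibility statement for the torsion strata fully rigorous rather than just generically — both are handled in the cited references \cites{Laumon,CF}, so in the write-up I would give the dimension count in detail and invoke those for the smoothness and irreducibility assertions.
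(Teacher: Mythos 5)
The paper offers no proof of this proposition beyond the citation to \cite{Laumon} and \cite{CF}, and your proposal reconstructs the standard argument those references give, with the correct Euler-characteristic computation $\chi(\mv^\vee\otimes\mq)=dn+r\deg(\mn)+r(n-r)$ and the correct final answer. However, two of your intermediate assertions are false as stated, and the logical order needs to be reversed. First, the fibers of $\Quot(d,r,\mn,n)\to\op{Bun}_{\mn}(n)$ are \emph{not} of pure expected dimension: over $\mw=\mathcal{O}(-N)\oplus\mathcal{O}(N)$ on $\pone$, the rank-one, degree-zero subsheaves form a $\Bbb{P}^N=\Bbb{P}(H^0(\mw))$, of dimension $N$, whereas the expected fiber dimension is $1$. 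For the same reason, your first proposed route to smoothness (generic vanishing of $H^1(\mv^\vee\otimes\mq)$ on the fiber, plus generic smoothness of the fiberwise Quot scheme) cannot work: the fiberwise Quot scheme over an unbalanced $\mw$ genuinely has excess dimension, so no fiberwise argument can yield smoothness of the total stack. You therefore cannot compute $\dim\Quot$ as (fiber dimension) $+$ (base dimension).

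The correct mechanism is the one you mention only in passing: one shows directly that the \emph{absolute} deformation theory of the pair $(\mv\subseteq\mw)$, with $\mw$ allowed to move, is unobstructed. Concretely, applying $\Hom(-,\mw)$ to $0\to\mv\to\mw\to\mq\to 0$ and using $\Ext^2=0$ on a curve shows that $\Ext^1(\mw,\mw)\to\Ext^1(\mv,\mw)\to\Ext^1(\mv,\mq)$ is surjective, which kills the obstruction to deforming the pair; this gives smoothness of the total stack, and only then does deformation invariance of the Euler characteristic of the tangent complex give the dimension $\chi(\mv^\vee\otimes\mq)-\chi(\mw^\vee\otimes\mw)=r(n-r)+\deg(\mn)r+dn+1-n^2$ at \emph{every} point, irrespective of the jumping of actual fiber dimensions. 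Your treatment of irreducibility (open stratum of subbundles, smaller boundary strata lying in its closure) is the standard one and is fine, modulo the same caveat that the stratification must be carried out over the whole stack rather than fiber by fiber. Since both repairs are exactly what \cite{Laumon} and \cite{CF} supply, your appeal to them is appropriate, but the write-up should not present the fiberwise dimension claim as a step of the proof.
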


\begin{defi}\label{mindful}
Let $\vec{J}=(J^1,\dots,J^s)$ be an $s$-tuple of subsets  of $[n]$ each of cardinality $r$.
\begin{enumerate}
\item Let $\Omega=\Omega(d,r,\mn,n,\vec{J})$ be the stack parametrizing
tuples $(\mv,\mw,\me,\gamma)$ where $\mw\to \mw/\mv$ gives a point of $\Quot(d,r,\mn,n)$, so that $\mv$ is a rank $r$ bundle of degree $-d$, and for each $p=p_i\in S$, the map
 $$\mv_p\to \mw_p/E^p_{j^i_k}$$
 has kernel of rank $\geq k$ for $k=1,\dots,r$.
\item Let $\Omega^0=\Omega^0(d,r,\mn,n,\vec{J})$ be the substack of $\Omega$ parametrizing tuples $(\mv,\mw,\me,\gamma)$ with $\mv$ a subbundle of $\mw$, and $\mv_p\in \Omega^0_{J^i}(\me_p)\subseteq \Gr(r,\mw_p)$ for all $p=p_i\in S$.
\end{enumerate}
\end{defi}
\begin{lemma}\label{repres}
The map $\Omega(d,r,\mn,n,\vec{J})\to \Par_{n,\mn,S}$, which sends $(\mv,\mw,\me,\gamma)$ to $(\mw,\me,\gamma)$, is representable and proper.
\end{lemma}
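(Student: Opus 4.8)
The plan is to exhibit $\Omega(d,r,\mn,n,\vec{J})$ as a closed substack of a relative flag/incidence variety over a relative Quot scheme, both of which are already known to be representable and proper (or of finite type) over $\Par_{n,\mn,S}$. First I would recall that there is a natural morphism $\Omega(d,r,\mn,n,\vec{J})\to\Quot(d,r,\mn,n)$, sending $(\mv,\mw,\me,\gamma)$ to the quotient $\mw\to\mw/\mv$ together with the underlying bundle $\mw$; and that $\Quot(d,r,\mn,n)\to\op{Bun}_{\mn}(n)$ is representable and proper by the standard theory of relative Quot schemes (Proposition \ref{laumon} already records smoothness and irreducibility; properness of the Quot morphism over $\op{Bun}_{\mn}(n)$ is classical, see \cites{Laumon,CF}). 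Pulling back the flag data, we have a representable proper morphism $\Quot(d,r,\mn,n)\times_{\op{Bun}_{\mn}(n)}\Par_{n,\mn,S}\to\Par_{n,\mn,S}$ (since $\Par_{n,\mn,S}\to\op{Bun}_{\mn}(n)$ has fiber $\op{Fl}(n)^s$, which is proper, this fiber product is proper over $\Par_{n,\mn,S}$; and base change preserves representability).

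Next I would check that $\Omega(d,r,\mn,n,\vec{J})$ is precisely a closed substack of this fiber product. A point of the fiber product over $(\mw,\me,\gamma)$ consists of a flat quotient $\mw\to\mq$ of rank $n-r$ and degree $d+\deg(\mn)$, whose kernel $\mv$ is then a rank $r$ bundle of degree $-d$ on the rational base curve (automatically locally free since it is a subsheaf of a bundle on a smooth curve and is saturated — or, if not saturated, one argues with the appropriate degree bookkeeping, but the Quot data pins down $\deg\mv=-d$). The extra conditions defining $\Omega$ are, at each $p=p_i\in S$, the rank inequalities $\operatorname{rk}(\ker(\mv_p\to\mw_p/E^p_{j^i_k}))\ge k$ for $k=1,\dots,r$. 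These are degeneracy-locus conditions on the map of vector bundles $\mv|_{p_i}\to (\mw/E^p_{j^i_k})|_{p_i}$ over the fiber product, hence cut out closed substacks (the rank of a morphism of vector bundles drops on closed subsets). Intersecting over all $i$ and all $k$ gives a closed substack, which is exactly $\Omega(d,r,\mn,n,\vec{J})$. A closed immersion is representable and proper, and the composition of representable proper morphisms is representable and proper, so the composite $\Omega(d,r,\mn,n,\vec{J})\hookrightarrow \Quot(d,r,\mn,n)\times_{\op{Bun}_{\mn}(n)}\Par_{n,\mn,S}\to\Par_{n,\mn,S}$ is representable and proper, which is the claim.

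The step I expect to be the main (minor) obstacle is the bookkeeping in identifying $\Omega$ with an honest closed substack rather than merely a locally closed one: one must be careful that the degeneracy conditions are stated in terms of \emph{lower} bounds on kernel ranks (equivalently upper bounds on ranks of the composites $\mv_p\to\mw_p/E^p_{j^i_k}$), which are closed conditions, whereas the open stratum $\Omega^0$ of Definition \ref{mindful} would only be locally closed. Since the lemma as stated concerns $\Omega$ (with closed Schubert-type conditions) this is fine, but I would state explicitly that $\Omega$ carries the induced reduced closed substack structure (or the scheme structure from the degeneracy loci), so that properness is unambiguous. Everything else — representability of relative Quot, properness over $\op{Bun}_{\mn}(n)$, stability of these properties under base change and composition — is standard and can be cited.
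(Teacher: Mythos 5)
Your proposal is correct and follows essentially the same route as the paper: factor through the fiber product $\Quot(d,r,\mn,n)\times_{\op{Bun}_{\mn}(n)}\Par_{n,\mn,S}$, observe that $\Omega$ sits inside it via closed (degeneracy-locus) conditions, and conclude by base change and composition of representable proper morphisms. Your extra care about the closed versus locally closed distinction ($\Omega$ versus $\Omega^0$) is a reasonable elaboration of what the paper leaves implicit.
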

\begin{proof}
Let $\mathcal{Q}'$ be the fiber product of  $\Quot(d,r,\mn,n)$ and  $\Par_{n,\mn,S}$ over $\Bun_{\mathcal{N}}(n)$. The map  $\Omega(d,r,\mn,n,\vec{J})\to \mathcal{Q}'$ is representable and proper since the fibers are closed subschemes of products of flag varieties. Since  $\mathcal{Q}'\to \Par_{n,\mn,S}$ is a base change of $\Quot(d,r,\mn,n)\to \Bun_{\mathcal{N}}(n)$, it is proper and representable. The map $\Omega(d,r,\mn,n,\vec{J})\to \Par_{n,\mn,S}$ is a composite of these two maps and is hence  proper and representable.

\end{proof}

The following is the main result of this section. Statements of this form were first proved in \cite{bertie}:
 \begin{proposition}\label{maya}
  $\Omega=\Omega(d,r,\mn,n,\vec{J})$ is irreducible (possibly singular) of dimension
 \begin{equation}\label{codeyy}
\dim \Quot(d,r,\mn,n) + \dim \Fl(n)^s -\sum_{i=1}^s |\sigma_{J^i}|
\end{equation}
Furthermore, $\Omega^0(d,r,\mn,n,\vec{J})$ is dense in $\Omega$.
 \end{proposition}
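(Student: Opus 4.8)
The plan is to establish irreducibility and the dimension formula simultaneously by exhibiting $\Omega^0$ as a fibration over a known irreducible base, then showing $\Omega^0$ is dense in $\Omega$ (so $\Omega$, being the closure of an irreducible set, is irreducible of the same dimension). First I would analyze $\Omega^0 = \Omega^0(d,r,\mn,n,\vec{J})$ directly. A point of $\Omega^0$ is a subbundle $\mv \subseteq \mw$ of rank $r$, degree $-d$, together with the determinant trivialization $\gamma$ and flags $\me \in \Fl_S(\mw)$ subject to the open Schubert conditions $\mv_p \in \Omega^0_{J^i}(\me_p)$. The key observation is that we can instead fibered this over the moduli of the pair $(\mv, \mq)$ where $\mq = \mw/\mv$: the data of such an $\mw$ with its trivialized determinant is governed by the extension group $\Ext^1(\mq,\mv)$ together with the compatible trivialization, exactly as in $\ParL$ (Definition \ref{parul}), and this is an irreducible stack (being a "degree shift" of $\Par_{r,\mathcal{O},S}\times\Par_{n-r,\mathcal{O},S}$, cf. Proposition \ref{easily}). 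Over a fixed extension $\mw$, the remaining data is the choice of flag $E^p_\bull$ at each $p\in S$ lying in the appropriate position relative to the fixed subspace $\mv_p \subseteq \mw_p$; the set of complete flags $E_\bull$ on $\mw_p$ with $\mv_p\in\Omega^0_{J^i}(E_\bull)$ is a smooth irreducible variety (a tower of affine-space bundles, or equivalently a single $\GL$-orbit analysis on $\Fl(n)$ relative to a fixed $r$-plane), of codimension exactly $|\sigma_{J^i}|$ in $\Fl(n)$. Chaining these fibrations — irreducible base, irreducible fibers, all of the expected dimensions — gives that $\Omega^0$ is irreducible of dimension equal to (dimension of the $\mv$-$\mq$-extension stack) $+ \dim\Fl(n)^s - \sum_i |\sigma_{J^i}|$, and one checks by a direct count (using $\dim\Ext^1(\mq,\mv)$ and $\dim\Quot(d,r,\mn,n)$ from Proposition \ref{laumon}) that this agrees with \eqref{codeyy}.

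Next I would prove that $\Omega^0$ is dense in $\Omega$, which is the crux and the main obstacle. The issue is that $\Omega$ parametrizes $(\mv,\mw,\me,\gamma)$ where $\mv$ is only a coherent subsheaf (the quotient $\mw/\mv$ may have torsion), and the flag conditions are the weaker closed conditions "$\mv_p\to\mw_p/E^p_{j^i_k}$ has kernel of rank $\geq k$". One must show every such configuration is a limit of honest subbundles with open Schubert positions. The standard approach (the "statements of this form were first proved in \cite{bertie}" remark signals this) is: given a point of $\Omega$, first deform $\mv$ inside $\mw$ to elementary-modify away its torsion-in-quotient, i.e., replace $\mv$ by a nearby subsheaf that is saturated (a subbundle) of possibly smaller degree, and show such a deformation can be done staying inside $\Omega$ while landing on a family whose generic member is a subbundle; then, with $\mv$ a subbundle, deform the flags $\me$ so that the Schubert incidences become the generic (open) ones. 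Concretely I would use Lemma \ref{repres}: $\Omega\to\Par_{n,\mn,S}$ is proper and representable, so $\Omega$ is covered by its fibers over $\Par_{n,\mn,S}$, which are closed subschemes of $\prod_p \Fl(n)$ cut out by the degeneracy conditions on a fixed $\mw$; the point is that the relevant Quot scheme of subsheaves of a fixed general $\mw$ with the incidence conditions has its honest-subbundle, generic-position locus dense — a Bertini/generic-smoothness argument over the irreducible base $\Quot(d,r,\mn,n)$, combined with the known irreducibility of Quot schemes on $\Bbb{P}^1$ and a dimension count showing the "bad" loci (torsion in the quotient, or excess Schubert incidence) are of strictly smaller dimension.

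More precisely, I would run the density argument as a dimension estimate: stratify $\Omega$ by the length $t\geq 0$ of the torsion of $\mw/\mv$ and by which Schubert cells the fibers $\mv_p$ land in. On each stratum with $t>0$ or with a smaller (deeper) Schubert cell, bound the dimension by the dimension of the corresponding smaller Quot scheme plus the (smaller-codimension, hence this needs care) flag conditions, and check that it is strictly less than \eqref{codeyy}; the saturation map $\mv\rightsquigarrow\mv^{sat}$ decreases degree by $t$, changing the relevant Quot dimension by $-tn$, and the extra freedom gained (choice of the length-$t$ torsion subsheaf of $\mw/\mv^{sat}$, a Quot scheme of points) contributes only $\leq t(n-r) + t$ or so — net strictly negative — and similarly a deeper Schubert incidence cuts the flag locus by a strictly larger codimension. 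This forces all lower strata to have dimension $<\dim\Omega^0$, so $\Omega^0$ is dense and $\Omega = \overline{\Omega^0}$ is irreducible of dimension \eqref{codeyy}. The hard part will be making the torsion-stratum dimension bound fully rigorous — in particular handling the interaction between the degree drop under saturation and the Schubert conditions at points $p\in S$ where the torsion is supported (these are exactly the "enumerative complexities" and the "level of line bundles" subtleties the introduction flags), but the inductive/stratification structure reduces it to a finite bookkeeping of dimension counts against the formula in Proposition \ref{laumon}.
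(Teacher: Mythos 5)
Your overall strategy matches the paper's: show $\Omega^0$ is an irreducible fibration of dimension \eqref{codeyy} (the paper fibers it directly over an open subset of $\Quot(d,r,\mn,n)$ with flag-variety fibers, which is equivalent to your extension-space packaging), then stratify $\Omega-\Omega^0$ by torsion length and Schubert degeneration and bound the strata dimensions. The case analysis you sketch is exactly what the paper carries out in its Sections on subbundle failure away from $S$, at points of $S$, and Schubert strengthening.

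There is, however, one genuine gap in your density step. From ``every stratum of $\Omega-\Omega^0$ has dimension strictly less than \eqref{codeyy}'' you conclude that $\Omega^0$ is dense in $\Omega$; this inference is false in general, since $\Omega$ could a priori have a low-dimensional irreducible component entirely contained in $\Omega-\Omega^0$ (picture a surface with an isolated extra point: the complement of a dense open subset of the surface is small, yet that open set is not dense in the union). What rescues the argument, and what the paper states explicitly as its first step, is a \emph{lower} bound: $\Omega$ is cut out inside the (irreducible, of dimension $\dim\Quot(d,r,\mn,n)+\dim\Fl(n)^s$) fiber product of $\Quot(d,r,\mn,n)$ with the flag data by rank conditions that are locally $\sum_i|\sigma_{J^i}|$ equations, so \emph{every} irreducible component of $\Omega$ has dimension at least \eqref{codeyy}. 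Combined with your upper bounds on the strata of $\Omega-\Omega^0$, no component can avoid $\Omega^0$, and irreducibility plus density follow. You should add this determinantal lower bound; without it the conclusion does not follow from your dimension counts. A secondary caution: your heuristic torsion bookkeeping ($-tn$ versus $+t(n-r)+t$, net $t(1-r)$) is not strictly negative when $r=1$; the correct count, as in the paper, pairs the saturation $(\widetilde{\mv},Q)$ with the kernel data and gives $-(n+\epsilon-r)\epsilon+1\leq -1$ per support point, using $n>r$.
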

\subsection{Proof of Proposition \ref{maya}}
 By definition $\Omega$ is a subset of  tuples $(\mv,\mw,\me,\gamma)$ cut out by certain rank conditions on the maps $\mv_p\to \mw_p$ for $p=p_i\in S$. These rank conditions are given locally by $ |\sigma_{J^i}|$ equations for each $i$ (see the universal local case in the proof of \cite[Theorem 14.3]{FInt}), and hence the dimension of each irreducible component of $\Omega$ is at least as much as \eqref{codeyy}.

 It is also clear that $\Omega^0(d,r,\mn,n,\vec{J})$ is a smooth fiber bundle over an open subset of $\Quot(d,r,\mn,n)$ with fibres of dimension  $\dim \Fl(n)^s -\sum_{i=1}^s |\sigma_{J^i}|$ (Any fiber can be built by first considering induced flags on the  fibers of the subbundle and then extending these flags to the fibers of the universal bundle). Hence $\Omega^0$ has exactly the dimension \eqref{codeyy}.

 To conclude the proof we need to show that any irreducible component of $\Omega-\Omega^0$ has dimension less than \eqref{codeyy}.  We begin with  special cases in Sections \ref{bruck01}, \ref{bruck02} and \ref{bruck03}. The general case, which may be considered a linear superposition of these special cases,  is then proved in Section \ref{bruck04}. We also pay attention to  the divisorial components of  $\Omega-\Omega^0$ since they will play a special role in some computations in Section \ref{compact}.
 See \cite[Section 12.1]{BGM} for a similar computation.

\subsubsection{Subbundle property failing at points not in $S$}\label{bruck01}
Consider first the following simple case: Let $q\not\in S$ and $\Omega_q(\epsilon)\subset \Omega$ the substack formed by
$(\mv,\mw,\me,\gamma)$ such that the following conditions are satisfied:
\begin{enumerate}
\item[(a)] $\mv\subseteq \mw$ is a subbundle on $\Bbb{P}^1-\{q\}$.
\item[(b)]   $\mv_p\in \Omega^0_{J^i}(\me_p)\subseteq \Gr(r,\mw_p)$ for all $p=p_i\in S$.
\item[(c)] The kernel of $\mv_q\to\mw_p$ has  dimension $\epsilon$.
\end{enumerate}
Let $\Quot_{\epsilon}(q)\subseteq  \Quot(d,r,\mn,n) $ be the substack formed by $\mv\subseteq \mw$ such that conditions (a) and (c) hold. Then the dimension of  $\Omega_q(\epsilon)$ is $\leq$ the quantity \eqref{codeyy} plus
\begin{equation}\label{simplecase}
\dim \Quot_{\epsilon}(q)-\dim  \Quot(d,r,\mn,n)
\end{equation}

Consider a point $(\mv,\mw)$ in  $\Quot_{\epsilon}(q)$, and let $K$ be the kernel in (c). Let $\widetilde{\mv}$ be the subsheaf of $\mw$ which coincides with $\mv$ outside of $q$ and whose sections in a neighborhood of $q$ are meromorphic sections $s$ of $\mv$ such that $ts$ is a holomorphic section of $\mv$ with fiber at $q$ in $K$. Here $t$ is a uniformizing parameter at $q$. $\mv\subset\widetilde{\mv}\subseteq\mw$ and the image  $Q$ of $\mv_q\to \widetilde{\mv}_q$ has dimension $r-\epsilon$. The degree of $\widetilde{\mv}$ is $-d'$ where $d'=d-\epsilon$. The pairing $(\mv,\mw,K)$ to $(\widetilde{\mv},\mw,Q)$ is a bijection between triples with $K$ a subspace of the kernel of $\mv_q\to \mw_q$, and triples $(\widetilde{\mv},\mw,Q)$ with $Q$ a subspace of $\widetilde{\mv}_q$. The quantity
\eqref{simplecase} is therefore
$$\dim \Gr(r-\epsilon,r)+ (d'-d)n=\epsilon (r-n-\epsilon) =-(n+\epsilon-r)\epsilon\leq -2.$$
The point $q$ moves over a curve of dimension one. Therefore the following is a definite possibility:
\begin{itemize}
\item On a codimension one substack of $\Omega$, the quotient $\mw/\mv$ fails to be locally free. The only case where  this could happen is
$r=n-1$ and $\epsilon=1$.
\end{itemize}
\subsubsection{Subbundle property fails at a point of $S$}\label{bruck02}
Consider first the following simple case: Let $p=p_1\in S$ and $\Omega_{p,L}(\delta)\subset \Omega$ the substack formed by
$(\mv,\mw,\me,\gamma)$ such that the following conditions hold:
\begin{enumerate}
\item[(a)] $\mv\subseteq \mw$ is a subbundle on $\Bbb{P}^1-\{p\}$.
\item[(b)]   $\mv_{p'}\in \Omega^0_{J^i}(\me_{p'})\subseteq \Gr(r,\mw_{p'})$ for all $p'\neq p\in S$.
\item[(c)] $\mv_q\to\mw_q$ has a kernel of dimension $\delta$.
\item[(d)] The image $Q$ of $\mv_p\to\mw_p$ is an $r-\delta$ dimensional subspace of $\mw_p$ which lies in the Schubert cell  corresponding to $L=\{l_1,\dots,l_{r-\delta}\}$.
\end{enumerate}

Computing as before, the dimension of  $\Omega_{p,L}(\delta)$ is $\leq$ the quantity \eqref{codeyy} plus
\begin{equation}\label{simplecase1}
\dim \Gr(r-\delta,r)+ (d'-d)n + |\sigma_L|-|\sigma_{J^1}|
\end{equation}
We also use $l_a\leq j^1_{a+\delta}$ for $a\leq r-\delta$ (this follows from the rank conditions).  This gives  $|\sigma_L|-|\sigma_{J^1}|\leq -\sum_{a=1}^{\delta} (n-r+a-j^1_a)$. Therefore the quantity \eqref{simplecase1} is $\leq$
$\sum_{a=1}^{\delta} (a-j^1_a-\delta)$. This number is $\leq -2$ except for when $\delta=1$ and $j^1_1=1$ when it is equal to $-1$.

\subsubsection{Schubert condition strengthens at a point of $S$}\label{bruck03}
We need to now consider the cases in which $\mv$ is a subbundle of $\mw$, but the Schubert condition at $p=p_1$ strengthens to a Schubert cell parametrized by a subset $J$ of $[n]$, $|J|=r$. Dimension counts still apply, and we see that the codimension drops by $1$ whenever $J$ is of the form $J=(I^1-\{a\})\cup \{a-1\}$ where $a>1$, $a\in I^1$, $a-1\not\in I^1$.
\subsubsection{Conclusion of the proof of Proposition \ref{maya}}\label{bruck04}
It remains to  show that any irreducible component  of $\Omega-\Omega^0$ has dimension less than the quantity \eqref{codeyy}.  Consider the generic point $(\mv,\mw,\me,\gamma)$ of an irreducible component $Y$.
\begin{enumerate}
\item Let $Q=\{q_1,\dots,q_m\}$ be the set of points (possibly empty) not in $S$ such that for $q\in Q$, $\mv_q\to \mw_q$ has kernel of rank $\epsilon(q)>0$.
\item Let $\delta: S\to \Bbb{Z}_{\geq 0}$ be such that the maps $\mv_p\to\mw_p$ have kernels $K(p)$ of rank $\delta(p)$, for $p\in S$. Assume further that the images $Q(p)$ of $\mv_p\to\mw_p$ are  $r-\delta(p)$ dimensional subspaces of $\mw_q$  in the Schubert cells corresponding to $L^p=\{l^p_1,\dots,l^p_{r-\delta(p)}\}$.
\end{enumerate}
Computing as before, the dimension of  $Y$ is $\leq$ the quantity \eqref{codeyy} plus
$$\sum_{q\in Q} \bigl(1-(n+\epsilon(q)-r)\epsilon(q)\bigr) + \sum_{p\in S} \sum_{a=1}^{\delta(p)} (a-j^p_a-\delta(p)).$$
Each of the $q$ contributions is $\leq -1$ by the computation in Section \ref{bruck01}. If $\delta(p)>0$, then the $p$ term is $\leq -1$ by the computation in Section \ref{bruck02}. Note that if there no $q$ terms and $\delta(p)=0$ for all $p$, then since $Y\cap\Omega^0=\emptyset$, we need $L^p\neq J^p$ for some $p$, and this makes the dimension of $Y$ strictly less than  the quantity \eqref{codeyy} as in Section \ref{bruck03}. This corresponds to one of the inequalities $l^p_a\leq j^p_{a+\delta(p)}$ in Section \ref{bruck02} becoming strict.

Therefore the dimension of $Y$ is strictly less than the quantity \eqref{codeyy}, and this completes the proof of Proposition \ref{maya}.
\subsection{Cycle classes}
Propositions \ref{laumon}, \ref{maya} and Lemma \ref{repres} give the following
\begin{corollary}
The representable morphism $\Omega(d,r,\mn,n,\vec{J})\to \Par_{n,\mn,S}$ has relative dimension
$\sum_{i=1}^s |\sigma_{J^i}| -(dn +\deg(\mn)r +r(n-r)).$
\end{corollary}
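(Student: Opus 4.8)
The plan is to assemble the corollary directly from the three cited results, being careful about the sign convention for ``relative dimension.'' The morphism in question is $\pi\colon\Omega(d,r,\mn,n,\vec J)\to\Par_{n,\mn,S}$, which is representable and proper by Lemma~\ref{repres}, and the source $\Omega=\Omega(d,r,\mn,n,\vec J)$ is irreducible of known dimension by Proposition~\ref{maya}, while the target $\Par_{n,\mn,S}$ is irreducible (indeed smooth) of known dimension. For a representable (hence finite-type, with one-dimensional-fibre-jump controlled) morphism between irreducible stacks, the relative dimension is by definition $\dim\Omega-\dim\Par_{n,\mn,S}$; so the proof is purely a matter of subtracting the two dimension formulas and simplifying.

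Concretely, I would first record $\dim\Omega$ from Proposition~\ref{maya}:
\begin{equation*}
\dim\Omega=\dim\Quot(d,r,\mn,n)+\dim\Fl(n)^s-\sum_{i=1}^s|\sigma_{J^i}|,
\end{equation*}
and then substitute the value $\dim\Quot(d,r,\mn,n)=r(n-r)+\deg(\mn)r+dn+1-n^2$ from Proposition~\ref{laumon}, giving
\begin{equation*}
\dim\Omega=r(n-r)+\deg(\mn)r+dn+1-n^2+\dim\Fl(n)^s-\sum_{i=1}^s|\sigma_{J^i}|.
\end{equation*}
Next I would record $\dim\Par_{n,\mn,S}=\dim\Fl(n)^s+1-n^2$ (this is part of the definition of $\Par_{n,\mn,S}$ recalled in the introduction). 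Subtracting, the terms $\dim\Fl(n)^s$, $1$, and $-n^2$ cancel, leaving
\begin{equation*}
\dim\Omega-\dim\Par_{n,\mn,S}=r(n-r)+\deg(\mn)r+dn-\sum_{i=1}^s|\sigma_{J^i}|,
\end{equation*}
which is exactly the claimed relative dimension $\sum_{i=1}^s|\sigma_{J^i}|-(dn+\deg(\mn)r+r(n-r))$ with the opposite sign. The one genuine point to verify is the sign convention: in this paper ``relative dimension'' of a representable proper morphism $\pi$ with generic fibre dimension $f$ is being taken to be $\dim(\text{target})-\dim(\text{source})=-f$ (equivalently, the relative dimension as used in intersection-theoretic pushforward of cycles $\pi_*$, where an effective cycle of codimension $c$ upstairs pushes to a cycle of codimension $c+(\text{relative dim})$ downstairs). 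Indeed $\Omega^0\subseteq\Omega$ is dense and dominates an open substack of $\Quot$, hence maps to $\Par_{n,\mn,S}$ with generic fibres of dimension $\dim\Fl(n)^s-\sum|\sigma_{J^i}|-\dim\Quot+\dim\Par$; one checks this equals $\sum|\sigma_{J^i}|-(dn+\deg(\mn)r+r(n-r))$ up to sign, consistently with the subtraction above, so no subtlety beyond bookkeeping remains.

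The main (and only mild) obstacle is therefore not mathematical content but pinning down precisely which sign convention for ``relative dimension'' the authors intend, so that the statement reads correctly; once that is fixed the corollary is an immediate arithmetic consequence of Propositions~\ref{laumon} and~\ref{maya} together with the dimension formula for $\Par_{n,\mn,S}$, with properness and representability supplied by Lemma~\ref{repres} to guarantee the relative dimension is well defined (constant) over the irreducible target.
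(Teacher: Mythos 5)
Your proposal is correct and is essentially the paper's argument: the corollary is stated with no proof beyond citing Propositions \ref{laumon} and \ref{maya} and Lemma \ref{repres}, and the intended content is exactly your subtraction $\dim\Par_{n,\mn,S}-\dim\Omega=\sum_{i=1}^s|\sigma_{J^i}|-(dn+\deg(\mn)r+r(n-r))$. Your reading of the sign convention is also the right one — here ``relative dimension'' means the codimension of the pushforward cycle, i.e.\ $\dim(\text{target})-\dim(\text{source})$, as confirmed by the sentence preceding Definition \ref{cyclist} and by the codimension assigned to $C(d,r,\mn,n,\vec{J})$ there.
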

For proper morphisms of irreducible schemes $Y\to X$ of relative dimension $d$, one can form the push forward of the fundamental class $[Y]$  of $Y$, giving  a codimension $d$ cycle on $X$ \cite[Section 1.4]{FInt}. The push forward operation commutes with flat base changes by \cite[Proposition 1.7]{FInt}, and is therefore applicable in the setting of representable morphisms of irreducible Artin stacks. Applying it to the morphism $\Omega(d,r,\mn,n,\vec{J})\to \Par_{n,\mn,S}$, we obtain a cycle class on $\Par_{n,\mn,S}$:
\begin{defi}\label{cyclist}
The pushforward of the cycle $\Omega(d,r,\mn,n,\vec{J})$ to $\Par_{n,\mn,S}$ gives an effective cycle $C(d,r,\mn,n,\vec{J})$ (possibly zero) of codimension
 $\sum_{i=1}^s |\sigma_{J^i}| -(dn +\deg(\mn)r +r(n-r)).$
 \end{defi}
\section{Shift Operations}\label{shifty}
Let $p=p_i\in S$. We define a shift operation $\operatorname{Sh}_p: \Par_{n,\mn,S}\to \Par_{n,\mn(p),S}$,  as follows:
$$\operatorname{Sh}_p(\mw,\me,\gamma)=(\mw',\me',\gamma')$$
where
\begin{enumerate}
\item $\mw'$ coincides with $\mw$ away from $p$, and is defined in a neighbourhood of $p$ as follows: Let $t$ be a uniformizing parameter at $p$, $U$ an open subset containing $p$, and assume $t$ does not vanish on $U-\{p\}$. Sections $s$ of $\mw'$ on $U$ are meromorphic sections of $\mw$ of $U$, regular on $U-\{p\}$ such that $ts$ is a regular section of $\mw$
whose fiber at $p$ is in $E^p_1$. It is easy to see that $\mw'$ is independent of the choice of the uniformizing parameter $t$ at $p$. Clearly $\mw\subseteq\mw'$.
\item
The natural map $\det(\mw)\to \det(\mw')$ has a zero of order 1 at $p$, and hence $\det(\mw')$ is isomorphic to $\mn(p)$. Let $\gamma'$ denote this isomorphism.
\item Since $\mw$ and $\mw'$ are isomorphic outside of $p$, we can take $E'^{q}_{\bull}=E^q_{\bull}$ for $q\in S-\{p\}$. To complete the definition of $\me'$ we need to define $E'^p_{\bull}$.  Now $\mw_p\to \mw'_p$ has kernel $E^p_1$, so we define $E'^p_a$ to be the image of $E^p_{a+1}$ for $a<n$ and equal to  $\mw'_p$ for $a=n$.
\end{enumerate}

The exact sequence
$$0\to \mw \to \mw'\to \mw'_p/E'^p_{n-1}\to 0$$
allows us to define the inverse $\operatorname{ISh}_p$ to $\operatorname{Sh}_p$.
\subsection{The effect of the shift operation on line bundles}
Writing
$\operatorname{Sh}_p(\mw,\me,\gamma)=(\mw',\me',\gamma')$
we have an exact sequence
\begin{equation}\label{roast1}
0\to \mw'\to \mw(p)\leto{t} E^p_n/E^p_1\to 0
\end{equation}
which we make independent of the choice of $t$ by twisting by the canonical bundle:
\begin{equation}\label{roast}
0\to \mw'\to \mw(p)\to(E^p_n/E^p_1)\tensor K_p^*\to 0
\end{equation}
\begin{lemma}\label{phase}
$E^p_a/E^p_{a-1}=E'^p_{a-1}/E'^p_{a-2}$ if $2\leq a\leq n$, and $E^p_1/E^p_0=(E'^p_n/E'^p_{n-1})\tensor K_p$.
\end{lemma}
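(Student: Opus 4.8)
\textbf{Proof plan for Lemma \ref{phase}.}
The plan is to unwind the definition of the shift operation $\operatorname{Sh}_p$ and track what it does to the flag $E^p_\bull$ fiberwise at $p$. Recall from the construction that $\mw\subseteq\mw'$, that the natural map $\mw_p\to\mw'_p$ has kernel $E^p_1$, and that $E'^p_a$ is \emph{defined} to be the image of $E^p_{a+1}$ under this map for $a<n$ (and $E'^p_n=\mw'_p$). So the first step is to compute the subquotients $E'^p_{a}/E'^p_{a-1}$ directly from this definition. For $2\le a\le n-1$, the map $\mw_p\to\mw'_p$ sends $E^p_{a+1}$ onto $E'^p_a$ and $E^p_a$ onto $E'^p_{a-1}$ with kernel $E^p_1\subseteq E^p_a$; since $E^p_1\subseteq E^p_{a-1}$ as well, we get $E'^p_{a}/E'^p_{a-1}\cong E^p_{a+1}/E^p_a$ (no twist). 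Shifting the index $a\mapsto a-1$ gives the stated identity $E^p_a/E^p_{a-1}=E'^p_{a-1}/E'^p_{a-2}$ for $3\le a\le n$; the boundary case $a=2$ (i.e. $E'^p_1/E'^p_0=E'^p_1$, which should equal $E^p_2/E^p_1$) needs to be checked separately but follows the same way since $E'^p_1$ is the image of $E^p_2$ and $E^p_1$ is exactly the kernel. This disposes of the first assertion.

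For the second assertion, the one nontrivial identification, the key is the exact sequence \eqref{roast}
$$0\to \mw'\to \mw(p)\to(E^p_n/E^p_1)\tensor K_p^*\to 0,$$
or more simply its fiberwise content at $p$ together with the corresponding sequence for the inverse shift. The idea is that $\mw'_p/E'^p_{n-1}$ is the cokernel of $\mw_p\hookrightarrow\mw'_p$, and by the defining short exact sequence $0\to\mw\to\mw'\to\mw'_p/E'^p_{n-1}\to 0$ used to construct $\operatorname{ISh}_p$, this cokernel is a skyscraper at $p$. I would compute it by the following local model: choose a local trivialization of $\mw$ near $p$ adapted to the flag $E^p_\bull$, so that $\mw'$ is generated over $\mw$ by $t^{-1}e_1$ where $e_1$ spans $E^p_1$. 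Then $\mw'_p$ has basis $\{t^{-1}e_1, e_2,\dots,e_n\}$ (reduced mod $t$), the image of $\mw_p$ is spanned by $\{e_2,\dots,e_n\}$ (since $e_1=t\cdot(t^{-1}e_1)$ vanishes in the fiber of $\mw'$), and the quotient $\mw'_p/E'^p_{n-1}$ is spanned by the class of $t^{-1}e_1$. Comparing, $\mw'_p/E'^p_{n-1}\cong (E^p_1)\otimes (\mathfrak{m}_p/\mathfrak{m}_p^2)^{\vee}$, i.e. $E^p_1\otimes K_p^{*}$ after identifying the cotangent line $\mathfrak{m}_p/\mathfrak{m}_p^2$ with the fiber $(K_p)_p$ — hence $E^p_1/E^p_0 = E^p_1 \cong (E'^p_n/E'^p_{n-1})\tensor K_p$, as claimed.

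The main obstacle I anticipate is purely bookkeeping: making sure the twist by $K_p$ lands on the correct side and with the correct sign (i.e. $K_p$ versus $K_p^*$), since the shift raises the determinant degree by one and the one unit of twist has to appear in exactly one subquotient. The cleanest way to pin this down rigorously, rather than via the local coordinate computation above, is to take determinants: $\det\mw'\cong(\det\mw)(p)$ canonically, while on the other hand $\det\mw'_p=\bigotimes_a E'^p_a/E'^p_{a-1}$ and $\det\mw_p=\bigotimes_a E^p_a/E^p_{a-1}$; feeding in the $n-1$ untwisted identifications from the first part forces the remaining factor to absorb exactly the discrepancy between $\det\mw'$ and $\det\mw$ at $p$, which is $\mathcal{O}(p)|_p \cong K_p^{*}$ (the fiber of $\mathcal{O}(p)$ at $p$ being canonically the anticanonical line). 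This determinant argument is coordinate-free and makes the placement and direction of the twist unambiguous, so I would present the local-coordinate computation for intuition and the determinant computation for rigor.
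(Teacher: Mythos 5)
Your proof is correct, and since the paper states Lemma \ref{phase} without proof, your argument — unwinding the definition of $E'^p_\bull$ as images under $\mw_p\to\mw'_p$ with kernel $E^p_1$ for the first assertion, and identifying $\operatorname{coker}(\mw_p\to\mw'_p)$ with $E^p_1\otimes K_p^*$ (pinned down either by the local model or by comparing $\det\mw'\cong(\det\mw)(p)$ fiberwise) for the second — is exactly the intended verification. Both your placement of the twist and its direction check out against the sequence \eqref{roast}.
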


\begin{proposition}\label{easily}
Let $p=p_i\in S$.
\begin{enumerate}
\item[(a)] $\operatorname{Sh}_p^*(\mathcal{B}(\vec{\lambda},\ell))= \mathcal{B}(\vec{\mu},\ell)$
where $\mu^j=\lambda^j$ for $j\neq i$ and $\mu^i_a= \lambda^i_{a-1}$ if $2\leq a\leq n$, and $\mu^i_1=\lambda^i_{n}+\ell$.
\item[(b)] $\operatorname{ISh}_p^*(\mathcal{B}(\vec{\lambda},\ell))= \mathcal{B}(\vec{\nu},\ell)$
where $\nu^j=\lambda^j$ for $j\neq i$ and $\nu^i_a= \lambda^i_{a+1}$ if $1\leq a< n$, and $\nu^i_{n}=\lambda^i_{1}-\ell$.

\item[(c)]The morphism $\operatorname{Sh}_p: \Par_{n,\mn,S}\to \Par_{n,\mn(p),S}$ induces bijections $\Pic^+(\Par_{n,\mn(p),S})\leto{\sim}\Pic^+(\Par_{n,\mn,S})$ and
$\Pic^+_{\Bbb{Q}}(\Par_{n,\mn(p),S})\leto{\sim}\Pic^+_{\Bbb{Q}}(\Par_{n,\mn,S})$.
\item[(d)] $\ml\in  \Par_{n,\mn(p),S}$ is an F-line bundle if and only if $\operatorname{Sh}_p^*\ml$ is an F-line bundle on  $\Par_{n,\mn,S}$.
\end{enumerate}
\end{proposition}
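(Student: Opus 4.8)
The plan is to compute everything at the level of fibers of line bundles over a point $(\mw,\me,\gamma)$, using the explicit combinatorial relationship between the flags $\me$ and $\me'$ recorded in Lemma \ref{phase}, and then to bootstrap the elementary statements (a), (b) into the structural statements (c), (d).

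\textbf{Parts (a) and (b).} First I would recall that $\mathcal{B}(\vec{\lambda},\ell) = \mathcal{D}^{\ell}\otimes\bigotimes_{j}\ml_{\lambda^j,p_j}$, so $\op{Sh}_p^*$ acts separately on the determinant-of-cohomology factor $\mathcal{D}$ and on the flag line bundles $\ml_{\lambda^j,p_j}$. For $j\neq i$ the flags are unchanged under $\op{Sh}_p$ (step (3) of the definition of $\op{Sh}_p$ says $E'^q_{\bull}=E^q_{\bull}$ for $q\neq p$), so $\op{Sh}_p^*\ml_{\lambda^j,p_j}=\ml_{\lambda^j,p_j}$ as a line bundle on $\Par_{n,\mn,S}$, whence $\mu^j=\lambda^j$ for $j\neq i$. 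The key point is at $p=p_i$: using Lemma \ref{phase}, the subquotient $E^p_a/E^p_{a-1}$ of the original flag equals the subquotient $E'^p_{a-1}/E'^p_{a-2}$ of the shifted flag for $2\le a\le n$, while $E^p_1/E^p_0 = (E'^p_n/E'^p_{n-1})\otimes K_p$. Substituting these into the fiber formula $\ml_{\lambda}(\mw_p,E^p_{\bull})=\bigotimes_{a=1}^n((E^p_a/E^p_{a-1})^{\lambda_a})^*$ and reindexing, one reads off that pulling back $\ml_{\lambda^i}$ along $\op{Sh}_p$ produces the weight with $\mu^i_a=\lambda^i_{a-1}$ for $2\le a\le n$ and a contribution of $\lambda^i_n$ attached to the slot $E'^p_n/E'^p_{n-1}$; the twist by $K_p$ and the interaction with how $\op{Sh}_p^*\mathcal{D}$ differs from $\mathcal{D}$ (computed from the exact sequence \eqref{roast}, which relates $\mw'$, $\mw(p)$ and a skyscraper supported at $p$) together account for the extra $+\ell$, giving $\mu^i_1=\lambda^i_n+\ell$. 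Here one uses that $\deg$-shift sequences change $\det H^0$ and $\det H^1$ in a controlled way, and that the normalization $\lambda_n=0$ convention plus the freedom to add $(1,\dots,1)$ to any $\lambda^i$ (noted after Definition \ref{levell}) makes the bookkeeping consistent. Part (b) is then formal: $\op{ISh}_p$ is the inverse of $\op{Sh}_p$, so $\op{ISh}_p^*$ is the inverse substitution, which is exactly $\nu^i_a=\lambda^i_{a+1}$ for $1\le a<n$ and $\nu^i_n=\lambda^i_1-\ell$.

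\textbf{Parts (c) and (d).} Since $\op{Sh}_p:\Par_{n,\mn,S}\to\Par_{n,\mn(p),S}$ is an isomorphism of stacks (its inverse $\op{ISh}_p$ is constructed explicitly right after the definition), pullback $\op{Sh}_p^*$ is an isomorphism $\Pic(\Par_{n,\mn(p),S})\to\Pic(\Par_{n,\mn,S})$, and it carries global sections to global sections isomorphically; hence it restricts to bijections on $\Pic^+$ and, tensoring with $\Bbb{Q}$, on $\Pic^+_{\Bbb{Q}}$. This is (c). For (d), being an F-line bundle is the conjunction of two conditions: $h^0(\mathcal{L}^m)=1$ for all $m\ge 0$, and the zero divisor $E$ of the unique section being non-empty, reduced, and irreducible. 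An isomorphism of stocks preserves all of these: it preserves $h^0$ of every tensor power (by (c) applied to $\mathcal{L}^m$, or directly since $\op{Sh}_p^*$ is exact and an equivalence), and it carries the zero scheme of a section to the zero scheme of the pulled-back section, preserving non-emptiness, reducedness and irreducibility. Thus $\ml$ is an F-line bundle on $\Par_{n,\mn(p),S}$ iff $\op{Sh}_p^*\ml$ is one on $\Par_{n,\mn,S}$.

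\textbf{Main obstacle.} The only genuinely delicate point is the precise computation in (a) of how $\op{Sh}_p^*$ affects the determinant-of-cohomology bundle $\mathcal{D}$ and how that combines with the $K_p$-twist in Lemma \ref{phase} to produce exactly the shift $\lambda^i_n\mapsto\lambda^i_n+\ell$ in the first coordinate. I would handle this by pulling back the defining exact sequence \eqref{roast} for the universal bundle over $\Par_{n,\mn,S}$, taking determinants of cohomology (which are multiplicative in short exact sequences), and identifying the resulting correction line bundle with a power of the flag subquotient $\ml_{\lambda,p}$-factors; the exponent $\ell$ appears because $\mathcal{D}$ enters $\mathcal{B}(\vec\lambda,\ell)$ with multiplicity $\ell$. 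Everything else is either a direct consequence of $\op{Sh}_p$ being an isomorphism of stacks or a routine reindexing.
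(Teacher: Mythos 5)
Your proposal is correct and follows essentially the same route as the paper: the paper's own (very terse) proof likewise deduces (a) and (b) from Lemma \ref{phase} together with the exact sequence \eqref{roast1} and Definition \ref{levell}, and deduces (c) (and implicitly (d)) from the fact that $\operatorname{Sh}_p$ and $\operatorname{ISh}_p$ are mutually inverse isomorphisms of stacks. Your more detailed accounting of where the $+\ell$ comes from (the change in $\mathcal{D}$ across the shift, raised to the $\ell$-th power) is exactly the intended bookkeeping.
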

\begin{proof}
$\operatorname{Sh}_p$ and $\operatorname{ISh}_p$ are isomorphisms (they are inverses of each other). This proves (c). Lemma \ref{phase}, and \eqref{roast1}  (and Definition  \ref{levell}) imply parts (a) and (b). \end{proof}

\begin{defi}\label{grade} Define maps: $\operatorname{Gr}:\Pic (\Par_{n,\mn,S})\to \Bbb{Z}/n\Bbb{Z}$ by
$(\vec{\lambda},\ell)\mapsto \sum_{i=1}^s|\lambda_i|+ \ell \deg\mn \in\Bbb{Z}/n\Bbb{Z}$. Note that shift operations preserve this grading.  It is also easy to verify that the grade of an effective line bundle is zero (reduce to the case $\deg\mn=0$ using shift operations).
\end{defi}

\begin{remark}\label{able}
Recall the Killing form isomorphism $\frh_n^*\to \frh_n$ from Section \ref{loudvoice}. In case (a) of Proposition \ref{easily}, we note that $\kappa(\frac{{\mu^j}}{\ell}),\kappa(\frac{\lambda^j}{\ell})\in \Delta_n$ give the same conjugacy class in $\op{SU}(n)$ if $j\neq i$ and $\kappa(\frac{{\mu^i}}{\ell})$ equals  $\zeta_n^{-1}\cdot \kappa(\frac{{\lambda^i}}{\ell})$ (see Definition  \ref{symmetry}). In case (b),
we have  $\kappa(\frac{{\nu^j}}{\ell})=\kappa(\frac{{\lambda^j}}{\ell})$ if $j\neq i$ and $\kappa(\frac{{\nu^i}}{\ell})=\zeta_n\cdot\kappa(\frac{{\lambda^i}}{\ell})$.
\end{remark}
\begin{lemma}\label{threepointone}
Tensoring $\mw$ with $\mathcal{O}(p)$ sets up an isomorphism $\Par_{n,\mn,S}\to \Par_{n,\mn(np),S}$. This isomorphism coincides with an $n$-fold composition of
$\operatorname{Sh}_p$. This isomorphism carries the line bundle $\mathcal{B}(\vec{\lambda},\ell)$ on one stack
to a line bundle with the same parameters $\mathcal{B}(\vec{\lambda},\ell)$ on the other.
\end{lemma}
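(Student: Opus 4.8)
The plan is to identify the tensoring-by-$\mathcal{O}(p)$ isomorphism with $\operatorname{Sh}_p^{\circ n}$ by comparing the two operations on the underlying bundle, the flag data, and the determinant trivialization, and then to track the line bundle $\mathcal{B}(\vec{\lambda},\ell)$ through $n$ iterations of the formula in Proposition \ref{easily}(a). First I would observe that tensoring with $\mathcal{O}(p)$ is manifestly an isomorphism $\Par_{n,\mn,S}\to\Par_{n,\mn(np),S}$: it sends $(\mw,\me,\gamma)$ to $(\mw(p),\me',\gamma')$, where the flags at points $q\neq p$ are unchanged (since $\mw$ and $\mw(p)$ agree away from $p$ canonically up to the scaling by a section of $\mathcal{O}(p)$, which induces a canonical identification of fibers at $q$), the flag at $p$ is transported by the canonical isomorphism $\mw_p\cong\mw(p)_p$, and $\gamma'$ is $\gamma$ twisted by the canonical section $t$ of $\mathcal{O}(np)$ vanishing to order $n$ at $p$; its inverse is tensoring by $\mathcal{O}(-p)$.

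Next I would unwind the $n$-fold composite $\operatorname{Sh}_p^{\circ n}$. By the exact sequence \eqref{roast1} (equivalently the definition of $\operatorname{Sh}_p$), one step replaces $\mw$ by the subsheaf $\mw'\subseteq\mw(p)$ whose local sections $s$ near $p$ satisfy that $ts$ is regular with fiber in $E^p_1$, and relabels the flag by $E'^p_a = $ image of $E^p_{a+1}$. Applying this $n$ times, at each stage the flag index is shifted down by one and the determinant picks up degree $1$ at $p$; after $n$ steps the "fiber in $E^p_1$" conditions accumulate so that the resulting sheaf, I claim, is exactly $\mw(p)$: concretely, after $n$ shifts a local section $s$ of the result near $p$ is a meromorphic section of $\mw$ with $t^n s$ regular and with the full flag of conditions satisfied, which forces $s$ to be a regular section of $\mw(p)$. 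The flag after $n$ steps returns to (the transported copy of) $E^p_\bull$ since each of the $n$ cyclic relabelings composes to the identity cyclic shift of length $n$ (using $E^p_0=0$, $E^p_n=\mw_p$, and Lemma \ref{phase} to match the top/bottom subquotient, noting that the accumulated $K_p$-twists in Lemma \ref{phase} cancel over a full cycle because $\deg K_p$ contributions enter and leave). The determinant has then picked up degree $n$ at $p$, matching $\mn(np)$, and $\gamma$ is twisted by $t^n$, matching $\gamma'$. This identifies the two morphisms.

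For the statement about line bundles, I would apply Proposition \ref{easily}(a) $n$ times. One application of $\operatorname{Sh}_p^*$ sends $\mathcal{B}(\vec\lambda,\ell)$ to $\mathcal{B}(\vec\mu,\ell)$ with $\mu^i$ the cyclic shift $\mu^i_a=\lambda^i_{a-1}$ for $2\le a\le n$ and $\mu^i_1=\lambda^i_n+\ell$. This is precisely the action of the affine cyclic rotation on the weight $\lambda^i$ at level $\ell$ (an order-$n$ operation on dominant weights of level $\ell$ up to the standard identification, equivalently a genuine order-$n$ rotation if one normalizes $\lambda^i_n=0$ and remembers the overall degree shift). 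Iterating $n$ times returns each $\lambda^i_a$ to itself up to adding a constant independent of $a$ (namely $\ell$ added to every coordinate once, which is trivial in $\frh_n^*$ by the identification $\lambda\sim\lambda+c(1,\dots,1)$), so $\operatorname{Sh}_p^{*\,n}\mathcal{B}(\vec\lambda,\ell)=\mathcal{B}(\vec\lambda,\ell)$ as claimed, with the level $\ell$ unchanged throughout since $\operatorname{Sh}_p$ never alters $\ell$.

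The main obstacle I anticipate is the bookkeeping in the middle step: verifying carefully that the $n$-fold composite of the local modification rule for $\operatorname{Sh}_p$ recovers exactly $\otimes\,\mathcal{O}(p)$ on the nose (not merely up to a canonical isomorphism one has to name), and checking the $K_p$-twist in Lemma \ref{phase} genuinely cancels around a full cycle so that the flag returns to itself rather than to a twist of itself. Everything else — the line bundle computation and the fact that both maps are isomorphisms — is routine given Proposition \ref{easily}.
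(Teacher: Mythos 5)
The paper states Lemma \ref{threepointone} without proof, so there is no argument of record to compare against; your proposal supplies the natural and intended argument, and it is essentially correct. The identification of $\operatorname{Sh}_p^{\circ n}$ with $-\otimes\mathcal{O}(p)$, the observation that the flag returns to its canonical transport under $\mw_p\cong\mw(p)_p$ (the $K_p$-twists in Lemma \ref{phase} only affect the identification of subquotients, not the chain of subspaces, and in any case $K_p$ is a fixed line so it cannot change the isomorphism class of a line bundle on the stack), and the $n$-fold iteration of Proposition \ref{easily}(a) giving $\lambda^i\mapsto\lambda^i+\ell(1,\dots,1)$, which is trivial by the normalization in Definition \ref{levell}, are all as they should be.

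One intermediate assertion is stated incorrectly and should be repaired: you claim that a local section $s$ of the $n$-fold shifted sheaf is a meromorphic section of $\mw$ with $t^ns$ regular, and that this ``forces $s$ to be a regular section of $\mw(p)$.'' As written this describes $\mw(np)$, not $\mw(p)$, and the flag conditions do not rescue the implication in the form you state it, because at each stage the condition refers to the first step of the \emph{current} (already shifted) flag, not of the original one. Two clean fixes: (i) choose a local basis $e_1,\dots,e_n$ of $\mw$ near $p$ adapted to $E^p_\bull$ and check directly that the $k$-th shift replaces $e_k$ by $t^{-1}e_k$, so that after $n$ steps one has exactly $\langle t^{-1}e_1,\dots,t^{-1}e_n\rangle=\mw(p)$ with the flag $\langle t^{-1}e_1(p),\dots,t^{-1}e_a(p)\rangle$; or (ii) observe that each shift increases the degree by one and that each $\mw_k$ is contained in $\mw(p)$ (each basis direction is divided by $t$ at most once over the cycle), so $\mw_n\subseteq\mw(p)$ with $\deg\mw_n=\deg\mw(p)$ forces equality. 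With that repair the proof is complete.
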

\subsubsection{Proof of Proposition \ref{LaSo}}\label{shifty2}
Proposition \ref{LaSo}, which describes the Picard group of $\Par_{n,\mn,S}$ follows from (a), (b), (c) of Proposition \ref{easily} and the special case $\mn=\mathcal{O}$ proved in \cite{LS}.
\subsection{The effect of shift operations on cycles}
Let $p=p_i\in S$. To study the effect of the shift operations on the stacks $\Omega^0(d,r,\mn,n,\vec{J})$ defined in Definition  \ref{mindful}, we begin with a basic commutative square:
\begin{equation}\label{basicD}
\xymatrix{
 \Omega^0(d,r,\mn,n,\vec{J})\ar[d]\ar[r]^{\operatorname{Sh}_p}  & \Omega^0(d',r,\mn(p),n,\vec{K})\ar[d]\\
\Par_{n,\mn,S}\ar[r]^{\operatorname{Sh}_p}  & \Par_{n,\mn(p),S}
}
\end{equation}
where the terms $d'$ and $\vec{K}$ are determined as follows,
 \begin{enumerate}
 \item $d'=d$ if $1\notin J^i$, and $d'=d-1$ if $1\in J^i$
 \item $K^k=J^k$ for $k\neq i$.
 \item $K^i =\{j^i_1-1,j^i_2-1,\dots, j^i_r-1\}$ if $1\notin J^i$, and $K^i =\{j^i_2-1,\dots, j^i_r-1,n\}$ if $1\in J^i$.
 \end{enumerate}

To see that we note that there is a bijective correspondence between subbundles of bundles $\mw$ and $\mw'$ when we have a relation $\Sh_p(\mw,\me,\gamma)=(\mw',\me',\gamma')$. Keeping track of the Schubert cell data gives Diagram \eqref{basicD}.

Recall the cycles $C(d,r,\mn,n,\vec{J})$ on $\Par_{n,\mn,S}$ defined as pushforwards of $\Omega(d,r,\mn,n,\vec{J})$ (Definition  \ref{cyclist}).
 \begin{lemma}\label{newS} Let $p=p_i\in S$. Then
 $$\operatorname{Sh}_{p}^*C(d',r,\mn(p),n,\vec{K})=C(d,r,\mn,n,\vec{J}).$$
 \end{lemma}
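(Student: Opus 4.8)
The plan is to deduce Lemma~\ref{newS} directly from the commutative square \eqref{basicD} together with the functoriality of the pushforward operation under flat (here, representable) base change. Recall that $C(d,r,\mn,n,\vec{J})$ is \emph{defined} as the pushforward of the fundamental class of $\Omega=\Omega(d,r,\mn,n,\vec{J})$ along the representable proper morphism $\pi:\Omega\to \Par_{n,\mn,S}$ of Lemma~\ref{repres}, and similarly $C(d',r,\mn(p),n,\vec{K})$ is the pushforward of $[\Omega']$ along $\pi':\Omega'=\Omega(d',r,\mn(p),n,\vec{K})\to \Par_{n,\mn(p),S}$. Since $\operatorname{Sh}_p:\Par_{n,\mn,S}\to\Par_{n,\mn(p),S}$ is an isomorphism (Proposition~\ref{easily}(c) and the fact that $\operatorname{Sh}_p$, $\operatorname{ISh}_p$ are mutually inverse), pulling back a cycle is the same as pushing forward along the inverse isomorphism, so $\operatorname{Sh}_p^* C(d',r,\mn(p),n,\vec{K})=\operatorname{ISh}_{p,*} C(d',r,\mn(p),n,\vec{K})$.

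First I would upgrade the square \eqref{basicD}, which is stated only for the open substacks $\Omega^0$, to the corresponding square for the closures $\Omega$. The horizontal maps $\operatorname{Sh}_p$ are isomorphisms on the ambient $\Par$-stacks, and the top horizontal $\operatorname{Sh}_p$ in \eqref{basicD} is an isomorphism $\Omega^0(d,r,\mn,n,\vec{J})\leto{\sim}\Omega^0(d',r,\mn(p),n,\vec{K})$ coming from the bijective correspondence between subbundles of $\mw$ and of $\mw'$ under $\operatorname{Sh}_p(\mw,\me,\gamma)=(\mw',\me',\gamma')$, together with the bookkeeping of the Schubert cell data that produces the specified $d'$ and $\vec K$. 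Because $\Omega^0$ is dense in $\Omega$ (Proposition~\ref{maya}) and $\Omega$, $\Omega'$ are irreducible, this isomorphism of dense open substacks, combined with the fact that the vertical maps are proper (hence have closed image) and that the diagram of ambient stacks commutes, forces $\operatorname{Sh}_p$ (more precisely, the unique isomorphism extending it) to carry $\Omega$ isomorphically onto $\Omega'$: the closure of $\Omega^0(d,r,\mn,n,\vec J)$ inside $\Omega\times_{\Par}\Par'$ maps isomorphically to the closure of $\Omega^0(d',r,\dots)$. Concretely, one checks that a tuple $(\mv,\mw,\me,\gamma)$ lies in $\Omega$ iff the rank conditions on $\mv_p\to\mw_p/E^p_{j^i_k}$ hold, and one verifies these translate, under the correspondence $\mv\leftrightarrow\mv'$ and $E\leftrightarrow E'$ dictated by Lemma~\ref{phase} and the definition of $\operatorname{Sh}_p$, into exactly the rank conditions defining $\Omega'$ with parameters $d',\vec K$; this is the content one must spell out, and it is essentially the same bookkeeping already used to produce \eqref{basicD}.

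Granting that, the square
\[
\xymatrix{
\Omega(d,r,\mn,n,\vec{J})\ar[d]_{\pi}\ar[r]^{\sim}_{\operatorname{Sh}_p} & \Omega(d',r,\mn(p),n,\vec{K})\ar[d]^{\pi'}\\
\Par_{n,\mn,S}\ar[r]^{\sim}_{\operatorname{Sh}_p} & \Par_{n,\mn(p),S}
}
\]
commutes, with both horizontal arrows isomorphisms. Applying pushforward along $\operatorname{ISh}_p=\operatorname{Sh}_p^{-1}$ to the identity $\pi'_*[\Omega']=C(d',r,\mn(p),n,\vec K)$ and using compatibility of pushforward with composition gives $\operatorname{ISh}_{p,*}\pi'_*[\Omega']=\pi_*(\operatorname{ISh}_p)_*[\Omega']=\pi_*[\Omega]=C(d,r,\mn,n,\vec J)$, where $(\operatorname{ISh}_p)_*[\Omega']=[\Omega]$ because $\operatorname{ISh}_p$ restricts to an isomorphism $\Omega'\leto{\sim}\Omega$ sending fundamental class to fundamental class. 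Since $\operatorname{Sh}_p^*=\operatorname{ISh}_{p,*}$ on cycles (isomorphism), this is exactly the claimed equality $\operatorname{Sh}_p^* C(d',r,\mn(p),n,\vec K)=C(d,r,\mn,n,\vec J)$.

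The main obstacle is the first step: making rigorous that the top horizontal map in \eqref{basicD} extends to an isomorphism of the \emph{closed} stacks $\Omega$ and $\Omega'$ (with matching parameters $d'$, $\vec K$), rather than merely an isomorphism of the open loci $\Omega^0$. This requires care because $\operatorname{Sh}_p$ changes the bundle near $p$ and hence changes degrees of subsheaves (reflected in the $d'=d$ vs.\ $d'=d-1$ dichotomy depending on whether $1\in J^i$), and because on $\Omega-\Omega^0$ the sheaf $\mv$ need not be a subbundle, so one must check that the shift construction still makes sense on all of $\Omega$ and is compatible with the given correspondence on the dense open part. The dimension estimates of Proposition~\ref{maya} guarantee $\Omega,\Omega'$ are irreducible of the expected (matching) dimension, which is what lets the isomorphism on dense opens propagate; once that is in hand the rest is formal functoriality of pushforward as in \cite[Section 1.4, Proposition 1.7]{FInt}.
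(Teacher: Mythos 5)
Your overall strategy (reduce to the commutative square \eqref{basicD} and push/pull fundamental classes) is the right one, but the pivotal step of your argument --- that $\operatorname{Sh}_p$ extends from the dense open substacks $\Omega^0$ to an isomorphism of the \emph{closed} stacks $\Omega(d,r,\mn,n,\vec{J})\leto{\sim}\Omega(d',r,\mn(p),n,\vec{K})$ --- is a genuine gap, and it is exactly the point the paper declines to claim: the proof in the text states that $\op{Sh}_p$ ``may not lift to a map of the closures.'' An isomorphism of dense opens of two irreducible stacks, each proper over isomorphic bases, does not propagate to an isomorphism of the closures (think of two non-isomorphic birational models over a base). Concretely, the correspondence $\mv\leftrightarrow\mv'$ underlying the top arrow of \eqref{basicD} is built from the fiber $\mv_p$ and its position relative to $E^p_1$; on $\Omega-\Omega^0$ the map $\mv_p\to\mw_p$ can have a kernel, so the modification of $\mv$ at $p$ that produces a degree $-d'$ subsheaf of $\mw'$ is not defined (or does not vary flatly), and the dichotomy $d'=d$ versus $d'=d-1$ breaks down. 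So you cannot assert the upgraded square with horizontal isomorphisms, and your final computation $\operatorname{ISh}_{p,*}\pi'_*[\Omega']=\pi_*[\Omega]$ rests on an unproved (and possibly false) premise.

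The repair is to notice that you never needed the isomorphism of closures. By Fulton's definition, $\pi_*[\Omega]$ is $\deg(\Omega/\overline{\pi(\Omega)})\cdot[\overline{\pi(\Omega)}]$ (or zero), so it is determined entirely by the generic point of $\Omega$, i.e., by the restriction of $\pi$ to the dense open $\Omega^0$ (density is Proposition \ref{maya}); the same holds for $\pi'_*[\Omega']$ and $\Omega'^0$. The square \eqref{basicD} for the open parts, together with the fact that the bottom arrow is an isomorphism, then identifies the generic images and generic degrees of $\pi$ and $\pi'$, which is precisely the equality of cycles $\operatorname{Sh}_p^*C(d',r,\mn(p),n,\vec{K})=C(d,r,\mn,n,\vec{J})$. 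This is the paper's argument (``the multiplicities of the divisor can be calculated at generic points''), and your write-up already contains all the ingredients for it; you only need to delete the unjustified extension-to-closures claim and replace it with the observation that pushforward of cycles is insensitive to what happens on the complement of a dense open.
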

\begin{proof}
 The operation $\op{Sh}_p$  lifts to a map  $\Omega^0(d,r,\mn,n,\vec{J})\to \Omega^0(d',r,\mn(p),n,\vec{K})$ (as in \eqref{basicD}), but  may not lift to a map of the closures $\Omega(d,r,\mn,n,\vec{J})\to \Omega(d',r,\mn(p),n,\vec{K})$. Since the multiplicities of the divisor
$C(d',r,\mn(p),n,\vec{K})$ can be calculated at generic points, we obtain the stated result from Diagram \eqref{basicD} and Proposition \ref{maya}.
\end{proof}
 \subsection{Generalized Gromov-Witten numbers}\label{stuffed}
 Definition \ref{GW} is an enumerative count of subbundles on $\mathcal{O}^{\oplus n}$, which is the ``general" rank $n$ vector bundle with trivial determinant. We can replace the role of $\mathcal{O}^{\oplus n}$ by a general rank $n$ vector bundle
with arbitrary determinant $\mathcal{N}$ of degree $-D$, \cite[Definition 2.4]{BHorn}:
\begin{defi}\label{GWgen}
Let $I^1,\dots I^s$ be subsets of $[n]$ each of cardinality $r$ and $d\in\Bbb{Z}$. Let $\mn$ be a line bundle on $\Bbb{P}^1$
of degree $-D$.  Let $\mw$ be a general rank $n$ bundle of degree $-D$ (hence $\det\mw=\mn$).  Let $\me\in\Fl_S(\mw)$ be a general point.

The Gromov-Witten number $\langle\sigma_{I^1}, \sigma_{I^2},\dots,\sigma_{I^s}\rangle_{d,D}$
is defined to be the number of subbundles (count as zero if infinite) $\mv\subseteq \mw$ of degree $-d$ and rank $r$  such that for $p\in S$,
$\mv_p\in\Omega_{I^j}(E^p_{\bullet})\subseteq \Gr(r,\mw_p)$. It follows from the dimension formula for quot schemes that
 if $\langle\sigma_{I^1}, \sigma_{I^2},\dots,\sigma_{I^s}\rangle_{d,D}$ is non-zero, then
 \begin{equation}\label{numerical}
 \sum_{j=1}^s |\sigma_{I^j}| =r(n-r)+dn-Dr.
 \end{equation}

\end{defi}
These new Gromov-Witten numbers reduce to the usual Gromov-Witten numbers using shift operations by the following:
Assume that the  codimension of the cycle $C(d,r,\mn,n,\vec{J})$ in $\Par_{n,\mn,S}$ is $0$, then in the setting of Lemma \ref{newS}, 
$$\Omega(d,r,\mn,n,\vec{J},S)\to \Par_{n,\mn,S}, \ \ \Omega(d',r,\mn(p),n,\vec{K},S)\to \Par_{n,\mn(p),S},$$
 have the same degree. Let $D=-\deg(\mn)$. We then have by Lemma \ref{newS},  
\begin{equation}\label{formula1}
\langle\sigma_{J^1}, \sigma_{J^2},\dots,\sigma_{J^s}\rangle_{d,D}= \langle\sigma_{K^1}, \sigma_{K^2},\dots,\sigma_{K^s}\rangle_{d',D-1}.
\end{equation}
Also note the formula
\begin{equation}\label{formula2}
\langle\sigma_{J^1}, \sigma_{J^2},\dots,\sigma_{J^s}\rangle_{d+r,D+n}=\langle\sigma_{J^1}, \sigma_{J^2},\dots,\sigma_{J^s}\rangle_{d,D}
\end{equation}
 which is obtained by an $n$-fold application of the shift operation at any $p_i$.

\subsection{Quantum cohomology of Grassmannians and ranks of conformal blocks}\label{QCB}
There is an expression for  $\langle\sigma_{I^1}, \sigma_{I^2},\dots,\sigma_{I^s}\rangle_{d,D}$ (with notation from Definition \ref{GWgen}) as equal to  a suitable  $h^0(\Par_{r,\mathcal{O}(-d),S},\ml)$ \cites{Gepner,Witten} (and Section \ref{QCB} below). The formula in general (even when $D=0$) has some shifting. The most succinct way of stating it is to reduce to $d=0$ (using \eqref{formula1}) and then using the following: Assume $d=0$ in Definition \ref{GWgen} and hence $\sum_{j=1}^s |\sigma_{I^j}|$ is divisible by $r$.
Let $\lambda^1,\dots,\lambda^s$ be highest weights for $\op{SL}(r)$ given by $\lambda^i=\lambda(I^j)$ following Definition \ref{correspondence} below. Then  (see \cite[Proposition 3.4]{BHorn}),
\begin{lemma}\label{newlabel2}
Suppose $d=0$ in Definition \ref{GWgen}. Then
$$\langle\sigma_{I^1}, \sigma_{I^2},\dots,\sigma_{I^s}\rangle_{d,D}= h^0(\Par_{r,\mathcal{O},S},\ml)$$ where $\ml=\mathcal{B}(\vec{\lambda},n-r)$.
\end{lemma}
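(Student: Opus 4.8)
This is \cite[Proposition 3.4]{BHorn}; I describe the structure of the argument, which combines Witten's identification of the quantum cohomology of a Grassmannian with the Verlinde algebra of $\mathfrak{sl}_r$ and the identification of Verlinde numbers with spaces of sections on parabolic moduli stacks.

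First I would record, using Definition \ref{GWgen} together with Propositions \ref{laumon} and \ref{maya} and the Corollary following them, that in the codimension-zero case the number $\langle\sigma_{I^1},\dots,\sigma_{I^s}\rangle_{d,D}$ is the degree of the generically finite representable proper morphism $\Omega(d,r,\mn,n,\vec J)\to\Par_{n,\mn,S}$, i.e.\ a genuine finite count of degree $-d$, rank $r$ subbundles of a general $\mw$ meeting the Schubert conditions at $S$; this is a genus-zero $s$-point Gromov--Witten invariant of $\Gr(r,n)$. By Witten's theorem \cite{Witten} (see also \cite{Gepner}) these invariants are the structure constants of the small quantum cohomology ring $QH^*(\Gr(r,n))$, and that ring is isomorphic, after a cyclic twist by the centre of $\SL(r)$, to the Verlinde (fusion) algebra of $\mathfrak{sl}_r$ at level $n-r$, with $\sigma_{I^j}$ going to the fusion class of the dominant weight $\lambda(I^j)$ of Definition \ref{correspondence}. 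Hence $\langle\sigma_{I^1},\dots,\sigma_{I^s}\rangle_{d,D}$ equals, up to the cyclic shift, the fusion number $N^{\mathfrak{sl}_r}_{\lambda(I^1),\dots,\lambda(I^s)}$ at level $n-r$.

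Second, I would invoke the standard identification of that fusion number with a space of sections: for $\mathfrak{sl}_r$ at level $k=n-r$ the genus-zero conformal block has dimension $h^0(\Par_{r,\mathcal{O},S},\mathcal{B}(\vec\lambda,k))$ by the parabolic ``conformal blocks equal generalised theta functions'' theorem (Faltings; Beauville--Laszlo; Kumar--Narasimhan--Ramanathan; Pauly), which is also consistent with the interpretation of $h^0(\Par_{n,\mathcal{O},S},\mathcal{B}(\vec\lambda,\ell))$ as a Verlinde coefficient recalled in Section \ref{etrange}. It then remains only to check that the cyclic twist appearing in Witten's theorem is trivial in the present normalisation. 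This is exactly the role of the hypotheses $d=0$ and $\mn=\mathcal{O}$: the shift operations of Section \ref{shifty}, via Proposition \ref{easily} on the $\SL(r)$ side and the identities \eqref{formula1}--\eqref{formula2} on the Gromov--Witten side, convert that twist into a change of the integer $d$, and it disappears precisely when $d=0$, leaving the clean identity $\langle\sigma_{I^1},\dots,\sigma_{I^s}\rangle_{0,D}=h^0(\Par_{r,\mathcal{O},S},\mathcal{B}(\vec\lambda,n-r))$ (with $D$ determined by the numerical constraint \eqref{numerical}).

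The main obstacle is bookkeeping rather than ideas: pinning down the dictionary $I^j\mapsto\lambda(I^j)$ and the level $n-r$, and --- above all --- verifying that with $d=0$ there is no residual central twist on either side, which is what makes the formula shift-free. Everything else follows formally from the two cited inputs, Witten's theorem and the realisation of conformal blocks as spaces of sections on $\Par_{r,\mathcal{O},S}$.
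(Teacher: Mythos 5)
Your proposal is correct and matches the paper, which does not give an independent proof of this lemma but simply cites \cite[Proposition 3.4]{BHorn} together with Witten's theorem \cite{Witten} (and \cite{Gepner}), exactly the two inputs you organize your argument around. Your observation that the $d=0$ hypothesis is what removes the cyclic twist is consistent with the paper's own remark preceding the lemma that the general formula ``has some shifting'' and that one reduces to $d=0$ via \eqref{formula1} precisely to obtain the clean statement.
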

Note that the $h^0$ above is also the rank of a vector space of conformal blocks for the Lie algebra $\mathfrak{sl}_r$ and the representations $\vec{\lambda}$ placed at the points of $S$ at level $n-r$.
\begin{defi}\label{correspondence}
Given a subset $I=\{i_1<i_2<\dots<i_r\}\subseteq [n]=\{1,\dots,n\}$, $|I|=r$, let $\lambda(I)=(\lambda_1,\dots,\lambda_r)$ where
$\lambda_a=n-r+a-i_a,\ a=1,\dots,r.$
\end{defi}
If $d\neq 0$, there is a similar equality. Let $\lambda^j=\lambda(I^j)$, and $\ml=\mathcal{B}(\vec{\lambda},n-r)$ on $\Par_{r,\mathcal{O}(-d),S}$. The assumption \eqref{numerical} shows that the grade of $\ml$ is zero (see Definition  \ref{grade}) and using the shift operations and Lemma \ref{newlabel2} we obtain
\begin{equation}\label{newlabel3}
\langle\sigma_{I^1}, \sigma_{I^2},\dots,\sigma_{I^s}\rangle_{d,D}= h^0(\Par_{r,\mathcal{O}(-d),S},\ml).
\end{equation}
One can use shift operations to reduce the $h^0$ to the rank of a suitable vector space of conformal blocks.

\subsection{Quantum saturation and Fulton conjectures}
We recall quantum saturation \cite{BHorn} and a quantum generalization of a conjecture of Fulton \cite[Remark 8.5]{BKq}. These (now theorems) generalize statements in the classical invariant theory of the special linear group \cite{KT,KTW}.

\begin{proposition}\label{qTheorems}
Suppose $\mathcal{B}(\vec{\lambda},\ell)$ is a line bundle on  $\Par_{n,\mn,S}$ of grade zero, where $\lambda^i, i=1,\dots, s$ are dominant integral weights for $\op{SL}(n)$ and $\ell>0 $. Then
\begin{enumerate}
\item For any positive integer $N$, $H^0(\Par_{n,\mn,S},\ml^N)\neq 0$ if and only if
$H^0(\Par_{n,\mn,S},\ml)\neq 0$. Therefore $H^0(\Par_{n,\mn,S},\ml)\neq 0$ if and only if
there is a semi-stable point in $\Par_{n,\mn,S}$ for the polarization given by $\ml$.
\item For any positive integer $N$, $h^0(\Par_{n,\mn,S},\ml^N)=1$ if and only if
$h^0(\Par_{n,\mn,S},\ml)=1$.
\end{enumerate}
\end{proposition}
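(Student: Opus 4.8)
The plan is to reduce to the case $\mn=\mathcal{O}$ by shift operations, reinterpret both assertions in terms of conformal blocks (equivalently, quantum Schubert calculus of Grassmannians), dispose of the trivial implications by an integral–domain argument on the section ring, and then invoke the quantum saturation theorem \cite{BHorn} and the quantum Fulton–type scaling result \cite[Remark 8.5]{BKq} for the two remaining implications, which carry all the weight. For the reduction: since $\operatorname{Pic}(\Bbb{P}^1)=\Bbb{Z}$, composing $\operatorname{Sh}_{p_1}$ (or its inverse $\operatorname{ISh}_{p_1}$) with itself the appropriate number of times gives an isomorphism of stacks $\Par_{n,\mn,S}\leto{\sim}\Par_{n,\mathcal{O},S}$ by Proposition \ref{easily}(c). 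By Proposition \ref{easily}(a)--(b) it carries $\mathcal{B}(\vec\lambda,\ell)$ to some $\mathcal{B}(\vec\mu,\ell)$ (of the same level $\ell$), still of grade zero since shifts preserve the grading (Definition \ref{grade}), and it carries $\ml^N=\mathcal{B}(N\vec\lambda,N\ell)$ to $\mathcal{B}(N\vec\mu,N\ell)=(\mathcal{B}(\vec\mu,\ell))^N$. Being an isomorphism of stacks, it preserves every $h^0$ appearing in the statement, so we may assume $\mn=\mathcal{O}$.

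When $\mn=\mathcal{O}$ the stack $\Par_{n,\mathcal{O},S}$ is the moduli stack of quasi-parabolic $\SL(n)$-bundles, and $H^0(\Par_{n,\mathcal{O},S},\mathcal{B}(\vec\mu,\ell))$ is canonically the space of genus-zero conformal blocks $\Bbb{V}_{\sL_n,\vec\mu,\ell}$ for $\sL_n$ at level $\ell$ with the weights $\mu^j$ placed at $p_j$ (the genus-zero instance of the identification behind Proposition \ref{LaSo} and Theorem \ref{blacktea}, cf.\ \cite{LS}); hence $h^0(\ml^N)=\dim\Bbb{V}_{\sL_n,N\vec\mu,N\ell}$, which by Witten's theorem (Section \ref{QCB}, \cite{Witten,Gepner}) equals, up to a cyclic shift, a generalized Gromov--Witten number $\langle\sigma_{I^1},\dots,\sigma_{I^s}\rangle_{d,D}$ for $\Gr(n,n+\ell)$ (Definition \ref{GWgen}), with the scaling $\vec\mu\mapsto N\vec\mu$, $\ell\mapsto N\ell$ dilating the associated subsets and the degree by $N$. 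Now for the trivial implications: the section ring $\bigoplus_m H^0(\Par_{n,\mathcal{O},S},\ml^m)$ is an integral domain (it embeds into a polynomial ring over the function field of $\Par_{n,\mathcal{O},S}$ on a dense open substack where $\ml$ is trivialized), so a nonzero section $s$ of $\ml$ has $s^N\neq 0$, giving $h^0(\ml)>0\Rightarrow h^0(\ml^N)>0$; the assertion about semistable points then follows, combined with part (1), from the standard fact that the semistable locus for the polarization $\ml$ is non-empty exactly when $\ml^N$ has a non-zero section for some $N$. If moreover $h^0(\ml)\ge 2$, linearly independent $s,s'\in H^0(\ml)$ yield $N+1$ linearly independent elements $s^{N-i}(s')^{i}$ of $H^0(\ml^N)$ (otherwise $s'/s$ would be algebraic over $\Bbb{C}$, impossible for a nonconstant rational function), so $h^0(\ml^N)\ge N+1$; contrapositively $h^0(\ml^N)=1\Rightarrow h^0(\ml)\le 1$, which together with part (1) gives the $\Leftarrow$ direction of part (2).

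This leaves the two substantive implications: $h^0(\ml^N)>0\Rightarrow h^0(\ml)>0$ (quantum saturation) and $h^0(\ml)=1\Rightarrow h^0(\ml^N)=1$ (quantum Fulton), that is, in the Gromov--Witten language, that an $N$-fold dilation of all partitions and of the degree neither destroys a nonzero count nor changes a count equal to $1$. Saturation is the main theorem of \cite{BHorn}: using the dictionary between conformal blocks and intersection theory of Grassmannians one transports the claim to quantum Schubert calculus and, by a geometric degeneration-and-induction argument, reduces it to the classical saturation theorem of Knutson--Tao \cite{KT}. The Fulton-type scaling is proved in \cite[Remark 8.5]{BKq}, building on the classical Fulton conjecture of Knutson--Tao--Woodward \cite{KTW} and its geometric proofs. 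I expect this last step to be where essentially all the difficulty lies: the earlier steps are bookkeeping with shift operations and elementary commutative algebra, whereas this one repackages deep theorems whose own proofs invoke the full machinery of the eigenvalue problem (deformation of Schubert cycles, the conformal-blocks/intersection-theory correspondence, and the classical saturation and Fulton results).
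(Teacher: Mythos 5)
Your proposal is correct and follows essentially the same route as the paper: reduce to $\mn=\mathcal{O}$ via the shift operations of Proposition \ref{easily}, dispose of the easy implications by the irreducibility of the stack, and quote quantum saturation \cite{BHorn} and the quantum Fulton scaling result \cite[Remark 8.5]{BKq} for the two substantive directions. Your extra elaboration of the trivial implications and the conformal-blocks/Gromov--Witten reformulation is sound but not needed beyond what the paper's citation-based argument already supplies.
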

\begin{proof}
The shift operations reduce (1) to the case $\mn=\mathcal{O}$, this case of (1) follows from
quantum saturation \cite[Theorem 1.8]{BHorn}.

For (2), we can reduce to the case $\mn=\mathcal{O}$, which follows from  \cite[Remark 8.5]{BKq}. Proposition \ref{fulC} below also contains a proof of the quantum Fulton conjecture. See \cite{CS} for a recent discussion of the quantum saturation and Fulton conjectures.
\end{proof}

\begin{proposition}\label{kappa}
Suppose $\mathcal{B}(\vec{\lambda},\ell)$ is a line bundle on  $\Par_{n,\mn,S}$ of grade zero where $\lambda^i, i\in[s]$ are dominant integral weights for $\op{SL}(n)$ and $\ell> 0$. Let $D=-\deg(\mn)$. Let $p=p_i\in S$. Define a point $\vec{a}= \Delta_n^s$ as follows: $a^j=\kappa(\frac{\vec{\lambda_j}}{\ell})$  if $j\neq i$ and $a^i=\zeta^D_n\cdot \kappa(\frac{\vec{\lambda_i}}{\ell})$ (see Definition  \ref{symmetry}). Then,
\begin{enumerate}
\item  $\mathcal{B}(\vec{\lambda},\ell)$ is effective if and only $\vec{a}\in P_n(s)$, i.e., there exist matrices $A_i\in \op{SU}(n)$ conjugate to $a^i$, $i=1,\dots,s$ with product $A_1 A_2\cdots A_s=I_n$.
\item  $h^0(\Par_{n,\mn,S},\ml)=1$ if and only if there is a unique $A_i\in \op{SU}(n)$ conjugate to $a^i$, $i=1,\dots,s$  with product $A_1A_2\cdots A_s=I_n$. Here uniqueness means that if $(A'_1,\dots,A'_s)$ is another such collection, there exists a $C\in \op{SU}(n)$ with $A_i'=CA_iC^{-1}$ for $i=1,\dots,s$.
\end{enumerate}
\end{proposition}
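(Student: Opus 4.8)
The plan is to reduce both statements to the case $\mn=\mathcal{O}$ using the shift operations of Section~\ref{shifty}, and then invoke Theorem~\ref{blacktea} together with the quantum saturation and quantum Fulton statements of Proposition~\ref{qTheorems}. Since $\mn$ has degree $-D$ and line bundles on $\Bbb{P}^1$ are classified by degree, the $|D|$-fold iterate of $\operatorname{Sh}_p$ (if $D<0$) or of $\operatorname{ISh}_p$ (if $D>0$) is an isomorphism $\Par_{n,\mn,S}\leto{\sim}\Par_{n,\mathcal{O},S}$; by Proposition~\ref{easily}(c) it identifies the rational effective cones and preserves all the spaces $H^0(-,\mathcal{B}^m(\cdot))$. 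By Proposition~\ref{easily}(a),(b) it carries $\mathcal{B}(\vec\lambda,\ell)$ to $\mathcal{B}(\vec{\tilde\lambda},\ell)$, where $\tilde\lambda^j=\lambda^j$ for $j\neq i$ and $\tilde\lambda^i$ is obtained from $\lambda^i$ by $|D|$ cyclic rotations of the type appearing there. The level-$\ell$ inequality \eqref{weyl} is preserved under such a rotation, so the $\tilde\lambda^j$ are dominant integral of level $\ell$ whenever the $\lambda^j$ are; and if some $\lambda^j$ fails this, then $a^j\notin\Delta_n$, hence $\vec a\notin P_n(s)$ and $\mathcal{B}(\vec\lambda,\ell)$ is not effective by Theorem~\ref{blacktea}, so both asserted equivalences hold vacuously. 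The crucial bookkeeping is Remark~\ref{able}: each application of $\operatorname{ISh}_p$ multiplies $\kappa(\lambda^i/\ell)$ by $\zeta_n$ and each application of $\operatorname{Sh}_p$ multiplies it by $\zeta_n^{-1}$, so in either case one ends with $\kappa(\tilde\lambda^i/\ell)=\zeta_n^{D}\,\kappa(\lambda^i/\ell)=a^i$, while $\kappa(\tilde\lambda^j/\ell)=a^j$ for $j\neq i$. Thus it suffices to prove both parts when $\mn=\mathcal{O}$ and $\vec\lambda=\vec{\tilde\lambda}$, in which case $a^j=\kappa(\tilde\lambda^j/\ell)$ for all $j$.

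For part~(1) with $\mn=\mathcal{O}$: Theorem~\ref{blacktea} says $H^0(\Par_{n,\mathcal{O},S},\mathcal{B}^m(\vec{\tilde\lambda},\ell))\neq 0$ for some $m>0$ precisely when $\ell>0$, the $\tilde\lambda^j$ are dominant integral of level $\ell$, and the point $\tfrac1\ell(\kappa(\tilde\lambda^1),\dots,\kappa(\tilde\lambda^s))=\vec a$ lies in $P_n(s)$; quantum saturation (Proposition~\ref{qTheorems}(1)) says this is equivalent to $\mathcal{B}(\vec{\tilde\lambda},\ell)$ itself being effective. Unwinding the definition of $P_n(s)$ yields the matrix reformulation in the statement, and transporting back along the shift isomorphism gives part~(1) in general.

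For part~(2) with $\mn=\mathcal{O}$: quantum Fulton (Proposition~\ref{qTheorems}(2)) reduces $h^0(\mathcal{B}(\vec{\tilde\lambda},\ell))=1$ to $h^0(\mathcal{B}^m(\vec{\tilde\lambda},\ell))=1$ for all $m$. Here I would use the Mehta--Seshadri theorem \cite{MS}, the engine also behind Theorem~\ref{blacktea}: writing $R=\bigoplus_{m\ge 0}H^0(\Par_{n,\mathcal{O},S},\mathcal{B}^m(\vec{\tilde\lambda},\ell))$, the scheme $\operatorname{Proj}R$ is the moduli space $M$ of semistable parabolic bundles with weights $\vec a$, whose points are exactly the $\operatorname{SU}(n)$-conjugacy classes of tuples $(A_1,\dots,A_s)$ with $A_j$ conjugate to $a^j$ and $A_1\cdots A_s=I_n$ (for unitary representations, conjugacy by $\operatorname{GL}(n,\Bbb{C})$, by $U(n)$ and by $\operatorname{SU}(n)$ coincide, and such a representation is polystable). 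Since $\Par_{n,\mathcal{O},S}$ is integral, $R$ is a graded domain with $R_0=\Bbb{C}$. If $h^0(\mathcal{B}(\vec{\tilde\lambda},\ell))=1$, pick $0\neq x\in R_1$; then $R_m=\Bbb{C}x^m$ by quantum Fulton and integrality, so $R\cong\Bbb{C}[x]$, $M=\operatorname{Proj}R$ is a single reduced point, and the tuple is unique. Conversely, if the tuple is unique then $M$ has one point, so $\dim\operatorname{Proj}R=0$ and $R$ has Krull dimension $1$; a graded domain of dimension one over an algebraically closed field has $\dim_{\Bbb{C}}R_m\le 1$ for all $m$, and since $\mathcal{B}(\vec{\tilde\lambda},\ell)$ is effective by part~(1) (as $\vec a\in P_n(s)$), quantum saturation gives $R_m\neq 0$ for all $m\ge 1$, so $\dim_{\Bbb{C}}R_m=1$ for all $m$, and in particular $h^0(\mathcal{B}(\vec{\tilde\lambda},\ell))=1$. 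Transporting back finishes part~(2).

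I expect the main obstacle to be the central-twist bookkeeping in the shift reduction --- pinning down that the factor is $\zeta_n^{D}$ (not $\zeta_n^{-D}$) and that the level-$\ell$ condition survives the rotations --- together with the care needed in part~(2) to extract a genuinely reduced point from the bare uniqueness hypothesis, which is exactly where integrality of the section ring and quantum saturation both enter. The deeper geometric input, Mehta--Seshadri identifying $\operatorname{Proj}$ of the section ring with the moduli space of unitary representations (so that effectivity and one-dimensionality of $H^0$ become non-emptiness and being a point), is used here as a black box, as in the proof of Theorem~\ref{blacktea}.
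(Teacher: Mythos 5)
Your proposal is correct and follows essentially the same route as the paper: reduce to $\mn=\mathcal{O}$ via the shift operations with the $\zeta_n^{D}$ twist tracked by Remark \ref{able}, then apply Theorem \ref{blacktea} with quantum saturation for part (1), and quantum Fulton together with the Mehta--Seshadri identification of $\operatorname{Proj}$ of the section ring with the moduli of unitary representations for part (2). Your converse in part (2) (the graded-domain/Krull-dimension argument) is a more careful spelling-out of the paper's one-line appeal to "sections of a line bundle over a point," but it is the same underlying mechanism.
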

\begin{proof}
Suppose $-D=\deg(\mn)\leq 0$. Then $\operatorname{ISh}_p^D: \Par_{n,\mn(D),S}\to \Par_{n,\mn(D),S}$. Let  $\mathcal{B}(\vec{\nu},\ell)\in \Pic^+(\Par_{n,\mn(D),S})$ be the pullback of $\mathcal{B}(\vec{\lambda},\ell)$.  Since $\mn(D)$ is trivial and of grade zero, (1) follows from Proposition \ref{blacktea}, quantum saturation \cite{BHorn}(see Proposition \ref{qTheorems}(1)), and Remark \ref{able}. The case $D<0$ is handled similarly using  $\operatorname{Sh}_p$ instead of  $\operatorname{ISh}_p$.

For (2), we similarly reduce to $D=0$. In this case if $h^0(\Par_{n,\mn,S},\ml)=1$ then by quantum Fulton (Theorem \ref{qTheorems}(2)), $h^0(\Par_{n,\mn,S},\ml^N)=1$ for all $N$ and hence the Mehta-Seshadri moduli space is a point. This shows the uniqueness of the $A_i$. To go the other way, note that  $h^0(\Par_{n,\mn,S},\ml)$ is the space of global sections of  a line bundle over a point and is hence one dimensional.
\end{proof}

The $\kappa$ operation introduces some unnecessary denominators. In fact working in the unitary group $U(n)$, there is an equivalent formulation where the local monodromies are $\ell$th roots of unity. Here $\ell$ is the level of the corresponding line bundle:

\begin{corollary}\label{sansD}
Suppose  $\ml=\mathcal{B}(\vec{\lambda},\ell)$ is a line bundle of grade zero on  $\Par_{n,\mn,S}$ where $\lambda^i, i\in[s]$ are dominant integral weights for $\op{SL}(n)$ and $\ell>0$. Write  $\sum_{i=1}^s|\lambda_i|+ \ell \deg\mn=nu$, $u\in \Bbb{Z}$, Then, $\mathcal{B}(\vec{\lambda},\ell)$
is effective on   $\Par_{n,\mn,S}$  if and only if there exist matrices $A_1,\dots,A_s,C\in \op{U}(n)$ with $A_1\cdots A_s=C$, where
\begin{enumerate}
\item $A_i$ is conjugate to the diagonal matrix with entries $\exp(2\pi\sqrt{-1}\lambda^i_j/\ell)$ for $j=1,\dots,n$.
\item $C$ is the central matrix $c I_n$ with $c=\exp(2\pi\sqrt{-1}u/\ell)$.
\end{enumerate}
Moreover,  $h^0(\Par_{n,\mn,S},\ml)=1$ if and only if the corresponding representation variety of unitary local systems  on $\Bbb{P}^1-(S\cup\{p_{s+1}\})$, $p_{s+1}\not\in S$ (with local monodromies $A_1,\dots,A_s, C^{-1}$) is a point (this implies that there is a unique (possibly block reducible) solution to the matrix equation above in $U(n)$ up to conjugation).
\end{corollary}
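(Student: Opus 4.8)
The claim is essentially a reformulation of Proposition \ref{kappa}: I would deduce it by trading the $\op{SU}(n)$ description there for a $\op{U}(n)$ description via central scalars. Fix $i=1$ (any index would do, and the $\op{U}(n)$ formulation is in fact symmetric in the $s$ points), and let $\vec a=(a^1,\dots,a^s)\in\Delta_n^s$ be the point attached to $\ml=\mathcal B(\vec\lambda,\ell)$ in Proposition \ref{kappa}, so with $D=-\deg\mn$ one has $a^j=\kappa(\lambda^j/\ell)$ for $j\neq 1$ and $a^1=\zeta_n^{D}\cdot\kappa(\lambda^1/\ell)$, the action of $\mu_n$ being as in Definition \ref{symmetry}. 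For each $j$ set $\alpha_j=\exp(2\pi\sqrt{-1}\,|\lambda^j|/(n\ell))$; then the diagonal matrix with entries $\exp(2\pi\sqrt{-1}\lambda^j_k/\ell)$, $k\in[n]$, factors as $\alpha_j\widetilde A^{0}_j$ with $\widetilde A^{0}_j$ a diagonal representative of the conjugacy class $\kappa(\lambda^j/\ell)$ lying in $\op{SU}(n)$ --- indeed $\sum_k(\lambda^j_k/\ell-|\lambda^j|/(n\ell))=0$, so $\det\widetilde A^{0}_j=\alpha_j^{-n}\exp(2\pi\sqrt{-1}|\lambda^j|/\ell)=1$.

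First I would establish the equivalence of the two existence statements. Starting from $A_1,\dots,A_s,C$ as in the corollary, write $A_j=\alpha_j\widetilde A_j$ with $\widetilde A_j\in\op{SU}(n)$ conjugate to $\kappa(\lambda^j/\ell)$; since the $\alpha_j$ are central, $A_1\cdots A_s=\bigl(\prod_{j}\alpha_j\bigr)\widetilde A_1\cdots\widetilde A_s$. The grade-zero hypothesis, written as $\sum_j|\lambda^j|+\ell\deg\mn=nu$, gives $\prod_j\alpha_j=\exp\!\bigl(2\pi\sqrt{-1}(u/\ell+D/n)\bigr)$, so the equation $A_1\cdots A_s=cI_n$ with $c=\exp(2\pi\sqrt{-1}u/\ell)$ becomes $\widetilde A_1\cdots\widetilde A_s=\zeta_n^{-D}I_n$. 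Setting $B_1=\zeta_n^{D}\widetilde A_1$ and $B_j=\widetilde A_j$ for $j\neq1$ yields $B_1\cdots B_s=I_n$ with $B_j$ conjugate to $a^j$, i.e.\ $\vec a\in P_n(s)$; and the substitutions reverse ($\widetilde A_1=\zeta_n^{-D}B_1$, $\widetilde A_j=B_j$ otherwise, $A_j=\alpha_j\widetilde A_j$), so the first two lines of the corollary are equivalent to $\vec a\in P_n(s)$, hence to effectivity of $\ml$ by Proposition \ref{kappa}(1).

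For the refinement about $h^0$, I would observe that the map $(A_1,\dots,A_s)\mapsto(B_1,\dots,B_s)$ just used --- which replaces each $A_j$ by a fixed scalar multiple --- is a bijection between the set of tuples in $\op{U}(n)^s$ with $A_j$ conjugate to the prescribed diagonal matrix and $A_1\cdots A_s=C$, and the set of tuples in $\op{SU}(n)^s$ with $B_j$ conjugate to $a^j$ and $B_1\cdots B_s=I_n$, and that it intertwines the conjugation actions, since conjugation by $\op{U}(n)$ and by $\op{SU}(n)$ induce the same action on tuples (both factoring through $\op{PU}(n)=\op{U}(n)/Z(\op{U}(n))=\op{SU}(n)/Z(\op{SU}(n))$). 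Thus the first set consists of a single conjugacy class exactly when the second does, which by Proposition \ref{kappa}(2) is exactly the condition $h^0(\Par_{n,\mn,S},\ml)=1$. Finally, because $C^{-1}=c^{-1}I_n$ is central, a unitary local system on $\Bbb{P}^1-(S\cup\{p_{s+1}\})$ with local monodromies $A_1,\dots,A_s,C^{-1}$ is, via the relation $\gamma_1\cdots\gamma_{s+1}=1$, the same datum as a tuple $(A_1,\dots,A_s)$ in $\op{U}(n)$ with each $A_j$ in its prescribed conjugacy class and $A_1\cdots A_s=C$; so that representation variety is a point precisely when the matrix equation has a unique solution up to conjugacy, as claimed.

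The work is essentially bookkeeping; the one place calling for care is that the scalar $\prod_j\alpha_j$ coming from the eigenvalue data, the central twist $\zeta_n^{D}$ that Proposition \ref{kappa} carries when $\deg\mn\neq0$, and the scalar $c$ built from $u$ all cancel exactly, which is where the grade-zero hypothesis (equivalently $u\in\Bbb{Z}$ with $\sum_j|\lambda^j|+\ell\deg\mn=nu$) enters, and where a sign error in the exponents would be easiest to make. A secondary point worth checking is that dividing the prescribed diagonal matrix by $\alpha_j$ genuinely lands in $\op{SU}(n)$, which is immediate since its determinant is $\exp(2\pi\sqrt{-1}|\lambda^j|/\ell)=\alpha_j^{n}$.
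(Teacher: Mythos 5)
Your argument is correct and is essentially the paper's own proof: both reduce to Proposition \ref{kappa} by factoring each prescribed $\op{U}(n)$ class as a central scalar $\exp(2\pi\sqrt{-1}|\lambda^j|/(n\ell))$ times an $\op{SU}(n)$ class representing $\kappa(\lambda^j/\ell)$, and checking via the grade-zero identity $\sum_j|\lambda^j|/(n\ell)=u/\ell-\deg\mn/n$ that the accumulated scalars turn the product condition $A_1\cdots A_s=cI_n$ into $B_1\cdots B_s=I_n$ with $B_1$ absorbing the $\zeta_n^{D}$ twist. Your extra remarks on the uniqueness statement (both conjugation actions factoring through $\op{PU}(n)$, and the central $C^{-1}$ making the $(s+1)$-punctured representation variety equivalent to the tuple data) correctly fill in details the paper leaves implicit.
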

\begin{proof}
By Proposition \ref{kappa}, $\mathcal{B}(\vec{\lambda},\ell)$
is effective on   $\Par_{n,\mathcal{O},S}$  if and only if there exist matrices $B_1,\dots,B_s\in \op{SU}(n)$ with $B_1\cdots B_s=\zeta_n^{\deg \mn}I_n$, where $A_i$ is conjugate to the diagonal matrix with entries $\exp(2\pi\sqrt{-1}\bigl(\lambda^i_j/{\ell}-\sum |\lambda^i|/(n\ell)\bigr))$ but
$\sum |\lambda^i|/(n\ell)=u/\ell - \deg\mn/n$. This proves the desired statement.

\end{proof}

\subsection{F-vertices and F-line bundles}
\begin{lemma}\label{elgar}
Suppose $\ml=\mathcal{O}(E)$  is an F-line bundle on $\Par_{n,\mathcal{O},S}$. Then
\begin{enumerate}
\item $\ml$ gives an extremal ray $\Bbb{Q}_{\geq 0}\mathcal{O}(E)$   of $\Pic^+_{\Bbb{Q}}(\Par_{n,\mathcal{O},S})$, which  lies on some regular face of  $\Pic^+_{\Bbb{Q}}(\Par_{n,\mathcal{O},S})$.
\item $\ml$ gives a Hilbert basis element of the additive semigroup $\Pic^+(\Par_{n,\mathcal{O},S})$.
\end{enumerate}
 \end{lemma}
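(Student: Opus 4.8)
The plan is to exploit two facts about $\ml = \mathcal{O}(E)$: first, that all positive tensor powers $\ml^m$ have a one-dimensional space of sections (so the section ring is the polynomial ring $k[\sigma]$ on the defining section $\sigma$ of $E$), and second, that $E$ is reduced and irreducible. For part (1), I would argue that $\ml$ cannot be written as a nontrivial sum in the effective monoid. Suppose $\ml^N = \ml_1 \otimes \ml_2$ with $\ml_1,\ml_2$ effective line bundles, having nonzero sections $\sigma_1,\sigma_2$. Then $\sigma_1\sigma_2$ is a nonzero section of $\ml^N$, hence (up to scalar) equals $\sigma^N$ where $\sigma$ cuts out $E$; since the divisor of $\sigma^N$ is $NE$ with $E$ irreducible and reduced, and $\operatorname{div}(\sigma_1)+\operatorname{div}(\sigma_2)=NE$ forces each $\operatorname{div}(\sigma_i)$ to be a nonnegative multiple $a_iE$ with $a_1+a_2=N$. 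This identifies $\ml_i$ with $\mathcal{O}(a_i E)=\ml^{a_i}$, so the ray $\Bbb{Q}_{\geq 0}\mathcal{O}(E)$ is extremal in $\Pic^+_{\Bbb{Q}}(\Par_{n,\mathcal{O},S})$. That it lies on a regular face then follows from Lemma \ref{pinky} together with Proposition \ref{b9}: the corresponding point of $P_n(s)$ is an F-vertex, in particular a vertex, and every vertex of $P_n(s)$ lies on a regular facet; under the dictionary of Proposition \ref{b9} and the discussion of regular faces this says precisely that $\ml$ lies on a regular face.

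For part (2), I would show $\ml$ is a Hilbert basis element of the semigroup $\Pic^+(\Par_{n,\mathcal{O},S})$, i.e. that $\ml$ is not a sum of two nonzero elements of the monoid. But this is exactly the $N=1$ case of the argument just given: if $\ml = \ml_1\otimes\ml_2$ with both $\ml_i$ effective, then comparing divisors of sections against the reduced irreducible divisor $E$ forces one of the $\ml_i$ to be trivial. Here one needs that a line bundle whose only effective representative is the zero divisor is the trivial bundle on $\Par_{n,\mathcal{O},S}$; since $h^0$ of that bundle is at least $1$ and, being a quotient of $h^0(\mathcal{O})$ via the section, the bundle is $\mathcal{O}$ — alternatively one invokes that $\Pic^+$ is the intersection of the effective cone with the lattice and that an element of the cone lying on an extremal ray with primitive lattice generator $\ml$ admits no decomposition with both summands nonzero lattice points strictly inside. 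So $\ml$ is indecomposable in $\Pic^+(\Par_{n,\mathcal{O},S})$, hence a Hilbert basis element.

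The main obstacle I anticipate is making rigorous the step "divisor of a section of $\ml_i$ is a nonnegative multiple of $E$": a priori $\operatorname{div}(\sigma_i)$ is some effective divisor $D_i$ with $D_1+D_2 = NE$ as divisors, and since $E$ is irreducible this does force $D_i = a_i E$ — but one must know $\Par_{n,\mathcal{O},S}$ is sufficiently nice (irreducible, normal, so that effective Cartier divisors behave well and $\operatorname{div}$ determines the line bundle) for this to be valid. These properties hold because $\Par_{n,\mathcal{O},S}$ is a smooth Artin stack (as recorded after its definition), so effective divisors form a free monoid on the irreducible ones locally, and the argument goes through. A secondary point to check is that one genuinely may assume $\ml_1,\ml_2$ carry sections in the first place — which is the definition of lying in $\Pic^+$ — and that the scaling $h^0(\ml^m)=1$ for all $m\geq 0$ (given by the F-line bundle hypothesis, invoking the quantum Fulton conjecture Proposition \ref{qTheorems}(2) to reduce to $m=1$) is what makes the section ring the polynomial ring, which is what rigidifies the decomposition.
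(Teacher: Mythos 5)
Your argument is correct and is essentially the paper's own proof: both use that $h^0(\ml^m)=1$ (via quantum Fulton) to force any decomposition $\ml^m=\mathcal{O}(D_1+D_2)$ with $D_i$ effective to satisfy $D_1+D_2=mE$, whence irreducibility of $E$ makes each $D_i$ a multiple of $E$, and both deduce the regular-face claim from Proposition \ref{b9} and Lemma \ref{pinky} and part (2) from indivisibility. The technical points you flag (smoothness/irreducibility of the stack, reduction to $m=1$ via Proposition \ref{qTheorems}(2)) are exactly the ingredients the paper relies on.
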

\begin{proof}
The extremal ray property in (1)  follows from  the method of  \cite[Lemma 2.1]{BHermit}: Suppose
$\ml^m$ is a tensor of two effective line bundles for some positive integer $m$. So we may write
$\ml^m=\mathcal{O}(D_1+D_2)$ where $D_1$ and $D_2$ are effective divisors on the smooth stack $\Par_{n,\mathcal{O},S}$. Since $\ml^m=\mathcal{O}(mE)$, and $\ml^m$ has only one global section up to scalars, we should have $mE=D_1+D_2$ as effective divisors. Since $E$ is irreducible, we see that $D_1$ and $D_2$ are multiples of $E$, and hence the extremal ray property follows. By Proposition \ref{b9}(2), and Lemma \ref{pinky}, $\ml$ lies on some regular face of
$\Pic^+_{\Bbb{Q}}(\Par_{n,\mathcal{O},S})$. Since $\ml$ is an F-line bundle, it is indivisible as an effective line bundle, and so (2) follows from (1).

\end{proof}

\begin{lemma}\label{corresp}
There is a bijective correspondence between F-line bundles on $\Par_{n,\mathcal{O},S}$, and F-vertices in $P_n(s)$. The correspondence takes $\mathcal{B}(\vec{\lambda},\ell)$ to $\vec{a}=(\kappa(\lambda^1/\ell),\dots,\kappa(\lambda^s/\ell))$. For the reverse correspondence a vertex $\vec{a}=(a^1,\dots,a^s)$ corresponds to $\mathcal{B}(\vec{\lambda},\ell)$ so that
\begin{enumerate}
\item  $(\kappa(\lambda^1/\ell),\dots,\kappa(\lambda^s/\ell))=\vec{a}$.
\item $\mathcal{B}(\vec{\lambda},\ell)$ is of grade zero (i.e., $\sum |\lambda^i|$ divisible by $n$).
\item The $\ell>0$ above is the smallest possible.
\end{enumerate}
(The condition on $\mathcal{B}(\vec{\lambda},\ell)$ is that it is indivisible as a grade zero line bundle.)
\end{lemma}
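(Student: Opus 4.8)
The plan is to write down both correspondences explicitly and then check they are mutually inverse, the only real work being that the line bundle attached to an F-vertex is genuinely an F-line bundle. \emph{From F-line bundles to F-vertices:} let $\ml=\mathcal{O}(E)=\mathcal{B}(\vec\lambda,\ell)$ be an F-line bundle on $\Par_{n,\mathcal{O},S}$. By Lemma~\ref{elgar}, $\Bbb{Q}_{\ge 0}\ml$ is an extremal ray of $\Pic^+_{\Bbb{Q}}(\Par_{n,\mathcal{O},S})$ and $\ml$ is indivisible in $\Pic^+(\Par_{n,\mathcal{O},S})$. Under the Killing-form identification of Proposition~\ref{b9} this cone is the cone over $P_{n,\Bbb{Q}}(s)\times 1$, so the ray corresponds to a vertex of $P_n(s)$, namely $\vec a=(\kappa(\lambda^1/\ell),\dots,\kappa(\lambda^s/\ell))$; here $\ell>0$ since an effective bundle has grade $0$ and Theorem~\ref{blacktea} then forces $\ell>0$. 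As $h^0(\Par_{n,\mathcal{O},S},\ml)=1$, Proposition~\ref{kappa}(2) (with $\mathcal{N}=\mathcal{O}$) shows the equation $A_1\cdots A_s=I_n$, $A_i\in\op{SU}(n)$ conjugate to $a^i$, has a unique solution up to simultaneous conjugation; i.e.\ $\vec a$ is an F-vertex in the sense of Definition~\ref{defFprime}.

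\emph{From F-vertices to F-line bundles:} let $\vec a$ be an F-vertex. Vertices of $P_n(s)$ are rational, so $R=\Bbb{Q}_{\ge 0}(\vec a,1)$ is a ray which by Proposition~\ref{b9} is extremal in $\Pic^+_{\Bbb{Q}}(\Par_{n,\mathcal{O},S})$. By Proposition~\ref{LaSo}, $\Pic(\Par_{n,\mathcal{O},S})=(\frh_{\Bbb{Z}}^*)^s\oplus\Bbb{Z}$ is a torsion-free lattice; a grade-$0$ lattice point $(\vec\mu,m)$ on $R$ with $m>0$ automatically has $\vec\mu$ dominant of level $m$ and $\kappa(\vec\mu/m)=\vec a\in P_n(s)$, hence is effective by Theorem~\ref{blacktea} together with quantum saturation (Proposition~\ref{qTheorems}(1)), and such points exist (rescale any integral point of $R$ by $n$). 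On a ray the level determines the point, so these effective bundles are linearly ordered by level; let $\mathcal{B}(\vec\lambda,\ell)$ be the one of least level $\ell=\ell_0>0$. It satisfies conditions (1)--(3) by construction, and it is indivisible in $\Pic^+$: any factorisation into effective bundles has both factors on $R$ by extremality, and a level-$0$ effective bundle on $R$ is trivial (the level-$0$ point of $R$ is the origin and $\Pic$ is torsion free), so both factors have level $\ge\ell_0$ and one is $\mathcal{O}$; conversely the indivisible grade-$0$ effective bundle on $R$ is this minimal one, which is the parenthetical reformulation. Finally, since $\vec a$ is an F-vertex, Proposition~\ref{kappa}(1),(2) give that $\mathcal{B}(\vec\lambda,\ell)$ is effective with $h^0=1$, and then $h^0(\mathcal{B}(\vec\lambda,\ell)^m)=1$ for all $m\ge 0$ by quantum Fulton (Proposition~\ref{qTheorems}(2)).

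It remains to see that $\mathcal{B}(\vec\lambda,\ell)=\mathcal{O}(E)$ has $E$ non-empty, reduced and irreducible, hence is an F-line bundle (Definition~\ref{defF}). Non-emptiness is clear, a nowhere-vanishing section would trivialise the non-trivial line bundle $\mathcal{B}(\vec\lambda,\ell)$. Write $E=\sum_i m_i E_i$ with the $E_i$ distinct irreducible; each $[\mathcal{O}(E_i)]\in\Pic^+_{\Bbb{Q}}$ and their positive combination lies on the extremal ray $R$, so each $[\mathcal{O}(E_i)]\in R$. A component with $\mathcal{O}(E_i)$ of level $0$ would be trivial, which is impossible for a nonzero effective divisor because $H^0(\Par_{n,\mathcal{O},S},\mathcal{O})=\Bbb{C}$ (the projection to $\Bun_{\mathcal{O}}(n)$ has proper connected fibres $\Fl(n)^s$, and $\Bun_{\mathcal{O}}(n)$ is connected with only constant global functions). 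Hence every $\mathcal{O}(E_i)$ has level $\ge\ell_0$, and comparing levels in $\sum_i m_i[\mathcal{O}(E_i)]=[\mathcal{B}(\vec\lambda,\ell)]$ forces a single term with $m_1=1$ of level $\ell_0$; thus $E=E_1$ is reduced and irreducible.

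\emph{Mutual inverseness and the main obstacle:} starting from an F-vertex $\vec a$, the backward then forward map returns $\kappa(\vec\lambda/\ell)=\vec a$. Conversely, if $\ml=\mathcal{O}(E)=\mathcal{B}(\vec\lambda,\ell)$ is an F-line bundle with image $\vec a$, we must show $\ml$ has the least level among effective bundles on the ray $R$ through $\vec a$. If $\mathcal{B}(\vec\mu,m)$ were effective on $R$ with $0<m<\ell$, then $m\vec\lambda-\ell\vec\mu$ is constant at each point of $S$, so $\mathcal{B}(\vec\lambda,\ell)^m=\mathcal{B}(\vec\mu,m)^\ell$; from $h^0(\mathcal{B}(\vec\mu,m)^\ell)=h^0(\mathcal{O}(mE))=1$ and quantum Fulton we get $h^0(\mathcal{B}(\vec\mu,m))=1$, and comparing zero divisors gives $mE=\ell\operatorname{div}(s)$ for the section $s$ of $\mathcal{B}(\vec\mu,m)$; since $E$ is irreducible and reduced, $\operatorname{div}(s)=cE$ with $c\in\Bbb{Z}_{\ge1}$, whence $m=\ell c\ge\ell$, a contradiction. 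So $\ell=\ell_0$ and $\ml$ is the backward image of $\vec a$, and the two maps are inverse bijections. The delicate point throughout is the reducedness and irreducibility of the divisor $E$ attached to an F-vertex: it rests on the extremal-ray description of Proposition~\ref{b9}, on the indivisibility forced by minimality of $\ell_0$, and on ruling out spurious level-$0$ components, which in turn uses that $\Par_{n,\mathcal{O},S}$ carries only constant global functions; a related subtlety, handled above, is that an F-line bundle automatically has the least level among effective bundles on its ray, which is what makes the two constructions genuinely match.
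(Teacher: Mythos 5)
Your proof is correct and follows the same route as the paper: Lemma~\ref{elgar} and Proposition~\ref{b9} for the forward direction, and quantum saturation together with indivisibility on the extremal ray for the reverse. You supply in full several steps the paper leaves implicit — that the grade-zero lattice points of positive level on the ray are the multiples of a single minimal element, that the divisor attached to an F-vertex is reduced and irreducible, and that an F-line bundle automatically has least level on its ray — which is a faithful fleshing-out rather than a different argument.
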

\begin{proof}
Let $\ml=(\vec{\lambda},\ell)$ and $\vec{a}$ satisfy the relation $\vec{a}=(\kappa(\lambda^1/\ell),\dots,\kappa(\lambda^s/\ell))$. Then the representation variety of unitary representations with local monodromies given by $\vec{a}$ is identified with the Proj of the graded ring $\oplus_{N=0}^{\infty} H^0(\Par_{n,\mathcal{O},S}, \mathcal{B}(\vec{\lambda},\ell)^N)$. If $\mathcal{B}(\vec{\lambda},\ell)$
is an F-line bundle, then this Proj is a point.  By Lemma \ref{elgar} (1), $\ml$ gives an extremal ray  of $\Pic^+_{\Bbb{Q}}(\Par_{n,\mathcal{O},S})$. This shows the forward direction of the correspondence (also use Proposition \ref{b9}).

For the reverse direction, note that the $\mathcal{B}(\vec{\lambda},\ell)$ described in the statement is effective by quantum saturation (Proposition \ref{qTheorems}). Since (by construction) it is not a multiple of an effective line bundle, the reverse direction follows from Proposition \ref{b9}.
\end{proof}
\begin{remark}\label{inpractice}
Note that ``in practice'', to find an F-line bundle corresponding to a vertex $\vec{a}$ we write $a^j=b^j/\tilde{\ell}$ for a common denominator $\tilde{\ell}$, so that $b^j\in \Bbb{Z}^n$ for all $j\in[s]$. Now let $\nu^j=b^j-b^j_n(1,\dots,1)$.
The line bundle $\mathcal{B}(\vec{\nu},\tilde{\ell})$ has grade zero and $(\kappa(\nu^1/\tilde{\ell}),\dots,\kappa(\nu^s/\tilde{\ell}))=\vec{a}$. But it may not be indivisible by this property. We then divide $(\vec{\nu},\tilde{\ell})$ by a positive integer $k$ so that it is indivisible of grade zero. The desired pair $(\lambda,\ell)$ is then $\frac{1}{k}(\vec{\nu},\tilde{\ell})$.
\end{remark}

\begin{lemma}\label{divisorE}
Let $\ml=\mathcal{B}(\vec{\lambda},\ell)$ be an F-line bundle on $\Par_{n,\mathcal{O},S}$. Write it as $\mathcal{O}(E)$ for a reduced irreducible divisor  $E\subseteq \Par_{n,\mathcal{O},S}$.
Then $E$ is equal to an effective cycle $C(d,r,\mathcal{O},n,\vec{J})$ (Definition  \ref{cyclist})  of codimension one i.e., $dn  +r(n-r)-\sum_{i=1}^s |\sigma_{J^i}|=1$, with the further property that
$\sum_{i=1}^s \sum_{k\in J^i} \lambda^i_k> d\ell +\sum_{i=1}^s \frac{r|\lambda^i|}{n}$.
\end{lemma}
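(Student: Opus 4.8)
The plan is to identify the unique section of the F-line bundle $\ml = \mathcal{B}(\vec{\lambda},\ell)$ with a degeneracy locus of the type built in Section \ref{quot1}, and then read off the required numerical inequality from the fact that $\ml$ lies on a regular facet. First I would invoke Lemma \ref{elgar}(1): since $\ml$ is an F-line bundle, it generates an extremal ray of $\Pic^+_{\Bbb{Q}}(\Par_{n,\mathcal{O},S})$ lying on some regular face, which by the discussion following Theorem \ref{ABW} and by Proposition \ref{b9} is a face $\mf = \mf_{d,r,I^1,\dots,I^s}$ cut out by equality in the eigenvalue inequality \eqref{wallnew} for some Gromov-Witten datum $(d,r,n,I^1,\dots,I^s)$ with $\langle\sigma_{I^1},\dots,\sigma_{I^s}\rangle_d = 1$. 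This pins down the combinatorial data that will produce the divisor.

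Next I would produce the divisor concretely. The natural candidate is an effective cycle $C(d',r,\mathcal{O},n,\vec{J})$ from Definition \ref{cyclist}, where $\vec{J}$ is a one-step strengthening of $\vec{I}$ at one of the marked points (replacing some $a \in I^j$ by $a-1$, or handling the $a=1$ wraparound with a degree shift, exactly as in Sections \ref{bruck02}--\ref{bruck03} and \ref{basicR}); by Proposition \ref{maya} the cycle $\Omega(d',r,\mathcal{O},n,\vec{J})$ is irreducible, so $C(d',r,\mathcal{O},n,\vec{J})$ is an irreducible effective cycle, of codimension $d'n + r(n-r) - \sum_i |\sigma_{J^i}|$, and the one-step strengthening arranges this codimension to be $1$. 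The key point is to show $\mathcal{O}(C(d',r,\mathcal{O},n,\vec{J})) \cong \ml$ as line bundles, i.e. that this divisor class lies on the extremal ray of $\ml$. For this I would argue that $C$ is non-zero (a general parabolic bundle does admit such a subbundle/subsheaf — this is where the hypothesis that $\ml$ sits on the face $\mf$, together with the Gromov-Witten count being $1$, enters), that $\mathcal{O}(C)$ is effective and lies on the same regular face $\mf$ by the enumerative computation of its class (Proposition \ref{enumerative}, or equivalently the formulas in Section \ref{concept}), and hence, since $\ml$ generates the unique F-line-bundle ray on that face (or at worst one of finitely many extremal rays, matched by the class computation), that $\mathcal{O}(C)$ is a positive rational multiple of $\ml$; indivisibility of $\ml$ as an effective line bundle and the fact that $C$ is a reduced irreducible divisor then forces $\mathcal{O}(C) = \ml = \mathcal{O}(E)$, so $E = C(d,r,\mathcal{O},n,\vec{J})$ after renaming $d' \leadsto d$.

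Finally, the strict inequality $\sum_{i}\sum_{k\in J^i}\lambda^i_k > d\ell + \sum_i \frac{r|\lambda^i|}{n}$ is exactly the statement that the datum $(d,r,n,\vec{J})$ does \emph{not} give a valid facet inequality for $\ml$ — equivalently, that $E$ is a genuine (non-empty) divisor rather than the whole stack. I would derive this either directly: if the reverse (non-strict) inequality held, then by Theorem \ref{ABW} / the wall inequalities \eqref{wallnew} the section of $\ml$ would be forced to vanish, contradicting $h^0(\Par_{n,\mathcal{O},S},\ml)=1$ with $E \neq \Par_{n,\mathcal{O},S}$; or from the relation between $\vec{J}$ and $\vec{I}$ together with equality in \eqref{wallnew} for $\vec{I}$ — strengthening the Schubert condition strictly increases the left-hand side of the wall inequality, giving the strict sign. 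I expect the main obstacle to be the middle step: showing that the divisor class $\mathcal{O}(C(d,r,\mathcal{O},n,\vec{J}))$ is \emph{precisely} $\ml$ and not merely proportional to it, which requires controlling that $C$ is non-zero and that its class lands on the correct extremal ray — this is where the enumerative computations of Section \ref{concept} and the F-line-bundle characterization (Lemma \ref{elgar}, Lemma \ref{corresp}) must be combined carefully, and where one must rule out the possibility that the general parabolic bundle's unique section vanishes on a different divisor.
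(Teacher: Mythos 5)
Your approach has a genuine gap: you construct the candidate divisor as a one-step strengthening $\vec{J}$ of the Gromov--Witten datum $\vec{I}$ defining the regular face $\mf$ on which $\ml$ lies. But that construction produces exactly the divisors $D(a,j)$ of Theorem \ref{Adagio}, and the lemma must hold for \emph{every} F-line bundle on $\mf$, including those lying on $\mf^{(2)}$ and obtained by induction from Levi subgroups (Proposition \ref{condensed}); Lemma \ref{powercord} characterizes precisely when an F-line bundle is of the form $\mathcal{O}(D(a,j))$, and the paper does not assert (Question \ref{Q} is left open) that this always happens. For such an $\ml$, each $\mathcal{O}(D(a,j))$ generates a \emph{different} extremal ray of the face, so your cycle $C$ is not proportional to $\ml$ and the proportionality-plus-indivisibility step collapses. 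You flag this worry at the end (``rule out the possibility that the \dots section vanishes on a different divisor'') but do not resolve it, and it cannot be resolved within your framework because the data $(d,r,\vec{J})$ appearing in the lemma is in general unrelated to a one-step strengthening of the face datum.

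The paper's proof supplies the missing idea by going through instability rather than through the face combinatorics. By quantum Fulton (Proposition \ref{qTheorems}(2)), $h^0(\ml^N)=1$ for all $N$, so every section of every power vanishes on $E$ and a generic point of $E$ is non-semistable for the polarization $\ml$. The Harder--Narasimhan maximal destabilizing subsheaf $\mv$ at such a generic point then \emph{defines} the data: $r=\rk\mv$, $d=-\deg\mv$, and $J^i$ records the Schubert position of $\mv_{p_i}$. Irreducibility of $\Omega(d,r,\mathcal{O},n,\vec{J})$ (Proposition \ref{maya}) shows its image lies in $E$, and uniqueness of the HN maximal contradictor makes $\Omega(d,r,\mathcal{O},n,\vec{J})\to E$ generically one-to-one, giving $E=C(d,r,\mathcal{O},n,\vec{J})$ with codimension one forced because $E$ is a divisor. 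The strict inequality is then simply the violated semistability inequality for $\mv$ --- not a consequence of Theorem \ref{ABW} (which only constrains data with Gromov--Witten number one), as your first alternative suggests. If you want to salvage your route, you would need to restrict to the case covered by Lemma \ref{powercord}; for the general case the HN argument is essential.
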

\begin{proof}
Any (generic) point $(\mw,\me,\gamma)$ of $E$ is unstable for $\ml$. Here we have used the quantum Fulton conjecture (Proposition \ref{qTheorems} (2)) that $h^0(\Par_{n,{\mathcal{O}},S},\ml^N)=1$ for all $N>1$, and hence all sections of $H^0(\Par_{n,{\mathcal{O}},S},\ml^N)$ vanish on $E$.

Let the Harder-Narasimhan maximal contradictor of semistability at a (generic) point $(\mw,\me,\gamma)$ of $E$ be a subbundle $\mv$ of the universal bundle $\mw$ on  $\Par_{n,{\mathcal{O}},S}$ of rank $r$, and suppose it meets the flags of the universal bundle in points corresponding to the Schubert cells parametrized by subsets $J^i\subseteq[n]$, $|J^i|=r$. That is, assume  $\mv_p\in \Omega^0_{J^i}(\me_p)\subseteq \Gr(r,\mw_p)$ for all $p=p_i\in S$.  Let $d=-\deg(\mv)$.

It is now easy to see that $E=C(d,r,\mathcal{O},n,\vec{J})$, and the  codimension of this class is one, i.e., $dn+r(n-r)-\sum_{i=1}^s |\sigma_{J^i}|=1$. This is because the stack $\Omega(d,r,\mathcal{O},n,\vec{J})$ is irreducible and its image under $\pi:\Omega(d,r,\mathcal{O},n,\vec{J})\to
\Par_{n,{\mathcal{O}},S}$ is destabilizing for $\ml$, and hence $\pi$ maps $\Omega(d,r,\mathcal{O},n,\vec{J})$ into $E$. The uniqueness of Harder-Narasimhan maximal contradictors of semistability then shows that $\pi:\Omega(d,r,\mathcal{O},n,\vec{J})\to E$ is generically one-to-one. Therefore $E$ is the cycle theoretic pushforward of $\Omega(d,r,\mathcal{O},n,\vec{J})$. The last inequality is the violated semistability inequality.
\end{proof}
The following gives a more practical approach towards listing all F-line bundles (hence F-vertices of $P_n(s)$ by Lemma \ref{corresp}) on $\Par_{n,\mathcal{O},S}$ in that we do not have to solve systems of linear inequalities.
\begin{proposition}\label{practical}
Consider a line bundle  $\ml=\mathcal{B}(\vec{\lambda},\ell)$ which is indivisible as a grade zero line bundle on $\Par_{n,\mathcal{O},S}$. Then $\ml$  is an F-line bundle  iff there is a cycle $E=C(d,r,\mathcal{O},n,\vec{J})$ (Definition  \ref{cyclist})  of codimension one i.e., $dn  +r(n-r)-\sum_{i=1}^s |\sigma_{J^i}|=1$ so that the following conditions hold:
\begin{enumerate}
\item[(i)] $\ml=\mathcal{O}(E)$. Note that we have formulas for $\mathcal{O}(E)$ from Proposition  \ref{enumerative}.
\item[(ii)] $\sum_{i=1}^s \sum_{k\in J^i} \lambda^i_k> d\ell +\sum_{i=1}^s \frac{r|\lambda^i|}{n}.$
\end{enumerate}
Therefore one can list all F-line bundles on $\Par_{n,\mathcal{O},S}$ by listing all cycles $E=C(d,r,\mathcal{O},n,\vec{J})$, computing $\mathcal{O}(E)=\mathcal{B}(\vec{\lambda},\ell)$ from Proposition  \ref{enumerative}, checking if $\mathcal{O}(E)$ satisfies the strict inequality in (ii), and is indivisible as a grade zero line bundle.
\end{proposition}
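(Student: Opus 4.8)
The plan is to prove the equivalence in two implications — the forward one being essentially a citation and the converse the substantive part — and then to read off the algorithmic consequence. For the forward implication there is nothing new to do. If $\ml=\mathcal O(E)$ is an F-line bundle, then $E$ is a reduced irreducible divisor and Lemma~\ref{divisorE} produces data $(d,r,\vec J)$ with $dn+r(n-r)-\sum_{i=1}^s|\sigma_{J^i}|=1$ such that $E=C(d,r,\mathcal O,n,\vec J)$ as cycles and $\sum_{i}\sum_{k\in J^i}\lambda^i_k>d\ell+\sum_{i}r|\lambda^i|/n$. Condition~(i) is then the identity $\ml=\mathcal O(E)$, whose right-hand side is exactly the divisor class computed in Proposition~\ref{enumerative}, and condition~(ii) is precisely the displayed inequality.

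For the converse, assume $\ml=\mathcal B(\vec\lambda,\ell)$ is indivisible of grade zero and that $E=C(d,r,\mathcal O,n,\vec J)$ is a codimension-one cycle satisfying (i) and (ii). First I would check that $E$ is a genuine non-empty, reduced, irreducible divisor. Being indivisible, $\ml$ is in particular non-trivial, so $C=\mathcal O(E)$ is a nonzero effective divisor. By Proposition~\ref{maya}, $\Omega:=\Omega(d,r,\mathcal O,n,\vec J)$ is irreducible with $\Omega^0$ dense in it, and by Lemma~\ref{repres} the morphism $\pi:\Omega\to\Par_{n,\mathcal O,S}$ is proper and representable; since $C\ne 0$ has codimension one, $\pi$ is generically finite onto its image, so $C=m\,[\overline{\pi(\Omega)}]$ with $\overline{\pi(\Omega)}$ irreducible and $m\ge 1$. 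As $\mathcal O(\overline{\pi(\Omega)})$ is effective it has grade zero (Definition~\ref{grade}), so $\ml=\mathcal O(\overline{\pi(\Omega)})^{\,m}$ would contradict the indivisibility of $\ml$ as a grade-zero line bundle unless $m=1$; hence $E=\overline{\pi(\Omega)}$ is reduced and irreducible, and non-empty. Choose $s_E\in H^0(\Par_{n,\mathcal O,S},\ml)$ with $\op{div}(s_E)=E$, so that $h^0(\ml)\ge 1$; the formula of Proposition~\ref{enumerative} also exhibits the $\lambda^i$ as dominant integral weights and $\ell>0$, so Proposition~\ref{qTheorems} applies to $\ml$.

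The heart of the converse is to show that every point of $E$ is non-semistable for the polarization $\ml$. The generic point $(\mw,\me,\gamma)$ of $E$ comes from a point of the dense open $\Omega^0$, so $\mw$ carries a rank-$r$ subbundle $\mv$ of degree $-d$ with $\mv_p\in\Omega^0_{J^i}(\me_p)$ for every $p=p_i\in S$. Dividing (ii) by $\ell$ and passing through the Killing identification $\kappa$ turns it into the statement that the parabolic $\ml$-slope of $\mv$ strictly exceeds that of $\mw$; this is exactly the Mehta--Seshadri destabilizing inequality for the pair $\mv\subseteq\mw$ — it is the same computation whose conclusion, phrased as a \emph{violated} semistability inequality, closes the proof of Lemma~\ref{divisorE}, now read in the reverse direction. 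Hence $(\mw,\me,\gamma)$ is unstable for $\ml$, and since the non-semistable locus is closed it contains all of $E$. Every section of every power $\ml^N$ vanishes on the non-semistable locus (the standard GIT/Hitchin fact, used in the same form in the proof of Lemma~\ref{divisorE}), hence vanishes on $E$; as $E$ is reduced and irreducible with $\ml=\mathcal O(E)$, such a section is a scalar multiple of $s_E$, and with $h^0(\ml)\ge 1$ we get $h^0(\ml)=1$. By the quantum Fulton conjecture (Proposition~\ref{qTheorems}(2)) this forces $h^0(\ml^m)=1$ for all $m\ge 0$, so, together with $E$ being reduced, irreducible and non-empty, $\ml$ is an F-line bundle (Definition~\ref{defF}).

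I expect the main obstacle to be precisely this last-paragraph point: verifying that the generic point of $E$ is unstable for $\ml$, i.e.\ that (ii) is the failure of the Mehta--Seshadri/GIT semistability inequality for a subbundle of the prescribed rank, degree and flag type, together with the guarantee (from density of $\Omega^0$ in Proposition~\ref{maya}) that such a subbundle genuinely exists there. The remaining ingredients — reducedness via the grade homomorphism, vanishing of sections at non-semistable points, and the passage to all powers via quantum Fulton — are routine. Finally, the ``Therefore'' is immediate: for fixed $n$ and $|S|=s$, the relation $dn+r(n-r)-\sum_i|\sigma_{J^i}|=1$ with $1\le r\le n-1$ and $0\le|\sigma_{J^i}|\le r(n-r)$ bounds $d$, so there are only finitely many cycles $C(d,r,\mathcal O,n,\vec J)$; Proposition~\ref{enumerative} computes $\mathcal O(C)=\mathcal B(\vec\lambda,\ell)$ for each, and by the equivalence just proved the F-line bundles on $\Par_{n,\mathcal O,S}$ are exactly those classes $\mathcal O(C)$ that are indivisible of grade zero and satisfy the strict inequality (ii); by Lemma~\ref{corresp} these correspond bijectively to the F-vertices of $P_n(s)$.
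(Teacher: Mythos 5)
Your proof is correct and follows essentially the same route as the paper: the forward direction is Lemma \ref{divisorE}, and the converse uses (ii) as the violated semistability inequality to force every section to vanish on the (irreducible) support of the cycle, with indivisibility of the grade-zero class ruling out a multiple and quantum Fulton handling higher powers. Your write-up simply makes explicit the reducedness argument and the GIT vanishing step that the paper leaves terse.
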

\begin{proof}
If $\ml=\mathcal{B}(\vec{\lambda},\ell)$ is an F-line bundle then the cycle $C(d,r,\mathcal{O},n,\vec{J})$ exists by the proof of Proposition  \ref{divisorE}. For the reverse implication note that $\mathcal{O}(E)$ is non-semistable on the support of the cycle $C(d,r,\mathcal{O},n,\vec{J})$ (the support is irreducible because it is the image of the irreducible $\Omega(d,r,\mathcal{O},n,\vec{J})$) by (ii), hence any global section of $\ml$ vanishes on this support. The desired assertion follows since  $E$ is not a multiple of an effective divisor by the indivisibility property.
\end{proof}

\section{Strange duality}\label{SD}
\subsection{Proof of Theorem \ref{egregium}}
 We show how to  associate a level $\ell$ representation $\lambda$ of $\op{SL}(n)$ to  a conjugacy class in $\op{GL}(\ell,\Bbb{C})$ of order dividing $n$:
\begin{defi} \label{reprem2}
Suppose $\bar{A}\in \op{GL}(\ell,\Bbb{C})$ such that  $A^n=I_{\ell}$.   We can find an $\ell\times n$ Young diagram $\mu$, with $n> \mu_1\geq \mu_2\geq \dots \geq \mu_{\ell}\geq 0$, such that $\bar{A}$ is conjugate to the diagonal matrix with entries $\exp(2\pi\sqrt{-1}\mu_i/n)$, for $i=1,\dots,\ell$.

  Let $\lambda=\lambda(\bar{A})=\mu^T$, the transpose Young diagram of $\mu$, which fits in a box of size $n\times \ell$. Note that $\lambda$ is dominant integral weight of $\op{SL}(n)$ of level $\ell$, i.e., $\lambda_1-\lambda_n \leq \ell$, and also  $\lambda_n=0$, which is  normalisation condition on $SL(n)$ representations (see Sections \ref{loudvoice} and \ref{loudvoice2}).
\end{defi}

\begin{lemma}\label{reprem}Let $\ell$ and $n$ be fixed. The correspondence $\bar{A}\to \lambda(\bar{A})$ in Definition \ref{reprem2} gives a bijective correspondence between the  set of semisimple conjugacy classes  $\bar{A}\in\op{GL}(\ell,\Bbb{C})$ with $\bar{A}^n=I_{\ell}$,  and the set of  level $\ell$ representations of $\op{SL}(n)$.
\end{lemma}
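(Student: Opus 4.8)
The plan is to factor $\bar A \mapsto \lambda(\bar A)$ as a composite of three elementary bijections and to check invertibility of each. First I would note that a semisimple conjugacy class in $\op{GL}(\ell,\Bbb{C})$ is nothing but its multiset of $\ell$ eigenvalues, and that imposing $\bar A^n = I_\ell$ selects exactly the classes all of whose eigenvalues are $n$th roots of unity. Writing each such root uniquely as $\exp(2\pi\sqrt{-1}\,k/n)$ with $k \in \{0,1,\dots,n-1\}$ and listing the exponents in weakly decreasing order identifies these multisets with tuples $\mu=(\mu_1,\dots,\mu_\ell)$ satisfying $n > \mu_1 \ge \mu_2 \ge \dots \ge \mu_\ell \ge 0$, i.e.\ with Young diagrams fitting in an $\ell\times(n-1)$ box; the sorting is canonical, so this assignment is a well-defined bijection, and the strict bound $\mu_1 < n$ is precisely what rules out the ambiguity $\exp(2\pi\sqrt{-1})=\exp(0)$.

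Next I would use that transposition of Young diagrams is an involution, hence restricts to a bijection between diagrams fitting in an $\ell\times(n-1)$ box and diagrams fitting in an $(n-1)\times\ell$ box. Finally I would identify a Young diagram $\lambda$ in an $(n-1)\times\ell$ box --- equivalently an integer tuple $\ell \ge \lambda_1 \ge \dots \ge \lambda_{n-1}\ge 0$ --- with the dominant integral weight of $\op{SL}(n)$ obtained by appending $\lambda_n=0$: the normalization $\lambda_n=0$ forces at most $n-1$ nonzero parts, and the level condition $\lambda_1-\lambda_n\le\ell$ reads $\lambda_1\le\ell$, so level $\ell$ weights of $\op{SL}(n)$ (with the normalization of Section \ref{loudvoice}) are exactly such diagrams. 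Composing the three bijections recovers $\bar A\mapsto\mu\mapsto\mu^T=\lambda(\bar A)$ of Definition \ref{reprem2}, which proves the lemma.

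I do not anticipate a genuine obstacle here: the statement is a bookkeeping exercise. The only points needing care are conventions --- that ``level $\ell$'' is the inequality $\lambda_1-\lambda_n\le\ell$ (matching Definition \ref{reprem2}), not an equality, and that the normalization $\lambda_n=0$ is what makes the relevant box $(n-1)\times\ell$ rather than $n\times\ell$, consistently with the strict bound $\mu_1<n$ on the eigenvalue side. Once these are pinned down, invertibility of each of the three steps is immediate.
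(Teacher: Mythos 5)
Your proof is correct and follows essentially the same route as the paper, which simply exhibits the inverse map (level $\ell$ weight $\lambda$ with $\lambda_n=0$ $\mapsto$ $\mu=\lambda^T$ $\mapsto$ the diagonal class with eigenvalues $\exp(2\pi\sqrt{-1}\mu_i/n)$); your factorization into three bijections is the same bookkeeping made explicit, and your attention to the conventions ($\lambda_n=0$ forcing $\mu_1<n$, level as the inequality $\lambda_1-\lambda_n\le\ell$) matches the paper's.
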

\begin{proof}
The reverse correspondence is the following: A  level $\ell$ representation of $\op{SL}(n)$
gives a Young diagram  $\lambda$ that fits in an $n\times \ell$ box with $\lambda_n=0$ (the  normalisation condition).   Let $\mu=\lambda^T$.  Clearly $\mu_1<n$ since $\lambda_n=0$. The conjugacy class corresponding to $\lambda$ is then the diagonal matrix with entries $\exp(2\pi\sqrt{-1}\mu_i/n)$, for $i=1,\dots,\ell$.
\end{proof}

Let $\ell$ and $n$ be fixed. Applying Lemma \ref{reprem}, coordinate by coordinate, we find that the following sets are in bijective correspondence:
\begin{enumerate}
\item[(A)]The set of $s$-tuples of semisimple  conjugacy classes $\mathcal{A}=(\bar{A}_1,\dots,\bar{A}_s)$ in $\op{GL}(\ell,\Bbb{C})$, with $\bar{A}_i^n=1$, such that  $\prod_{i=1}^s \det \bar{A}_i =1$.
\item[(B)] The set of grade zero bundles $\mathcal{B}(\vec{\lambda},\ell)$  of level $\ell$ on $\Par_{n,\mathcal{O},S}$, i.e., $\vec{\lambda}$ is a tuple of  dominant integral weights  for $\op{SL}(n)$ of level $\ell$, and  $n$ divides $\sum_{i=1}^s|\lambda^i|$.
\end{enumerate}
The correspondence between (A) and (B) is 
\begin{equation}\label{corr2}
\mathcal{A}\mapsto (\lambda^1,\dots,\lambda^s),\ \ \lambda^i=\lambda(\bar{A}_i), \ i=1,\dots,s
\end{equation}
(also see Remark \ref{newRem} for the compatibility with the point $v(\mathcal{A})$ from Definition \ref{finiteness}). 

The condition on  the grade in  (B) is that $n$ divides $\sum_{i=1}^s|\lambda^i|$. Since $|\mu|=|\mu^T|$ for a Young diagram $\mu$, this is the same condition, under the correspondence, as the determinant condition in (A).
\begin{remark}\label{newRem}
In the above correspondence \eqref{corr2}, the point $v(\mathcal{A})\in \Delta_n^s$ from Definition \ref{finiteness} is
$(\kappa(\lambda^1/\ell),\dots,\kappa(\lambda^s/\ell))$ (see the Killing form isomorphism $\kappa$ from Section \ref{loudvoice}, and Theorem \ref{blacktea}).

\end{remark}

The following numerical strange duality \eqref{GepnerPP} is a direct consequence of results of Witten and Gepner \cite{Gepner,Witten}. The full strange duality asserts that the vector spaces in \eqref{GepnerPP} are in fact  canonically dual to each other (see Section \ref{understanding}), but we will not need this fact.
\begin{proposition}\label{BWag}
Suppose $\ml=\mathcal{B}(\vec{\lambda},\ell)$ is grade zero line bundle on $\Par_{n,\mathcal{O},S}$ with $\ell>0$, $\vec{\lambda}$ an $s$-tuple of dominant integral weights, which are normalised, i.e., $\lambda^i_n=0, i\in[s]$. Write $\sum_{i=1}^s|\lambda^i|=nu$. Let  $\widetilde{\mn}$ be a line bundle of degree $-u$ on $\Bbb{P}^1$. Let $\ml'$ be the grade zero line bundle $\mathcal{B}(\vec{\lambda}^T,n)$ on $\Par_{\ell,\widetilde{\mathcal{N}},S}$. Then,
\begin{equation}\label{GepnerPP}
h^0(\Par_{n,\mathcal{O},S},\ml)=h^0(\Par_{\ell,\widetilde{\mathcal{N}},S},\ml')
\end{equation}
\end{proposition}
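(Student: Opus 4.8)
The plan is to reduce the asserted equality \eqref{GepnerPP} to Witten's identification of the ranks $h^0(\Par_{n,\mathcal{O},S},\ml)$ with Gromov--Witten numbers of Grassmannians, together with the ``Grassmann duality'' $\Gr(n,n+\ell)=\Gr(\ell,n+\ell)$, and then bookkeep the central twists that relate the untwisted side and the $\widetilde{\mathcal{N}}$-twisted side via the shift operations of Section \ref{shifty}. First I would record, using \eqref{newlabel3} and Lemma \ref{newlabel2} (Witten's theorem in the form stated in the excerpt), that for a grade zero, normalised $\ml=\mathcal{B}(\vec{\lambda},\ell)$ of level $\ell$ on $\Par_{n,\mathcal{O},S}$ one has
$$h^0(\Par_{n,\mathcal{O},S},\ml)=\langle\sigma_{I^1},\dots,\sigma_{I^s}\rangle_{d,D}$$
for the appropriate data $(d,D)$ and subsets $I^j\subseteq[n]$ of cardinality $\ell$ obtained by inverting the correspondence of Definition \ref{correspondence}: the weight $\lambda^j$ of level $\ell$ for $\op{SL}(n)$, viewed as a Young diagram in an $n\times\ell$ box, determines a Schubert class $\sigma_{I^j}$ on $\Gr(n,n+\ell)$ with $|\sigma_{I^j}|=|\lambda^j|$. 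The degree $d$ and twist $D$ absorb the grade-zero condition $\sum|\lambda^j|=nu$; concretely the numerical constraint \eqref{numerical} forces $d$ to be determined by $u$, $n$, $\ell$.

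Next I would do the same computation on the other side: the transposed data $\vec{\lambda}^T$ consists of level $n$ weights for $\op{SL}(\ell)$, the bundle $\ml'=\mathcal{B}(\vec{\lambda}^T,n)$ lives on $\Par_{\ell,\widetilde{\mathcal{N}},S}$ with $\deg\widetilde{\mathcal{N}}=-u$, and again by \eqref{newlabel3} its rank $h^0(\Par_{\ell,\widetilde{\mathcal{N}},S},\ml')$ equals a Gromov--Witten number $\langle\sigma_{I'^1},\dots,\sigma_{I'^s}\rangle_{d',D'}$ on $\Gr(\ell,n+\ell)$, where $\sigma_{I'^j}$ is the Schubert class attached to the transposed Young diagram $(\lambda^j)^T$ in an $\ell\times n$ box. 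Since $|\mu|=|\mu^T|$, the codimensions match; since transposition of Young diagrams in the $n\times\ell$ box is exactly the identification of Schubert classes under $\Gr(n,n+\ell)\cong\Gr(\ell,n+\ell)$ (``Grassmann duality''), the class $\sigma_{I^j}$ is carried to $\sigma_{I'^j}$; and the quantum degree parameter $d$ of the small quantum cohomology $QH^*(\Gr(n,n+\ell))$ matches that of $QH^*(\Gr(\ell,n+\ell))$ under this isomorphism of rings (both being $\mathbb{Z}[q]$-algebras identified compatibly). Hence the two Gromov--Witten numbers coincide.

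The step I expect to be the main obstacle is the careful alignment of the degree shifts and central twists: the untwisted line bundle $\mathcal{B}(\vec{\lambda},\ell)$ is compared with a bundle on $\Par_{\ell,\widetilde{\mathcal{N}},S}$ with $\widetilde{\mathcal{N}}$ of degree $-u$, and to apply Witten's formula in the clean form of Lemma \ref{newlabel2} one must first pass to $d=0$ using \eqref{formula1} (the shift-operation identity $\langle\cdots\rangle_{d,D}=\langle\cdots\rangle_{d',D-1}$) and \eqref{formula2}, on \emph{both} sides, checking that the resulting subsets $I^j$, $I'^j$ really are transposes of each other after the shifts and that no off-by-one in the degree survives. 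Concretely, I would: (i) reduce the $\op{SL}(n)$-side to a $d=0$ Gromov--Witten number on $\Gr(n,n+\ell)$ via shifts, tracking how $\vec\lambda$ and $D$ change; (ii) reduce the $\op{SL}(\ell)$-side to a $d'=0$ Gromov--Witten number on $\Gr(\ell,n+\ell)$ the same way; (iii) invoke $\Gr(n,n+\ell)\cong\Gr(\ell,n+\ell)$ with its induced isomorphism of (quantum) cohomology rings, under which Schubert classes correspond by transposition of the indexing Young diagram; (iv) observe that the grade-zero condition $\sum|\lambda^j|=nu$ is symmetric in $n\leftrightarrow\ell$ after transposition ($\sum|(\lambda^j)^T|=\sum|\lambda^j|=nu=\ell u'$ for the appropriate $u'$), so the two sides have matching numerical data \eqref{numerical}; and conclude the equality \eqref{GepnerPP}. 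All of this is purely numerical once the dictionary of Definitions \ref{GW}, \ref{GWgen}, \ref{correspondence} and the formulas \eqref{newlabel3}, \eqref{formula1}, \eqref{formula2} are in hand, which is why I would only cite Witten--Gepner and the Grassmann duality rather than reprove them.
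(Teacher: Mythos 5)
Your proposal is correct and follows essentially the same route as the paper: convert both ranks to (generalized) Gromov--Witten numbers via Witten's theorem, identify them under Grassmann duality $\Gr(n,n+\ell)\cong\Gr(\ell,n+\ell)$ (which the paper implements concretely by passing to the dual of the quotient bundle $(\mathcal{W}/\mathcal{V})^*\subseteq\mathcal{W}^*$), and use the shift operations (Lemma \ref{threepointone}) to align the determinant twist with $\widetilde{\mathcal{N}}$ of degree $-u$. The only slip is notational: the indexing subsets on the $\op{SL}(n)$-side are subsets of $[n+\ell]$ of cardinality $n$ (Young diagrams in an $n\times\ell$ box), not subsets of $[n]$ of cardinality $\ell$, but your subsequent description makes clear this is what you meant.
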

\begin{proof}
The rank of $H^0(\Par_{n,\mathcal{O},S},\ml)$ equals a generalized Gromov-Witten number $\langle\sigma_{I^1}, \sigma_{I^2},\dots,\sigma_{I^s}\rangle_{0,D}$ with $D=\ell-u$, $I^1,\dots,I^s$ are subsets of $[n+\ell]$ each of cardinality $n$ corresponding to the partitions $\vec{\lambda}$ (see Definition  \ref{correspondence} and Lemma \ref{newlabel2}).

Now interpret the GW count as a count of subbundles of rank $\ell$ of degree $D$ of a generic bundle of degree $D$ by forming the (ordinary) dual of the quotient $(\mw/\mv)^*\subseteq \mw^*$ above. Using \eqref{newlabel3}, we  write the numbers above as $h^0(\Par_{\ell,\mathcal{N}',S},\mathcal{B}(\vec{\mu},n))$, with $\deg(\mathcal{N}')=D=\ell-u$. Using Lemma \ref{threepointone}, these numbers are equal to $h^0(\Par_{\ell,\widetilde{\mathcal{N}},S},\ml')$, which finishes the proof.


\end{proof}

\begin{proposition}\label{schubertt}
The following are equivalent in the correspondence  between (A) and (B):
\begin{enumerate}
\item There is a unitary representation of $\pi_1$ with  local monodromy data $\mathcal{A}$.
\item The line bundle $\mathcal{B}(\vec{\lambda},\ell)$ on $\Par_{n,{\mathcal{O}},S}$ is effective.
\item $v(\ma)$ is a point of $P_n(s)$ (see Definition \ref{finiteness}).
\end{enumerate}

\end{proposition}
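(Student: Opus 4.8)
The plan is to prove the two equivalences $(2)\Leftrightarrow(3)$ and $(1)\Leftrightarrow(2)$. For $(2)\Leftrightarrow(3)$ I would argue directly from the parabolic-bundle dictionary: under the correspondence between (A) and (B), $\mathcal{B}(\vec{\lambda},\ell)$ is a grade-zero line bundle on $\Par_{n,\mathcal{O},S}$ with $\ell>0$ and each $\lambda^i$ a dominant integral weight of level $\ell$ (visible from Definition \ref{reprem2}, where $\lambda^i=(\mu^i)^T$ sits in an $n\times\ell$ box with $\lambda^i_n=0$). Theorem \ref{blacktea} then says $H^0(\Par_{n,\mathcal{O},S},\mathcal{B}^m(\vec{\lambda},\ell))\neq 0$ for some $m>0$ exactly when $\tfrac1\ell(\kappa(\lambda^1),\dots,\kappa(\lambda^s))\in P_n(s)$; quantum saturation (Proposition \ref{qTheorems}(1)) upgrades ``for some $m>0$'' to ``$m=1$'', i.e.\ to effectivity of $\mathcal{B}(\vec{\lambda},\ell)$; and by linearity of the Killing-form identification $\kappa$ together with Remark \ref{newRem}, $\tfrac1\ell(\kappa(\lambda^1),\dots,\kappa(\lambda^s))=(\kappa(\lambda^1/\ell),\dots,\kappa(\lambda^s/\ell))=v(\mathcal{A})$. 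Chaining these three facts gives $(2)\Leftrightarrow(3)$.

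For $(1)\Leftrightarrow(2)$ I would route through numerical strange duality. Put $\mu^i=(\lambda^i)^T$ and write $\sum_i|\lambda^i|=nu$, an integer by grade zero; let $\widetilde{\mathcal{N}}$ be a line bundle on $\Bbb{P}^1$ of degree $-u$ and set $\ml'=\mathcal{B}(\vec{\mu},n)$ on $\Par_{\ell,\widetilde{\mathcal{N}},S}$. Proposition \ref{BWag} gives $h^0(\Par_{n,\mathcal{O},S},\mathcal{B}(\vec{\lambda},\ell))=h^0(\Par_{\ell,\widetilde{\mathcal{N}},S},\ml')$, so (2) is equivalent to effectivity of $\ml'$. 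I would then feed $\ml'$ into Corollary \ref{sansD}, with the roles of $n$ and $\ell$ interchanged: since $\sum_i|\mu^i|+n\deg\widetilde{\mathcal{N}}=nu-nu=0$, the integer ``$u$'' occurring there is $0$ and the central matrix $C$ is the identity, so effectivity of $\ml'$ becomes the existence of $A_1,\dots,A_s\in\op{U}(\ell)$ with $A_1\cdots A_s=I_\ell$ and each $A_i$ conjugate to the diagonal matrix with entries $\exp(2\pi\sqrt{-1}\mu^i_j/n)$, $j\in[\ell]$ — which by Definition \ref{reprem2} is exactly $A_i$ in the conjugacy class $\bar A_i$. This is precisely the statement that there is a unitary representation of $\pi_1(\Bbb{P}^1-S)$ with local monodromy data $\mathcal{A}$, i.e.\ (1).

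The step I expect to require the most care is the central-twist bookkeeping in the last paragraph: one must check that the degree $-u$ forced on $\widetilde{\mathcal{N}}$ by Proposition \ref{BWag} is exactly what makes the central matrix in Corollary \ref{sansD} equal to $I_\ell$ (a nontrivial scalar there would only produce a representation of the fundamental group of an extra-punctured sphere, not of $\Bbb{P}^1-S$). Along the way one should also record the elementary compatibilities $|\mu^i|=|\lambda^i|$ — so that the grade-zero hypothesis in (B) matches the determinant condition $\prod_i\det\bar A_i=1$ in (A) — and the linearity $\kappa(\lambda^i/\ell)=\tfrac1\ell\kappa(\lambda^i)$. Everything else is a direct application of results already in place.
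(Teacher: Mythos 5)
Your proposal is correct and follows essentially the same route as the paper: the equivalence $(1)\Leftrightarrow(2)$ is obtained exactly as in the text via numerical strange duality (Proposition \ref{BWag}) combined with Corollary \ref{sansD} with $n$ and $\ell$ interchanged, including the same verification that $\sum_i|\mu^i|+n\deg\widetilde{\mathcal{N}}=0$ kills the central twist. For $(2)\Leftrightarrow(3)$ the paper cites Proposition \ref{kappa}, which is itself just the packaging of Theorem \ref{blacktea} and quantum saturation that you invoke directly, so there is no substantive difference.
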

\begin{proof}
As in Prop \ref{BWag}, write $\sum|\mu^i|=nu$. Let $\widetilde{\mathcal{N}}$ be a line bundle of degree $-u$ on $\Bbb{P}^1$.
There is a unitary representation with  local monodromy data $\mathcal{A}$ if and only if $\ml'=\mathcal{B}(\vec{\mu},n)$ is effective on the moduli stack  $\Par_{\ell,\widetilde{\mathcal{N}},S}$ (use Corollary \ref{sansD} with $n$ and $\ell$ interchanged): We have an equation
$\sum|\mu^i| +n\deg \widetilde{\mathcal{N}} =0\cdot\ell$.

By numerical strange duality, i.e., equality \eqref{GepnerPP} in Proposition \ref{BWag},  $\ml'$ is effective if and only if $\ml$ on  $\Par_{n,{\mathcal{O}},S}$ is effective. Since the grade of $\ml$ is zero, it is effective if and only if $(\kappa(\lambda^1/\ell),\dots,\kappa(\lambda^s/\ell))$ is a point of $P_n(s)$ (use Proposition \ref{kappa}).
It is easy to check that $v(\ma)=(\kappa(\lambda^1/\ell),\dots,\kappa(\lambda^s/\ell))$, which finishes the proof.
\end{proof}

\begin{defi}\label{Shosta}
In the setting of Prop \ref{schubertt}, we will call the moduli of unitary representation of $\pi_1$ with local monodromy data $\mathcal{A}$ (possibly empty)  ``strange dual'' to  the line bundle  $\mathcal{B}(\vec{\lambda},\ell)$ (possibly non-effective), i.e., the numerical data (A) and (B) are said to be in strange duality (also see Remark \ref{added}).

\end{defi}
\begin{defi}\label{amirK}
We will also assign parabolic weights if the $\lambda_i$ are dominant and $\ell>0$: The parabolic weights of $\ml=\mathcal{B}(\vec{\lambda},{\ell})$ are $\frac{{\lambda^i}}{\ell}$ at $p_i$. The parabolic degree of a point of $\Par_{n,\mathcal{O},S}$ corresponding to the line bundle  $\mathcal{B}(\vec{\lambda},{\ell})$ is $\frac{1}{n}(\deg{\mathcal{O}}+\frac{1}{\ell}\sum_{i=1}^s|\lambda^i|)=\ell-1-\deg\widetilde{\mn}$. The parabolic degree of $\mathcal{B}(\vec{\lambda}^T,n)$ on $\Par_{\ell,\widetilde{\mathcal{N}},S}$ in Proposition \ref{BWag} is then $\frac{1}{\ell}(\deg{\widetilde{\mn}}+\frac{1}{n}\sum_{i=1}^s|(\lambda^i)^T|) =n-n=0$.
\end{defi}

\begin{theorem}\label{christmas} 
Under the correspondence between the sets (A) and (B) above, suppose that $\mathcal{B}(\vec{\lambda},\ell)$ on $\Par_{n,{\mathcal{O}},S}$ is effective (equivalently, by Proposition \ref{schubertt}, there is a unitary representation of $\pi_1$ with  local monodromy data $\mathcal{A}$). Then
\begin{enumerate}
\item[(i)] $\mathcal{B}(\vec{\lambda},\ell)$ is a Hilbert basis element of  the semigroup $\Pic^+(\Par_{n,\mathcal{O},S})$ if and only if 
any unitary local system on $\Bbb{P}^1-S$ with local monodromy data $\mathcal{A}$ is  irreducible.
\item[(ii)] $\mathcal{B}(\vec{\lambda},\ell)$ is an F-line bundle on   $\Par_{n,\mathcal{O},S}$ (and hence gives rise to a F-vertex of $P_n(s)$ by Lemma \ref{schubertt}) if and only  there is a unitary, irreducible rigid local system on  $\Bbb{P}^1-S$ with local monodromy data $\mathcal{A}$. 
\end{enumerate}
\end{theorem}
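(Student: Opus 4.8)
The plan is to translate both statements across the strange duality correspondence of Propositions \ref{schubertt} and \ref{BWag}, and then to match the three notions --- (a) Hilbert basis element / F-line bundle on the rank-$n$ side, (b) irreducibility / rigidity of the unitary local system on the rank-$\ell$ side --- using the quantum saturation and quantum Fulton results (Proposition \ref{qTheorems}) together with Corollary \ref{sansD} and Lemma \ref{divisorE}. Throughout, write $\ml=\mathcal{B}(\vec{\lambda},\ell)$ on $\Par_{n,\mathcal{O},S}$ and $\ml'=\mathcal{B}(\vec{\mu},n)$ on $\Par_{\ell,\widetilde{\mathcal{N}},S}$ (with $\vec{\mu}=\vec{\lambda}^T$, $\sum|\lambda^i|=nu$, $\deg\widetilde{\mathcal{N}}=-u$), so that by Proposition \ref{BWag} we have $h^0(\Par_{n,\mathcal{O},S},\ml^N)=h^0(\Par_{\ell,\widetilde{\mathcal{N}},S},\ml'^N)$ for every $N>0$ (apply \eqref{GepnerPP} to $N\vec\lambda$); by Corollary \ref{sansD} (with $n$ and $\ell$ interchanged, noting the parabolic degree of $\ml'$ is $0$ by Definition \ref{amirK}) the graded ring $\bigoplus_N H^0(\Par_{\ell,\widetilde{\mathcal{N}},S},\ml'^N)$ is the homogeneous coordinate ring of the moduli space $M(\mathcal{A})$ of unitary local systems on $\Bbb{P}^1-S$ with monodromy data $\mathcal{A}$. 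Thus $\dim M(\mathcal{A})=0$ iff $h^0(\Par_{n,\mathcal{O},S},\ml^N)=1$ for all $N$, which by quantum Fulton (Proposition \ref{qTheorems}(2)) holds iff $h^0(\Par_{n,\mathcal{O},S},\ml)=1$.

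For part (i): I would first show that if $\ml$ is \emph{not} a Hilbert basis element of $\Pic^+(\Par_{n,\mathcal{O},S})$, then some unitary local system with data $\mathcal{A}$ is reducible. Writing $\ml=\ml_1\otimes\ml_2$ with $\ml_1,\ml_2$ effective of positive grade-compatible levels, strange duality (via \eqref{GepnerPP}, using that $\Pic(\Par_{n,\mathcal{O},S})$ has level and the grading of Definition \ref{grade} is additive) produces effective $\ml_1',\ml_2'$ on the Levi-type factors whose monodromy data $\mathcal{A}_1,\mathcal{A}_2$ concatenate (under coordinatewise direct sum, using $\lambda^i=\lambda_1^i+\lambda_2^i$ so that $\bar A_i$ is conjugate to the block sum of $\bar A_{1,i}$ and $\bar A_{2,i}$, with the diagonal Young-diagram bookkeeping of Definition \ref{reprem2}) to $\mathcal{A}$; taking unitary representations $\rho_1,\rho_2$ with these data (which exist by Proposition \ref{schubertt}) and forming $\rho_1\oplus\rho_2$ gives a reducible unitary local system with data $\mathcal{A}$. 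Conversely, if some unitary local system with data $\mathcal{A}$ is reducible, decompose it as a sum of two unitary local systems with data $\mathcal{A}_1,\mathcal{A}_2$; applying the correspondence (A)$\leftrightarrow$(B) to each and Proposition \ref{schubertt} yields effective line bundles $\ml_1,\ml_2$ with $\ml=\ml_1\otimes\ml_2$, so $\ml$ is not indivisible in $\Pic^+$. (One must check that the summands can be arranged to have all local monodromies of order dividing $n$ --- this is automatic since restricting a monodromy matrix of finite order dividing $n$ to an invariant subspace preserves that property.)

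For part (ii): combine (i) with the level-zero-dimensional analysis. By definition (Definition \ref{defF}), $\ml$ is an F-line bundle iff it is indivisible with $h^0=1$ and its section has reduced irreducible zero divisor $E$. By (i) and the $\dim M(\mathcal{A})=0$ equivalence above, indivisibility-plus-$h^0=1$ corresponds exactly to: every unitary local system with data $\mathcal{A}$ is irreducible, and $M(\mathcal{A})$ is a point --- i.e.\ there is a \emph{unique} unitary local system with data $\mathcal{A}$ and it is irreducible, which is precisely the statement that there is a unitary irreducible rigid local system with data $\mathcal{A}$ (rigidity for unitary irreducibles being the point-ness of the moduli space; cf.\ Remark \ref{RigidRem}). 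It remains to see that the irreducibility-and-reducedness of the divisor $E=\mathcal{O}^{-1}(\ml)$ is automatic once $h^0(\Par_{n,\mathcal{O},S},\ml^N)=1$ for all $N$ and $\ml$ is indivisible: this is exactly Lemma \ref{divisorE}'s identification of $E$ with the irreducible cycle $C(d,r,\mathcal{O},n,\vec J)$ (the cycle-theoretic pushforward of the irreducible stack $\Omega(d,r,\mathcal{O},n,\vec J)$ of Proposition \ref{maya}), whose multiplicity-one-ness comes from generic uniqueness of the Harder--Narasimhan destabilizer. So the F-line bundle condition adds nothing beyond indivisibility and $h^0=1$, and (ii) follows from (i) and the dimension count.

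The main obstacle I expect is the bookkeeping in part (i): making precise that a \emph{tensor} factorization of $\ml$ on $\Par_{n,\mathcal{O},S}$ corresponds, under strange duality, to a \emph{direct sum} decomposition of unitary local systems on the rank-$\ell$ side --- and conversely --- including the degree shifts ($\widetilde{\mathcal{N}}=\widetilde{\mathcal{N}}_1\otimes\widetilde{\mathcal{N}}_2$), the grading constraints of Definition \ref{grade}, and the transposition of Young diagrams (row-addition on one side versus column-addition on the other, as emphasized in Section \ref{uber}). One has to be careful that the pieces $\ml_1,\ml_2$ can be taken to have \emph{positive} level so that Corollary \ref{sansD} and Proposition \ref{schubertt} apply (a grade-zero effective factor of positive level; a level-zero factor would be trivial and can be discarded), and that the resulting monodromy data $\mathcal{A}_1,\mathcal{A}_2$ indeed have product of determinants one coordinatewise --- which follows from the grade-zero conditions. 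Once this dictionary is set up cleanly, the rest is a direct assembly of Propositions \ref{qTheorems}, \ref{BWag}, \ref{schubertt}, Corollary \ref{sansD}, and Lemmas \ref{divisorE}, \ref{elgar}.
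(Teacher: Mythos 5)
Your proposal follows essentially the same route as the paper: part (i) is proved by matching tensor factorizations $\ml=\ml_1\otimes\ml_2$ of effective grade-zero line bundles on $\Par_{n,\mathcal{O},S}$ with direct-sum decompositions $\mt'\oplus\mt''$ of unitary local systems on the strange-dual side (via the additivity $c^b_i=c'^b_i+c''^b_i$ of multiplicities of eigenvalues), and part (ii) combines (i) with the equivalence ``$h^0(\ml^N)=1$ for all $N$ $\Leftrightarrow$ the Mehta--Seshadri moduli space is a point $\Leftrightarrow$ rigidity as a unitary representation'' furnished by \eqref{GepnerPP}, Proposition \ref{qTheorems}(2), Corollary \ref{sansD} and Lemma \ref{RigidRem}. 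The one step where you deviate is in establishing that the zero divisor $E$ of the unique section is reduced and irreducible: you invoke Lemma \ref{divisorE}, but that lemma takes as \emph{hypothesis} that $\ml$ is an F-line bundle (in particular that $E$ is already reduced and irreducible), so the appeal is circular as written. The intended argument is much simpler and is the one the paper uses: if $E$ decomposed as $E_1+E_2$ with both pieces nonzero effective (or as a multiple $mE'$), then $\ml=\mathcal{O}(E_1)\otimes\mathcal{O}(E_2)$ would exhibit $\ml$ as a product of effective line bundles, contradicting the Hilbert basis property already secured by part (i). With that repair your claim that the F-line bundle condition ``adds nothing beyond indivisibility and $h^0=1$'' is correct, and the rest of the proposal, including the bookkeeping caveats about grade, level positivity and determinant conditions in part (i), matches the paper's proof.
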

\begin{remark}\label{added}
Under the above correspondence,
F-line bundles on $\Par_{n,\mathcal{O},S}$ (and the corresponding F-vertices) on the (B) side are therefore in strange duality with the unitary rigid local systems on (A) side.
\end{remark}
\subsection{Proof of Theorem \ref{christmas}}
We first prove the  contrapositive of  the reverse direction in (i): Suppose,
\begin{equation}\label{decompo}
\mathcal{B}(\vec{\lambda},\ell)=\mathcal{B}(\vec{\lambda'},\ell')\otimes \mathcal{B}(\vec{\lambda''},\ell'')
\end{equation}
with $\vec{\lambda}=\vec{\lambda'}+\vec{\lambda''}$ and $\ell=\ell'+\ell''$, and $\vec{\lambda},\vec{\lambda'}$ and $\vec{\lambda''}$ are vectors of  normalised weights, i.e., their $n$th rows are empty.   Assume the two factors $\mathcal{B}(\vec{\lambda'},\ell')$ and $\mathcal{B}(\vec{\lambda''},\ell'')$ both have non-zero global sections. They then are necessarily of grade zero. The correspondence (A) to (B) for these factors give two (families) of unitary local systems $\mt'$ and $\mt''$ of ranks $\ell'$ and $\ell''$ respectively. We claim that $\mt'\oplus\mt''$ is a local system of rank $\ell$, with local monodromy $\mathcal{A}$: Write $\lambda'^i=\sum_{b=1}^{n-1} c'^b_i\omega_b$ and $\lambda'^i=\sum_{b=1}^{n-1} c''^b_i\omega_b$. Then $\bar{A}_i$ has eigenvalues $\exp(2\pi \sqrt{-1}b/n)$ repeated $c^b_i=c'^b_i+c''^b_i$ times, as desired.

For the other direction of (i), assume that a point in the moduli space corresponding to the strange dual of $\ml=\mathcal{B}(\vec{\lambda},\ell)$ (see Definition \ref{Shosta})  breaks up as a direct sum $\mathcal{T}=\mathcal{T}'\oplus \mathcal{T}''$ of semistable bundles of ranks $\ell'$ and $\ell''$ respectively. Under the correspondence of (A) and (B), suppose $\mt'$ and $\mt''$ correspond to $\mathcal{B}(\vec{\lambda'}, \ell')$  and $\mathcal{B}(\vec{\lambda''}, \ell'')$ respectively. It is now easy to see that
\eqref{decompo} holds on $\Par_{n,\mathcal{O},S}$, and the two factors are effective by Proposition  \ref{schubertt}.

If $\ml=\mathcal{B}(\vec{\lambda},\ell)$ is an F-line bundle on   $\Par_{n,\mathcal{O},S}$, then $\ml'=\mathcal{B}(\vec{\mu},n)$ on the moduli stack  $\Par_{\ell,\widetilde{\mathcal{N}},S}$ has a one dimensional space of sections (see equation \ref{GepnerPP}), making the corresponding moduli space, of representations of the fundamental group with local monodromy data $\ma$, a point, i.e., the corresponding representation is rigid as a unitary representation. Since it is also irreducible by (i), it is an irreducible rigid local system (see Remark \ref{RigidRem}).

If the strange dual gives a rigid unitary representation, then it obviously does not break up as a direct sum of semistable bundles anywhere in its moduli space, and the line bundle has exactly one section (up to scalars). It then follows from (i) that its  strange dual (i.e., $\mathcal{B}(\vec{\lambda},\ell)$) is a Hilbert basis element which has exactly one section up to scalars (use \eqref{GepnerPP}). Writing $\mathcal{B}(\vec{\lambda},\ell)=\mathcal{O}(E)$ we see that $E$ is irreducible and not a multiple since it is a Hilbert basis element. Therefore, $\mathcal{B}(\vec{\lambda},\ell)$ is an F-line bundle, as desired.
\subsubsection{Proof of Theorem \ref{egregium}}
The association of F-line bundles on $\Par_{n,\mathcal{O},S}$ and F-vertices of $P_n(s)$ is a bijection by Proposition \ref{corresp}. Note that this does not use 
strange duality, and follows from the Mehta-Seshadri theorem, geometric invariant theory, and the saturation conjecture (see Section \ref{uber}). 

Theorem \ref{egregium}  therefore reduces to the statement  that $F$-line bundles on $\Par_{n,\mathcal{O},S}$ of level $\ell$ are in one-one bijection with the set of local monodromy data $\ma$ (as in (A)) which correspond to unitary, irreducible, rigid local systems. Now  by Proposition \ref{schubertt}, effective level $\ell$  line bundles on $\Par_{n,\mathcal{O},S}$ are in one-one correspondence with local monodromy data $\ma$ (as in (A)) which correspond to  rank $\ell$ unitary local systems.

Theorem \ref{egregium} now follows from Theorem \ref{christmas} (ii). Reviewing the proof of Theorem \ref{christmas}, we see that if we are able to 
decompose an effective  line bundle on $\Par_{n,\mathcal{O},S}$ as tensor product of effective line bundles, then on the strange dual side, we can create a reducible representation with monodromy data $\mathcal{A}$ (and vice versa).

\begin{remark}\label{numerics}
If $\rigid$  is an irreducible rigid local system of rank $\ell$  on $\Bbb{P}^1-S$ with local monodromies $A_i$ (not necessarily semisimple), then one has \cite[Theorem 1.1.2]{Katz}
\begin{equation}\label{eqRi}
\sum_{i=1}^s \dim Z(A_i)= (s-2) \ell^2+2
\end{equation}
Here $Z(A_i)$ is the subgroup of $\op{GL}(\ell,\Bbb{C})$ of matrices that commute with $A_i$. Also note that if $A$ is a semisimple matrix  with eigenvalues $a_1,\dots,a_p$ of multiplicities $n_1,\dots,n_p$, then $\dim Z(A)=\sum_{j=1}^p n_j^2$.
\end{remark}
\begin{lemma}\label{RigidRem}
Suppose  $\rigid$ is a  unitary local system on $\Bbb{P}^1-S$. Let $\rho: \pi_1(\Bbb{P}^1-S,b)\to \op{U}(\ell)$ be the  corresponding representation. Then the following conditions on $\rigid$ are equivalent :
\begin{enumerate}
\item[(a)]$\rigid$ is an irreducible rigid local system in the sense of Katz \cite{Katz}.
\item[(b)] $\rigid$ is an irreducible local system, which is rigid as unitary local system (i.e., any $\rho': \pi_1(\Bbb{P}^1-S,b)\to \op{U}(\ell)$ with the same local monodromies as $\rho$ is conjugate to it).
\end{enumerate}
\end{lemma}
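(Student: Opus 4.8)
The plan is to prove the two implications of the biconditional separately, routing the forward direction through Schur's lemma and the backward direction through a dimension count for the moduli of unitary representations combined with the cohomological rigidity criterion recalled in Remark \ref{numerics}.

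For $(a)\Rightarrow(b)$: by the very definition of Katz-rigidity, every local system on $\Bbb{P}^1-S$ whose local monodromies are conjugate to those of $\rigid$ is isomorphic to $\rigid$. In particular, given any $\rho':\pi_1(\Bbb{P}^1-S,b)\to\op{U}(\ell)$ with the same local monodromies as $\rho$, there is $C\in\op{GL}(\ell,\Bbb{C})$ with $\rho'=C\rho C^{-1}$, and the point is to arrange that $C$ be unitary. Since $\rho'(\gamma)$ and $\rho(\gamma)$ are unitary, so that $\rho(\gamma)^{-1}=\rho(\gamma)^*$, the relation $\rho'(\gamma)^*\rho'(\gamma)=I_{\ell}$ rearranges to $\rho(\gamma)(C^*C)=(C^*C)\rho(\gamma)$ for all $\gamma$; irreducibility of $\rho$ and Schur's lemma force $C^*C=\mu I_{\ell}$ with $\mu>0$, so $U:=\mu^{-1/2}C$ is unitary and $\rho'=U\rho U^{-1}$. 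Hence $\rigid$ is rigid as a unitary local system.

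For $(b)\Rightarrow(a)$: write $A_i=\rho(\gamma_i)$, with conjugacy class $\mathcal{O}_i\subset\op{U}(\ell)$, and let $\mathcal{M}_{\op{U}}$ denote the moduli space $\{(B_1,\dots,B_s):B_i\in\mathcal{O}_i,\ B_1\cdots B_s=I_{\ell}\}/\op{U}(\ell)$ of unitary local systems with these local monodromies (by Mehta--Seshadri, the moduli of polystable parabolic bundles of the corresponding weight type). Hypothesis $(b)$ says $\mathcal{M}_{\op{U}}=\{[\rho]\}$; since $\rho$ is irreducible, $[\rho]$ is a smooth point, so $\mathcal{M}_{\op{U}}$ has real dimension $0$ there. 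On the other hand, the standard dimension formula for character varieties of punctured spheres (the multiplicative moment-map count: at an irreducible point the stabiliser is the centre, the product map is a submersion onto $\op{SU}(\ell)$, and the residual $\op{PU}(\ell)$-action is free) gives
\[
\dim_{\Bbb{R}}\mathcal{M}_{\op{U}}=\sum_{i=1}^s\dim\mathcal{O}_i-2(\ell^2-1)=(s-2)\ell^2+2-\sum_{i=1}^s\dim Z(A_i),
\]
using $\dim\mathcal{O}_i=\ell^2-\dim Z(A_i)$ for the semisimple unitary matrix $A_i$. Therefore $\sum_i\dim Z(A_i)=(s-2)\ell^2+2$, which is the equality \eqref{eqRi}; by \cite[Theorem 1.1.2]{Katz} (for an irreducible local system this cohomological identity is equivalent to rigidity in the sense of Katz) $\rigid$ is rigid, establishing $(a)$.

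The hard part is the backward direction, specifically the passage from the a priori set-theoretic statement ``$\mathcal{M}_{\op{U}}$ is a single point'' to the numerical identity \eqref{eqRi}: this needs the deformation-theoretic input that $\mathcal{M}_{\op{U}}$ is smooth of the expected dimension at the irreducible point, and then the nontrivial implication of Katz's criterion that cohomological rigidity forces physical rigidity. An equivalent route, closer to the machinery of this paper, is to express rigidity as a unitary local system as the condition $h^0=1$ for the associated line bundle $\mathcal{B}(\vec\lambda,\ell)$ on $\Par_{n,\mathcal{O},S}$ (via Corollary \ref{sansD} and quantum Fulton) and compare tangent spaces with the de Rham moduli space; but the character-variety dimension count above is the most direct.
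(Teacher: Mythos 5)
Your argument is correct, and it differs from the paper's in both halves. For (a)$\Rightarrow$(b) the paper does not argue directly at all: it reduces both (a) and (b) to the single numerical condition \eqref{eqRi}, whereas you give the clean classical argument that a $\op{GL}(\ell,\Bbb{C})$-conjugacy $C$ between two irreducible unitary representations forces $C^*C$ to be central by Schur's lemma, so $C$ can be rescaled to a unitary; this is more elementary and isolates the (easy) forward implication from the numerics entirely. For (b)$\Rightarrow$(a), which is where the content lies, you and the paper compute the dimension of the same moduli space in two different models: the paper works on the algebraic side of the Mehta--Seshadri correspondence (the moduli of parabolic bundles, realized near a generic point as the quotient of a product of partial flag varieties of total dimension $\sum_i\frac{1}{2}(\ell^2-\dim Z(A_i))$ by the $\ell^2$-dimensional automorphism group of the underlying bundle, citing \cite{BTIFR}), while you work directly on the unitary character variety via the quasi-Hamiltonian/product-map count, using that at an irreducible tuple the map $\mathcal{O}_1\times\cdots\times\mathcal{O}_s\to\op{U}(\ell)$ is a submersion onto the appropriate determinant coset of $\op{SU}(\ell)$ and that $\op{PU}(\ell)$ acts freely. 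Both yield $\dim\mathcal{M}_{\op{U}}=(s-2)\ell^2+2-\sum_i\dim Z(A_i)$ and then invoke Katz's equivalence of cohomological and physical rigidity for irreducible local systems, exactly as the paper does when it asserts that (a) is equivalent to \eqref{eqRi}. Your route buys independence from the parabolic-bundle references at the cost of having to justify the submersion claim (the image of the differential of the product map is the annihilator of the Lie algebra of the simultaneous centralizer, which is the center at an irreducible tuple); conversely the paper's route is the one that generalizes to its later stack-theoretic arguments. Two cosmetic points worth a sentence in a write-up: the submersion is onto the coset $\{\det=\prod_i\det\mathcal{O}_i\}$ rather than onto $\op{SU}(\ell)$ itself, and the identity $\dim_{\Bbb{R}}\mathcal{O}_i=\ell^2-\dim Z(A_i)$ silently uses that the real dimension of the unitary centralizer $\prod U(n_j)$ equals the complex dimension $\sum n_j^2$ of $Z(A_i)$ as defined in Remark \ref{numerics}.
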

\begin{proof}
This follows from the fact that the complexification of the Lie algebra of $\op{U}(\ell)$ is the Lie algebra of $\op{GL}(\ell)$, and the description of tangent spaces of representation varieties. Here is another direct numerical explanation:
To see this, note that the irreducibility requirements in (a) and (b) are the same. Therefore let us assume these to hold. In this situation (a) is equivalent to \eqref{eqRi}  with $A_i=\rho(\gamma_i)$.
Assuming irreducibility, we have a stable point in the Mehta-Seshadri moduli space of $\rigid$ (call this moduli $\mathcal{M}$). Using \cite[Lemma 5.4]{BTIFR}, (b) is equivalent to the same numerical condition:
Let $\rigid\in\mathcal{M}$ correspond to a parabolic bundle whose underlying bundle is $\mw$. We may assume $\mw$ to be generic, and automorphisms of $\mw$ is a group of dimension $\ell^2$ \cite[Lemma 9.1]{BTIFR}, and (b) says that it has an open orbit in the  corresponding variety of partial flags in the fibers of $\mw$ at points of $S$. The corresponding partial  flag variety at the point $p_i\in S$ should be shown to have dimension $\frac{1}{2}(\ell^2-Z(A_i))$. Using Remark
\ref{numerics}, this follows from \cite[Lemma 1.2]{BTIFR}.
\end{proof}

\begin{remark}\label{Hilberty} Let $n$ and $|S|$ be fixed.  There is a constant $\tilde{C}(|S|,n)$ with the following property:
 Suppose $\mathcal{T}$ is an irreducible  rank $\ell$ unitary local system (possibly non-rigid) on $\Bbb{P}^1-S$ with local monodromy data $\mathcal{A}=(\bar{A}_1,\dots,\bar{A}_s)$ with $\bar{A}_i^n=I_{\ell}$, such that any unitary local system with local monodromy data $\mathcal{A}$ is irreducible. Then, $\ell\leq \tilde{C}(|S|,n)$. This boundedness property follows from part (i) of Theorem \ref{christmas}, and the finiteness  of the number of elements in the Hilbert basis of a saturated semigroup.

Local systems  $\mathcal{T}$ with the  above property are not necessarily rigid. By Theorem \ref{christmas} the strange duals of non F-vertices of $P_n(s)$ give examples of families of such $\mt$ which are not rigid.  An example of a non F-vertex in $P_9(3)$ is given in Section \ref{Rex}. The corresponding strange dual is an one dimensional family of such unitary $\mt$ of rank $3$, with local monodromies $(\zeta^3,\zeta^7,\zeta^8)$ at $p_1$, and ($1,\zeta^3,\zeta^6)$ at $p_2$ and $p_3$, with $\zeta=\exp(2\pi\sqrt{-1}/9)$.  We are unable to construct any such $\mt$ explicitly (so this is only an existence statement).

We find reducible but non-unitary $\mt$ with the same local monodromy data: of the form $\mt_1\oplus\mt_2$, where $\mt_1$ is rank one with local monodromies $\zeta^3$ at all three points, and $\mt_2$ of rank $2$ with local monodromies $(\zeta^7,\zeta^8)$, $(1,\zeta^6)$ and $(1,\zeta^6)$. Such a $\mt_2$ arises from a hypergeometric system with $\alpha_1=7/9$, $\alpha_2=8/9$, $\beta_1=0$ and $\beta_2=3/9$. $\mt_2$ is not unitary (see Theorem \ref{BHhere}).

\end{remark}
\begin{remark}\label{stretch2}
In Theorem \ref{christmas}, the hypotheses on the (moduli of) unitary local systems coming from the strange duals are invariant under scaling the choice of $n$, i.e., if local monodromies are $n$th roots of unity, they are also $(nm)$th roots of unity for any positive integer $m$. Under strange duality, scaling is strange dual to stretching: The $m$-stretch of a line bundle
$\mathcal{B}(\vec{\lambda},\ell)$ on  $\Pic^+(\Par_{n,\mathcal{O},S})$ is a line bundle $\mathcal{B}(\vec{\lambda}[m],\ell)$ on  $\Pic^+(\Par_{mn,\mathcal{O},S})$. Here if $\lambda=\sum_{i=1}^{n-1}  c_i\omega_i$ is a weight for $\mathfrak{sl}_n$ then $\lambda[m] = \sum_{i=1}^{n-1}  c_i\omega_{mi}$ is a weight for $\mathfrak{sl}_{mn}$  (see \cite[Prop 1.3]{GG} for an appearance of this operation in the context of Chern classes of conformal blocks).

We can also scale in $\ell$ on the (B)-side in Proposition \ref{schubertt}, replacing $\mathcal{B}(\vec{\lambda},\ell)$ by a power $\mathcal{B}(m\vec{\lambda},m\ell)$, but now (A) has rank
$m\ell$. The cone structure on the (B)-side in $\Pic^+(\Par_{n,\mathcal{O},S})$ therefore goes into an operation corresponding to direct sums on the (A)-side.

According to Theorem \ref{christmas}, the properties of being a Hilbert basis element, and a F-line bundle (hence for F-vertices), are invariant under stretching. This invariance property for F-vertices (similarly for all vertices), under stretching,  was proved previously in unpublished joint work with  Gibney and Kazanova using the natural (``direct sum'') map $\Par_{n,\mathcal{N},S}\to \Par_{mn,\mathcal{N}^m,S}$, quantum saturation and Fulton conjectures (Proposition   \ref{qTheorems}), and strange duality \ref{BWag}.
\end{remark}

\subsection{Proof of Corollary \ref{egregiumprime}}
\begin{defi}\label{aliB}
If $\lambda=\sum_{a=1}^{n-1} c_a\omega_a$ is a dominant integral weight of $\op{SL}(n)$, and $\op{gcd}(m,n)=1$, let $T_m(\lambda)$ be the dominant integral weight $\sum c_a\omega_{ma \pmod{n}}$.
\end{defi}

\begin{lemma}\label{calcul}
Suppose $\sigma\in \op{Gal}(\bar{\Bbb{Q}}/\Bbb{Q})$ has image $\bar{m}\in (\Bbb{Z}/n\Bbb{Z})^*=\op{Gal}(\Bbb{Q}(\zeta_n)/\Bbb{Q})$.

\begin{enumerate}
\item Then $v(\sigma(\mathcal{A}))= T_m(v(\mathcal{A}))$ ($T_m$ is as defined in Definition  \ref{aliA}).
\item  $\mathcal{B}(\vec{\lambda},\ell)$ is indivisible as a grade zero line bundle if and only if
$\mathcal{B}(T_m(\vec{\lambda}),\ell))$ is indivisible as a grade zero line bundle ($T_m$ is  defined in Definition  \ref{aliB}).
\end{enumerate}
\end{lemma}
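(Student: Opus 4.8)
The plan is to express both $T_m$-operations appearing in the statement — the one on $\Delta'_n$ from Definition \ref{aliA} and the one on dominant weights from Definition \ref{aliB} — in terms of the single permutation $b\mapsto \bar m b$ of $\mathbb{Z}/n\mathbb{Z}$, and then treat (1) and (2) in turn.

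For part (1) I would begin from Remark \ref{newRem}, which identifies $v(\mathcal{A})$ with $(\kappa(\lambda^1/\ell),\dots,\kappa(\lambda^s/\ell))$ for $\lambda^i=\lambda(\bar A_i)$ as in Definition \ref{reprem2}. The first step is to record the Galois action: if $\zeta_n^b$ is an eigenvalue of $\bar A_i$ with multiplicity $k_b^i$ (indices in $\mathbb{Z}/n\mathbb{Z}$), then since $\sigma(\zeta_n)=\zeta_n^m$, the eigenvalue $\zeta_n^b$ of $\sigma(\bar A_i)$ has multiplicity $k^i_{m^{-1}b}$. The second step is to unwind Definition \ref{reprem2}: the $\omega_b$-coefficient of $\lambda^i=(\mu^i)^T$ equals $\lambda^i_b-\lambda^i_{b+1}=k_b^i$ for $1\le b\le n-1$, so the $\omega_b$-coefficient of $\lambda(\sigma(\bar A_i))$ is $k^i_{m^{-1}b}$, which is exactly the $\omega_b$-coefficient of $T_m(\lambda^i)$ by Definition \ref{aliB}; hence $\lambda(\sigma(\bar A_i))=T_m(\lambda(\bar A_i))$. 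The third step is to check that $\lambda\mapsto\kappa(\lambda/\ell)$ intertwines the weight-level $T_m$ with the $\Delta'_n$-level $T_m$: under the identification $\Delta_n\cong\Delta'_n$ via $x_j=a_j-a_{j+1}$, the point $\kappa(\lambda/\ell)$ has $j$-th coordinate $(\lambda_j-\lambda_{j+1})/\ell$, i.e. $1/\ell$ times the $\omega_j$-coefficient of $\lambda$, so permuting $\omega$-coefficients by $b\mapsto mb$ corresponds precisely to permuting $\Delta'_n$-coordinates by $j\mapsto mj\bmod n$. Assembling these three steps yields $v(\sigma(\mathcal{A}))=T_m(v(\mathcal{A}))$.

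For part (2) I would work in $\Pic(\Par_{n,\mathcal{O},S})=\Lambda^s\oplus\mathbb{Z}$, where $\Lambda=\mathbb{Z}^n/\mathbb{Z}(1,\dots,1)$ is the $\mathfrak{sl}_n$ weight lattice with $\mathbb{Z}$-basis $\omega_1,\dots,\omega_{n-1}$. Since $\gcd(m,n)=1$, the map $b\mapsto mb\bmod n$ permutes $\{1,\dots,n-1\}$, so $T_m\colon\omega_b\mapsto\omega_{mb\bmod n}$ extends to a lattice automorphism of $\Lambda$; thus $T_m^{\oplus s}\oplus\operatorname{id}$ is a lattice automorphism of $\Pic$ that carries $\mathcal{B}(\vec\lambda,\ell)$ to $\mathcal{B}(T_m(\vec\lambda),\ell)$ (the level is unchanged, as $\sum_b c_b^i$ is preserved). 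Using $|\omega_b|=b$ one then checks $\operatorname{Gr}(\mathcal{B}(T_m(\vec\lambda),\ell))\equiv m\operatorname{Gr}(\mathcal{B}(\vec\lambda,\ell))\pmod n$ (Definition \ref{grade}), so that $T_m$ preserves the grade-zero sublattice and restricts to an automorphism of it. Since a lattice automorphism sends primitive vectors to primitive vectors, $\mathcal{B}(\vec\lambda,\ell)$ is indivisible as a grade-zero line bundle if and only if $\mathcal{B}(T_m(\vec\lambda),\ell)$ is.

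The argument is largely bookkeeping, so there is no deep obstacle; the step that needs the most care is the first part of (1) — recognising the Galois action on the unordered eigenvalue data of $\bar A_i$ as the permutation $b\mapsto\bar m b$ of multiplicities, and verifying that transposition of Young diagrams intertwines this with the operation $T_m$ of Definition \ref{aliB}. One must also keep the normalisations consistent (the choice $\lambda^i_n=0$, the harmless $\sum x_i\le 1$ convention for $\Delta'_n$, and the observation that $T_m$ of Definition \ref{aliB}, a priori only defined on dominant weights, extends to the lattice automorphism used in (2)).
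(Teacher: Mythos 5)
Your argument is correct and is essentially the paper's own proof: the paper likewise observes that the $\omega_a$-coefficient of $\lambda=\mu^T$ is the multiplicity of the eigenvalue $\zeta_n^a$, that $\sigma$ sends $\zeta_n^a$ to $\zeta_n^{ma}$ and hence $\lambda$ to $\sum c_a\omega_{ma\pmod n}=T_m(\lambda)$, and then leaves the remaining bookkeeping (your Step 3 and part (2)) as "a small computation". You have simply written out that computation explicitly, including the useful observation that $T_m$ is a lattice automorphism multiplying the grade by $m$ modulo $n$, hence preserving grade-zero indivisibility.
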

\begin{proof}
Consider a  semisimple class $\bar{A}\in \op{GL}(\ell,\Bbb{C})$ with eigenvalues $\exp(2\pi\sqrt{-1}\mu_j/n)$ with $n>\mu^i_1\geq \mu^i_2\geq \dots \geq \mu^i_{\ell}\geq 0$. Let $\lambda=\mu^T$ be expressed in the form $\sum_{a=1}^{n-1} c_a\omega_a$. The eigenvalues of $\bar{A}$ are $\zeta_n^a$ with multiplicity  $c_a$  for $1\leq a \leq n$, and $1$ with multiplicity $\ell-\sum c_a$. The action of $\sigma$ on $\bar{A}$ changes $\lambda$ to $\sum_{a=1}^{n-1} c_a\omega_{ma \pmod n}$ since the element of $\op{Gal}(\Bbb{Q}(\zeta_n)/\Bbb{Q})$ corresponding to $m$ takes $\zeta_n$ to $\zeta_n^m$.
The lemma now follows from a small computation.
\end{proof}
We now prove Corollary \ref{egregiumprime}. Suppose there is a rigid local system $\rigid$  with local monodromy data $\mathcal{A}$, with finite global monodromy. Suppose the corresponding matrices are $(A_1,\dots,A_s)$. Then we may assume
$A_i$ have coefficients in $\bar{\Bbb{Q}}$ (in fact $\rigid$ is defined over the ring of integers of a cyclotomic
number field by a theorem of Katz \cite{Katz}). Then the twists $(\sigma(A_1),\dots,\sigma(A_s))$ also have finite monodromy for all $\sigma\in \op{Gal}(\bar{\Bbb{Q}}/\Bbb{Q})$. Hence there is a unitary irreducible rigid local system corresponding to $\mathcal{A}^{\sigma}$ for every $\sigma$. We can complete the forward implication in the corollary using Theorem \ref{egregium} and Lemma \ref{calcul}.

For the reverse implication let $\rigid$ be the unitary irreducible rigid local system corresponding to $\mathcal{A}$. By rigidity, and our assumptions,  all Galois conjugates of $\rigid$ are unitary. Hence by a standard argument, see e.g., \cite[Theorem 11]{BLocal}, Katz's theorem that $\rigid$ is
defined over the ring of integers of a cyclotomic number field implies that $\rigid$ has finite global monodromy.
\subsection{More consequences and examples}
Theorem \ref{christmas} therefore imposes numerical conditions on the weights of F-line bundles:
\begin{corollary}\label{katzkatz}
Suppose $\mathcal{B}(\vec{\lambda},\ell)$ is an F-line bundle on $\Par_{n,\mathcal{O},S}$. Write $\lambda^i=\sum_{b=1}^{n-1}c^b_i\omega_b$ for $i=1,\dots,s$. Then,
\begin{equation}\label{egale}
\sum_{i=1}^s \bigl((\ell-\sum_{b=1}^{n-1} c^b_i)^2 \ + \ \sum_{b=1}^{n-1}(c^b_i)^2\bigr) =(s-2)\ell^2+2.
\end{equation}
\end{corollary}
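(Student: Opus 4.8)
The plan is to pass from the F-line bundle $\mathcal{B}(\vec\lambda,\ell)$ to its strange dual and then apply Katz's numerical criterion for rigidity. First I would invoke Theorem \ref{christmas}(ii): since $\mathcal{B}(\vec\lambda,\ell)$ is an F-line bundle on $\Par_{n,\mathcal{O},S}$, the corresponding data $\mathcal{A}=(\bar A_1,\dots,\bar A_s)$ under the correspondence between (A) and (B) gives a unitary, irreducible rigid local system $\rigid$ of rank $\ell$ on $\Bbb{P}^1-S$ whose local monodromy at $p_i$ is conjugate to $\bar A_i$. By Lemma \ref{RigidRem}, $\rigid$ is then rigid in the sense of Katz, so \eqref{eqRi} in Remark \ref{numerics} applies and gives
$$\sum_{i=1}^s \dim Z(\bar A_i)=(s-2)\ell^2+2.$$

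Next I would rewrite each $\dim Z(\bar A_i)$ in terms of the coordinates $c_i^b$. By Definition \ref{reprem2} and the correspondence \eqref{corr2}, $\lambda^i=\lambda(\bar A_i)=(\mu^i)^T$, where $\bar A_i$ is conjugate to the diagonal matrix with entries $\exp(2\pi\sqrt{-1}\mu^i_j/n)$, and $\mu^i$ is a Young diagram with at most $\ell$ parts, each $<n$. Writing $\lambda^i=\sum_{b=1}^{n-1}c_i^b\omega_b$, one has $c_i^b=\lambda^i_b-\lambda^i_{b+1}$, which transposed is precisely the number of parts of $\mu^i$ equal to $b$; hence the eigenvalue $\zeta_n^b$ of $\bar A_i$ occurs with multiplicity $c_i^b$ for $b=1,\dots,n-1$, while, since $\lambda^i_n=0$, the eigenvalue $1$ occurs with multiplicity $\ell-\lambda^i_1=\ell-\sum_{b=1}^{n-1}c_i^b\ge 0$. (This is the same bookkeeping already used in the proof of Lemma \ref{calcul}.) As $\bar A_i$ is semisimple, the last sentence of Remark \ref{numerics} gives
$$\dim Z(\bar A_i)=\Bigl(\ell-\sum_{b=1}^{n-1}c_i^b\Bigr)^{2}+\sum_{b=1}^{n-1}(c_i^b)^{2}.$$
Substituting this into the displayed identity above yields \eqref{egale}.

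I do not expect a real obstacle: the entire content sits in Theorem \ref{christmas} (which already packages the Mehta--Seshadri/strange-duality input) together with Katz's rigidity formula. The only points requiring care are conventional: verifying that the $c_i^b$ genuinely record the eigenvalue multiplicities of $\bar A_i$ (in particular that the $1$-eigenspace has dimension $\ell-\sum_b c_i^b$, which is nonnegative because $\mu^i=(\lambda^i)^T$ has at most $\ell$ parts), and that semisimplicity of each $\bar A_i$ is what licenses the formula $\dim Z(A)=\sum_j n_j^2$. The same argument, applied via Theorems \ref{Adagio} and \ref{brahms} to the divisors $D(a,j)$, proves the quadratic identity asserted in Corollary \ref{existence}.
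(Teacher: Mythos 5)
Your proposal is correct and follows essentially the same route as the paper: the paper's proof of Corollary \ref{katzkatz} likewise observes that the transpose of $\lambda^i$ has $c^b_i$ rows of length $b$ and $\ell-\sum_b c^b_i$ rows of length $0$ (so these are the eigenvalue multiplicities of $\bar{A}_i$), and then cites Theorem \ref{christmas} together with Katz's rigidity identity \eqref{eqRi}. The points of care you flag (semisimplicity licensing $\dim Z(A)=\sum_j n_j^2$, and the nonnegativity of the $1$-eigenvalue multiplicity) are exactly the right ones and are implicit in the paper's shorter argument.
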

\begin{proof}
Then the transpose of $\lambda^i$ has rows of length $b$ repeated $c^b_i$ times and a row of length $0$ repeated $\ell-\sum_{b=1}^{n-1} c^b_i$ times. Equation \eqref{egale} now follows from Theorem \ref{christmas} and equation \eqref{eqRi}.
\end{proof}

\subsection{Some examples}\label{smalln}
In this section we use known explicit descriptions of vertices $P_n(3)$ for $n\leq 6$ to list all unitary irreducible rigid local systems whose local monodromies are $n$th roots of identity.  To do this we  start with a F-vertex use Proposition \ref{corresp} and Remark \ref{inpractice} to find the corresponding F-line bundle, and then apply Theorem \ref{christmas} to find the corresponding rigid local system. For $n\leq 6$ all vertices of $P_n(3)$ are F-vertices.

By direct computation $P_2(3)\subseteq[0,\frac{1}{2}]^3$ is a polytope with vertices $(\frac{1}{2}, \frac{1}{2},0)$ and various permutations. Note that $\Delta_2=\{(a,-a): a\geq 0,\ a\leq -a +1\}=[0,\frac{1}{2}]$.
The strange duals of these vertices are of level one, so we only get rigid local systems of rank $1$. Therefore there are no unitary and irreducible  rigid local systems on $\Bbb{P}^{1}-\{p_1,p_2,p_3\}$ of rank $>1$ whose local monodromies have eigenvalues $\pm 1$.

By an explicit computation of $P_3(3)$ a similar conclusion holds for $n=3$: There are no unitary and irreducible  rigid local systems on $\Bbb{P}^{1}-\{p_1,p_2,p_3\}$ of rank $>1$ whose local monodromies have eigenvalues third roots of unity.

For $n=4$, by \cite[Section 7]{BLocal}, there are F-vertices of levels are bigger than one: These correspond to the  F-line bundle $\mathcal{B}(\vec{\lambda},\ell)$ with $\ell=2$ and $\lambda^1=\lambda^2=\lambda^3=\omega_1+\omega_3$. Therefore in the corresponding the rank $2$ local system   all local monodromies are conjugate to the diagonal matrix with entries $\exp(2\pi\sqrt{-1}/4)$ and $\exp(6\pi\sqrt{-1}/4)$, i.e., $\sqrt{-1}$ and $-\sqrt{-1}$.  The corresponding local system corresponds to the equation $\sigma_1\sigma_2\sigma_3=I_2$ where
\begin{equation}\label{dfnsigma}
   \sigma_1=
  \left[ {\begin{array}{cc}
   0 & 1 \\
   -1 & 0 \\
  \end{array} }\right],   \sigma_2=
  \left[ {\begin{array}{cc}
   -\sqrt{-1} & 0 \\
   0 & \sqrt{-1} \\
  \end{array} }\right],  \sigma_3=
  \left[ {\begin{array}{cc}
   0 & \sqrt{-1} \\
   \sqrt{-1} & 0 \\
  \end{array} }\right].
  \end{equation}
This local system and all of its twists by $4$th roots of unity  are the only unitary irreducible rigid local systems on $\Bbb{P}^{1}-\{p_1,p_2,p_3\}$ of rank $>1$ whose local monodromies have eigenvalues fourth roots of unity.
(By twisting we mean the following. Let $a_1,a_2,a_3$ be $n$th roots of unity with $a_1a_2a_3=1$.  Then the twist is just the tensor product with the rank one local system given by $(a_1,a_2,a_3)$). They all have finite global monodromy by Corollary \ref{egregiumprime}, and appear in Schwarz's list from the 19th century \cite{Schwarz}, and are of hypergeometric type (up to a twist).

For $n=5,6$ we use computations of $P_n(3)$ reported in \cite{thaddy}.

For $n=5$ there are two orbits under $\tau_5(3)\rtimes S_3$ (see Section \ref{ABWin})  of vertices of $P_5(3)$, both consisting of F-vertices. The first one is the orbit of $(0,0,0,0,0)^3$ whose strange duals are rank one. The second is the orbit of $(\frac{1}{2},0,0,0,-\frac{1}{2})^3$, the point $(\frac{1}{2},0,0,0,-\frac{1}{2})$ corresponds to $(1/2,0,0,1/2)\in\Delta'_5$ in the notation of Remark \ref{aliA}.
The corresponding unitary irreducible rigid local system is of rank $2$. By Theorem \ref{KatzQ}, none of the second kind of local systems have finite global monodromy (which can also be checked here by using Corollary \ref{egregiumprime}).

For $n=6$, let us first note that the Galois group $\op{Gal}(\Bbb{Q}(\zeta_n)/\Bbb{Q})$ has order two, and hence we just need to list the strange duals of all vertices of $P_6(3)$, all of these will have finite global monodromy (the action on $\Delta_n^3$ is either the identity or by ordinary duals, and both preserve $P_n(3)$). There are three orbits under $\tau_6(3)\rtimes S_3$ (see Section \ref{ABWin})  of vertices of $P_5(3)$,  all consisting of F-vertices. There strange duals give us the following local systems
on $\Bbb{P}^{1}-\{p_1,p_2,p_3\}$. These and their central twists by $6$th roots of unity are the only unitary (hence finite for $n=6$) rigid local systems on $\Bbb{P}^{1}-\{p_1,p_2,p_3\}$ with local monodromies  sixth roots of unity, here $\zeta=\exp(\frac{2\pi\sqrt{-1}}{6})$. They are all of classical
hypergeometric type (up to a twist)  as in Section \ref{classH}. We indicate the classical type below.
\begin{enumerate}
\item The first local system is a rank $2$ local system with  all local monodromies conjugate, the eigenvalues are $(\zeta^5,\zeta)$. The twist consisting of local monodromies $(\zeta^5,\zeta)$, $(1,\zeta^2)$, and $ (1,\zeta^4)$ corresponds to a rank $2$ hypergeometric system with $\alpha_1=\frac{1}{6}$, $\alpha_2=\frac{5}{6}$, $\beta_1=0$ and  $\beta_2=4/6$.
\item The second local system is a rank $2$ local system with  eigenvalues of local monodromies  given by
$(\zeta,\zeta^5), (1,\zeta^3)$, and $(1,\zeta^3)$.  This corresponds to a rank $2$ hypergeometric system with $\alpha_1=\frac{1}{6}$, $\alpha_2=\frac{5}{6}$, $\beta_1=0$ and  $\beta_2=3/6$.
\item The third local system is a rank $3$ local system with eigenvalues of local monodromies $(1,1,\zeta^3)$, $(1,\zeta^2,\zeta^4)$, and $(\zeta,\zeta^3,\zeta^5)$. This  corresponds to a rank $3$ hypergeometric system with $\alpha_1=\frac{1}{6}$, $\alpha_2=\frac{3}{6}$,  and $\alpha_3= \frac{5}{6}$, $\beta_1=0$,  $\beta_2=\frac{2}{6}$ and $\beta_3=\frac{4}{6}$.
\end{enumerate}

\subsection{Special properties of rigid unitary local systems and the proof of Theorem \ref{KatzQ}} \label{KatzSection}
\begin{proposition}\label{propP}
Let $\mt$ be a rank $\ell$ irreducible rigid local system with semisimple local monodromy data
$\mathcal{A}=(\bar{A}_1,\dots,\bar{A}_s)$ in $\op{GL}(\ell,\Bbb{C})$, with $\bar{A}_i^n=1$. Let $i\in[s]$ and let $\alpha$ and $\beta$ be two eigenvalues of $\bar{A}_i$.
\begin{enumerate}
\item If $\mt$ is unitary, then $\alpha^{-1}\beta\neq \zeta_n$ where $\zeta_n=e^{2\pi\sqrt{-1}/n}$ (i.e., two different eigenvalues of the local monodromy at a point of $S$ cannot be ``as close as possible" for unitary rigid irreducible local systems).
\item If $\mt$ has finite global monodromy, then $\alpha^{-1}\beta$ is not a primitive $n$th root of unity.
\end{enumerate}
\end{proposition}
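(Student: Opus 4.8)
The plan is to deduce (2) from (1) by Galois descent, and to prove (1) by transporting it, through strange duality, into a statement about the divisor class of a degeneracy locus on $\Par_{n,\mathcal{O},S}$.

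For (2): if $\mt$ has finite global monodromy it is defined over the ring of integers of a cyclotomic field (Katz's theorem \cite{Katz}), so every Galois conjugate $\sigma(\mt)$, $\sigma\in\op{Gal}(\bar{\Bbb Q}/\Bbb Q)$, makes sense; each is again irreducible and rigid, has local monodromies that are $n$th roots of unity, and — being of finite monodromy — is unitary (exactly as in the proof of Corollary \ref{egregiumprime}, using e.g.\ \cite[Theorem 11]{BLocal}). If $\alpha^{-1}\beta=\zeta_n^k$ with $\gcd(k,n)=1$, pick $\sigma$ whose image in $(\Bbb Z/n\Bbb Z)^*$ is the inverse of $k$; then $\sigma(\alpha),\sigma(\beta)$ are two eigenvalues of the $i$th local monodromy of $\sigma(\mt)$ with $\sigma(\alpha)^{-1}\sigma(\beta)=\zeta_n$, contradicting (1) applied to $\sigma(\mt)$. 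So it suffices to prove (1).

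For (1), first reduce to the normalized case $\alpha=1$, $\beta=\zeta_n$ at $i=1$. Since $\bar A_i^n=I_\ell$, the eigenvalue $\alpha$ is an $n$th root of unity; tensoring $\mt$ with the rank one local system whose monodromy data is $(\alpha^{-1},\alpha,1,\dots,1)$ (an $s$-tuple of $n$th roots of unity with product $1$) preserves irreducibility, unitarity, the property that all local monodromies are $n$th roots of unity, and — by the numerical criterion \eqref{eqRi} for rigidity of an irreducible local system, using $Z(aA)=Z(A)$ — rigidity, while replacing $\bar A_i$ by $\alpha^{-1}\bar A_i$, which has $\alpha^{-1}\alpha=1$ and $\alpha^{-1}\beta=\zeta_n$ among its eigenvalues. (On the (B)-side this twist is the action of $\tau_n(s)$, so the property we use below is unaffected.) After relabelling we must show: if $\mt$ is a unitary irreducible rigid local system with $\bar A_i^n=I_\ell$, then $1$ and $\zeta_n$ cannot both be eigenvalues of $\bar A_1$. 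Suppose they are. By Theorem \ref{christmas}(ii) the data $\ma$ is strange dual to an F-line bundle $\ml=\mathcal B(\vec\lambda,\ell)$ on $\Par_{n,\mathcal O,S}$ with $\lambda^1=\lambda(\bar A_1)$ (Definition \ref{reprem2}); writing $\lambda^1=\sum_{b=1}^{n-1}c_1^b\omega_b$, the multiplicity of the eigenvalue $\zeta_n^b$ of $\bar A_1$ is $c_1^b$ for $1\le b\le n-1$, and the multiplicity of the eigenvalue $1$ is $\ell-\sum_b c_1^b=\ell-\lambda^1_1$ (as in the proof of Lemma \ref{calcul}). Thus our assumption reads $c_1^1>0$ and $\lambda^1_1<\ell$. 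By Lemma \ref{divisorE} we may write $\ml=\mathcal O(E)$ with $E=C(d,r,\mathcal O,n,\vec J)$ an effective codimension one cycle: the closure of the locus of parabolic bundles admitting a rank $r$, degree $-d$ subbundle meeting the flags in the open cells $\Omega^0_{J^i}$, with $dn+r(n-r)-\sum_i|\sigma_{J^i}|=1$. Computing the class $\mathcal O(E)=\mathcal B(\vec\lambda,\ell)$ by Proposition \ref{enumerative}, each coefficient $c^b_i$ is a Gromov–Witten count of the one-step weakening of $\vec J$ at $p_i$ that replaces some $a\in J^i$ with $a+1\notin J^i$; in particular $c^b_i\ne0$ forces $b\in J^i$ and $b+1\notin J^i$, and the level $\ell$ is itself such a Gromov–Witten number for a phantom weakening (cf.\ Theorem \ref{brahms}). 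From $c_1^1>0$ we conclude $1\in J^1$ and $2\notin J^1$.

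It remains to derive a contradiction, i.e.\ to show $1\in J^1$ forces $\ell=\lambda^1_1$ (the eigenvalue $1$ absent at $p_1$). Here the multiplicity $\ell-\lambda^1_1$ of the eigenvalue $1$ at $p_1$ should again be read as an enumerative count, but of the \emph{wrap-around} weakening at $p_1$ — the one moving the element $n$ of $J^1$ past the top of the flag, which costs a degree shift $d\mapsto d+1$; this is precisely the complication isolated in Section \ref{concept}. Applying the shift operation $\op{Sh}_{p_1}$ (Lemma \ref{newS}, Proposition \ref{easily}), which, because $1\in J^1$, replaces $\vec J$ by data $\vec K$ with $K^1=\{j^1_2-1,\dots,j^1_r-1,n\}$ and carries $\ml$ to another F-line bundle on a shifted stack, this wrap-around weakening becomes an ordinary interior weakening whose Gromov–Witten number vanishes by the same mechanism as above — the required replacement of an element of $K^1$ is blocked since $n\in K^1$ and the successor is out of range without a further shift. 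Hence $\ell-\lambda^1_1=0$, contradicting $\lambda^1_1<\ell$, and (1) follows.

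The main obstacle I expect is exactly this last step: extracting from the class computation of Proposition \ref{enumerative} and the shift operations the correct enumerative interpretation of the multiplicity of the eigenvalue $1$, and verifying that the membership pattern of $\vec J$ together with $1\in J^1$ forces the corresponding count to be zero — that is, handling the "$a=n$"/degree-shift case precisely, which is where property (P) acquires its content.
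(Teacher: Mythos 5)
Your proof is correct and follows essentially the same route as the paper: Galois conjugation reduces (2) to (1), a central twist normalizes to $\alpha=1$, $\beta=\zeta_n$, strange duality plus Lemma \ref{divisorE} produces the cycle $E=C(d,r,\mathcal{O},n,\vec{J})$, and the contradiction comes from the incompatible membership conditions $1\in J^1$ (forced by $c^1_1\neq 0$) versus $1\notin J^1$ (forced by a nonzero multiplicity of the eigenvalue $1$). The only difference is that you re-derive the final implication by a shift argument, whereas the paper simply invokes Proposition \ref{enumerative}, parts (C) and (D), together with Remark \ref{earlyAM}, which state exactly that $c^1_i\neq 0$ forces $\ell=\sum_b c^b_i$.
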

Theorem \ref{KatzQ}, and property (P) from the introduction are clearly  immediate consequences of the above proposition.
\begin{proof}
The Galois group $\op{Gal}(\Bbb{Q}(\zeta_n)/\Bbb{Q})$ acts transitively on the set of primitive $n$th roots of unity and therefore the second part follows from the first part (use that any Galois conjugate of $\mt$ is a unitary local system in addition to being rigid and irreducible).

For the first part we proceed as follows. Assume the contrary, then by the action of $\tau_n(s)$, we may assume $\alpha=1$, $\beta= \exp(2\pi \sqrt{-1}/n)$. Let the corresponding F-line bundle (by the correspondence between data  (A) and (B) in Section \ref{SD}) on $\Par_{n,\mathcal{O},S}$ be $\ml=\mb(\vec{\lambda},\ell)$. Write  $\lambda^i=\sum_{b=1}^{n-1}c^b_i\omega_b$ for $i=1,\dots,s$. We also have from Lemma \ref{divisorE}, an expression $\ml=\mathcal{O}(E)$ where  $E=C(d,r,\mathcal{O},n,\vec{J})$.

By Proposition \ref{enumerative}, parts (C) and (D), and the strange duality operation (see Definition \ref{Shosta}) the following properties hold which imply that $1$ and $\zeta_n$ cannot both be eigenvalues of $A_i$:
\begin{enumerate}
\item If  $1\leq b<n$,  then $\exp(2\pi\sqrt{-1}b/n)$ is an eigenvalue of $A_i$ with multiplicity $c^b_i$ (possibly zero). If this multiplicity is non-zero, then $b\in J^i$ and $b+1\not\in J^i$.
\item The multiplicity of the eigenvalue  $1$ of $A_i$ is  $\ell-\sum_{b=1}^{n-1}c^b_i$ (possibly zero). If this multiplicity is non-zero,  then $n\in J^i$ and $1\not\in J^i$.
\end{enumerate}
\end{proof}
\begin{remark}\label{moreR} The assumption of rigidity is needed in property (P) and Proposition \ref{propP}: Enlarge $S$ by two points and introduce local monodromies at the points that are inverses of each other.
\end{remark}

\subsection{Rigid local systems from quantum Schubert calculus}\label{qSC}
Theorems \ref{Adagio} and \ref{brahms} produce F-line bundles $\mathcal{O}(D(a,j))$, and hence rigid irreducible unitary local systems as in Corollary \ref{existence}. Let us recall the steps:
\begin{enumerate}
\item [(i)]Start with a situation where
 $\langle\sigma_{I^1}, \sigma_{I^2},\dots,\sigma_{I^s}\rangle_d= 1$ in a Grassmannian $\Gr(r,n)$.
\item[(ii)] Choose $(a,j)$: Let $(a,j)$ be a pair with $a\in[n]$ and $1\leq j\leq s$ such that
\begin{enumerate}
\item $a>1$, $a\in I^j$, and $a-1\not\in I^j$, or
\item $a=1$, $1\in I^j$, $n\not\in I^j$.
\end{enumerate}
\item[(iii)] Define subsets $J^1,\dots,J^s$ of $[n]$ each of cardinality $r$ and an integer $d'$ as follows:
In case (1) let $J^k=I^k$ if $k\neq j$, $J^j=(I^j-\{a\}) \cup \{a-1\}$ and $d'=d$, and in case (2) let  $J^k=I^k$ if $k\neq j$, $J^j=(I^j-\{1\}) \cup \{n\}$ and $d'=d-1$.
\item[(iv)] Construct the F-line bundle $\mathcal{B}(\vec{\lambda},\ell)$ on $\Par_{n,\mathcal{O},n}$
by the formulas $\lambda^i=\sum_{b=1}^{n-1}c^i_b\omega_b$ with formulas for $\ell$ and $c^i_b$
as given in Theorem \ref{brahms}.
 \item[(v)] Finally the rigid local system $\mt$ is of rank $\ell$ on $\Bbb{P}^1-S$ with local monodromy at $p_i$ for $i\in[s]$
 conjugate to a diagonal matrix with eigenvalues $\exp(2\pi\sqrt{-1}b/n)$ with multiplicity $c^i_b$, for $b=1,\dots n-1$, and eigenvalue $1$ with multiplicity $\ell-\sum_{b=0}^{n-1} c^i_b$. Note that the KZ differential equation which gives rise (up to a cyclic twist) to $\mathcal{T}$ is given explicitly by Theorem \ref{KZe} and Proposition \ref{generalKZ}, since the data of the divisor $E$ in the proof of Theorem \ref{KZe} is explicit in this situation.
 \end{enumerate}

 Quantum Schubert calculus gives a method of computing all $\langle\sigma_{I^1}, \sigma_{I^2},\dots,\sigma_{I^s}\rangle_d$, and in the case $d=0$ there are many other methods for computing these numbers (Littlewood-Richardson rule, the honeycomb and puzzle formulations of Knutson and Tao). As for systematic methods of obtaining situations where $\langle\sigma_{I^1}, \sigma_{I^2},\dots,\sigma_{I^s}\rangle_d= 1$, we note the following:
\begin{enumerate}
\item Let $\lambda,\mu$ and $\nu$ be dominant integral weights of $\op{SL}_r$ such that there is a Weyl group  element $w\in S_r$ with $\lambda+w\mu=\nu$. In this case, by a result of Kostant \cite[Lemma 4.1]{Kostant}, $V_{\nu}$ appears with multiplicity one in $V_{\lambda}\tensor V_{\mu}$. Since Littlewood-Richardson numbers also compute structure coefficients in the classical cohomology of suitable Grassmannians $\Gr(r,n)$, this result of Kostant produces a family of $\langle\sigma_{I^1}, \sigma_{I^2},\dots,\sigma_{I^s}\rangle_d=1$ with $d=0$.
\item The coefficients that appear in the quantum Pieri rule are all one \cite{bertie}. These correspond to $s=3$ and $\lambda(I^1)=(a,0,\dots,0)$ (or their duals). Some of the unitary local systems that arise from this situation seem to be a subset of  those coming from the classical  hypergeometric systems (Section \ref{hyper}), but there is an infinite series of new ones  as shown by Kiers and Orelowitz (see Section \ref{KO} for their examples).
\item Let $V_1,\dots,V_s$ be  subspaces in general position of a vector space $W$ so that $\sum_{i=1}^s \dim V_i= \dim W+1$. Now there is a unique subspace $T\subset W$ of dimension $s-1$ that meets all $V_i$ non-trivially.
This sets up a classical intersection number $1$ situation in $\Gr(s-1,W)$ (we learned of this example from Nori): $\langle\sigma_{I^1},\dots,\sigma_{I^s}\rangle_0=1$ where $\lambda(I^j)=(a_j,0,\dots,0)$ (see Definition  \ref{correspondence}) with $a_j = n-(s-1) +1-\dim V_j$. The corresponding unitary irreducible rigid local systems seem to include some classical Pochhammer systems (Section \ref{pokhie}).
\item There are examples of Hobson \cite{Hobson} of rank one conformal blocks in type A, which then give rise
to a situation where $\langle\sigma_{I^1}, \sigma_{I^2},\dots,\sigma_{I^s}\rangle_d= 1$ using the Witten dictionary (see Section \ref{QCB}).
\item Suppose we have a classical intersection number one   situation in a Grassmannian $\Gr(r,n)$, i.e., $\langle\sigma_{I^1}, \sigma_{I^2},\dots,\sigma_{I^s}\rangle_d= 1$ with $d=0$. We can then create an intersection number one situation in $\Gr(r,n+1)$:   $\langle\sigma_{J^1}, \sigma_{J^2},\dots,\sigma_{J^s},\sigma_{J^{s+1}}\rangle_0= 1$ with $J^{s+1}=\{n-1,n-2,\dots,n-r-1\}$ and $J^i$ obtained by adding $1$ to elements of $I^i$ for $i\in[s]$.  Running the above process from the two intersection one situations above, one gets two local systems (of possibly different ranks), one on $\Bbb{P}^1-\{p_1,\dots,p_s\}$ and the other on $\Bbb{P}^1-\{p_1,\dots,p_s,p_{s+1}\}$, and the local monodromies have eigenvalues that are $n$th and $(n+1)$th roots of unity respectively.
\item Any classical GW intersection number (i.e., $d=D=0$) in a Grassmannian $\Gr(r+1,r+\ell+2)$ breaks up as a sum of two GW numbers for $d=1$ on Grassmannians $\Gr(r,r+\ell+1)$ and $\Gr(\ell+1,r+\ell+1)$ by \cite[Section 9.9]{BGM}. If the classical intersection number is one, then exactly of the two GW numbers is one.
\item Theorem \ref{Adagio} inductively produces situations where $\langle\sigma_{I^1}, \sigma_{I^2},\dots,\sigma_{I^s}\rangle_d= 1$, since the line bundles $D(a,j)$ are F-line bundles: The new Grassmannian
is $\Gr(n,n+\ell)$ where $\ell$ is the level of $\mathcal{O}(D(a,j))$ (and using Lemma \ref{newlabel2}).
\end{enumerate}
In Sections \ref{E1} and \ref{KO}, we list examples of unitary rigid local systems that are not of hypergeometric and Pochhammer type (which have at least one  matrix with only two distinct eigenvalues). Several examples of $\langle\sigma_{I^1}, \sigma_{I^2},\dots,\sigma_{I^s}\rangle_d= 1$ for $n\leq 16$ are given in \cite{OO} which also includes a discussion of computational aspects of Gromov-Witten invariants.
\subsection{All unitary irreducible rigid local systems}\label{qSC2}
  One may ask if all unitary irreducible rigid local systems with local monodromy $n$ roots of identity arise from Corollary \ref{existence}, i.e., the procedure described in Section \ref{qSC}. By Theorem \ref{christmas} (ii), this is equivalent to the question  of whether all F-line bundles on $\Par_{n,\mathcal{O},S}$ are of the form $\mathcal{O}(D(a,j))$. By Lemma \ref{powercord} (also see Proposition  \ref{practical}), this is equivalent to asking
\begin{question}\label{Q}
Let $\mathcal{B}(\vec{\lambda},\ell)$ be an F-line bundle on $\Par_{n,\mathcal{O},S}$. Write $\lambda^i=\sum_{b=1}^{n -1} c^b_i\omega_b$. Then is it necessarily true that  there is an $i$ such that
$c^b_i=1$ for some $b$, or $\ell -\sum_{b=1}^{n-1} c^b_i=1$?
\end{question}
The strange dual way of asking this question is the following (see the proof of Corollary \ref{katzkatz}): Let $\mathcal{T}$ be a irreducible rigid unitary local system on $\Bbb{P}^1-S$. Is it necessarily the case that the local monodromy of $\mathcal{T}$ around some $p_i\in S$ has an eigenvalue of multiplicity one? See Section \ref{wilson} for an example, due to A. Wilson, in which
$c^b_i\neq 1$ for all $b$ and $i$ (coming from a classical context), but in this example  $\ell -\sum_{b=1}^{n-1} c^b_i=1$ for some $i$. The rigidity equation \eqref{eqRi} does not settle this:
For example a rigid local system of rank $7$ with $|S|=3$, and multiplicity decomposition $(3,2,2)$
at each point is allowed by \eqref{eqRi}. We do not know if there are any unitary irreducible rigid local systems with these eigenvalue decompositions of local monodromies.

Note the hypergeometric and Pochhammer unitary local systems satisfy Question
\ref{Q}, and hence arise from Corollary \ref{existence}.

\subsection{Rigid irreducible local systems without the unitarity condition}\label{afterN}
We consider the following question:
\begin{question} Fix a natural number $n$. Consider the set of  irreducible rigid local systems (possibly non-unitary) on $\Bbb{P}^1-S$  such that the local  monodromies are semisimple with eigenvalues $n$ roots of unity.
Is this set finite, i.e., are the ranks $\ell$ of such local systems bounded above?
\end{question}
In the terminology of Katz's book \cite[Section 6.1]{Katz},  the above question asks, in characteristic zero,  whether  realizable  classes in $\op{NumData}(D,\Gamma)$, such that the ``local monodromies'' are semisimple, form a finite set when $\Gamma$ is the cyclic group of order $n$. We do not know the answer to this question.  We examine  below whether plausible (\cite[Definition  6.3.3]{Katz}) elements  in $\op{NumData}(D,\Gamma)$ with semisimple local monodromies form a finite set.

We consider
two natural  necessary conditions on the local monodromy data $\mathcal{A}=(\bar{A}_1,\dots,\bar{A}_s)$ of such a local system of rank $\ell$. The first one is the rigidity equation  \eqref{eqRi}. The second one, a consequence of irreducibility, is the following condition  which can be applied to all cyclic twists of the data (see \cite[Lemma III.9.10]{MM}):
\begin{equation}\label{irreducible1}
\sum_{i=1}^s \rk(A_i-I_{\ell})\geq 2\ell
\end{equation}
(This is the inequality that $\chi(\Bbb{P}^1-S, j_*\mathcal{T})\leq 0$ for irreducible $\mathcal{T}$ and $j:\Bbb{P}^1-S\to \Bbb{P}^1$, and follows from Poincare duality, \cite[Section 1.1]{Katz}.)

These two conditions do not seem to put an upper bound on $\ell$ in terms of $n$ by the following argument. Fix $s=3$ for concreteness. We can then put \eqref{irreducible1} in the following form
\begin{equation}\label{irreducible2}
\sum_{i=1}^3 n_{i,0}\leq \ell,
\end{equation}
where $n_{i,0}$ is the rank of the zero eigenspace of $A_i$.
Consider ``two extreme cases":
\begin{enumerate}
\item All local monodromies are the same with the multiplicity of each $n$th root of unity the same.
In this case \eqref{irreducible2} is valid  for all cyclic twists since $3(\ell/n)\leq \ell$ if $n\geq 3$. The left hand side of  the equality \eqref{eqRi} would then be $3\ell^2/n$ which is less than $\ell^2+2$ (since $n>3$).
\item There are only two eigenvalues at each of the  three points, with multiplicities $\ell/2$ and $\ell/2$ each. The left hand side of the equality \eqref{eqRi} would then be $3\ell^2/2$ which is greater than $\ell^2+2$ for  $\ell$ large. For \eqref{irreducible2} to fail, all three matrices must have a common eigenvalue (after central twisting). But that does not seem to be forced by the numerical requirements.
\end{enumerate}
Therefore assuming \eqref{irreducible2}, and  that $\ell$ is large  respect to $n$, it is not the case that equality \eqref{eqRi} always fails in the same direction. One could also check whether Katz' algorithm
applied to the data formally produces a rank which is $\geq 0$. When $s=3$, the rank is of the form $2\ell -(\ell_1 +\ell_2+\ell_3)$ where $\ell_i$ are ranks of suitable eigenspaces at the three points (see e.g., the proof of Theorem III.9.5 in \cite{MM}). In both of the extreme cases this number is positive.

\section{Formula for divisor classes}\label{enume}
Let $\vec{J}=(J^1,\dots,J^s)$ be an $s$-tuple of subsets  of $[n]$ each of cardinality $r$. Recall from Definition \ref{cyclist} that the effective cycle $E=C(d,r,\mn,n,\vec{J})$ on $\Par_{n,\mn,S}$ is defined as a pushforward of
$\Omega(d,r,\mn,n,\vec{J})$ (see Definition  \ref{mindful}).  We will compute $E$ when the codimension of this class is one, i.e.,
\begin{equation}\label{cdc}
dn +\deg(\mn)r +r(n-r)-\sum_{i=1}^s |\sigma_{J^i}|=-1.
\end{equation}
We will use the following enumerative computation to prove Theorem \ref{brahms} in Section \ref{rumble}. It is also used to compare KZ local systems to strange duals of F-line bundles in Section \ref{EqKZ}. Recall the generalization of Gromov-Witten numbers given in  Definition \ref{GWgen}.
\begin{proposition}\label{enumerative}
Assume the codimension condition \eqref{cdc} and
write
$$\mathcal{O}(E)=\mathcal{B}(\vec{\lambda},\ell).$$
Write $\lambda^i=\sum_{b=1}^{n-1} c_{i}^{b}\omega_b$, where $\omega_b$ is the $b$th fundamental weight.
\begin{enumerate}
\item[(A)] Let $J^{s+1}=\{1,n-r+1,n-r+2,\dots,n-1\}$. Then
$$\ell=\langle\sigma_{J^1}, \sigma_{J^2},\dots,\sigma_{J^s},\sigma_{J^{s+1}}\rangle_{d+1,D}.$$
\item[(B)] $c_{i}^{b}=0$ if $b\not\in J^i$, or $b+1\in J^i$. If $b\in J^i$ and $b+1\not\in J^i$, define subsets $J'^1,\dots,J'^s$ of $[n]$
 each of cardinality $r$ as follows: $J'^k=J^k$ if $k\neq i$, and $J'^i=(J^i-\{b\})\cup \{b+1\}$. Then
$$c_i^{b}= \langle\sigma_{J'^1}, \sigma_{J'^2},\dots,\sigma_{J'^s}\rangle_{d,D}.$$
\item[(C)] If $1\leq b<n-1$ and $i\in[s]$, then $c^b_i\neq 0$ implies $c^{b+1}_i=0$.
\item[(D)] If $c^1_i\neq 0$ then $\ell=\sum_{b=1}^{n-1} c^b_i$.  Similarly if $c^{n-1}_i\neq 0$ then $\ell=\sum_{b=1}^{n-1} c^b_i$.
\end{enumerate}
\end{proposition}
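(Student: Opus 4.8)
The plan is to compute $\mathcal{O}(E) = \mathcal{B}(\vec\lambda,\ell)$ by intersecting the cycle $E = C(d,r,\mn,n,\vec J)$ with test curves inside $\Par_{n,\mn,S}$ that generate the dual of $\Pic(\Par_{n,\mn,S})$, using the fact (Proposition \ref{LaSo}/\ref{laumon}) that the Picard group is freely generated by $\mathcal{D}$ and the line bundles $\ml_{\omega_b,p_i}$. Concretely, I would first set up, for each point $p_i \in S$ and each index $b$, a one-parameter family of parabolic bundles $(\mw,\me,\gamma)$ in which $\mw$ and $\me^{q}$ are held fixed for $q \ne p_i$, the "other steps" of the flag $E^{p_i}_{\bull}$ are fixed, and only the $b$th step $E^{p_i}_b$ varies in a Lefschetz pencil inside $\Fl(\mw_{p_i})$; the degree of $\ml_{\omega_c,p_i}$ on this curve is $\delta_{bc}$ up to a known normalisation, and the degree of $\mathcal{D}$ is $0$. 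Intersecting $E$ with such a curve counts the points of the pencil at which the fixed bundle $\mw$ acquires the degenerate subsheaf $\mv$ demanded by the Schubert conditions $\vec J$. The key observation — which is exactly the content of Proposition \ref{maya} and the irreducibility of $\Omega(d,r,\mn,n,\vec J)$ — is that at a general point of $E$ the destabilising subsheaf is a genuine subbundle meeting the flags in the open Schubert cells $\Omega^0_{J^i}$, so the intersection number is computed by a \emph{transverse} enumerative count: one weakens precisely the single Schubert condition being varied, i.e. replaces $J^i$ by $J'^i = (J^i - \{b\})\cup\{b+1\}$ (requiring $b \in J^i$, $b+1 \notin J^i$), which raises the expected dimension by one so that the count becomes a well-defined generalized Gromov–Witten number $\langle \sigma_{J'^1},\dots,\sigma_{J'^s}\rangle_{d,D}$. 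This gives part (B); the vanishing clauses ($b \notin J^i$ or $b+1 \in J^i$) are immediate since the weakened configuration is then either impossible or forces infinitely many subsheaves, hence contributes $0$.

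For part (A), the coefficient $\ell$ is the multiplicity of $\mathcal{D}$, which is detected by a test curve in which $\mw$ varies in a pencil $\{\mw_t\}$ of rank $n$ bundles with fixed determinant while the flags are generic and also move; the degree of $\mathcal{D}$ on a generic such pencil is $1$ and the $\ml_{\lambda,p}$ have degree $0$. Intersecting with $E$ then counts how many members of the pencil carry the degenerate subsheaf. Here the bookkeeping is slightly different: to turn the expected dimension $-1$ count into a $0$-dimensional count I would \emph{add} a marked point $p_{s+1}$ with the Schubert condition $J^{s+1} = \{1, n-r+1,\dots,n-1\}$, which is the condition $|\sigma_{J^{s+1}}| = n-r$ cutting out exactly the codimension needed ($\dim\Gr(r,n) - (n-r)$), and at the same time raise the degree by $1$ (subbundle allowed to specialise to a subsheaf of one lower degree, as in the shift-operation analysis of Section \ref{bruck01}–\ref{bruck02} with $r = n-1$, $\epsilon = 1$ — the "degree shift" alluded to in the statement). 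This produces the Gromov–Witten number $\langle\sigma_{J^1},\dots,\sigma_{J^s},\sigma_{J^{s+1}}\rangle_{d+1,D}$; the numerical relation \eqref{numerical} together with the codimension hypothesis \eqref{cdc} confirms that the count is balanced. I should double-check the precise form of $J^{s+1}$ and the degree shift against Lemma \ref{newS} and formulas \eqref{formula1}–\eqref{formula2}, since getting the index conventions for $\sigma_I$ versus $\lambda(I)$ exactly right is where errors creep in.

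Parts (C) and (D) are then combinatorial corollaries of (B). For (C): if $c^b_i \ne 0$ then $b \in J^i$ and $b+1 \notin J^i$; but then for $c^{b+1}_i$ to be nonzero we would need $b+1 \in J^i$, a contradiction, so $c^{b+1}_i = 0$. For (D): if $c^1_i \ne 0$ then $1 \in J^i$. I claim this forces $n \notin J^i$: indeed $n \in J^i$ together with $1 \in J^i$ and the cardinality/degree constraints from \eqref{numerical} applied to $\vec J$ would violate the codimension-one condition \eqref{cdc} (alternatively, in the strange-duality reformulation this is the statement that the multiplicity of the eigenvalue $1$ is controlled by whether $n \in J^i$, item (2) in the proof of Proposition \ref{propP}). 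Given $1 \in J^i$, $n \notin J^i$, every $b \in [n-1]$ lies in $J^i$ exactly when it contributes, and the "staircase" structure forced by (C) together with the boundary index $1$ being occupied makes the eigenvalue $1$ of $\bar A_i$ have multiplicity zero — i.e. $\ell = \sum_{b=1}^{n-1} c^b_i$. The case $c^{n-1}_i \ne 0$ is symmetric, using the degree-shift end of the argument (case (2) of the $(a,j)$ dichotomy with $a = 1$). The main obstacle I anticipate is not any single step but the careful verification that the test-curve intersection numbers are genuinely transverse — i.e. that $E$ meets each pencil in reduced points lying in the locus $\Omega^0$ where Proposition \ref{maya}'s density statement applies — together with pinning down the exact degree shift and the choice of $J^{s+1}$ so that the Gromov–Witten counts have the right degree parameters; these are precisely the places where the "subbundle degenerating to a subsheaf" phenomena analysed in Section \ref{quot1} must be invoked with care.
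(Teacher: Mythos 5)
Your treatment of part (B) is essentially the paper's proof: the paper also uses a test curve in $\Par_{n,\mn,S}$ on which only the single flag step $E^{p_i}_b$ varies, observes that the finitely many subbundles parametrized by $\Omega(d,r,\mn,n,\vec{J}')$ do not move as the point of the curve varies, so that the preimage of the curve is $m=\langle\sigma_{J'^1},\dots,\sigma_{J'^s}\rangle_{d,D}$ copies of $\Bbb{P}^1$, each meeting $\Omega(d,r,\mn,n,\vec{J})$ transversally in one point. The one point you flag but do not settle -- that each such $\Bbb{P}^1$ meets $\Omega$ in exactly one reduced point -- is exactly where the paper cites \cite[Lemma 8.8]{BHermit}; without that input the transversality is not free. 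Part (C) is handled identically in both arguments.

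For part (A) you diverge from the paper, and your route is the harder one. You propose intersecting $E$ with a pencil of bundles $\{\mw_t\}$ on which $\mathcal{D}$ has degree $1$ and the flag bundles have degree $0$; constructing such a pencil and controlling its intersection with $E$ (where the destabilizing subsheaf now genuinely moves with $t$) is not carried out and is substantially more delicate than the flag-varying curves. The paper avoids this entirely: it \emph{deduces} (A) from (B) by adding a point $p_{s+1}$ with the trivial Schubert condition $\{n-r+1,\dots,n\}$ (Lemma \ref{propagation}, which leaves the cycle unchanged), then applying the shift operation at $p_{s+1}$; after the shift the coefficient $\ell$ appears as $\lambda^{s+1}_{n-1}-\lambda^{s+1}_n$, which is computed by formula (B) at the new point, and the shift identities \eqref{formula1}--\eqref{formula2} convert the answer into the stated $\langle\sigma_{J^1},\dots,\sigma_{J^{s+1}}\rangle_{d+1,D}$. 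You should adopt that reduction rather than attempt the $\mathcal{D}$-test-curve directly.

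Part (D) is where your argument has a genuine gap. First, the claim that $1\in J^i$ ``forces $n\notin J^i$'' by the codimension condition \eqref{cdc} is false -- nothing in \eqref{cdc} prevents both $1$ and $n$ from lying in $J^i$ -- and it is also unnecessary. Second, your appeal to the strange-duality reading of $\ell-\sum_b c^b_i$ as the multiplicity of the eigenvalue $1$ is circular: Proposition \ref{propP} and the whole strange-duality dictionary are downstream of Proposition \ref{enumerative}. What (D) actually requires is an enumerative formula for the quantity $\ell-\sum_{b=1}^{n-1}c^b_i$ itself, which the paper supplies in Remark \ref{earlyAM}: one applies the same test-curve/weakening argument to the ``wrap-around'' modification $J'^i=(J^i\setminus\{n\})\cup\{1\}$ with degree raised to $d+1$, and concludes that $\ell-\sum_b c^b_i=0$ whenever $1\in J^i$ or $n\notin J^i$. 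Since $c^1_i\neq 0$ forces $1\in J^i$ and $c^{n-1}_i\neq 0$ forces $n\notin J^i$ (both by (B)), part (D) follows in one line. Without establishing that formula, your ``staircase'' argument does not close.
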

We prove this proposition in the rest of this section. The  proof is modelled after the proofs of analogous enumerative counts in \cite{BHermit,BKiers,Kiers}.
\subsection{Formula (B) implies Formula (A)}
Add a new point $p_{s+1}$ and let $J'^{s+1}=\{n-r+1,\dots, n\}$, and $J'^i=J^i$ for $1\leq i \leq s$. Then, by Lemma \ref{propagation} below,
$$\mathcal{O}(C(d,r,\mathcal{O}(D),n,\vec{J'}))=\mathcal{D}^{\ell}\tensor\tensor_{i=1}^s\ml_{\lambda^i,p_i}\tensor \ml_{\vec{0},p_{s+1}}.$$
Now, by Proposition \ref{easily},
$$\operatorname{ISh}_{p^{s+1}}^*\mathcal{O}(C(d,r,\mathcal{O}(D),n,\vec{J'}))=\mathcal{D}^{\ell}\tensor\tensor_{i=1}^s\ml_{\lambda^i,p_i}\tensor\ml_{\lambda{s+1},p_{s+1}}$$
where $\lambda^{s+1}=(0,0,\dots,-\ell)$.

 $\operatorname{Sh}_{p^{s+1}}$ maps $C(d,r,\mathcal{O}(D),n,\vec{J'})$ isomorphically to $C(d,r,\mathcal{O}(D-1),n,\vec{K})$,
where $K^i=J^i$ for $1\leq i \leq s$ and $K^{s+1}=\{n-r,n-r+1,\dots,n-1\}$. Therefore,
$$\operatorname{ISh}_{p^{s+1}}^*C(d,r,\mathcal{O}(D),n,\vec{J'})=C(d,r,\mathcal{O}(D-1),n,\vec{K}).$$

Therefore we obtain the formula
$$\mathcal{O}(C(d,r,\mathcal{O}(D-1),n,\vec{K}))= \mathcal{D}^{\ell}\tensor\tensor_{i=1}^s\ml_{\lambda^i,p_i}\tensor\ml_{\lambda^{s+1},p_{s+1}}$$
where $\lambda^{s+1}=(0,0,\dots,-\ell)$. We have another formula for $\ell= \lambda_{n-1}^{s+1}-\lambda_{n}^{s+1}$ using  (B):
\begin{equation}\label{ABL1}
\ell= \lambda_{n-1}^{s+1}-\lambda_{n}^{s+1}=\langle\sigma_{I^1}, \sigma_{I^2},\dots,\sigma_{I^s},\sigma_{I^{s+1}}\rangle_{d,D-1}
\end{equation}
with $I^j=K^j$ for $1\leq j\leq s$ and $I^{s+1}=\{n-r,\dots,n-2,n\}$. We can use the shift equalities, see Remark \ref{stuffed},  to get the stated formula for $\ell$.

\begin{remark}\label{earlyAM}
We also get formulas for $\ell$ without adding new points.  Pick any $p_i$. then $\ell-(\lambda^i_{1}-\lambda^i_{r})=0$ if either $1\in J^i$ or $n\not\in J^i$. If $1\not\in J^i$ and $n\in J^i$, then
\begin{equation}\label{ABL4}
\ell-\sum_{b=1}^{n-1}c^b_i=\ell-(\lambda^i_{1}-\lambda^i_{r})=\langle\sigma_{J'^1}, \sigma_{J'^2},\dots,\sigma_{J'^s}\rangle_{d+1,D}
\end{equation}
where $J'^j=J^j$ for $j\neq i$ and $J'_i= (J^{i}-\{n\})\cup \{1\}$. Parallel to \eqref{ABL1}, we can shift this to
$$\ell-(\lambda^i_{1}-\lambda^i_{r})=\langle\sigma_{I^1}, \sigma_{I^2},\dots,\sigma_{I^s}\rangle_{d,D-1}$$
where $I^j=J^j$ for $j\neq i$ and $I^i= \{a\mid a=n\text{ or } a+1\in J^i\}$. Recall that $\lambda^i_1-\lambda^i_r=\sum_{b=1}^{n-1} c^b_i$.
\end{remark}

\begin{lemma}\label{propagation}
 Let $p_{s+1}\in \Bbb{P}^1-S$, and set $I^j=J^j$ for $\leq j\leq s$ and  $I^{s+1}=\{n-r+1,n-r+2,\dots,n\}$ so that $\sigma_{I^{s+1}}=1\in H^0(\Gr(r,n),\Bbb{Z})$. Let $S'=S\cup\{p_{s+1}\}$ and $\tau: \Par_{n,\mn,S'}\to \Par_{n,\mn,S}$ be the natural map which forgets the quasi-parabolic structure at $p_{s+1}$. Then,
 $$\tau^*C(d,r,\mn,n,\vec{J})=C(d,r,\mn,n,\vec{I}).$$
 \end{lemma}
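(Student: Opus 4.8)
The plan is to show that adding a marked point $p_{s+1}$ with the "full" Schubert condition $I^{s+1} = \{n-r+1,\dots,n\}$ (for which $\Omega_{I^{s+1}}(E_\bull) = \Gr(r,n)$, i.e. no condition at all) is compatible with pullback of the cycle classes $C(d,r,\mn,n,\vec J)$ under the forgetful map $\tau\colon \Par_{n,\mn,S'}\to\Par_{n,\mn,S}$. First I would observe, at the level of the auxiliary stacks $\Omega(d,r,\mn,n,\vec J)$ from Definition \ref{mindful}, that there is a Cartesian square: the stack $\Omega(d,r,\mn,n,\vec I)$ over $\Par_{n,\mn,S'}$ is precisely the fiber product of $\Omega(d,r,\mn,n,\vec J)$ (over $\Par_{n,\mn,S}$) with $\Par_{n,\mn,S'}$ over $\Par_{n,\mn,S}$. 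Indeed, the condition that $\mv_{p_{s+1}}\to\mw_{p_{s+1}}/E^{p_{s+1}}_{i^{s+1}_k}$ have kernel of rank $\ge k$ for $k=1,\dots,r$ is vacuous when $I^{s+1}=\{n-r+1,\dots,n\}$, since then $i^{s+1}_k = n-r+k$ and $\rk(\mw_{p_{s+1}}/E^{p_{s+1}}_{n-r+k}) = r-k$, forcing the kernel to have rank $\ge r - (r-k) = k$ automatically. So $\Omega(d,r,\mn,n,\vec I)$ really is just $\Omega(d,r,\mn,n,\vec J)$ with an extra flag at $p_{s+1}$ freely adjoined, i.e. $\Omega(d,r,\mn,n,\vec I) = \Omega(d,r,\mn,n,\vec J)\times_{\Par_{n,\mn,S}}\Par_{n,\mn,S'}$.

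Next I would record that $\tau\colon\Par_{n,\mn,S'}\to\Par_{n,\mn,S}$ is smooth (it is a $\Fl(n)$-bundle, being the pullback of the smooth morphism $\Par_{n,\mn,\{p_{s+1}\}}\to\Bun_\mn(n)$ with $\Fl(n)$ fibers), in particular flat. By Proposition \ref{maya} applied to both index data, $\Omega(d,r,\mn,n,\vec J)$ and $\Omega(d,r,\mn,n,\vec I)$ are irreducible, and a dimension count (or directly the Cartesian description above together with $\dim\Fl(n)$-dimensional fibers) shows the two relative dimensions over their respective $\Par$-stacks agree: $|\sigma_{I^{s+1}}| = 0$, so the formula in the corollary following Definition \ref{cyclist} gives the same relative dimension. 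Then the key point is the compatibility of proper pushforward with flat base change, \cite[Proposition 1.7]{FInt} (invoked in the excerpt just after Definition \ref{cyclist} precisely in the representable-Artin-stack setting): applying this to the Cartesian square
$$
\xymatrix{
\Omega(d,r,\mn,n,\vec I)\ar[d]_{\pi'}\ar[r] & \Omega(d,r,\mn,n,\vec J)\ar[d]^{\pi}\\
\Par_{n,\mn,S'}\ar[r]^{\tau} & \Par_{n,\mn,S}
}
$$
gives $\tau^*\pi_*[\Omega(d,r,\mn,n,\vec J)] = \pi'_*[\Omega(d,r,\mn,n,\vec I)]$, which is exactly $\tau^*C(d,r,\mn,n,\vec J) = C(d,r,\mn,n,\vec I)$ by Definition \ref{cyclist}. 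One small check to include: the generic reducedness / multiplicity-one statement implicit in Definition \ref{cyclist} is preserved, because $\tau$ is smooth so it pulls back the generic point of the image to the generic point of the pulled-back cycle, and the "density of $\Omega^0$ in $\Omega$" clause of Proposition \ref{maya} transfers verbatim through the smooth base change.

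The main obstacle — really the only subtlety — is verifying that the forgetful map $\tau$ is genuinely flat (smooth) as a morphism of Artin stacks and that the fiber-product description of $\Omega(d,r,\mn,n,\vec I)$ is literally Cartesian and not merely set-theoretically correct; once that is pinned down, the statement is a formal consequence of flat base change for proper pushforward. I would handle this by noting that $\Par_{n,\mn,S'} = \Par_{n,\mn,S}\times_{\Bun_\mn(n)}\Par_{n,\mn,\{p_{s+1}\}}$ and that $\Par_{n,\mn,\{p_{s+1}\}}\to\Bun_\mn(n)$ is a (representable, smooth, proper) $\Fl(n)$-bundle, so $\tau$ inherits smoothness by base change; the Cartesian-ness of the $\Omega$-square then follows because the defining rank conditions for $\vec I$ are literally the $\vec J$-conditions together with the (empty) condition at $p_{s+1}$, so the moduli problems match on all test schemes, not just on points.
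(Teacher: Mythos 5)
Your argument is correct and is precisely the one the paper intends: the lemma is stated without proof because it follows from the flat base change property of proper pushforward invoked right after Definition \ref{cyclist}, once one notes (as you do) that the Schubert condition at $p_{s+1}$ for $I^{s+1}=\{n-r+1,\dots,n\}$ is vacuous, so that $\Omega(d,r,\mn,n,\vec{I})$ is the Cartesian pullback of $\Omega(d,r,\mn,n,\vec{J})$ along the smooth forgetful map $\tau$. Nothing is missing.
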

\subsection{Proof of Formula (B)}
Consider a curve $C\leto{\sim} \Bbb{P}^1$ in $\Par_{n,\mn,S}$ consisting of points $(\mw,\me,\gamma)$, with $\mw$ (general and fixed) and $\gamma$ fixed, and all flags $\me^p$ fixed except when $p=p_i$. The flag $\me^{p_i}$ is the only variable, and $E^{p_i}_k$ is fixed except for $E^{p_i}_{b}$ which varies in a one dimensional family, with $E^{p_i}_{b-1}\subset E^{p_i}_b\subset E^{p_i}_{b+1}$. Here $E^p_0=0$. The number $c^i_b$ is the degree of $\mathcal{O}(E)$ on this curve. If $b\not\in J^i$ or $b+1\in J^i$  the rank condition at $b$ is not needed in the set of conditions defining the closed Schubert variety $\Omega_{J^i}(\me_p)$. It is now easy to see that, in this case, $E$ does not meet $C$ (for general choices of the fixed elements above) and the degree is zero, as desired.

Now suppose $b\in J^i$ and $b+1\not\in J^i$ and $J'^1,\dots,J'^s$ be as in the statement of (B). Let  $\Omega=\Omega(d,r,\mn,n,\vec{J})$ and $\Omega'=\Omega(d,r,\mn,n,\vec{J}')$. Then $\Omega\subseteq \Omega'$ and let $\pi:\Omega'\to \Par_{n,\mn,S}$ be the natural projection. Now $\pi$ is a generically finite map of degree $m=\langle\sigma_{J'^1}, \sigma_{J'^2},\dots,\sigma_{J'^s}\rangle_{d,D}$.  The fibers of $\Omega'$ over points of $C$ are finite sets of subbundles of $\mw$. These sets do not vary with the point on $C$. Therefore $\pi^{-1}(C)$ is $m$ copies of $\Bbb{P}^1$. The variation in these $\Bbb{P}^1$'s is only in the  $E^{p_i}_b$ elements of the flag (at $p_i$). Each of these $\Bbb{P}^1$'s meet $\Omega\subseteq \Omega'$ in exactly one point (and transversally so) by \cite[Lemma 8.8]{BHermit}. Therefore, $c^i_b =\deg(\pi_*[\Omega]\cap C)= \deg([\Omega]\cap \pi^{-1}(C))=m$, as desired.
\subsection{Proof of parts (C) and (D)}
Part (C) is immediate from the formula in (B) because $c_i^b\neq 0$ implies $b+1\notin J^i$ hence
$c_i^{b+1}=0$. Part (D) follows similarly from Remark \ref{earlyAM}.

\begin{remark}\label{ajabaja}
If $r=1$ in Proposition \ref{enumerative}, then $\ell=1$ as well since the only numbers that appear in the small quantum products of projective spaces are $0$ and $1$. In this case, if $D=0$ by normalization, it is easy to see that $\mathcal{O}(E)=\mathcal{B}(\vec{\lambda},1)$ with $\vec{\lambda}=(\omega_{a_1},\dots,\omega_{a_s})$ with $n|\sum a_i$.
\end{remark}
\subsection{Divisors of F-line bundles}
Let $\ml=\mathcal{B}(\vec{\lambda},\ell)$ be an F-line bundle  on $\Par_{n,\mathcal{O},S}$. Write it as $\mathcal{O}(E)$ for a reduced irreducible divisor  $E\subseteq \Par_{n,\mathcal{O},S}$.
Write $\lambda^i=\sum_{b=1}^{n-1} \gamma_{i}^{b}\omega_b$.

\begin{lemma}\label{powercord}
The following are equivalent :
\begin{enumerate}
\item $\mathcal{L}$ is of the form $\mathcal{O}(D(a,j))$ for some choice of $(a,j)$ in Theorem \ref{Adagio}.
\item $\gamma^j_b$ =1 for some choice of $j,b$, or $\ell-\sum_{b=1}^{n-1} \gamma^j_b=1$ for some $j\in[s]$.
\end{enumerate}
\end{lemma}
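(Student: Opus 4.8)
The plan is to prove Lemma \ref{powercord} by matching the enumerative data of the two constructions. Recall that $D(a,j)$ is, by Definition (just before Theorem \ref{Adagio}), the cycle $C(d',r,\mathcal{O},n,\vec{J})$ where $\vec{J}$ is obtained from $\vec{I}$ by a single ``lowering'' move at the coordinate $j$ (either $a\mapsto a-1$ inside $[n]$, or $1\mapsto n$ with a degree shift). Its class $\mathcal{O}(D(a,j))=\mathcal{B}(\vec\lambda,\ell)$ is computed by Theorem \ref{brahms} (equivalently Proposition \ref{enumerative}), so the coefficients $\gamma^i_b$ of $\lambda^i$ are the Gromov--Witten numbers $\langle\sigma_{J'^1},\dots,\sigma_{J'^s}\rangle_{d'}$. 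The key observation is the structural feature of the set $\vec{J}=(J^1,\dots,J^s)$ produced in step (iii) of Section \ref{qSC}: it differs from an intersection-number-one tuple $\vec{I}$ (with $\langle\sigma_{I^1},\dots,\sigma_{I^s}\rangle_{d}=1$) by exactly one elementary modification at a single index $j$.

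First I would prove (1)$\Rightarrow$(2). Suppose $\mathcal{L}=\mathcal{O}(D(a,j))$. In case (1) of the construction, $J^j=(I^j-\{a\})\cup\{a-1\}$, so $a-1\in J^j$ while $a\notin J^j$; thus the ``raising at $a-1$'' modification $J'^j=(J^j-\{a-1\})\cup\{a\}$ gives back $\vec{I}$, and the corresponding coefficient is $\gamma^j_{a-1}=\langle\sigma_{J'^1},\dots,\sigma_{J'^s}\rangle_{d}=\langle\sigma_{I^1},\dots,\sigma_{I^s}\rangle_d=1$. In case (2), $J^j=(I^j-\{1\})\cup\{n\}$, so $1\notin J^j$ and $n\in J^j$; here Proposition \ref{enumerative}(D) and Remark \ref{earlyAM} apply: $\ell-\sum_{b=1}^{n-1}\gamma^j_b$ is the Gromov--Witten number obtained by the move $J'^j=(J^j-\{n\})\cup\{1\}=I^j$ with a degree shift $d'+1=d$, which equals $\langle\sigma_{I^1},\dots,\sigma_{I^s}\rangle_d=1$. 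So in either case (2) holds. (I would keep careful track here of the degree bookkeeping $d'=d$ vs.\ $d'=d-1$, since it is exactly what distinguishes the two cases.)

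Next, (2)$\Rightarrow$(1). Here $\mathcal{L}=\mathcal{O}(E)$ is an F-line bundle, and by Lemma \ref{divisorE} we may write $E=C(d,r,\mathcal{O},n,\vec{J})$ of codimension one with the strict semistability-violating inequality. By Proposition \ref{enumerative}(B), $\gamma^j_b=\langle\sigma_{J'^1},\dots,\sigma_{J'^s}\rangle_{d,0}$ where $J'^j=(J^j-\{b\})\cup\{b+1\}$ (requiring $b\in J^j$, $b+1\notin J^j$), while by Remark \ref{earlyAM} the defect $\ell-\sum_b\gamma^j_b$ is $\langle\sigma_{J'^1},\dots,\sigma_{J'^s}\rangle_{d+1,0}$ with $J'^j=(J^j-\{n\})\cup\{1\}$. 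Hypothesis (2) says one of these Gromov--Witten numbers equals $1$. Interpreting this number enumeratively: it counts (generically finitely many) subbundles of a general bundle in the relevant Schubert position, and a count equal to one means the tuple $\vec{I}:=\vec{J}'$ is an intersection-number-one datum. Reading the relation $\vec{J}=$ (lowering move on $\vec{I}$ at index $j$) shows $\vec{J}$ is exactly of the shape prescribed in step (iii) of Section \ref{qSC} for the pair $(a,j)$ with $a=b+1$ (case (1)) or $a=1$ (case (2), with the degree shift). Then the divisor $D(a,j)$ built from this $\vec{I}$ is by construction the cycle $C(d',r,\mathcal{O},n,\vec{J})$, i.e.\ equals $E$, so $\mathcal{L}=\mathcal{O}(D(a,j))$; Theorem \ref{brahms} confirms the class matches.

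The main obstacle I anticipate is the (2)$\Rightarrow$(1) direction: one must argue that a Gromov--Witten number being $1$ genuinely forces the corresponding datum $\vec{I}$ to be an admissible intersection-one datum in the precise sense needed to invoke Theorem \ref{Adagio}, rather than merely a number that happens to equal one. This requires knowing that the ``raised'' tuple $\vec J'$ has the right codimension count (so that the GW number is a bona fide enumerative invariant and not zero-by-dimension), which follows from the codimension-one condition \eqref{cdc} on $E$ together with the dimension formula \eqref{numerical}; and that the modification relating $\vec J$ to $\vec J'$ is of exactly the elementary type in Section \ref{qSC}(ii)--(iii), with the $a=1$/$a=n$ degree-shift case handled correctly. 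I would isolate this as a short lemma on the bookkeeping of the moves $I^j\leftrightarrow J^j$ and their effect on $(d,D)$ via the shift equalities \eqref{formula1}, \eqref{formula2}, reducing everything to the two explicit cases.
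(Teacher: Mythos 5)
Your proposal is correct and follows essentially the same route as the paper: the forward direction reads off $\gamma^j_{a-1}=1$ (resp.\ $\ell-\sum_b\gamma^j_b=1$ via Remark \ref{earlyAM}) because the raising move restores the intersection-number-one tuple $\vec I$, and the converse uses Lemma \ref{divisorE} plus Proposition \ref{enumerative} to recover an admissible datum $\vec I$ and the pair $(b+1,j)$ or $(1,j)$ with the degree shift $d\mapsto d+1$, exactly as in the paper. The dimension-count worry you isolate is resolved just as you suggest, by the codimension-one condition on $E$ together with \eqref{numerical}.
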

\begin{proof}
It is easy to see that (1) implies (2) if $a\neq 1$, since  $\gamma^j_{a-1}=1$. If $a=1$, then we use Remark \ref{earlyAM}.

Assume (2). By Lemma \ref{divisorE}. $E$ is equal to an effective cycle $C(d,r,\mathcal{O},n,\vec{J})$ (Definition  \ref{cyclist})  of codimension one i.e., $dn  +r(n-r)-\sum_{i=1}^s |\sigma_{J^i}|=-1$. Applying Proposition \ref{enumerative}, we find an enumerative formula for $\ml$ in terms of the data of the cycle $C(d,r,\mathcal{O},n,\vec{J})$. If $\gamma^j_b=1$ for some $j,b$ take $I^i=J^i$ for $i\neq j$ and $I^j=(J^j-\{b\})\cup\{b+1\}$ and apply Theorem \ref{Adagio} for the cycle data $(d,r,\mn,I^1,\dots,I^s)$, and consider the pair $(b+1,j)$   If $\ell-\sum_{b=1}^{n-1} \gamma^j_b=1$, we can take $I^j=(J^j-\{n\})\cup\{1\}$, and the pair $(1,j)$, and replace $d$ by $d+1$ and use Remark \ref{earlyAM}.
\end{proof}

\section{Unitary irreducible rigid local systems and KZ equations}\label{EqKZ}
\subsection{KZ equations on conformal blocks}\label{chocolate}
 Let $\vec{\nu}=\nu^1,\dots,\nu^s,\nu^{s+1}$ be an $(s+1)$-tuple of dominant integral weights of $\mathfrak{sl}_r$ at level $k$ (a representation $\nu=\sum_{i=1}^{r-1} c_i\omega_i$ is said to be of level $k$ if $\sum_{i=1}^{r-1}c_i\leq k$). Assume $\sum_{i=1}^{s+1} \nu^i$ is in the root lattice, i.e., $\sum_{i=1}^{s+1}|\nu^i|$ is divisible by $r$. Let $\mathcal{A}_{s+1}$ be the configuration space of $s+1$ distinct points on $\Bbb{A}^1$:
$$\mathcal{A}_{s+1}=\{(z_1,\dots,z_s,z_{s+1})\mid z_i\neq z_j, 1\leq i<j \leq s+1\}\subseteq \Bbb{A}^{s+1}.$$
There is a bundle of conformal blocks $\mathcal{V}_{\vec{\nu}}$ on $\mathcal{A}_{s+1}$ \cite{tuy,sorger}. These bundles are quotients of the (constant bundles) of  coinvariants in the tensor product of the finite dimensional  representations $V_{\nu^i}$ of $\mathfrak{sl}_r$ of highest weight $\nu^i$. There is a flat logarithmic connection on $\mathcal{V}_{\vec{\nu}}$ over $\mathcal{A}_{s+1}$ without any projective ambiguities.
\begin{remark}
The fiber of $\mathcal{V}_{\vec{\nu}}$ over $(z_1,\dots,z_{s+1})$ is dual to $H^0(\Par_{r,\mathcal{O},S},\ml)$ where $S=\{z_1,\dots,z_{s+1}\}$, and $\ml=\mathcal{B}(\vec{\nu},k)$ \cite{pauly}. The following additional properties of conformal blocks are included for completeness and are not needed in this paper.
\begin{enumerate}
\item Let $e_{\theta}$ be the root operator corresponding to the highest root $\epsilon_1-\epsilon_r$ of $\mathfrak{sl}_r$. Let $\Bbb{V}= V_{\nu_1}\tensor V_{\nu_2}\tensor\dots \tensor V_{\nu_{s+1}}$ and let $T:\Bbb{V}\to \Bbb{V}$ be the operator $T=\sum_{i=1}^{s+1} z_i e_{\theta}^{(i)}$ with $e_{\theta}^{(i)}$ acting on the $i$th tensor coordinate.  Then the fiber of $\mathcal{V}_{\vec{\nu}}$
    at $(z_1,\dots,z_{s+1})$ is the quotient
    $$\frac{\Bbb{V}}{\mathfrak{sl}_r\cdot \Bbb{V} +\im T^{k+1}}$$ with the natural diagonal action of the Lie algebra $\mathfrak{sl}_r$  on $\Bbb{V}$.

\item The connection on $\mathcal{V}_{\vec{\nu}}$ is induced by a connection on the constant bundle with fibers $\Bbb{V}$. The  operator corresponding to $\frac{\partial}{\partial z_i}$ acts on a constant tensor $v_1\tensor v_2\tensor\dots\tensor v_{s+1}$ as follows:
    $$\frac{\partial}{\partial z_i}(v_1\tensor v_2\tensor\dots\tensor v_{s+1})=
    -\frac{1}{r+k}\sum_{1\leq j\leq s+1, j\neq i}\frac{ {\Omega_{i,j}}(v_1\tensor v_2\tensor\dots\tensor v_{s+1})}{z_i-z_j}.$$
Here $\Omega_{i,j}$ is the Casimir operator acting on the $i$ and $j$ tensor summands (see \cite[(5.40)]{Ueno} for more details).
\end{enumerate}
\end{remark}
\subsubsection{Local monodromy}\label{clarinet}
Let us pull back the KZ system to $\Bbb{A}^1-\{z_1,\dots,z_s\}$, under the map $t\mapsto (z_1,\dots,z_s,t)\in\mathcal{A}_{s+1}$, so that $z_{s+1}$ is the moving point and the other $z_i$ are fixed. The monodromies of this pullback local system
on  $\Bbb{A}^1-\{z_1,\dots,z_s\}$ are the following (see the computations of \cite[Section 3.1]{naf}, placing the trivial representation at $\infty$). Recall that the local monodromy is  the exponential of $2\pi\sqrt{-1}$ times the residue of the logarithmic connection (since the local monodromy is diagonalizable in our setting).
\begin{itemize}
\item[-] The residue around $\infty$ is central with residue  $\frac{c(\mu_{s+1})}{r+k}$ times the identity matrix (see Definition  \ref{ls} below).
\item[-] To compute the monodromy as $t$ goes around the point $z_i$, for ease of notation assume $z_i=z_s$. Note that there is a rank formula
$$ \dim \mathcal{V}_{\vec{\nu}}=\sum_{\gamma}\rk \mathcal{V}_{\{\nu^1,\dots,\nu^{s-1},\gamma\}} \rk \mathcal{V}_{\{\nu^s,\gamma^*,\nu^{s+1}\}}.$$
The local exponents (of the residue)  around $t=z_s$ are (see Definition  \ref{ls} below)  $\frac{1}{2(r+k)}(c(\gamma)-c(\nu^s)-c(\nu^{s+1}))$ with multiplicity equal to $\rk \mathcal{V}_{\{\nu^1,\dots,\nu^{s-1},\gamma\}} \rk \mathcal{V}_{\{\nu^s,\gamma^*,\nu^{s+1}\}}$.
\end{itemize}

\begin{defi}\label{ls} Let $\frh_r$  be the Cartan subalgebra of $\mathfrak{sl}_r$. For $\lambda\in\frh_r^*$ let
$c(\lambda)=(\lambda,\lambda+2\rho)$ where $(\ ,\ )$ is the normalized Killing for on $\frh_r^*$ (i.e., all roots $\alpha$ have $(\alpha,\alpha)=2$)  and $\rho\in\frh_r^*$ the half sum of positive roots.
\end{defi}
\subsection{Unitary irreducible rigid local systems come from KZ equations}\label{KZstuff}
Let $E=C(d,r,\mathcal{O},n,\vec{J})$ be an effective cycle of codimension $1$ on $\Par_{n,\mathcal{O},S}$ (see Definition  \ref{cyclist}). Let $\ml=\mathcal{O}(E)=\mathcal{B}(\vec{\lambda},\ell)$ be the corresponding effective line bundle on $\Par_{n,\mathcal{O},S}$.  Write $\lambda^i=\sum_{b=1}^{n-1} c^b_i\omega_b$. Let
$\ml'=\mathcal{B}(\vec{\mu},n)$ be the  line bundle on $\Par_{\ell,\widetilde{\mathcal{N}},S}$ as in Section \ref{SD}. We have a local system (L1) and a nonempty family of local systems (L2) in this situation.
\begin{enumerate}

\item[(L1)] The first local system is the KZ local system on $\Bbb{A}^1-\{z_1,\dots,z_s\}$ corresponding to the data $\nu^1,\dots,\nu^{s+1}$ with $\nu^{s+1}=\omega_1$ (as in Section \ref{chocolate}) at level $k=n-r$.
The weights $\nu^i$ are defined as follows. First define $\mu^i=\lambda(J^i)$ for $j\neq1$, and $\mu^1$ the $d$-fold shift at level $k=n-r$ of $\lambda(J^1)$ (i.e., a single shift replaces $\mu=(\mu_1,\dots,\mu_r)$ by $(\mu_2,\dots,\mu_r,\mu_1-k)$. These are Young diagrams that fit into an $r\times (n-r)$ box. Let $\nu^i$ be their (ordinary) duals, $i=1,\dots,s$, i.e $\nu^i_j= (n-r)-\mu^i_{r+1-j}$.
\item[(L2)]The effective line bundle $\ml'$ produces a non-empty family of  rank $\ell$ unitary local systems on $\Bbb{P}^1-(S\cup\{\infty\})$ as in Corollary \ref{sansD} applied with $\ell$ and $n$ interchanged (note that the local monodromy at $\infty$ is trivial).  Let $m$ be the remainder when $d$ is divided by $r$. Twist this family of local systems by $\zeta_n^{-j^1_m}$ at the point $p_1$ and $\zeta_n^{j^1_{m}}$ at $\infty$ (set $j^1_m=0$ if $m=0$). This twisted family is (L2).
\end{enumerate}

We will show that these two local systems (L1) and (L2) have essentially equivalent local monodromy up to central twists (in particular these have the same ranks), so that the KZ local system
(up to twists by rank one local systems) lies in the family (L2). More precisely
\begin{proposition}\label{generalKZ}
Twist the local system in (L1) by $\exp(-2\pi\sqrt{-1}|\mu^i|/rn)$ at the points $p_i$ and $\exp(2\pi\sqrt{-1}\sum_{i=1}^s |\mu^i|/rn)$ at infinity. This twisted local system is in the family of local systems with fixed local monodromy given by (L2).
\end{proposition}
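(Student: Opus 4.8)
\textbf{Proof proposal for Proposition \ref{generalKZ}.}

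The plan is to match the two local systems (L1) and (L2) coordinate by coordinate, by computing the local monodromies of each explicitly and checking that, after the prescribed central twists, the conjugacy classes agree at every point of $S\cup\{\infty\}$. Since both (L1) and (L2) are known to be semisimple (the KZ system in genus zero with finite-order local monodromies is unitary by the Schechtman--Varchenko realization cited in the introduction, and (L2) consists of unitary representations by Corollary \ref{sansD}), matching conjugacy classes of all local monodromies is enough: a representation of the free-on-$s$-generators group modulo the product relation is determined up to conjugacy by its local monodromy classes \emph{together with} the global isomorphism class, but here the point of the statement is only that (L1) lands in the family cut out by (L2)'s local data, so equality of local monodromy classes is exactly what must be shown.

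The key steps, in order. First, I would use Lemma \ref{newlabel2} (Witten's dictionary, as in Section \ref{QCB}) together with Proposition \ref{enumerative}(A)--(B) to express the level $\ell$ and the coefficients $c^b_i$ of $\ml=\mathcal{O}(E)=\mathcal{B}(\vec\lambda,\ell)$ as ranks of spaces of conformal blocks for $\mathfrak{sl}_r$ at level $k=n-r$: namely $\ell$ is the rank of the conformal block attached to $(\nu^1,\dots,\nu^s,\omega_1)$ and $c^b_i$ is a rank of a conformal block with one weight shifted. Second, on the (L1) side I would apply the monodromy computation recalled in Section \ref{clarinet} (from \cite{naf}): the residue of the KZ connection at $z_i$ has eigenvalues $\frac{1}{2(n)}\bigl(c(\gamma)-c(\nu^i)-c(\omega_1)\bigr)$ (using $r+k=n$) with multiplicity $\rk\mathcal{V}_{\{\nu^1,\dots,\widehat{\nu^i},\dots,\gamma\}}\cdot\rk\mathcal{V}_{\{\nu^i,\gamma^*,\omega_1\}}$, where $\gamma$ runs over level $k$ weights obtained from $\nu^i$ by adding or removing a box (since $\nu^{s+1}=\omega_1$ forces $\mathcal{V}_{\{\nu^i,\gamma^*,\omega_1\}}$ to have rank $\le 1$, nonzero exactly when $\gamma=\nu^i\pm\square$). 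A Casimir computation then reduces $c(\gamma)-c(\nu^i)-c(\omega_1)$ to an explicit rational number; exponentiating $2\pi\sqrt{-1}$ times this, and comparing with the box-counting combinatorics, one reads off that the eigenvalue $\exp(2\pi\sqrt{-1} b/n)$ occurs with multiplicity $c^b_i$ up to an overall scalar. Third, on the (L2) side, Corollary \ref{sansD} (applied with $n$ and $\ell$ interchanged) gives directly that the local monodromy at $p_i$ of a member of the family has eigenvalues $\exp(2\pi\sqrt{-1}\lambda^i_j/\ell)$, which by the transpose relation $\mu^i=(\lambda^i)^T$ is exactly $\exp(2\pi\sqrt{-1} b/n)$ with multiplicity $c^b_i$ — so the two agree once the scalar twist $\exp(-2\pi\sqrt{-1}|\mu^i|/(rn))$ at $p_i$ and its compensating twist at infinity are inserted to reconcile the $\mathfrak{sl}$ normalization with the $\mathfrak{gl}$/unitary normalization. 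Finally, the treatment of $p_1$ needs the extra $d$-fold shift and the twist by $\zeta_n^{\mp j^1_m}$: here I would invoke the shift operations of Section \ref{shifty} (Proposition \ref{easily}, Remark \ref{able}, Lemma \ref{threepointone}) to see that the $d$-fold shift at level $n-r$ used to define $\mu^1$ corresponds on the monodromy side exactly to multiplication by $\zeta_n^{j^1_m}$ (where $m = d \bmod r$), which is why (L2) is defined with the compensating twist; the residue-at-$\infty$ being central (Section \ref{clarinet}) handles the point at infinity and forces $\det$ of the product to be the predicted root of unity, consistent with the grade-zero condition.

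The main obstacle I expect is the bookkeeping of normalizations and central twists: the KZ residues are naturally defined for $\mathfrak{sl}_r$ (trace-zero, $\lambda_r=0$ convention) while Corollary \ref{sansD} lives in $\op{U}(n)$, and the transpose/strange-duality operation interchanges "add a row" with "add a column", so the degree shift $d$ (which is $-\deg$ of the subbundle $\mv$, entering both via $\mu^1$ and via $d'$ in Section \ref{basicR}) has to be propagated through equations \eqref{formula1}, \eqref{formula2}, \eqref{newlabel3}, and through the shift formulas of Proposition \ref{easily}, keeping the scalars $\exp(2\pi\sqrt{-1}|\mu^i|/(rn))$ and $\zeta_n^{j^1_m}$ exactly straight. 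The Casimir identity $c(\gamma)-c(\nu^i)-c(\omega_1)$, when $\gamma = \nu^i + \epsilon_a$ for the $a$-th standard basis vector, simplifies to $2(\nu^i_a - a + \tfrac{r+1}{2}) + (1 - \tfrac{1}{r}) - \ldots$; getting the arithmetic to land on $2 b/n$ for the right $b$ (reading $b$ off from which column of $\mu^i$, equivalently which part of $\lambda^i$, is being modified) is the one genuinely computational point, but it is parallel to the enumerative counts of \cite{BHermit,BKiers,Kiers} and of Proposition \ref{enumerative}, so I would expect it to go through by the same bookkeeping, modulo care with the $\rho$-shift and the level $k=n-r$.
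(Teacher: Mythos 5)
Your proposal follows essentially the same route as the paper's proof: reduce to degree zero via the shift operations at $p_1$, compute the KZ residues by specializing Section \ref{clarinet} to $\nu^{s+1}=\omega_1$ with the Casimir identity, read off the (L2) monodromies from Corollary \ref{sansD} and the transpose, and match multiplicities by converting the Gromov--Witten formulas of Proposition \ref{enumerative} into conformal block ranks via Witten's dictionary. The only small imprecision is that the admissible $\gamma$ are of the form $\nu^i+L_j$ (always adding a box, up to the central representative $\nu^i+L_1-(1,\dots,1)$ when $\nu^i_1=k$), not ``adding or removing a box,'' but this does not affect the argument.
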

Here twisting means that we multiply the local monodromy elements by the central elements given.
Proposition \ref{generalKZ} will be proved in Section \ref{KZe}. It implies the following:
\begin{theorem}\label{KZe}
Let $\rigid$ be a unitary and irreducible rigid rank $\ell>1$ local systems such that the eigenvalues of the local monodromies are  $n$th roots of unity.  Then, there exists $1<r<n$ and a KZ local system on $\Bbb{P}^1-S\cup\{\infty\}$, for $\mathfrak{sl}_r$ at level $n-r$, with central monodromy
around infinity, which is, up to a twist by  $rn$ roots of unity, isomorphic to $\rigid$.
\end{theorem}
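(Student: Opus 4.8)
The plan is to combine the correspondence of Theorem \ref{christmas} with the explicit divisor description of Lemma \ref{divisorE} and the monodromy comparison of Proposition \ref{generalKZ}, so that the theorem becomes a matter of tracking twists. Start with a unitary irreducible rigid local system $\rigid$ of rank $\ell>1$ on $\Bbb{P}^1-S$, $S\subset\Bbb{A}^1$, with local monodromy data $\ma=(\bar{A}_1,\dots,\bar{A}_s)$, $\bar{A}_i^n=I_\ell$. Since $\rigid$ is unitary each $\bar{A}_i$ is semisimple, and $\prod_i\det\bar{A}_i=1$, so $\ma$ is of type (A) in Section \ref{SD}. By Theorem \ref{christmas}(ii) the line bundle $\ml=\mb(\vec{\lambda},\ell)$ on $\Par_{n,\mathcal{O},S}$ attached to $\ma$ under the correspondence (A)$\leftrightarrow$(B) is an F-line bundle, and, via numerical strange duality \eqref{GepnerPP}, its strange dual $\ml'$ on $\Par_{\ell,\widetilde{\mathcal{N}},S}$ has a one-dimensional space of sections whose Proj is the one-point moduli space containing $\rigid$ (Definition \ref{Shosta}, Proposition \ref{schubertt}). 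By Lemma \ref{divisorE} I may write $\ml=\mathcal{O}(E)$ with $E=C(d,r,\mathcal{O},n,\vec{J})$ a codimension-one effective cycle. Here $1<r<n$: a Harder--Narasimhan contradictor is a proper subbundle, so $r<n$, while $r=1$ would force $\ell=1$ by Remark \ref{ajabaja} applied to the formula of Proposition \ref{enumerative} for $\mathcal{O}(E)$, against $\ell>1$.

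With the data $(d,r,\vec{J})$ in hand, I invoke the pair of local systems (L1) and (L2) introduced in Section \ref{KZstuff}. Here (L1) is the KZ local system for $\mathfrak{sl}_r$ at level $k=n-r$, with weights $\vec{\nu}$ (and $\nu^{s+1}=\omega_1$) read off from $(d,\vec{J})$, pulled back to $\Bbb{A}^1-\{z_1,\dots,z_s\}=\Bbb{P}^1-(S\cup\{\infty\})$, whose residue around $\infty$ is central (Section \ref{clarinet}); and (L2) is the family of rank $\ell$ unitary local systems on $\Bbb{P}^1-(S\cup\{\infty\})$ obtained from $\ml'$ through Corollary \ref{sansD} (applied with $n$ and $\ell$ interchanged) and twisted by $\zeta_n^{\mp j^1_m}$ at $p_1$ and $\infty$. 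Because $h^0(\ml')=h^0(\ml)=1$, the ``moreover'' clause of Corollary \ref{sansD} shows that the underlying representation variety is a point, and by the same chain of identifications used above this point is $\rigid$; hence (L2) is $\rigid$ twisted by the explicit $\zeta_n$-factors. Now Proposition \ref{generalKZ} identifies the twist of (L1) by $\exp(-2\pi\sqrt{-1}|\mu^i|/(rn))$ at $p_i$ and $\exp(2\pi\sqrt{-1}\sum_i|\mu^i|/(rn))$ at $\infty$ (with $\mu^i$ as in Section \ref{KZstuff}) with a member of (L2); composing with the twist relating (L2) to $\rigid$ exhibits $\rigid$ as (L1) twisted by an explicit tuple of central scalars. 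Every such scalar is a power of $\zeta_n=\exp(2\pi\sqrt{-1}r/(rn))$ times an exponential of the form $\exp(2\pi\sqrt{-1}a/(rn))$, $a\in\Bbb{Z}$, hence an $rn$th root of unity, and the monodromy of (L1) around $\infty$ remains central after a central twist; this gives the theorem.

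The genuinely delicate point --- matching the local monodromies of the KZ system with those of the strange-dual family, which absorbs all the shift- and twist-bookkeeping --- is precisely Proposition \ref{generalKZ}, which I am taking as given. What remains to be written out here is the passage $\rigid\mapsto\ml\mapsto(d,r,\vec{J})$ through Theorem \ref{christmas} and Lemma \ref{divisorE}, the verification that $1<r<n$, the collapse of (L2) to a single point via \eqref{GepnerPP} and Corollary \ref{sansD}, and the elementary check that the composite twist is by $rn$th roots of unity. I expect the main obstacle in presenting this cleanly to be keeping track of the three separate sources of central twisting at $\infty$ --- the $\zeta_n^{\pm j^1_m}$ in the definition of (L2), the factor $\exp(2\pi\sqrt{-1}\sum_i|\mu^i|/(rn))$ supplied by Proposition \ref{generalKZ}, and the intrinsic central residue of the KZ connection at $\infty$ --- without sign errors.
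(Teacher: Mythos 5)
Your proposal follows the same route as the paper's own proof: pass from $\rigid$ to the F-line bundle $\ml$ via Theorem \ref{christmas}(ii), write $\ml=\mathcal{O}(E)$ with $E=C(d,r,\mathcal{O},n,\vec{J})$ by Lemma \ref{divisorE}, and conclude by Proposition \ref{generalKZ}. Your added verifications (that $1<r<n$ via Remark \ref{ajabaja} and the Harder--Narasimhan argument, and that the composite twist lies in the $rn$th roots of unity) are correct details the paper leaves implicit.
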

\begin{proof}
Let $\rigid$ be a unitary irreducible  rank $\ell>1$ rigid local system on $\Bbb{P}^1-S$, $S=\{p_1,\dots,p_s\}$. Assume $S\subseteq \Bbb{A}^1$. Let $\mathcal{A}$ be the conjugacy class data of
$\rigid$, and $\ml=\mathcal{B}(\vec{\lambda},\ell)$ be the corresponding F-line bundle on $\Par_{n,\mathcal{O},S}$ (Theorem \ref{christmas}(ii)).  By Lemma \ref{divisorE} $\ml=\mathcal{O}(E)$ with $E$ for a reduced irreducible divisor of the form  $E=C(d,r,\mathcal{O},n,\vec{J})$ (Definition  \ref{cyclist}), a cycle  of codimension one i.e., $dn +r(n-r)-\sum_{i=1}^s |\sigma_{J^i}|=-1$. The theorem now follows from Proposition \ref{generalKZ} applied to $C(d,r,\mathcal{O},n,\vec{J})$ .
\end{proof}

\begin{remark}\label{hodge}
The  works  \cite{Ram,BKZ,BM}   realise conformal blocks as suitable  isotypical spaces (under the action of a  finite group) in the $H^{M,0}$ of the cohomology of smooth projective varieties, consistent with connections. They do not however show that conformal blocks carry an integral Hodge structure over the ring of integers of a cyclotomic field (i.e., the part of $H^{M,0}$ is not shown to be a Hodge substructure of $H^M$ over the ring of integers of a number field). Note that Katz's motivic realizations are over the ring of integers of a cyclotomic field \cite[Theorem 8.4.1]{Katz}. It seems likely however that the conformal blocks picture should also be over the ring of integers of a cyclotomic field (see \cite[Question 6.2]{Loo}).
\end{remark}

The works \cite{BH} and \cite{Haraoka} produce an infinite collection of unitary rigid (irreducible) local systems with infinite monodromy, and therefore the corresponding KZ connections have infinite monodromy groups.
\begin{example}
For  the strange dual of Thaddeus' example (see Section \ref{E1}), we see that the KZ local system on $\mathcal{A}_4$ with  $\mathfrak{sl}_4$ and the representations $\lambda^1=2\omega_2+\omega_3$,
$\lambda^2=\lambda^3=\omega_1+2\omega_2$ and $\lambda^4=\omega_1$ (and the trivial representation at infinity) has infinite monodromy (even infinite projective monodromy). Examples of TQFT representations (which are equivalent to the monodromy of KZ equations) with infinite monodromy were first given in \cite{Masbaum}.
\end{example}
\subsection{Proof of  Proposition \ref{generalKZ}}
\subsubsection{First steps}\label{atonal}
 Since the codimension of the divisor
$E=C(d,r,\mathcal{O},n,\vec{J})$ on $\Par_{n,{\mathcal{O}},S}$ is one, we have
 \begin{equation}\label{ccdd}
\sum_{i=1}^s |\sigma_{J^i}|=dn +r(n-r)+1.
\end{equation}
Consider the shifted F-line bundle on $\Par_{n,\mathcal{O}(-D),n}$ obtained by applying the shift operation at $p_1$ $j^1_m$-times. Here $D=-j^1_m$. This F-line bundle has divisor  $E'=C(0,r,\mathcal{O}(-D),n,\vec{K})$ where  $K^1$ is the $j^1_m$ fold cyclic shift of $J^1$ (the cyclic shift applied once subtracts one from all elements of ${J}^1$ with zeros replaced by $n$). Let $K^j=J^j$ for $j>1$. Write $\mathcal{O}(E')= \mathcal{B}(\vec{\lambda},\ell)$.
We note the following:
\begin{enumerate}
\item [(I)]  $\mu^i$ in the local system (L1) are equal to $\lambda(K^i)$ (see Definition  \ref{correspondence}),
\item[(II)](L2) is the rank $\ell$ local system with local monodromy data (with eigenvalues $n$th roots of identity) given by the local monodromy  data $(\lambda^i)^T$ at the points $p_i$ and
$\zeta_n^{j^1_{m}}I_{\ell}$ at infinity.
\item[(III)] Formulas for  $\ell$ and $\vec{\lambda}$ are given in Section \ref{enume}, we recall them here: write
$\lambda^i=\sum_{b=1}^{n} c_{i}^{b}\omega_b$, where $\omega_b$ is the $b$th fundamental weight.
Also note that \eqref{ccdd} gives
\begin{equation}\label{cccddd}
\sum_{i=1}^s |\sigma_{K^i}|=rj^1_m +r(n-r)+1.
\end{equation}
 \end{enumerate}

\begin{enumerate}
\item[(A)] Let $K^{s+1}=\{n-r,n-r+1,\dots,n-2,n\}$. Then (see Formula \ref{ABL1})
$$\ell=\langle\sigma_{K^1}, \sigma_{K^2},\dots,\sigma_{K^s},\sigma_{K^{s+1}}\rangle_{0,D-1}.$$

\item[(B)] Suppose $b<n$. then $c_{i}^{b}=0$ if $b\not\in K^i$, or $b+1\in K^i$. If $b\in K^i$ and $b+1\not\in K^i$, define subsets $K'^1,\dots,K'^s$ of $[n]$
 each of cardinality $r$ as follows: $K'^k=K^k$ if $k\neq i$, and $K'^i=(K^i-\{b\})\cup \{b+1\}$. Then
$$c_i^{b}= \langle\sigma_{K'^1}, \sigma_{K'^2},\dots,\sigma_{K'^s}\rangle_{0,D}.$$
\item[(C)] For $b=n$, the formula is given in Remark \ref{earlyAM}, which we recall here:
$$\ell-(\lambda^i_{1}-\lambda^i_{r})=\langle\sigma_{I^1}, \sigma_{I^2},\dots,\sigma_{I^s}\rangle_{0,D-1}$$
where $I^j=K^j$ for $j\neq i$ and $I^i= \{a\mid a=n\text{ or } a+1\in K^i\}$.
\end{enumerate}
\subsubsection{Rank equality verification of the two local systems}\label{listy}
We first verify that the ranks are the same.
We can readily convert the formula (A) into the rank of a conformal block for $\mathfrak{sl}_r$  at level $k=n-r$ using Lemma \ref{newlabel2}. Note that $\lambda(K^{s+1})=\omega_{r-1}$. Passing to (ordinary) dual weights (which does not change the rank of conformal blocks), we see that $\ell$ equals the rank of the  KZ local system on $\Bbb{A}^1-\{z_1,\dots,z_s\}$ corresponding to the data $\nu^1,\dots,\nu^{s+1}$ with $\nu^{s+1}=\omega_1$.
\subsubsection{The local monodromy of the KZ equation}
We now specialize  the formulas of Section \ref{clarinet} to the case $\nu^{s+1}=\omega_1$ to compute the local monodromies of the KZ system (L1). The following formulas facilitate this computation:
\begin{enumerate}
\item[(a)] If $\mu=(\mu_1,\dots,\mu_r)$, and $\nu=(\nu_1,\dots,\nu_r)$, then $(\mu,\nu)=(\sum_{i=1}^r\mu_i\nu_i) -\frac{|\mu||\nu|}{r}$. Here $|\mu|=\sum \mu_i$ (similarly $|\nu|$).
\item[(b)] $c(\omega_1)=(r^2-1)/r$.
\item[(c)]  Assume $\nu_r=0$. Then $\rk \mathcal{V}_{\nu,\gamma^*,\omega_1}=1$ if  $\gamma$ is of the form $\nu+L_i$ for $1\leq i<r$ (and $k\geq\gamma_1\geq\dots\geq \gamma_r=0$), and zero otherwise (see \cite{Gepner}). If $\nu_r>0$, the above implies that  we will have to allow one more value of $\gamma$ if in addition  $\nu_1=k$. This entry is $\nu+L_1-(1,\dots,1)$. Note that $c(\nu+L_1-(1,\dots,1))=c(\nu+L_1)$.
\item[(d)] This motivates the following computation keeping in mind the second residue formula above in Section \ref{clarinet}: $c(\nu+L_i)-c(\nu)-c(\omega_1)= 2(\nu_i -\frac{|\nu|}{r} -i +1)$.
\end{enumerate}

We first compute the monodromy of the KZ system first around $t=z_s$. The $\gamma$ that appear are of three types (use item (c) above):
\begin{itemize}
\item[-] $\gamma=\nu^s+L_j$ with  $j>1$, $\nu^s_{j-1}> \nu^s_j$. This just means that $\mu^s_{r+2-j}<\mu^s_{r+1-j}$,
i.e, $k^s_{r+2-j}> k^s_{r+1-j} +1$, or that if $b=k^i_{r+1-j}$, then $b\in K^i$ and $b+1\not\in K^i$. The residue of the part corresponding to $\gamma$ (of multiplicity  $\rk \mathcal{V}_{\nu^1,\dots,\nu^{s-1},\gamma}$) is $\frac{1}{r+k}=\frac{1}{n}$ times
$$(\nu^s_j -\frac{|\nu|}{r} -j +1)= (n-r)-\mu^s_{r+1-j}-\frac{|\nu|}{r} -j +1$$
using the formula $\mu^s_{n+r-i}=(n-r)+(r+1-j)-k^s_{r+1-j}=n+1-k^s_{r+1-j}$ and $\frac{|\nu|}{r}=n-r-\frac{|\mu|}{r}$, we see that this residue is equal to $\frac{1}{n}$ times $k^s_{r+1-j}-n +\frac{|\mu^s|}{r}$.
\item[-] $\gamma=\nu^s+L_1$ with $\nu^s_{1}< (n-r)$, i.e., $k^s_r<n$.  The formulas are similar here and the residue is equal to  $\frac{1}{n}$ times $k^s_{r}-n +\frac{|\mu^s|}{r}$. Let $b=k^s_r$. Then in this case $n\not\in K^j$.
\item[-] $\gamma=\nu^s -(1,1,\dots,1) +L_1$ when $\nu^s_1=n-r$ (i.e., $k^s_r=n$) and $\nu^s_r>0$, i., $k^s_1\neq 1$. Setting $b=k^s_r$, this case corresponds to $b=n$, and $1\not\in K^i$.
In this case the residue is $\frac{1}{n}$ times $\frac{|\mu^s|}{r}$ which can be written in the same form as $k^s_{r}-n +\frac{|\mu^s|}{rn}$.
\end{itemize}
Around $t=\infty$ the residue is a constant $(r^2-1)/rn$. Subtracting  $\frac{|\mu^i|}{rn}$ (the codimension of $\sigma_{K^i}$ divided by $rn$) at the finite points $z_1,\dots,z_s$ and adding $\sum_i|\mu^i|/rn$ at infinity, we see that the monodromy at infinity has residue (using \eqref{cccddd})
$$(r^2-1)/rn +\sum|\mu^i|/rn = \frac{1}{n}((r^2-1)/r +n-r+ \frac{1}{r} +j^1_m) =\frac{j^1_m}{n}+1.$$
The local monodromy at the finite points $z_i$ have residues $b-n, b\in K^i$ (with possibly zero multiplicities). Therefore the equation of the fundamental group of $\Bbb{P}^1-\{z_1,\dots,z_s,\infty\}$ is a product of $\ell\times \ell$ invertible matrices
\begin{equation}\label{KZMatrix}
A_1\dots A_s A_{\infty}=I_{\ell}
\end{equation}
with
\begin{itemize}
\item  $A_i, i\neq \infty$ conjugate to a matrix with eigenvalues (with varying multiplicities)
$$\exp(2\pi\sqrt{-1}(k^i_{r+1-j}-n)/n)= \exp(2\pi\sqrt{-1}k^i_{r+1-j}/n), j=1,\dots,r$$
with multiplicities given by  ranks of conformal blocks $\rk \mathcal{V}_{\nu^1,\widehat{\nu_i}\dots,\nu^{s-1},\gamma}$ with $\nu^i$ omitted and $\gamma=\nu^i+L_j$ (if dominant of level $k=n-r$, count as zero otherwise).
\item $A_{\infty}=\exp(2\pi\sqrt{-1}j^1_m/n)I_{\ell}$.
\end{itemize}
\subsubsection{The local monodromies of (L2)}
By Item (I2) before equation \eqref{cccddd}, (L2) corresponds to matrix equations
\begin{equation}\label{sdMatrix}
B_1\dots B_s B_{\infty}=I_{\ell}
\end{equation}
 where
\begin{itemize}
\item  $B_{\infty}=\exp(2\pi\sqrt{-1}j^1_m/n)I_{\ell}$.
\item For the conjugacy class of $B_i$ we proceed as follows. Note that $\lambda^i=\sum_{b=1}^{n-1} c^i_b\omega_b$. Here the set of $b$ is constrained  by (B) in Section \ref{atonal}. First, the set of $b$ is a subset of $K^i$. It is also easy to see that we need in addition one of the following to hold
\begin{enumerate}
\item  $b=n$ and $1\not\in K^i$.
\item $b<n$ and $b+1\not\in K^i$.
\end{enumerate}
Note that the transpose of $\sum_{b=1}^n c^b_i\omega_b$ has rows of size $b$ with multiplicities $c^b_i$ (and $\sum c^b_i=\ell$).  Pick $b=k^i_{n+r-j}$. The eigenvalues of $B_i$ are therefore of the form $\exp(2\pi\sqrt{-1}k^i_{n+r-i}/n)$ with $b\in K^i$ with multiplicities $c^i_b$ (possibly zero).
\end{itemize}
\subsubsection{Comparison of local monodromies}
We have two matrix equations \eqref{KZMatrix} and \eqref{sdMatrix}. We want to show that the matrices that appear are conjugate.
Clearly $A_{\infty}=B_{\infty}$. It remains to show that $A_i$ is conjugate to $B_i$ for $i=1,\dots,s$.

The eigenvalues of $A_i$ and $B_i$ are both of the form $\exp(2\pi\sqrt{-1}b/n)$ with $b\in K^i$ with multiplicities (possibly zero). We need to verify that the multiplicities are the same.
Let $b=k^i_{r+1-j}$. We need to verify that
\begin{equation}\label{desiree}
c^i_b=\rk \mathcal{V}_{\nu^1,\widehat{\nu_i}\dots,\nu^{s-1},\gamma}
\end{equation}
with $\nu^i$ omitted and $\gamma=\nu^i+L_j$. Unless $b=n$ and $1\not\in K^i$, or $b<n$ and $b+1\not\in K^i$, both multiplicities are zero by the above discussion.

If $b<n$, then $c^b_i$ is given by case (B) in Section \ref{listy}. Let $J'^i=(K^i-\{b\})\cup \{b+1\}$. Let $\mu'=\lambda(J'^i)$ and $\nu'=\mu'^*$. t is easy to see that $\nu'+L_j$, and we use  \cite[Proposition 3.4]{BHorn} again, to write $c^b_i$ as the rank of a conformal block for $\mathfrak{sl}_r$  at level $k=n-r$. This gives \eqref{desiree}.

If $b=n$ and $1\not\in K^i.$ We use formula (C) from section \ref{listy}. Let $I^i= \{a\mid a=n\text{ or } a+1\in K^i\}$. We need to show that $\lambda(I^i)^*=\nu_i +L_1 -(1,1,\dots,1)$ which is easy to check. The rest of the computation is similar to the previous steps.
\section{Compactified parameter spaces}\label{compact}
The remaining aims for this paper are the following: completing the proofs of Theorems \ref{Adagio},\ref{brahms}, and Proposition \ref{condensed}, and  setting up the induction apparatus to induce extremal rays from Levi subgroups giving rise to the maps \eqref{productstructure} and \eqref{inductiones}.

Assume that the generalized Gromov-Witten number $\langle\sigma_{I^1}, \sigma_{I^2},\dots,\sigma_{I^s}\rangle_{d,D}=1$, where $D=-\deg(\mn)$ (Definition  \ref{GWgen} (We use this general set up with $D$ possibly non-zero because we will need to use shift operations to handle the case $1\in I^j$ for some $j$). This implies that $\Omega(d,r,\mn,n,\vec{I})\to \Par_{n,\mn,S}$ is birational and
\begin{equation}\label{timer1}
dn +\deg(\mn)r +r(n-r)-\sum_{i=1}^s |\sigma_{I^i}|=0.
\end{equation}

Recall that  $\Omega(d,r,\mn,n,\vec{I})$ is a compactification over $\Par_{n,\mn,S}$ of $\Omega^0(d,r,\mn,n,\vec{I})$ (which is the set of subbundles of the universal bundle of determinant $\mn$ and rank $n$ with fibers in given Schubert states). $\Omega(d,r,\mn,n,\vec{I})$ has several defects: (1) for a  point $(\mv,\mw,\me,\gamma)$ not in $\Omega^0(d,r,\mn,n,\vec{I})$,  $\mw/\mv$ can have torsion and one does not have an inductive structure of a  subbundle and quotient which are both bundles with induced flags, (2) even when $\mw/\mv$ is locally free the induced flag structures cannot be assigned without discontinuities ,and (3) $\Omega(d,r,\mn,n,\vec{I})$ could be singular.

For the induction map and the proof of Proposition \ref{fulC} we need a relative
partial compactification of $\Omega^0(d,r,\mn,n,\vec{I})\to \Par_{n,\mn,S}$ without these defects, as well as stacks for the relevant Levi subgroups. We will also need to understand the ramification properties of $\Omega(d,r,\mn,n,\vec{I})\to \Par_{n,\mn,S}$.  This will result in a (known) multiplicative generalization of a conjecture of Fulton, which we prove for completeness here (see Proposition \ref{fulC} below).

\subsection{Stacks for Levi subgroups}
\begin{defi}\label{parul}
Let $\ParL=\ParL(-d,r,n-r,d-D)$ be the moduli of tuples $(\mv,\mq,\mf,\mg,\gamma)$ where
\begin{enumerate}
\item $\mv$ (resp. $\mq$) is a degree $-d$ (resp. $d-D$), rank $r$ (resp. rank $n-r$) vector bundle on $\Bbb{P}^1$,
\item  $\mf\in \Fl_S(\mv)$, $\mg\in \Fl_S(\mq)$,
\item $\gamma$ is an isomorphism $\det \mv\otimes \det\mq\leto{\sim}\mn=\mathcal{O}(-D)$.
\end{enumerate}
\end{defi}
There is a diagram
\begin{equation}\label{Dbasic}
\xymatrix{
 & \Omega^0(d,r,\mn,n,\vec{I})\ar[dl]^{\pi}\ar[dr]\\
\Par_{n,\mn,S}   &  & \ParL\ar@/_/[ul]_i\ar[ll]^{i'=\pi\circ i}
}
\end{equation}
Here $i$ is a ``direct sum morphism", $i(\mv,\mq,\mf,\mg,\gamma)= (\mw,\me,\gamma')\in \Par_{n,\mn,S}$ where $\mw=\mv\oplus\mq$, $\gamma': \det(\mw)=\det(\mv)\otimes\det(\mq)\leto{\gamma} \mathcal{O}(-D)$, and $\me$ is given by the following rule. For $p=p_i$, write $I^i=\{i_1>i_2>\dots>i_r\}$. Define $E^p_{\bull}$ as follows: For  $1\leq k\leq n$, define
$$E^p_{j}=F^p_{a}\oplus G^p_{j-a}$$
where  $i_a\leq j <i_{a+1}$ (with $i_0=0$ and $i_{r+1}=n$). Let $\mathcal{R}\subseteq  \Omega^0(d,r,\mn,n,\vec{I})$ be the ramification locus of $\pi$ (see Section \ref{RamDiv}).

\subsection{Desired properties of partial compactifications}
One of our basic techniques, inspired by Ressayre's work \cite[Section 4.1]{R1} (see \cite[Section 8.6]{BHermit} for a comparison)  is to take line bundles on $\ParL$, and lift them to $\Omega^0(d,r,\mn,n,\vec{I})$.  By Zariski's main theorem, $\Omega^0(d,r,\mn,n,\vec{I})-\mathcal{R}$ is an open substack of $\Par_{n,\mn,S} $, and we thus get a line bundle on an open substack of $\Par_{n,\mn,S} $.

\begin{remark}\label{explainmore}
This open substack misses all the basic divisors $D(a,j)$ from the introduction (in the case $\mn=\mathcal{O}$).
\end{remark}
By the above remark the complement of this open subset may have codimension 1 components, and we cannot then naturally extend the line bundle to  $\Par_{n,\mn,S}$. One looks for an enlargement of   $\Omega^0=\Omega^0(d,r,\mn,n,\vec{I})$ that does not have this defect (and which still admits maps to $\Par_{n,\mn,S} $ and $\ParL$).

In fact this happens in the simplest possible fashion whenever $1\not\in I^i$ for any $i$, and the shift operations allow us to reduce the general case to this case. This partial compactification will be inside $\Omega=\Omega(d,r,\mn,n,\vec{I})$.

 We do this in two steps: First we enlarge to $\widehat{\Omega}^0\to \Par_{n,\mn,S}$ where we only weaken the Schubert conditions, the subsheaves remaining subbundles. The map   $\Omega^0\to \ParL$ will be shown to extend to $\widehat{\Omega}^0\to \ParL$ when $1\not\in I^i$ for any $i$.

The second step will extend this to a substack  $\widehat{\Omega}_{S}^0\subseteq \Omega$ over $\Par_{n,\mn,S}$, where the subbundle condition is imposed only at points of $S$. We will then have
$$\Omega^0\subseteq \widehat{\Omega}^0\subseteq \widehat{\Omega}_{S}^0\subseteq \Omega.$$ The map $\widehat{\Omega}^0\to \ParL$ will not be shown to extend to $\widehat{\Omega}_{S}^0\to \ParL$. Instead we will show in Proposition \ref{bee} that   $\widehat{\Omega}_{S}^0-  \widehat{\Omega}^0$ lies inside the ramification divisor $\mathcal{R}$, but  irreducible components of $\Omega-\widehat{\Omega}_{S}^0$ are of codimension $\geq 2$. The desired enlargement of $\Omega^0(d,r,\mn,n,\vec{I})$ will then be $\widehat{\Omega}^0$ (and not $\widehat{\Omega}_{S}^0$!).
\subsection{Construction of the partial compactification}
For convenience, we weaken our assumptions to the following:
Suppose $\vec{I}=(I^1,\dots,I^s)$ is an $s$-tuple of subsets  of $[n]$ each of cardinality $r$. Let $\Omega=\Omega(d,r,\mn,n,\vec{I})$.  The representable morphism $\Omega\to \Par_{n,\mn,S}$ has relative dimension $dn +\deg(\mn)r +r(n-r)-\sum_{i=1}^s |\sigma_{I^i}|$. We will make the assumption that this relative dimension is zero:
\begin{equation}\label{timer}
dn +\deg(\mn)r +r(n-r)-\sum_{i=1}^s |\sigma_{I^i}|=0.
\end{equation}

\subsubsection{Definition of $\widehat{\Omega}^0=\widehat{\Omega}^0(d,r,\mn,n,\vec{I})$}\label{bruckner92}
We create a  smooth open subset of $\Omega(d,r,\mn,n,\vec{I})$ containing $\Omega^0(d,r,\mn,n,\vec{I})$ in the following way.

\begin{defi}\label{ToI}
\hspace{2em}
\begin{enumerate}
\item For a subset $I$ of $[n]$ of cardinality $r$, let $T(I)\subset [n]$ be the subset of all $j\in [n]$ such that one of the following holds $j\in I$ or $j+1\not\in I$ (equivalently $a\not\in T(I)$ if and only if $a\not\in I$ and $a+1\in I$). Note that $T(I)$ contains $n$.
\item In the setting of Definition \ref{ineffable}, define a smooth variety $\widehat{\Omega}^0_I(E_{\bull})$, with $\Omega^0_I(E_{\bull})\subseteq \widehat{\Omega}^0_I(E_{\bull})\subseteq {\Omega}_I(E_{\bull})$ by
$$\widehat{\Omega}^0_I(E_{\bull})=\{V\in \Gr(r,W)\mid \rk (V\cap E_j)=a, \text{ if } i_a\leq j<i_{a+1}\text { and } j\in T(I)\}.$$
It is easily seen that  $\widehat{\Omega}^0_I(E_{\bull})$ carries a transitive action of a connected group and is hence smooth. It is easy to see that all irreducible components of
${\Omega}_I(E_{\bull})-\widehat{\Omega}^0_I(E_{\bull})$ are of codimension $\geq 2$ in ${\Omega}_I(E_{\bull})$, see \cite[Lemma 7.9]{BHermit}.
\end{enumerate}
\end{defi}
Define $\widehat{\Omega}^0=\widehat{\Omega}^0(d,r,\mn,n,\vec{I})\subseteq \Omega(d,r,\mn,n,\vec{I})$ to be the smooth substack parametrizing tuples $(\mv,\mw,\me,\gamma)$ with $\mv$ a subbundle of $\mw$, and $\mv_p\in \widehat{\Omega}^0_{I^i}(\me_p)\subseteq \Gr(r,\mw_p)$ for all $p=p_i\in S$. Clearly, $\widehat{\Omega}^0(d,r,\mn,n,\vec{I})\supseteq {\Omega}^0(d,r,\mn,n,\vec{I})$.
By \cite[Lemma 7.10]{BHermit}), the morphism  $\Omega^0 \to \ParL$ extends to a morphism $\widehat{\Omega}^0\to\ParL$, and we get a commutative diagram
 \begin{equation}\label{morph1}
\xymatrix{
 \Omega^0 \ar[r]\ar[d] &  \widehat{\Omega}^0\ar[dl]\\
\ParL
}
\end{equation}
\subsubsection{Definition of $\widehat{\Omega}_{S}^0=\widehat{\Omega}_{S}^0(d,r,\mn,n,\vec{I})$}

There is an even larger  open subset: Instead of requiring that $\mv$ is a subbundle of $\mw$, we can require only that it is a subbundle near points of $S$. Denote this open subset by $\widehat{\Omega}_{S}^0=\widehat{\Omega}_{S}^0(d,r,\mn,n,\vec{I})$ with the subscript $S$ denoting that the subbundle property is only imposed at points of $S$. Clearly $$\Omega^0\subseteq \widehat{\Omega}^0\subseteq \widehat{\Omega}_{S}^0\subseteq \Omega.$$
\begin{proposition}\label{bee}
\hspace{2em}
\begin{enumerate}
\item[(a)] If $1\not\in I^i$ for all $i$, then   all irreducible components of $\Omega-\widehat{\Omega}_{S}^0$ are of codimension $\geq 2$.
\item[(b)] $\widehat{\Omega}_{S}^0$ is a smooth Artin stack.
\item[(c)] Any point in $\widehat{\Omega}_{S}^0-\widehat{\Omega}^0$ is in the ramification locus of $\widehat{\Omega}_{S}^0\to \Par_{n,\mn,S}$.
\end{enumerate}
\end{proposition}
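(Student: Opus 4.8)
The plan is to establish the three parts in the order (b), (a), (c): part (b) produces the smooth ambient stack that parts (a) and (c) use freely, part (a) is the substantive dimension count, and part (c) is a tangent-space computation on the smooth stack of (b).

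\emph{Part (b).} First I will note that $\widehat{\Omega}_S^0$ is an \emph{open} substack of $\Omega=\Omega(d,r,\mn,n,\vec I)$: the locus where $\mw/\mv$ fails to be locally free along $S$ is closed, and the conditions $\mv_p\in\widehat{\Omega}^0_{I^i}(\me_p)$ are open inside the closed Schubert conditions already imposed in $\Omega$ (using the codimension statement in Definition \ref{ToI}(2)). Smoothness will be proved exactly as for $\widehat{\Omega}^0$ in \ref{bruckner92}: the forgetful morphism $(\mv,\mw,\me,\gamma)\mapsto(\mw,\gamma,[\mw\twoheadrightarrow\mw/\mv])$ carries $\widehat{\Omega}_S^0$ onto the open substack $\Quot^{\circ}\subseteq\Quot(d,r,\mn,n)$ where the universal quotient is locally free along $S$; $\Quot^{\circ}$ is smooth by Proposition \ref{laumon}, and over it $\widehat{\Omega}_S^0$ is, \'etale-locally, a product of copies (one for each $p\in S$) of the varieties $\{E_{\bull}\in\Fl(\mw_p):\mv_p\in\widehat{\Omega}^0_{I^i}(E_{\bull})\}$, each smooth by the argument of \cite[Lemma 7.9]{BHermit} with the roles of the subspace and the flag interchanged (cf. \cite[Lemma 7.10]{BHermit}). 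Hence $\widehat{\Omega}_S^0\to\Quot^{\circ}$ is smooth and $\widehat{\Omega}_S^0$ is a smooth Artin stack; this is the same proof that shows $\widehat{\Omega}^0$ is smooth, with the open locus of $\Quot$ where the quotient is everywhere locally free enlarged to $\Quot^{\circ}$.

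\emph{Part (a).} Since $\Omega^0\subseteq\widehat{\Omega}_S^0$ and $\Omega$ is irreducible with $\Omega^0$ dense (Proposition \ref{maya}), and since $\widehat{\Omega}_S^0$ is open, it suffices to show that the generic point of every codimension-one component of $\Omega-\Omega^0$ lies in $\widehat{\Omega}_S^0$. The proof of Proposition \ref{maya} (Sections \ref{bruck01}--\ref{bruck04}) classifies these generic points into three kinds: (i) $\mv$ is a global subbundle away from a single point $q\notin S$ with a one-dimensional drop, which forces $r=n-1$; (ii) $\mw/\mv$ has one-dimensional torsion at some $p=p_i\in S$, which forces $1\in I^i$; (iii) $\mv$ is a global subbundle and $\mv_{p_i}$ lies in the Schubert cell of $I'=(I^i-\{a\})\cup\{a-1\}$ with $a>1$, $a\in I^i$, $a-1\notin I^i$. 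In case (i) the subbundle property holds along $S$ and $\mv_p$ lies in $\Omega^0_{I^i}(\me_p)\subseteq\widehat{\Omega}^0_{I^i}(\me_p)$, so the point is in $\widehat{\Omega}_S^0$. In case (iii), since $a\in I^i$ and $a-1\notin I^i$ we have $a-1\notin T(I^i)$ by the definition of $T(\cdot)$, hence $\Omega^0_{I'}(\me_{p_i})\subseteq\widehat{\Omega}^0_{I^i}(\me_{p_i})$ and again the point lies in $\widehat{\Omega}_S^0$. Under the hypothesis $1\notin I^i$ for all $i$, case (ii) cannot occur. Therefore $\Omega-\widehat{\Omega}_S^0$ meets each codimension-one component of $\Omega-\Omega^0$ in a subset of codimension $\geq 2$, and together with the codimension $\geq 2$ components of $\Omega-\Omega^0$ this proves (a).

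\emph{Part (c).} The morphism $\pi\colon\widehat{\Omega}_S^0\to\Par_{n,\mn,S}$ is, by \eqref{timer}, of relative dimension zero between smooth stacks (Part (b) and the definition of $\Par_{n,\mn,S}$), so a point is ramified as soon as $d\pi$ fails to be injective there. Let $x=(\mv,\mw,\me,\gamma)\in\widehat{\Omega}_S^0-\widehat{\Omega}^0$, so $\mv$ is a subbundle of $\mw$ along $S$ but not globally; hence the torsion subsheaf $\mathcal{T}$ of $\mw/\mv$ is nonzero and supported on a finite subset of $\Bbb{P}^1-S$. The fibre of $d\pi_x$ is the space of first-order deformations of the subsheaf $\mv\subseteq\mw$ that fix $(\mw,\me,\gamma)$ and remain in $\widehat{\Omega}_S^0$, which is a subspace of $\Hom(\mv,\mw/\mv)$; since $\mv$ is locally free of positive rank, $\Hom(\mv,\mathcal{T})$ is a nonzero subspace of $\Hom(\mv,\mw/\mv)$, and any deformation arising from it leaves $\mv$ unchanged in a neighbourhood of $S$ (being concentrated on $\operatorname{Supp}(\mathcal{T})\subseteq\Bbb{P}^1-S$), hence preserves the subbundle-along-$S$ and Schubert conditions. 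Thus $\ker d\pi_x\supseteq\Hom(\mv,\mathcal{T})\neq 0$, so $x$ lies in the ramification locus, proving (c).

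The main obstacle is Part (a): it requires a careful bookkeeping of the dimension estimates in the proof of Proposition \ref{maya} together with the combinatorial point that $a\in I$, $a-1\notin I$ implies $a-1\notin T(I)$ — this is exactly what makes $\widehat{\Omega}_S^0$ absorb every codimension-one degeneration of $\Omega^0$ apart from the one type (torsion at a point of $S$) that is ruled out by the hypothesis $1\notin I^i$. Parts (b) and (c) are then essentially formal given the smoothness of $\Quot(d,r,\mn,n)$ and the relative-dimension-zero structure of $\pi$.
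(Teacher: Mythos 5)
Your proof is correct and follows essentially the same route as the paper: part (b) via smoothness and representability over the open locus of the quot scheme where the quotient is locally free along $S$, part (a) by checking that each type of codimension-one degeneration from the proof of Proposition \ref{maya} is absorbed by $\widehat{\Omega}^0_S$ (with the torsion-at-$S$ case excluded by $1\notin I^i$ and the Schubert-strengthening case handled by $a-1\notin T(I^i)$), and part (c) by exhibiting $\Hom(\mv,\mathcal{T})$ inside the tangent space of the fiber. Your deformation-theoretic phrasing of (c) is the same computation as the paper's observation in Section \ref{martel} that the torsion of $\shom(\mv,\mq)$ supported off $S$ forces $H^0(\mk_x)\neq 0$.
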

Part (a) in the above theorem follows from the computations in Sections \ref{bruck02} and \ref{bruck03}.
Note that if $J$ is as in Section \ref{bruck03}, then ${\Omega}^0_J(E_{\bull}) \subseteq \widehat{\Omega}^0_{I^1}(E_{\bull})$, and therefore $\widehat{\Omega}_{S}^0(d,r,\mn,n,\vec{I})$ includes the codimension 1 degenerations of Section \ref{bruck03}.
 $\widehat{\Omega}_{S}^0$ is smooth and representable over $\op{Quot}(d,r,\mn,n)$ which implies (b).
 Part (c) of Proposition \ref{bee} is proved in  Section \ref{martel}.
\subsection{Ramification divisors}\label{RamDiv}
Suppose $\mathcal{X}\leto{\pi} \mathcal{Y}$ is a representable morphism of smooth Artin stacks of the same dimension over $\Bbb{C}$. We construct a natural  ramification Cartier divisor $\mathcal{R}\subseteq \mathcal{Y}$ as follows. Let $U\to \mathcal{Y}$
be an atlas. We get a morphism of smooth schemes of the same dimension
$U\times_{\mathcal{Y}}\mathcal{X}\leto{\pi'} U$. The map on tangent spaces results in  a map of bundles of the same rank on $U\times_{\mathcal{Y}}\mathcal{X}$ given by $T (U\times_{\mathcal{Y}} \mathcal{X}) \to \pi'^* TU$. From this data we obtain  a line bundle $\det \pi'^* TU\tensor \det T (U\times_{\mathcal{Y}}\mathcal{X})^{-1}$ and a section. We denote the corresponding Cartier divisor by $\mathcal{R}$ and the line bundle by $\mathcal{O}(\mathcal{R})$. The line bundle and the section glue in the smooth topology of $\mathcal{X}$.

To prove Part (c) of Proposition \ref{bee}, we introduce a line bundle on $\widehat{\Omega}^0_S(d,r,\mn,n,\vec{I})$ with a section which we will later prove to be equal to $\mathcal{O}(\mathcal{R})$ and the corresponding section. Consider a point
$$x=(\mv,\mw,\me,\gamma)\in \widehat{\Omega}^0_S(d,r,\mn,n,\vec{I}).$$
Here,
\begin{itemize}
\item[--]$\mv\subset \mw$ is a locally free subsheaf. Let $\mathcal{Q}=\mw/\mv$ which is by assumption, locally free near points of $S$.
\item[--]Let $\mathcal{F}$ (respectively $\mathcal{G}$) be the induced flags on the fibers of $\mv$ (respectively $\mathcal{Q}$) at points $p\in S$.
\item[--] For each $p=p_j\in S$ define a subspace $T_p$ of $\Hom(\mv_p,\mq_p)$ as follows:
$$T_p=\{\phi: \mv_p\to \mq_p\mid \phi(F^p_{k})\subseteq G^p_{i_j^{k}-k}, k=1,\dots,r \}.$$
Note that $T_p$ is the tangent space of the Schubert cell ${\Omega}^0_{I^j}(E_{\bull})$ of $\Gr(r,\mw_p)$ at $\mv_p\subseteq \mw_p$ and has dimension $|\sigma_{I^j}|$.
\end{itemize}
Consider the kernel $\mk_x$ of the natural surjective mapping of sheaves on $\Bbb{P}^1$  (the right hand side is a sum of skyscraper sheaves)
\begin{equation}\label{sky}
\shom(\mv,\mathcal{Q})\to \bigoplus_{p\in S}\frac{\Hom(\mv_p,\mq_p)}{T_p}\mid_p.
\end{equation}
The Euler characteristic $\chi(\Bbb{P}^1,\mk_x)$ is easily computed to be zero  using \eqref{timer} (note that $\mv$ is locally free), and hence we get a determinant of cohomology line (i.e., a one dimensional complex vector space) $D(\mk_x)=\det H^0(\Bbb{P}^1,\mk_x)^*\tensor  \det H^1(\Bbb{P}^1,\mk_x)$
together with a canonical section (i.e., an element in this line). This construction (as $x$ varies) gives
\begin{itemize}
 \item[-] A locally free sheaf $\mk$ on $\widehat{\Omega}^0_S(d,r,\mn,n,\vec{I})\times\Bbb{P}^1$.
\item[-] A line bundle $\Theta$ on $\widehat{\Omega}^0_S(d,r,\mn,n,\vec{I})$, together with a section $\theta$ on $\widehat{\Omega}^0_S(d,r,\mn,n,\vec{I})$. The fiber of this bundle over $x\in \widehat{\Omega}^0_S(d,r,\mn,n,\vec{I})$ is the determinant of cohomology $\mathcal{D}(\mk_x)$ of the coherent sheaf $\mk_x$ (as above) on $\Bbb{P}^1$.
\end{itemize}
\begin{proposition}\label{torro}
The Cartier divisor on $\widehat{\Omega}^0_S(d,r,\mn,n,\vec{I})$  corresponding to $\theta$ equals $\mathcal{R}$, the ramification divisor of the representable morphism of smooth Artin stacks $\pi:\widehat{\Omega}^0_S(d,r,\mn,n,\vec{I})\to \Par_{n,\mn,S}$.
\end{proposition}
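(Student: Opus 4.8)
The proof plan for Proposition \ref{torro}.

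\medskip

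\textbf{Strategy.} The plan is to identify the ramification divisor $\mathcal{R}$ of $\pi:\widehat{\Omega}^0_S(d,r,\mn,n,\vec{I})\to \Par_{n,\mn,S}$ with the zero divisor of $\theta$ by comparing the relevant determinant-of-cohomology line bundles together with their canonical sections. Both sides are canonically constructed Cartier divisors on the smooth Artin stack $\widehat{\Omega}^0_S(d,r,\mn,n,\vec{I})$, so it suffices to produce a canonical isomorphism of the two line bundles $\mathcal{O}(\mathcal{R})\leto{\sim}\Theta$ carrying the ramification section to $\theta$. Since both constructions are compatible with smooth base change (as noted in the definition of $\mathcal{R}$ and in the construction of $\Theta$ via $\mk$), I may work locally on an atlas $U\to \Par_{n,\mn,S}$ and on an atlas for the source, reducing the whole statement to a statement about a morphism of smooth schemes of the same dimension, where $\mathcal{O}(\mathcal{R})=\det \pi'^*TU\tensor \det T(U\times_{\mathcal{Y}}\mathcal{X})^{-1}$ with its Jacobian section.

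\medskip

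\textbf{Key steps.} First I would compute the relative tangent complex of $\pi$. A point $x=(\mv,\mw,\me,\gamma)$ of $\widehat{\Omega}^0_S$ has, for fixed target $(\mw,\me,\gamma)\in\Par_{n,\mn,S}$, a deformation space of the subsheaf $\mv\subseteq\mw$ compatible with the (weakened) Schubert conditions at the points of $S$; the standard deformation theory of Quot schemes combined with the incidence conditions identifies the tangent space to the fiber of $\pi$ with $H^0(\Bbb{P}^1,\mk_x)$ and the obstruction space with $H^1(\Bbb{P}^1,\mk_x)$, where $\mk_x$ is exactly the kernel sheaf of \eqref{sky} (the Schubert conditions cut out, at each $p\in S$, the subspace $T_p\subseteq\Hom(\mv_p,\mq_p)$, and the globalization of $\shom(\mv,\mq)$ modulo these pointwise conditions is $\mk_x$). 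Since $\chi(\Bbb{P}^1,\mk_x)=0$, the fiber of $\pi$ has the same dimension as $\Par_{n,\mn,S}$ at every point, consistent with $\pi$ being representable of relative dimension zero. Second, I would write the long exact cohomology sequence relating the tangent bundle of $\widehat{\Omega}^0_S$, the pullback of the tangent bundle of $\Par_{n,\mn,S}$, and $H^\bull(\mk_x)$: one gets a complex
$$0\to H^0(\mk_x)\to T_x\widehat{\Omega}^0_S\to \pi^*T\Par_{n,\mn,S}\to H^1(\mk_x)\to 0$$
after trivializing the infinitesimal automorphisms appropriately on an atlas (here the automorphism/stacky contributions cancel on both sides because $\pi$ is representable). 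Taking determinants, $\det(d\pi)$ is identified with the canonical section of $\det H^0(\mk_x)^*\tensor\det H^1(\mk_x)=\mathcal{D}(\mk_x)=\Theta_x$, which is precisely $\theta$. Third, I would check that the canonical section coming from the determinant of the map on $H^\bull$ in the four-term exact sequence agrees, up to a nowhere-vanishing factor, with the Jacobian section defining $\mathcal{R}$; this is the standard fact that for a map of bundles of equal rank whose cokernel and kernel fit into such a cohomology sequence, the induced section of the determinant line is the one whose vanishing locus (with multiplicities) is the degeneracy locus, i.e.\ $\mathcal{R}$.

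\medskip

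\textbf{Main obstacle.} The technical heart is the precise identification of the relative deformation theory of $\pi$ with $H^\bull(\Bbb{P}^1,\mk_x)$, including matching the canonical section $\theta$ of $\Theta$ (defined via the determinant of cohomology of $\mk_x$) with the Jacobian section defining $\mathcal{R}$ — in particular verifying that no spurious line bundle twist is introduced, that the skyscraper quotients in \eqref{sky} correctly encode the Schubert incidence conditions infinitesimally (rather than just set-theoretically), and that the stacky/automorphism contributions genuinely cancel between source and target because $\pi$ is representable. I expect this to follow the model of the analogous ramification-divisor computations in \cite{BHermit,BKiers,Kiers}, so the argument should be structurally routine, but the bookkeeping of the base-change compatibilities and of the exact sequence of tangent complexes over the Artin stack (to legitimately reduce to the scheme-theoretic Jacobian picture on an atlas) is where care is needed.
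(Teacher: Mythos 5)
Your proposal is correct and follows essentially the same route as the paper: identify the kernel/cokernel of $d\pi$ with $H^0(\mk_x)$ and $H^1(\mk_x)$ (giving the set-theoretic equality immediately), then upgrade this to an identification of determinant line bundles with their canonical sections. The one point you flag as the main obstacle — making the identification work in families rather than pointwise — is exactly where the paper does its real work, by passing to an atlas built from (hyper)quot schemes on which both tangent bundles are realized as $H^0(\Bbb{P}^1,\ma_x)$ and $H^0(\Bbb{P}^1,\mb_x)$ for a surjection $\ma\to\mb$ of sheaves with vanishing $H^1$ and kernel $\mk$, so that $Rp_*\mk$ is represented by the two-term complex computing the Jacobian.
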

The above equality of divisors at the level of sets is both easy (see the first paragraph below) and sufficient for our main results (the proof of Proposition \ref{bee}(c)). 
 \begin{proof}
 Note that the ramification divisor $\mathcal{R}$ is supported on the points where the fiber of $\pi$ is not smooth. The tangent space of a fiber of $\pi$ at a point $x$ is the same as $H^0(\Bbb{P}^1,\mathcal{K}_x)$, which is non zero only at points at which $\theta$ vanishes. Therefore the desired equality is true at the level of sets. We now show that we have equality as Cartier divisors.

 We ignore the data of $\det(\mw)\leto{\gamma}\mathcal{N}$ for simplicity. There exists an $N>0$, and a large open substack $\op{Bun}'_{\mn}(n)$ of $\op{Bun}_{\mn}(n)$ (complement of codimension $\geq 2$) such that
\begin{itemize}
\item[-] For points $(\mw,\gamma)\in \op{Bun}'_{\mn}(n)$, the vanishing $H^1(\Bbb{P}^1,\mw(N))=0$ holds, and $H^0(\Bbb{P}^1,\mw(N))$ is generated by global sections.
\end{itemize}
 Suppose that the $H^0(\Bbb{P}^1,\mw(N))$  above is $M$-dimensional. Let $\mt=(\mathcal{O}^{\oplus M})\tensor \mathcal{O}$. There is a suitable component $Q'$ of the quot scheme of quotients of $\mt$ such that there is a smooth atlas $Q'\to \op{Bun}'_{\mn}(n)$ so that $\op{Bun}'_{\mn}(n)$ is the stack quotient
$[Q'/G]$ with $G= \operatorname{Aut}(\mt)=\operatorname{GL}(M)$.

The space $\Par_{n,\mn,S}$ is a bundle over $\op{Bun}'_{\mn}(n)$ whose fibers are of the form $\Fl(n)^s$. We can base change  $\widehat{\Omega}^0(d,r,\mn,n,\vec{I})\to \Par_{n,\mn,S}$ to $Q'$. The base changed objects are now smooth schemes with actions of $G$, so that the original objects are stack quotients by $G$ (but only over $\op{Bun}'_{\mn}(n)$). Let the resulting map of smooth schemes be denoted by $\pi':W\to P$.

Note that $W$ parametrizes points $(\mv,\mw,\me,\gamma)\in \widehat{\Omega}^0_S(d,r,\mn,n,\vec{I})$ together with a surjection $\mt\twoheadrightarrow  \mw(N)$, so that $\mw^*$ is a subbundle of $\mt^*(N)$. Therefore $W$ is a fiber-bundle over a hyperquot scheme. $P$ is the data of $(\mw,\me,\gamma)$  with a surjection $\mt\twoheadrightarrow \mw(N)$.

We claim that given a point $x\in W$, there exist sheaves $\mathcal{A}_x$ and $\mathcal{B}_x$ on $\Bbb{P}^1$, and a surjective map of sheaves $\mathcal{A}_x\to \mathcal{B}_x$  so that
\begin{enumerate}
\item $TW_x =H^0(\Bbb{P}^1, \mathcal{A}_x)$, $TP_{\pi'(x)}=H^0(\Bbb{P}^1,\mathcal{B}_x)$ and $H^1(\Bbb{P}^1,\mathcal{A}_x)=H^1(\Bbb{P}^1,\mathcal{B}_x)=0$.
\item The sheaves $\mathcal{A}_x$ and $\mathcal{B}_x$ fit together and give sheaves $\mathcal{A}$ and $\mathcal{B}$ on $W\times \Bbb{P}^1$ together with a surjective map $\mathcal{A}\to \mathcal{B}$.
\item The sheaf $\mathcal{K}$ pulled back to $W\times\Bbb{P}^1$ equals the kernel of  $\mathcal{A}\to \mathcal{B}$. The pushforward, in the derived category, of (this pullback of) $\mk$ to $\Omega$
    is represented by a two term complex of vector bundles with fibers $H^0(\Bbb{P}^1, \mathcal{A}_x)$ and $H^0(\Bbb{P}^1,\mathcal{B}_x)$.
\end{enumerate}
This will imply that $\mathcal{D}(\mk_x)$ (together with the section) is canonically identified with $\det H^0(\ma_x)^*\otimes \det H^0(\mb_x)$ (together with the canonical section), and hence with the ramification line bundle and its section, and hence complete the proof.

To prove the claim, we appeal to the description of tangent spaces to hyperquot scheme parametrizing hyperquotients of the trivial bundles (i.e., of flags of subsheaves of the trivial bundles), see e.g., \cite{CF} (particularly Theorem 1.2, and Proposition E in the appendix). Ignoring the flag structures, both $W$ and $P$ are hyperquot schemes and the description of $\mk$ as the kernel of $\ma\to \mb$ follows from \cite[Prop. E]{CF}.

When flag structures are introduced, we need to replace $P$ by a product (locally) over $\op{Bun}'_{\mn}(n)$ of the form $\op{Bun}'_{\mn}(n)\times (\Fl(n))^s$ because the fibers at $p\in S$ of the universal bundles on $\op{Bun}'_{\mn}(n)\times \Bbb{P}^1$ are locally trivial. The replacement of $\ma_x$ will be a subsheaf $\ma'_x$ of $\ma_x\oplus T_e(\Fl(n))^s$ with the quotient a sum of skyscraper sheaf (on fibers), see Remark \ref{afternoon} below. We want $H^1$ of this sheaf to be zero to apply the above reasoning. This follows because we know that the dimension of $H^0(\ma'_x)$ is the expected one ($\widehat{\Omega}^0_S(d,r,\mn,n,\vec{I})$ is smooth of the expected dimension).
\end{proof}
\begin{remark}\label{afternoon}
Consider the universal Schubert variety $\Omega_I$ in $\Gr(r,n)\times\Fl(n)$, and a smooth point $x=(V,E_{\bull})$ in $\Omega_I$. Then the tangent space of $\Omega_I$ fits into an exact sequence
$$0\to T_x(\Omega_I) \to T(\Gr(r,n))_V\oplus T(\Fl(n))_{E_{\bull}}\to Q\to 0$$ where $Q$ is a vector space of dimension given by the codimension of $\Omega_I(E_{\bull})$ in $\Gr(r,n)$. The map $T(\Gr(r,n))_V\oplus T(\Fl(n))_{E_{\bull}}\to Q$ is surjective on each factor.
\end{remark}
There is a morphism  $\ParL\to {\Omega}^0(d,r,\mn,n,\vec{I})$. Let $\mathcal{R}_L\subseteq \ParL$ be the pullback of $\mathcal{R}$. From the description of $\mathcal{R}$ as the zeroes of a determinant given in \eqref{sky}, it follows that
\begin{lemma}
The Cartier divisor $\mathcal{R}$ on  $\widehat{\Omega}^0(d,r,\mn,n,\vec{I})$ equals $\pi^*\mathcal{R}_L$ where
$\pi: \widehat{\Omega}^0(d,r,\mn,n,\vec{I})\to \ParL$.
\end{lemma}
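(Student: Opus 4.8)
The plan is to show that the entire determinant-of-cohomology construction producing $\mathcal{O}(\mathcal{R})$ together with its tautological section on $\widehat{\Omega}^0=\widehat{\Omega}^0(d,r,\mn,n,\vec I)$ is itself pulled back from $\ParL$ along $\pi$, and then to transport divisors back and forth using the direct-sum section $i\colon\ParL\to\Omega^0\subseteq\widehat{\Omega}^0$ of \eqref{Dbasic}. In particular the statement is really just a bookkeeping consequence of Proposition \ref{torro}.

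First I would note that on $\widehat{\Omega}^0$ the subsheaf $\mv\subseteq\mw$ is a genuine subbundle, so $\mq=\mw/\mv$ is a vector bundle carrying well-defined induced flags; by \eqref{morph1} this is precisely the data recorded by $\pi\colon\widehat{\Omega}^0\to\ParL$. Hence the exact sequence \eqref{sky} defining $\mk_x$ --- built solely from $\shom(\mv,\mq)$, the induced flags, and the subspaces $T_p\subseteq\Hom(\mv_p,\mq_p)$, which depend only on those flags and the fixed $\vec I$ --- is the pullback under $\pi\times\mathrm{id}_{\Bbb{P}^1}$ of the analogous universal sequence on $\ParL\times\Bbb{P}^1$. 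Since $\chi(\Bbb{P}^1,\mk_x)=0$ by \eqref{timer} and the determinant of cohomology commutes with arbitrary base change, the line bundle $\Theta$ and its section $\theta$ on $\widehat{\Omega}^0$ are the $\pi$-pullback of a line-bundle-with-section $(\Theta_L,\theta_L)$ on $\ParL$. By Proposition \ref{torro}, applied on the open substack $\widehat{\Omega}^0\subseteq\widehat{\Omega}^0_S$, the divisor of $\theta$ equals $\mathcal{R}$, so $\mathcal{R}=\pi^*\operatorname{div}(\theta_L)$ as Cartier divisors.

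It then remains to identify $\operatorname{div}(\theta_L)$ with $\mathcal{R}_L$. The direct-sum morphism $i$ is a section of $\pi$: forming $\mv\oplus\mq$ and then quotienting by $\mv$ returns $(\mv,\mq,\mf,\mg,\gamma)$, so $\pi\circ i=\mathrm{id}_{\ParL}$. By the definition of $\mathcal{R}_L$ as the pullback of $\mathcal{R}$ along $i$, we get $\mathcal{R}_L=i^*\mathcal{R}=i^*\pi^*\operatorname{div}(\theta_L)=(\pi i)^*\operatorname{div}(\theta_L)=\operatorname{div}(\theta_L)$. Pulling this back by $\pi$ yields $\pi^*\mathcal{R}_L=\pi^*\operatorname{div}(\theta_L)=\mathcal{R}$, which is exactly the assertion.

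The one step carrying real content is the base-change claim in the second paragraph: one must check that on $\widehat{\Omega}^0$ --- unlike on $\widehat{\Omega}^0_S$ or $\Omega$, where $\mq$ can acquire torsion or the induced flags can jump --- the quotient bundle together with its flags forms a genuine family, so that \eqref{sky} really is the $\pi$-pullback of a universal object over $\ParL$. Granting that, compatibility of the determinant of cohomology with base change (and of $\operatorname{div}(-)$ with pullback of a section) finishes the argument, and the remaining manipulations with the section $i$ are purely formal.
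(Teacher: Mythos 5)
Your proposal is correct and follows essentially the same route the paper intends: the paper's (one-line) justification is precisely that $\mathcal{R}$ is the divisor of the theta-section built from \eqref{sky}, whose defining data $(\mv,\mq,\mf,\mg)$ and subspaces $T_p$ factor through $\pi$ on $\widehat{\Omega}^0$, so that $(\Theta,\theta)$ is pulled back from $\ParL$ and the identification with $\mathcal{R}_L$ follows from $\pi\circ i=\mathrm{id}$. You have simply written out the base-change and section bookkeeping that the paper leaves implicit, and you correctly flag the only point with content, namely that on $\widehat{\Omega}^0$ (as opposed to $\widehat{\Omega}^0_S$ or $\Omega$) the quotient and its flags form a genuine family over $\ParL$.
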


Note that \eqref{morph1} restricts to
 \begin{equation}
\xymatrix{
 \Omega^0-\mathcal{R} \ar[r]\ar[d] &  \widehat{\Omega}^0-\mathcal{R}\ar[dl]\\
\ParL-\mathcal{R}_L
}
\end{equation}
\subsection{Proof of Proposition \ref{bee}, (c)}\label{martel}
We need to show that a point $x\in\widehat{\Omega}_{S}^0-\widehat{\Omega}^0$, $H^0(\mk_x)\neq 0$. This is clear because $\shom(\mv,\mathcal{Q})$ has torsion supported outside of $S$, and therefore gives rise to torsion in $\mk_x$.
This proves (c) of Proposition \ref{bee}. Proposition \ref{bee} has the following important corollary:
\begin{corollary}\label{carole}
Suppose $\langle\sigma_{I^1}, \sigma_{I^2},\dots,\sigma_{I^s}\rangle_{d,D}=1$, and $1\not\in I^i$ for all $i$. Then $\widehat{\Omega}^0(d,r,\mn,n,\vec{I})-\mathcal{R}$ maps isomorphically to an open substack of $\Par_{n,\mn,S}$ whose complement has codimension $\geq 2$.
\end{corollary}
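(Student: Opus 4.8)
The claim to prove is Corollary~\ref{carole}: under the hypotheses $\langle\sigma_{I^1},\dots,\sigma_{I^s}\rangle_{d,D}=1$ and $1\notin I^i$ for all $i$, the morphism $\widehat{\Omega}^0(d,r,\mn,n,\vec{I})-\mathcal{R}\to\Par_{n,\mn,S}$ is an open immersion onto an open substack whose complement has codimension $\geq 2$. The plan is to assemble this from the three ingredients already in place: the generic injectivity (birationality) of $\pi\colon\Omega(d,r,\mn,n,\vec{I})\to\Par_{n,\mn,S}$, the smoothness and codimension estimates of Proposition~\ref{bee}, and Proposition~\ref{torro} identifying the ramification divisor $\mathcal{R}$ with the vanishing locus of the determinantal section $\theta$ on $\widehat{\Omega}^0_S(d,r,\mn,n,\vec{I})$.

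First I would observe that since $\langle\sigma_{I^1},\dots,\sigma_{I^s}\rangle_{d,D}=1$, the hypothesis \eqref{timer} holds, so $\pi\colon\Omega\to\Par_{n,\mn,S}$ is a proper representable morphism of irreducible Artin stacks of the same dimension, and the Gromov--Witten count being $1$ forces $\pi$ to be birational: over a dense open substack of $\Par_{n,\mn,S}$ the fiber is a single reduced point (this is the standard ``number of subbundles equals $1$'' argument, valid because a general $(\mw,\me,\gamma)$ has a unique subbundle $\mv$ in the prescribed Schubert states, and that $\mv$ lies in $\Omega^0$). Next, by Zariski's main theorem applied to the representable birational morphism $\pi$ restricted to the smooth locus, the complement of the ramification locus in the source maps isomorphically onto an open substack of the (smooth, hence normal) target $\Par_{n,\mn,S}$. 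Concretely: $\widehat{\Omega}^0_S-\mathcal{R}\to\Par_{n,\mn,S}$ is étale (by definition of $\mathcal{R}$) and generically injective (by birationality), hence an open immersion, because an étale generically injective morphism to a normal target is injective, hence an open immersion. Then by Proposition~\ref{bee}(c), $\widehat{\Omega}^0_S-\mathcal{R}=\widehat{\Omega}^0-\mathcal{R}$ as open substacks, so in fact $\widehat{\Omega}^0-\mathcal{R}\hookrightarrow\Par_{n,\mn,S}$ is an open immersion.

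It remains to bound the codimension of the complement. Let $\mathcal{U}\subseteq\Par_{n,\mn,S}$ be the image open substack. A point $(\mw,\me,\gamma)$ fails to be in $\mathcal{U}$ precisely when every preimage in $\widehat{\Omega}^0_S$ lies in $\mathcal{R}$, or there is no preimage in $\widehat{\Omega}^0_S$ at all, or there is a preimage only in $\Omega-\widehat{\Omega}^0_S$. Using the hypothesis $1\notin I^i$ for all $i$, Proposition~\ref{bee}(a) gives that $\Omega-\widehat{\Omega}^0_S$ has codimension $\geq 2$ inside $\Omega$, hence its image in $\Par_{n,\mn,S}$ (under the finite-degree-$1$, i.e. birational, map $\pi$) has codimension $\geq 2$; similarly the locus of points with no preimage in $\widehat{\Omega}^0_S$ is contained in the image of $\Omega-\widehat{\Omega}^0_S$ together with the image of $\mathcal{R}$, since over points where a preimage in $\widehat{\Omega}^0_S-\mathcal{R}$ exists we are in $\mathcal{U}$. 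Finally the image $\pi(\mathcal{R})$ is the support of the ramification divisor's pushforward; since $\pi$ is birational and $\widehat{\Omega}^0_S-\mathcal{R}\to\Par_{n,\mn,S}$ is an open immersion with image $\mathcal{U}$, the closed complement $\Par_{n,\mn,S}-\mathcal{U}$ is exactly the union of $\pi(\mathcal{R})$ and $\pi(\Omega-\widehat{\Omega}^0_S)$ — but $\pi(\mathcal{R})$, being the image under a birational morphism of the \emph{divisor} $\mathcal{R}$ which, crucially, contains no component dominating $\Par_{n,\mn,S}$ (as $\pi$ is generically étale off $\mathcal{R}$) — here I must argue $\pi$ contracts $\mathcal{R}$, i.e. $\dim\pi(\mathcal{R})\leq\dim\mathcal{R}-1=\dim\Par_{n,\mn,S}-2$, or else directly that a component of $\pi(\mathcal{R})$ of codimension $1$ would have to be in the image of $\widehat{\Omega}^0_S-\mathcal{R}$ too by birationality, contradicting that that image is $\mathcal{U}$, the complement of which we are describing. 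Wrapping these estimates together yields $\codim(\Par_{n,\mn,S}-\mathcal{U})\geq 2$.

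The main obstacle I anticipate is the codimension-$2$ claim for $\pi(\mathcal{R})$: one needs to know that the ramification divisor $\mathcal{R}\subseteq\widehat{\Omega}^0_S$ is \emph{contracted} by $\pi$ (has image of codimension $\geq 2$ in the target), rather than mapping finitely onto a divisor. This is where the specific geometry of the degeneracy loci enters — the point is that a general point of a codimension-$1$ component of $\mathcal{R}$ corresponds to a parabolic bundle $(\mw,\me,\gamma)$ admitting \emph{two} subbundles in the prescribed Schubert states (the ramification being exactly the locus where fibers of $\pi$ become non-reduced or jump), and such $(\mw,\me,\gamma)$ already form a locus of codimension $\geq 1$ in $\Par_{n,\mn,S}$ by the birationality/genericity count, with the extra subbundle direction providing the second codimension. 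I would make this precise by the determinantal description in \eqref{sky}: $\mathcal{R}$ is the zero scheme of $\theta\in H^0(\Theta)$, and along $\mathcal{R}$ the cohomology sheaf $H^0(\mk_x)$ jumps; a tangent-space computation at a generic point of each component of $\mathcal{R}$, combined with Proposition~\ref{bee}(a)'s codimension estimates in Sections~\ref{bruck02} and \ref{bruck03}, should show the fibers of $\pi|_{\mathcal{R}}$ are positive-dimensional, giving the contraction. Alternatively — and this may be cleaner — one invokes purity: $\Par_{n,\mn,S}$ is smooth, $\widehat{\Omega}^0-\mathcal{R}\to\mathcal{U}$ is an isomorphism, so if $\Par_{n,\mn,S}-\mathcal{U}$ had a divisorial component $Z$, then the birational map $\pi^{-1}$ would be defined in codimension $1$ at the generic point of $Z$ (as source and target are normal and the map is birational), forcing $Z\subseteq\mathcal{U}$, a contradiction; hence $\codim(\Par_{n,\mn,S}-\mathcal{U})\geq 2$, and this purity argument bypasses the need to analyze $\mathcal{R}$ directly.
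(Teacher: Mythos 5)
Your proposal follows the same route as the paper's (one-line) proof — Proposition \ref{maya}, Proposition \ref{bee}, birationality/surjectivity of $\Omega\to\Par_{n,\mn,S}$, and Zariski's main theorem — and your first two paragraphs are correct as written (the étale-plus-birational-onto-normal-target argument for the open immersion, then Proposition \ref{bee}(c) to replace $\widehat{\Omega}^0_S-\mathcal{R}$ by $\widehat{\Omega}^0-\mathcal{R}$).

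In the third paragraph, the ``alternative'' purity argument is the right one, but the phrase ``forcing $Z\subseteq\mathcal{U}$'' elides the key point: the extension of $\pi^{-1}$ over the generic point $\eta_Z$ of a hypothetical divisorial component $Z$ a priori takes values only in the proper $\Omega$, and you must check that it lands in $\widehat{\Omega}^0-\mathcal{R}$. It avoids $\Omega-\widehat{\Omega}^0_S$ because, by Proposition \ref{bee}(a) (this is where $1\notin I^i$ enters), that locus has codimension $\geq 2$, hence so does its image under the proper map $\pi$, which therefore cannot contain $\eta_Z$; and it avoids $\mathcal{R}$ because a local section $s$ of $\pi$ satisfies $d\pi\circ ds=\mathrm{id}$, so $d\pi$ is an isomorphism at $s(\eta_Z)$ (equal dimensions, smooth source and target), i.e. $s(\eta_Z)\notin\mathcal{R}$ by the very definition of the ramification divisor. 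With that observation inserted the argument closes, and the first half of your third paragraph (the attempt to show directly that $\pi$ contracts $\mathcal{R}$, via tangent-space computations along components of $\mathcal{R}$) becomes unnecessary: contraction of $\mathcal{R}$ is a consequence of the purity argument, not an input to it.
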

\begin{proof}
 The assertion follows from Propositions \ref{maya} and \ref{bee}, the surjectivity and birationality of $\Omega\to \Par_{n,\mn,S}$, and Zariski's main theorem.
\end{proof}

\section{Proofs of Theorems \ref{Adagio} and \ref{brahms}}
\subsection{Shift operations}
The spaces ${\Omega}^0(d,r,\mn,n,\vec{I})$ (but not $\widehat{\Omega}^0(d,r,\mn,n,\vec{I})$) together with their maps to $\ParL$  behave well under shift operations:

Let $\deg\mn= -D$ and $p=p_i\in S$. Parallel to Diagram \eqref{basicD} we have a diagram
 \begin{equation}\label{basicDnew}
\xymatrix{
 \Omega^0(d,r,\mn,n,\vec{I})\ar[d]\ar[r]^{\operatorname{Sh}_p}  & \Omega^0(d',r,\mn(p),n,\vec{K})\ar[d]\\
\ParL(-d,r,n-r,d-D)\ar[r]^{\operatorname{Sh}_p}  & \ParL(-d',r,n-r,d'-D+1)
}
\end{equation}
where the terms $d'$ and $\vec{K}$ are determined as in Diagram \eqref{basicD}:
 \begin{enumerate}
 \item $d'=d$ if $1\notin I^i$, and $d'=d-1$ if $1\in I^i$
 \item $K^k=I^k$ for $k\neq i$.
 \item $I^i =\{i^i_1-1,i^i_2-1,\dots, i^i_r-1\}$ if $1\notin I^i$, and $K^i =\{i^i_2-1,\dots, i^i_r-1,n\}$ if $1\in I^i$.
 \end{enumerate}
 The shift operation in the bottom row of \eqref{basicDnew} is shift on one of the factors and identity on the other. It is a shift on the rank $r$ vector bundle factor if $d'=d-1$ and on the  rank $n-r$ vector bundle if $d'=d$.
 \subsection{A scaling property}\label{FSP}
We have a self-contained proof of a generalization of a conjecture of Fulton (known before, see \cite[Remark 8.5]{BKq}). In Remark \ref{Bee132}, we explain how the version in Proposition \ref{qTheorems} (2) follows from the following:
\begin{proposition}\label{fulC} For all integers $n\geq 0$,
$\dim H^0(\ParL,\mathcal{O}(n\mathcal{R}_L))=1$.
\end{proposition}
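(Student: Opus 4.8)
The plan is to interpret $\mathcal{R}_L$ as a line bundle that pulls back to the ramification divisor $\mathcal{R}$ on $\widehat{\Omega}^0(d,r,\mn,n,\vec{I})$, and then to transport the question to $\Par_{n,\mn,S}$ where we can invoke the quantum saturation and quantum Fulton machinery. More precisely, by Corollary \ref{carole} (after using shift operations to reduce to the case $1\notin I^i$ for all $i$, which is legitimate since $\op{Sh}_p$ identifies the relevant stacks and line bundles as in Diagram \eqref{basicDnew} and Proposition \ref{easily}), the open substack $\widehat{\Omega}^0 - \mathcal{R}$ maps isomorphically onto an open substack $U$ of $\Par_{n,\mn,S}$ whose complement has codimension $\geq 2$. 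Since $\Par_{n,\mn,S}$ is a smooth Artin stack, restriction $\Pic(\Par_{n,\mn,S}) \to \Pic(U)$ is an isomorphism and $H^0$ of any line bundle is computed on $U$. So I would first show that the line bundle $\mathcal{O}(\mathcal{R})$ on $\widehat{\Omega}^0$, restricted to $\widehat{\Omega}^0-\mathcal{R} \cong U$, extends to an explicit line bundle $\mathcal{B}(\vec{\mu},\ell)$ on $\Par_{n,\mn,S}$ — this is exactly the enumerative computation of Proposition \ref{enumerative} (the divisor $\mathcal{R}$, when pushed to $\Par_{n,\mn,S}$, becomes a cycle $C(d,r,\mn,n,\vec{J})$ of codimension one; the point is that the components of $\mathcal{R}$ that survive in $\Par_{n,\mn,S}$ are precisely the degeneracy loci $D(a,j)$-type divisors, cf. Remark \ref{explainmore}).

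Granting that identification, $\dim H^0(\ParL, \mathcal{O}(n\mathcal{R}_L))$ becomes $\dim H^0(\Par_{n,\mn,S}, \mathcal{B}(\vec{\mu}, \ell)^{\otimes n})$ for an appropriate effective line bundle — one that has a tautological nonzero section (the section $\theta$ cutting out $\mathcal{R}$, via Proposition \ref{torro}), hence is of grade zero. Then by quantum Fulton (Proposition \ref{qTheorems}(2)) it suffices to check $\dim H^0 = 1$ for the first power, i.e. that the ramification divisor $\mathcal{R}$ itself is the unique effective divisor in its class up to scalars. This in turn follows from the irreducibility of $\mathcal{R}$ together with an extremality argument exactly as in Lemma \ref{elgar}(1): if $\mathcal{O}(m\mathcal{R}) = \mathcal{O}(D_1 + D_2)$ with $D_1, D_2$ effective, and if we already know $h^0(\mathcal{O}(\mathcal{R})) = 1$ — wait, that is circular, so instead the cleaner route is to observe that $\mathcal{O}(\mathcal{R}_L)$ is a pullback under $\ParL \to \Omega^0(d,r,\mn,n,\vec{I})$ of $\mathcal{O}(\mathcal{R})$, and to compute $h^0$ on $\widehat{\Omega}^0$ directly: any global section of $\mathcal{O}(n\mathcal{R})$ on $\widehat{\Omega}^0$ restricts to a section on the big open set $\widehat{\Omega}^0 - \mathcal{R} \cong U \subseteq \Par_{n,\mn,S}$, extends (codimension $\geq 2$) to a global section of the corresponding line bundle on $\Par_{n,\mn,S}$, and there the space of sections of $\mathcal{B}(\vec\mu,\ell)^{\otimes n}$ is one-dimensional by quantum saturation/Fulton since the corresponding point of $P_n(s)$ is an F-vertex (it lies on the face cut out by the defining Gromov-Witten condition, via Theorem \ref{Adagio}). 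Pulling back along $\ParL \to \Omega^0 \subseteq \widehat\Omega^0$ preserves $h^0$ because the map is dominant with connected (indeed rational, a degree shift of a product) generic fibers, so constants pull back to constants and no new sections appear.

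The main obstacle I anticipate is the bookkeeping in identifying $\mathcal{O}(\mathcal{R})$ — more precisely its extension across the codimension $\geq 2$ complement to a genuine line bundle $\mathcal{B}(\vec\mu,\ell)$ on $\Par_{n,\mn,S}$ — and checking that this extended class is indeed effective of grade zero and lands on a regular face so that Propositions \ref{qTheorems} and \ref{enumerative} apply. Concretely one must match $\mathcal{R}$, as a cycle on $\Par_{n,\mn,S}$, with $C(d,r,\mn,n,\vec J)$ for the right $\vec J$ (this is where Proposition \ref{torro} and the local computations of Sections \ref{bruck01}--\ref{bruck03} enter: the surviving codimension-one components of $\mathcal{R}$ correspond exactly to the strengthening-of-Schubert-condition degenerations of Section \ref{bruck03} together with the $r=n-1$ torsion degeneration of Section \ref{bruck01}), and then compute $\mathcal{O}(C(d,r,\mn,n,\vec J))$ via Proposition \ref{enumerative}. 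A secondary subtlety is the reduction to $1 \notin I^i$: one has to track through Diagram \eqref{basicDnew} that $\op{Sh}_p$ carries $\mathcal{R}_L$ to $\mathcal{R}_L$ (on the shifted Levi stack) and preserves $h^0$, which is immediate since $\op{Sh}_p$ is an isomorphism of stacks. Once these identifications are in place, the one-dimensionality is a formal consequence of quantum Fulton plus the F-vertex property.
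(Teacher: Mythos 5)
Your overall skeleton --- reduce to $1\not\in I^i$ by shifting, invoke Corollary \ref{carole} to identify $\widehat{\Omega}^0-\mathcal{R}$ with a big open substack $U$ of $\Par_{n,\mn,S}$, and extend across the codimension $\geq 2$ complement --- is the right one, but both steps you build on it contain errors. The central identification fails: the restriction of $\mathcal{O}(n\mathcal{R})$ to $\widehat{\Omega}^0-\mathcal{R}\cong U$ is \emph{canonically trivial} (it is trivialized by $\theta^n$, which is nonvanishing off $\mathcal{R}$), so its extension across the codimension $\geq 2$ complement is $\mathcal{O}$ itself, not a power of a nontrivial F-line bundle $\mathcal{B}(\vec{\mu},\ell)$. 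Your supporting claim that ``the components of $\mathcal{R}$ that survive in $\Par_{n,\mn,S}$ are precisely the $D(a,j)$'' is backwards: the proof of Theorem \ref{Adagio} exhibits a generic point of each $D(a,j)$ inside $\widehat{\Omega}^0-\mathcal{R}$, so the $D(a,j)$ are \emph{not} images of components of $\mathcal{R}$ (it is $\Omega^0-\mathcal{R}$, not $\widehat{\Omega}^0-\mathcal{R}$, that misses them; cf. Remark \ref{explainmore}). Nor is there any reason for your hypothetical extension to be an F-line bundle: lying on the face does not make a class extremal, and Proposition \ref{basicC} identifies $\mathcal{O}(\mathcal{R}_L)$ as a box product $\ma\boxtimes\ma'$, which is typically decomposable. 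Finally, invoking quantum Fulton here is circular within the paper's architecture, since Remark \ref{Bee132} \emph{derives} Proposition \ref{qTheorems}(2) from this very proposition.

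Your descent back to $\ParL$ is also misstated: the map $\ParL\to\Omega^0$ is the section $i$ of the fibration $\pi:\Omega^0\to\ParL$, hence a proper closed substack (note $\dim\ParL<\dim\Omega^0$ for $s\geq 3$), not a dominant map, and restricting sections along a closed immersion controls nothing a priori. The step you actually need goes in the opposite direction: since $\mathcal{R}=\pi^*\mathcal{R}_L$, a section of $\mathcal{O}(n\mathcal{R}_L)$ divided by the canonical section cutting out $n\mathcal{R}_L$ is a regular function on $\ParL-\mathcal{R}_L$, which pulls back injectively (as $\pi$ is surjective) to a function on $\widehat{\Omega}^0-\mathcal{R}=U$; by Corollary \ref{carole} and smoothness this extends to a global function on $\Par_{n,\mn,S}$ and is therefore constant. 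Together with the existence of the canonical nonzero section this gives $h^0=1$ directly, with no enumerative or saturation input. Once your two steps are repaired in this way, your argument collapses to exactly this short computation.
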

\begin{proof}
 Note that $\mathcal{R}_L$ is the pullback of $\mathcal{R}$ under $\ParL\to {\Omega}^0(d,r,\mn,n,\vec{I})$. We can therefore reduce to the case  $1\not\in I^i$ for all $i$ using diagrams \eqref{basicDnew} and \eqref{basicD}.
Under this assumption, a function on $\ParL-\mathcal{R}_L$ pulls back to a function on $\widehat{\Omega}^0-\mathcal{R}$ which embeds in $\Par_{n,\mn,S}$ with complement of codimension $\geq 2$ by Corollary \ref{carole}. Since $h^0(\Par_{n,\mn,S},\mathcal{O})=1$, the proof is complete.
\end{proof}
\begin{proposition}\label{rigid}
$H^0(\Omega^0-\mathcal{R},\mathcal{O})=\Bbb{C}$.
\end{proposition}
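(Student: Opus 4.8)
The plan is to deduce Proposition \ref{rigid} from Proposition \ref{fulC} together with the structural facts already assembled about the partial compactification $\widehat{\Omega}^0 = \widehat{\Omega}^0(d,r,\mn,n,\vec{I})$ and its relation to $\Omega^0$. First I would reduce to the case $1 \notin I^i$ for all $i$ using the shift diagrams \eqref{basicD} and \eqref{basicDnew}: the shift operations $\operatorname{Sh}_p$ are isomorphisms on the relevant stacks and carry $\Omega^0(d,r,\mn,n,\vec{I})$ isomorphically onto $\Omega^0(d',r,\mn(p),n,\vec{K})$, and they preserve the ramification divisor $\mathcal{R}$ (since $\mathcal{R}$ is pulled back from $\ParL$ via $\Omega^0 \to \ParL$, and the bottom row of \eqref{basicDnew} is a shift on one factor). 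Hence $H^0(\Omega^0 - \mathcal{R}, \mathcal{O})$ is unchanged under these shifts, and after finitely many shifts at the points $p_i$ we may assume $1 \notin I^i$ for every $i$.

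Under this assumption, I would invoke Corollary \ref{carole}: $\widehat{\Omega}^0 - \mathcal{R}$ maps isomorphically onto an open substack $U \subseteq \Par_{n,\mn,S}$ whose complement has codimension $\geq 2$. Since $\Par_{n,\mn,S}$ is a smooth Artin stack (hence normal), restriction of regular functions gives $H^0(U, \mathcal{O}) = H^0(\Par_{n,\mn,S}, \mathcal{O}) = \Bbb{C}$ (the last equality because $\Bbb{P}^1$ is proper and connected and the stack is connected). Therefore $H^0(\widehat{\Omega}^0 - \mathcal{R}, \mathcal{O}) = \Bbb{C}$. Now the key point is to compare $\Omega^0 - \mathcal{R}$ with $\widehat{\Omega}^0 - \mathcal{R}$: by construction $\Omega^0 \subseteq \widehat{\Omega}^0 \subseteq \Omega$, and $\widehat{\Omega}^0$ is obtained from $\Omega^0$ only by weakening the Schubert conditions at points of $S$ in the way controlled by $T(I^i)$, with all irreducible components of $\widehat{\Omega}^0_{I^i}(E_\bull) - \Omega^0_{I^i}(E_\bull)$ of codimension $\geq 2$ (Definition \ref{ToI}(2)). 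Hence the complement $\widehat{\Omega}^0 - \Omega^0$ has codimension $\geq 2$ in the smooth stack $\widehat{\Omega}^0$, and the same holds after removing $\mathcal{R}$ (intersecting with the open $\widehat{\Omega}^0 - \mathcal{R}$ cannot decrease codimension). Since $\widehat{\Omega}^0$ is smooth (hence normal), regular functions extend across the codimension $\geq 2$ locus, so $H^0(\Omega^0 - \mathcal{R}, \mathcal{O}) = H^0(\widehat{\Omega}^0 - \mathcal{R}, \mathcal{O}) = \Bbb{C}$, as desired.

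Alternatively, and perhaps more cleanly, I would route the argument directly through Proposition \ref{fulC}: a global function on $\Omega^0 - \mathcal{R}$ restricts to $\Omega^0 - \mathcal{R}$ pulled back from, or mapping to, the Levi stack $\ParL$. Since $\mathcal{R} = \pi^*\mathcal{R}_L$ on $\widehat{\Omega}^0$ and (by Zariski's main theorem, using Propositions \ref{maya} and \ref{bee}) $\widehat{\Omega}^0 - \mathcal{R} \to \ParL - \mathcal{R}_L$ has connected fibers — in fact restricts over a large open to an isomorphism with an open substack of $\Par_{n,\mn,S}$ — a function on $\Omega^0 - \mathcal{R}$ descends to a function on $\ParL - \mathcal{R}_L$. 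But $H^0(\ParL - \mathcal{R}_L, \mathcal{O}) \subseteq \bigoplus_{m \geq 0} H^0(\ParL, \mathcal{O}(m\mathcal{R}_L)) = \Bbb{C}$ by Proposition \ref{fulC} (a function with poles only along $\mathcal{R}_L$ lies in some $H^0(\ParL, \mathcal{O}(m\mathcal{R}_L))$, which is one-dimensional and spanned by the constant $1$, forcing no pole). Hence the function is constant.

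The main obstacle is the bookkeeping in the first (codimension) approach: verifying that removing $\mathcal{R}$ does not create new codimension-one components of $\widehat{\Omega}^0 - \Omega^0$, and that $\widehat{\Omega}^0$ really is smooth and of the expected dimension so that Hartogs-type extension applies — but both facts are already recorded (Proposition \ref{bee}(b), Definition \ref{ToI}(2), and the cited \cite[Lemma 7.9, 7.10]{BHermit}). The second approach instead hinges on the clean statement that functions on $\ParL - \mathcal{R}_L$ with controlled poles are constant, which is exactly Proposition \ref{fulC}; I would present the proof via this route, as it is the shortest and most transparent.
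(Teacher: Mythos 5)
There is a genuine gap, and it sits at exactly the step the proposition exists to handle: propagating constancy from the Levi locus to all of $\Omega^0-\mathcal{R}$. Your first route rests on the claim that $\widehat{\Omega}^0-\Omega^0$ has codimension $\geq 2$, citing Definition \ref{ToI}(2); but that definition asserts codimension $\geq 2$ for ${\Omega}_I(E_{\bull})-\widehat{\Omega}^0_I(E_{\bull})$, not for $\widehat{\Omega}^0_I(E_{\bull})-\Omega^0_I(E_{\bull})$. The latter contains codimension-one Schubert strata: the loci $\Omega^0(d,r,\mn,n,\vec{J})$ with $\vec{J}$ obtained by strengthening one Schubert condition (Section \ref{bruck03}) lie in $\widehat{\Omega}^0(d,r,\mn,n,\vec{I})$, are divisorial, and are \emph{not} contained in $\mathcal{R}$ --- this is precisely what the proof of Theorem \ref{Adagio}(1) establishes, and these strata map onto the divisors $D(a,j)$ (cf.\ Remark \ref{explainmore}). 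So $(\widehat{\Omega}^0-\mathcal{R})-(\Omega^0-\mathcal{R})$ has divisorial components, Hartogs extension does not apply, and a function on $\Omega^0-\mathcal{R}$ could a priori have poles along the $D(a,j)$. (Ruling that out via $h^0(\Par_{n,\mn,S},\mathcal{O}(mD(a,j)))=1$ would be circular, since Theorem \ref{Adagio}(2) is itself deduced from Proposition \ref{rigid}.)

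Your second route correctly uses Proposition \ref{fulC} to see that the restriction of $f$ to the section $\ParL-\mathcal{R}_L$ is a constant $c$, but the assertion that $f$ ``descends'' to $\ParL-\mathcal{R}_L$ is unjustified: the projection $\Omega^0\to\ParL$ is not proper (its fibers parametrize extensions of $\mq$ by $\mv$ with compatible flags), so connectedness of fibers does not force a function to be constant on them. The missing ingredient is the Levification degeneration used in the paper: for each $p\in\Omega^0-\mathcal{R}$ there is a map $g:\Bbb{A}^1\to\Omega^0$ with $g(1)=p$, with $g(0)$ equal to the image of $p$ under $\Omega^0\to\ParL\leto{i}\Omega^0$, and with the isomorphism class of $g(t)$ constant for $t\neq 0$. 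Since $f$ is a function on a substack of $\Par_{n,\mn,S}$ it is constant on isomorphism classes, so $f\circ g$ is constant on $\Bbb{G}_m$ and, by continuity, $f(p)=f(g(0))=c$ (the family stays in $\Omega^0-\mathcal{R}$ because $\mathcal{R}=\pi^*\mathcal{R}_L$). This contracting $\Bbb{G}_m$-argument, not a codimension or descent statement, is what closes the proof.
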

\begin{proof}
Let $f\in H^0(\Omega^0-\mathcal{R},\mathcal{O})$. By Proposition \ref{fulC}, the restriction of the function $f$ to $\ParL$ is a constant $c$. Given a point $p\in \Omega^0$, there is a mapping $g:\Bbb{A}^1\to \Omega^0$ which
\begin{itemize}
\item[-] Sends $1$ to $p$ and $0$ lies in the image of $\ParL$ and equals the image of $p$ under the map $\Omega^0\to \ParL$.
\item[-] The isomorphism class of $g(t)$ remains constant for $t\neq 0$.
\end{itemize}
The existence of such a mapping can be explained more easily in the general group situation. Let $P$ be the standard parabolic for the Grassmannian $\Gr(r,n)$. Let $L$ be the corresponding Levi subgroup. A point of $\Omega^0$ corresponds to a principal $P$-bundle with some extra structure at the fibers over points of $S$ (see e.g., Lemmas 3.3 and 3.4 in \cite{BKiers}). We now use the Levification operation in  \cite[Section 3.8]{BKq} to produce the family $g$.
\end{proof}
\begin{remark}\label{Bee132}
Assume $d=0$ in Proposition \ref{fulC} (but $D$ is arbitrary). By a basic computation (see Proposition \ref{basicC} below), $\dim H^0(\ParL,\mathcal{O}(\mathcal{R}_L))$ is a tensor product
$H^0({\Par}_{r,\mathcal{O}(-d),S},\ma)\otimes H^0({\Par}_{n-r,\mathcal{O}(D-d),S},\mb)$ where the line bundles $\ma$ and $\mb$ are in strange duality (Definition \ref{Shosta}), and hence the dimensions of their spaces of sections are the same. The formula for $\ma$ in Proposition \ref{basicC} shows that it includes all line bundles of grade zero on ${\Par}_{r,\mathcal{O},S}$. This implies Proposition \ref{qTheorems} (2).
\end{remark}

\subsection{Proof of Theorem  \ref{Adagio} and Theorem \ref{brahms}}\label{rumble}
Most of the steps are similar to the proof of \cite[Theorem 1.9]{BHermit}. The proof will use shift operations and we will prove  a generalization to ${\Par}_{n,\mn,S}$. We start with a $\langle\sigma_{I^1}, \sigma_{I^2},\dots,\sigma_{I^s}\rangle_{d,D}=1$, where $D=-\deg(\mn)$.  The inequality \eqref{wallnew} is replaced by \eqref{wallneww}, so that $\mf$ is now given inside  $\Pic_{\Bbb{Q}}({\Par}_{n,\mn,S})$ by equality in \eqref{wallneww}.

We first deal with the case that $a\neq 1$. We will show that $\Omega^0(d,r,\mathcal{N},n,\vec{J})\subset \widehat{\Omega}^0(d,r,\mathcal{N},n,\vec{I})$ is not contained in $\mathcal{R}$, the ramification locus of $\widehat{\Omega}^0(d,r,\mathcal{N},n,\vec{I})\to  {\Par}_{n,\mn,S}$.

Choose a point $x=(\mv,\mw,\me,\gamma)\in\Omega^0(d,r,\mathcal{N},n,\vec{I})$ such that $\widehat{\Omega}^0(d,r,\mathcal{N},n,\vec{I})\to  {\Par}_{n,\mn,S}$ is etale at $x$, and then modify an appropriate element in the flag $E^{p_j}_{\bull}$, so that the point lies now in  $\Omega^0(d,r,\mathcal{N},n,\vec{J})$. The modification is the following: replace  $E^{p_j}_{a-1}$ by $E^{p_j}_{a-2}+\mv_{p_j}\cap E^{p_j}_{a}$. This way we get a new $\me'\in \Fl_S(\mw)$, and $y=(\mv,\mw,\me',\gamma)\in\Omega^0(d,r,\mathcal{N},n,\vec{J})$. The tangent space ``situation'' at $y\in \widehat{\Omega}^0(d,r,\mathcal{N},n,\vec{I})$ is the same as at $x$, since the element $E^{p_j}_{a-1}$ does not appear in the rank conditions defining $\widehat{\Omega}^0(d,r,\mathcal{N},n,\vec{I})$. Therefore $y\not\in\mathcal{R}$.

The birational morphism $\widehat{\Omega}^0(d,r,\mathcal{N},n,\vec{I})\to  {\Par}_{n,\mn,S}$ is therefore  etale at $y\in \widehat{\Omega}^0(d,r,\mathcal{N},n,\vec{I})$. Since $D(a,j)$ is the  image of the irreducible  $\Omega(d,r,\mathcal{N},n,\vec{J})$, it is irreducible and it coincides with the cycle theoretic image $\widehat{\Omega}^0(d,r,\mathcal{N},n,\vec{I})\to  {\Par}_{n,\mn,S}$. This proves part (1) of Theorem \ref{Adagio}. Proposition \ref{enumerative} now implies Theorem \ref{brahms}.

 Note that $D(a,j)$ lies in the complement of the open subset $\Omega^0(d,r,\mathcal{N},n,\vec{I})-\mathcal{R}$ of ${\Par}_{n,\mn,S}$, since otherwise the image $y\in \Omega^0(d,r,\mathcal{N},n,\vec{J})$ in ${\Par}_{n,\mn,S}$ will have a disconnected inverse image in $\widehat{\Omega}^0(d,r,\mathcal{N},n,\vec{I})$. Part (2) of Theorem \ref{Adagio} now follows from Proposition \ref{rigid}. Part (3) of Theorem \ref{Adagio} follows from parts (1) and (2) using a suitable generalization of \cite[Lemma 2.1]{BHermit}. In fact, part(3) is already covered by Lemma \ref{elgar}.

Finally for part (4) we want that the line bundle $\mathcal{O}(D(a,j))$ on ${\Par}_{n,\mn,S}$ lies on the face $\mathcal{F}$. The numerical equality in \eqref{wallneww} is then just the statement that the center of $L$ acts trivially on $\mathcal{O}(D(a,j))$ restricted to $\Par_L$, which follows from the fact that the restriction is effective: Using arguments of Ressayre \cite{R1} this is equivalent to the line bundle restricted to $\ParL$ being effective, which is certainly the case because $D(a,j)$ does not contain the image of $\ParL$. Note  $\ParL-\mathcal{R}_L$ does not meet $D(a,j)$ since $\ParL-\mathcal{R}_L\subseteq \Omega^0(d,r,\mathcal{N},n,\vec{I})-\mathcal{R}$, and $D(a,j)$ does not intersect the last set.  This proves (4).

Finally we can remove the assumption $a\neq 1$. Assume $a=1\in I^j$. Consider the shifted data
$(d',r,\mathcal{N}(p_j),n,\vec{K})$ associated to $(d,r,\mn,n,\vec{I})$, with $d'=d-1$, with the shift applied at $p_j$, so that $K^j=\{i^j_2-1,\dots, j^j_n -1,n\}$. We refer to Theorem \ref{Adagio} (also see \eqref{basicD}) with the $\vec{I}$ replaced by $\vec{K}$. We now consider $\tilde{D}(n,j)$ for the tuple $(d',r,\mathcal{N}(p_j),n,\vec{K})$ and the above reasoning applies to it.  Using the inverse of the shift operation at $p_j$, and properties of the shift operation, we obtain the desired statement about $D(1,j)$.\subsection{Proof of Corollary \ref{existence}}
$\mathcal{B}(\vec{\lambda},\ell)$ is a F-line bundle by Theorem \ref{brahms}. The corollary now follows from Theorem \ref{christmas} and Corollary \ref{katzkatz}.

\section{The face $\mf$ as a product}
We want to show the product structure \eqref{productstructure} of the face $\mf$. Recall the setting:
we have a tuple $(d,r,I^1,\dots,I^s)$ such that
$\langle\sigma_{I^1}, \sigma_{I^2},\dots,\sigma_{I^s}\rangle_{d}=1$. This defines a face $\mf$  of the monoid
$\Pic^+_{\Bbb{Q}}(\Par_{n,\mathcal{O},S})$ given by equality in inequality
\eqref{wallnew}. In Theorem \ref{Adagio} we have produced extremal ray generators $[D(a,j)]$ of $\Pic^+_{\Bbb{Q}}(\Par_{n,\mathcal{O},S})$ which lie on $\mf$.

\begin{defi}\label{Br2}
\hspace{2em}
\begin{enumerate}
\item Let $\Pic'=\Pic'(\mf)\subseteq\Pic(\Par_{n,\mathcal{O},S})$ be the subgroup formed by  $\mathcal{B}(\vec{\lambda},\ell)$ such that for any $j$, both of the following two conditions hold : (a) $\lambda^j_{a-1}=\lambda^{j}_{a}$ whenever $a\in I^j$, $a>1$ and $a-1\not \in I^j$ and (b) if $n\not\in I^j$ and $1\in I^j$ then $\lambda^j_1=\lambda^{j}_{n}+\ell$.
\item  Let $\Pic'^{+,\deg=0}(\mf)=\mf\cap \Pic'(\mf)$.
\end{enumerate}
\end{defi}

The product structure \eqref{productstructure} follows from the following:
\begin{proposition}\label{Br4} Let $D_1,\dots, D_q$ be the list of elements $[D(a,j)]$ produced by Theorem \ref{Adagio}. There is an isomorphism
$\bigoplus_{i=1}^{q}\Bbb{Q}D_i \oplus  \Pic' (\mf)_{\Bbb{Q}}\leto{\sim} \Pic^{\deg=0}_{\Bbb{Q}}(\Par_{n,\mathcal{O},S})$, where $\Pic^{\deg=0}_{\Bbb{Q}}(\Par_{n,\mathcal{O},S})$ is the subset of $\Pic(\Par_{n,\mathcal{O},S})$ formed by line bundles for which equality holds in
\eqref{wallnew}.
Setting $\mf^{(2)}= \Pic'^{+,\deg=0}_{\Bbb{Q}}(\mf) $, we obtain a bijection
$$\prod_{i=1}^{q}\Bbb{Q}_{\geq 0}D_i \times  \mf^{(2)} \leto{\sim} \mf.$$
\end{proposition}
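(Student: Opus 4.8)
\textbf{Proof plan for Proposition \ref{Br4}.}

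The plan is to reduce the product statement to a direct-sum decomposition of the degree-zero Picard lattice, and then to an enumeration of the lattice points on the face $\mf$. First I would make precise the combinatorial set indexing the divisors $D(a,j)$: it is the set of pairs $(a,j)$ with $1\le j\le s$ satisfying condition (1) or (2) of Section \ref{basicR}, i.e. $a\in I^j$ with the ``predecessor'' $a-1$ (cyclically, so $n$ if $a=1$) not in $I^j$. For each such pair Theorem \ref{brahms} gives an explicit formula for $\mathcal{O}(D(a,j))=\mathcal{B}(\vec\lambda,\ell)$, and the key elementary observation is that the weight datum of $D(a,j)$ has a nonzero entry in \emph{exactly} the coordinate that is constrained to be zero in the definition of $\Pic'(\mf)$: precisely, if $a>1$ the coefficient $c^{a-1}_j$ of $\omega_{a-1}$ in $\lambda^j$ equals $\langle\sigma_{J'^1},\dots,\sigma_{J'^s}\rangle_{d'}$ which is $\ge 1$ (indeed it is the intersection number controlling the ray), and the other ``defect coordinates'' $\lambda^{j'}_{a'-1}-\lambda^{j'}_{a'}$ vanish; and if $a=1$ the analogous statement holds for the quantity $\lambda^j_1-\lambda^j_n-\ell$ via Remark \ref{earlyAM}. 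This should be extracted directly from Theorem \ref{brahms} and part (C)/(D) of Proposition \ref{enumerative} (applied as in the proof of Lemma \ref{powercord}).

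Next I would establish the lattice-level splitting $\bigoplus_{i=1}^q \Bbb{Q}D_i \oplus \Pic'(\mf)_{\Bbb{Q}} \xrightarrow{\sim} \Pic^{\deg=0}_{\Bbb{Q}}(\Par_{n,\mathcal{O},S})$. The ambient space $\Pic_{\Bbb{Q}}(\Par_{n,\mathcal{O},S})$ has coordinates $(\vec\lambda,\ell)$; on it consider the linear map $\Phi$ recording the $q$ ``defect functionals'' $L_{a,j}(\vec\lambda,\ell)$ (namely $\lambda^j_{a-1}-\lambda^j_a$ for $a>1$, and $\lambda^j_1-\lambda^j_n-\ell$ for $a=1$) together with the single linear equation \eqref{wallnew} cutting out $\Pic^{\deg=0}$. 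By definition $\Pic'(\mf)=\ker\Phi$ intersected with the hyperplane \eqref{wallnew}, so it suffices to show the $q$ functionals $L_{a,j}$ are linearly independent and that $\Phi$ restricted to $\mathrm{span}(D_1,\dots,D_q)$ is an isomorphism onto $\Bbb{Q}^q$. Linear independence of the $L_{a,j}$ is immediate because distinct pairs $(a,j)$ involve distinct coordinates. The crucial point is that the $q\times q$ matrix $\big(L_{a,j}(D_{a',j'})\big)$ is \emph{triangular with nonzero diagonal}: the defect functional $L_{a,j}$ applied to $D(a',j')$ vanishes unless $(a',j')=(a,j)$ is ``downstream,'' and one can order the pairs (by $j$, then by the position of $a$ relative to the blocks of $I^j$) so that this is strictly triangular off the diagonal, with diagonal entries $L_{a,j}(D(a,j))=c^{a-1}_j\ge 1$ (resp. the $a=1$ analogue $\ge 1$) as noted above. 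The off-diagonal vanishing is exactly the statement from Theorem \ref{brahms}(2) that the only nonzero $\omega_b$-coefficient of $\lambda^j$ among the ``forbidden'' slots is the single one at $b=a-1$, and at the other marked points the weight of $D(a,j)$ is simply $\sigma_{I^{j'}}$-type and hence has no defect. Given this, the matrix is invertible over $\Bbb{Q}$, so $\mathrm{span}(D_i)\oplus\ker\Phi=\Pic^{\deg=0}_{\Bbb{Q}}$ and also $\mathrm{span}(D_i)\cap\ker\Phi=0$; combining with the hyperplane \eqref{wallnew} (on which all $D_i$ lie, by Theorem \ref{Adagio}(4)) gives the asserted direct sum.

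Finally I would upgrade the linear-algebra splitting to the claimed bijection of cones. Given $\ml\in\mf$, write $\ml=\sum_i q_i D_i + \ml'$ with $q_i\in\Bbb{Q}$ and $\ml'\in\Pic'(\mf)_{\Bbb{Q}}$ uniquely by the splitting just proved; I must show $q_i\ge 0$ and $\ml'\in\mf$, i.e. $\ml'$ is effective (being automatically on the hyperplane \eqref{wallnew} and of grade zero). For $q_i\ge 0$: apply the functional $L_{a,j}$ to $\ml$; since the matrix is triangular one solves for the $q_i$ one at a time (in the chosen order), and $L_{a,j}(\ml)\ge 0$ because $L_{a,j}(\ml)$ is, up to the positive diagonal factor, the multiplicity of a Harder--Narasimhan-type defect, which is $\ge 0$ for any effective line bundle — this is the content of the inequality side of Lemma \ref{divisorE}/Proposition \ref{enumerative}(C)--(D), read through strange duality (Theorem \ref{christmas}): a ``forbidden'' multiplicity being negative would contradict effectivity. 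Inductively each $q_i$ comes out $\ge 0$. For effectivity of $\ml'$: since $\ml$ is effective and $\ml=\sum q_iD_i+\ml'$ with $q_i\ge0$ and each $D_i$ effective, one argues that $\ml'$ is effective by the standard ``subtract off the fixed components'' argument, using that the $D_i$ are irreducible divisors (Theorem \ref{Adagio}(1)) and hence appear as fixed components of the linear system with the correct multiplicities; concretely, if $N$ is chosen so that $Nq_i\in\Bbb{Z}$, any section of $\ml^N$ vanishes on each $D_i$ to order $\ge Nq_i$ (because $\ml$ restricted to the open substack $\Omega^0(d,r,\mathcal{O},n,\vec I)-\mathcal{R}$, which misses every $D_i$, pulls back from $\ParL$ where it is controlled), so dividing out $\prod (\text{section of }\mathcal{O}(D_i))^{Nq_i}$ exhibits a section of $\ml'^N$, hence $\ml'\in\mf^{(2)}$. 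Conversely, any $\big(\sum q_iD_i,\ml'\big)$ with $q_i\ge0$ and $\ml'\in\mf^{(2)}$ maps to an effective line bundle on the hyperplane, i.e. into $\mf$, giving the inverse map. I expect the main obstacle to be the effectivity step: one must argue carefully — probably via the rational map $\Par_{n,\mathcal{O},S}\dashrightarrow\ParL$ and Corollary \ref{carole} — that sections of a power of $\ml$ vanish along each $D(a,j)$ with at least the multiplicity predicted by the splitting, since this is what makes the ``divide out the fixed part'' manipulation legitimate; the purely linear and combinatorial parts (triangularity, sign of the defect functionals) are routine once the formulas of Theorem \ref{brahms} are unpacked.
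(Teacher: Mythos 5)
Your overall architecture matches the paper's: split $\Pic^{\deg=0}_{\Bbb{Q}}$ using the ``defect functionals'' $L_{a,j}$, show the matrix $\bigl(L_{a,j}(D_{a',j'})\bigr)$ is invertible, and then pass from the linear splitting to the cone statement by subtracting off the $D_i$. For the linear part you are correct but can simplify: by Lemma \ref{oneoff} (which is exactly the computation you sketch from Theorem \ref{brahms} and Remark \ref{earlyAM}) each $D(a,j)$ fails \emph{exactly one} of the defining equalities of $\Pic'$, and fails it by exactly $1$; so the matrix is the identity, and no ordering or triangularity argument is needed. Likewise $q_{(a,j)}=L_{a,j}(\ml)\geq 0$ is immediate from the dominance and level conditions \eqref{weyl} satisfied by any effective $\mathcal{B}(\vec{\lambda},\ell)$ (Theorem \ref{blacktea}); the appeal to a ``Harder--Narasimhan-type defect'' is a detour.

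The genuine gap is the one you flag yourself: why sections of $\ml$ vanish along $D(a,j)$ when $L_{a,j}(\ml)>0$. Your proposed mechanism (that $\ml$ ``pulls back from $\ParL$ where it is controlled'' on the complement of the $D_i$, hence sections vanish to order $\geq Nq_i$ on $\ml^N$) is not an argument, and the order-of-vanishing formulation is also not what is needed. The paper's argument is a semistability computation done one divisor at a time: at a general point of $D(a,j)$ there is a subsheaf $\mv$ of the type parametrized by $\Omega(d',r,\mathcal{O},n,\vec{J})$, and the semistability inequality \eqref{wallnew} that $\mv$ imposes on the polarization $\ml$ is obtained from the equality defining $\mf$ by replacing $\lambda^j_a$ with $\lambda^j_{a-1}$ (for $a>1$), respectively by decreasing the left side by $\ell-1$ and the right side by $\ell$ via the saturation of $\mv$ (for $a=1$). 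Since $\ml$ lies on $\mf$ and the relevant defect is strictly positive, this inequality fails strictly, so a general point of $D(a,j)$ is unstable for $\ml$ and every global section of $\ml$ vanishes there. Hence $\ml(-D(a,j))$ is still effective and still on $\mf$ (using Theorem \ref{Adagio}(4)), and by Lemma \ref{oneoff} the subtraction lowers that one defect by exactly $1$ while preserving the others; iterating terminates in $\mf^{(2)}$. This replaces your ``divide out the fixed part to order $Nq_i$'' step entirely and is the missing content of the proof.
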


\begin{lemma}\label{oneoff}
Each of the divisors $D(a,j)$ in Theorem \ref{Adagio}, fails exactly one of the equalities defining $\Pic'$:
Write $\mathcal{O}(D(a,j))=\mathcal{B}(\vec{\lambda},\ell)$.
\begin{enumerate}
\item If $a>1$, then $\lambda^j_{a-1}=\lambda^j_{a}+1$.
\item If $a=1$, then $\lambda^j_n =\lambda^j_1-\ell +1$.
\end{enumerate}
\end{lemma}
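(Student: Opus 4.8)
The statement is a direct computation of the line bundle class $\mathcal{O}(D(a,j)) = \mathcal{B}(\vec{\lambda},\ell)$ using the enumerative formula of Theorem \ref{brahms} (equivalently Proposition \ref{enumerative}), combined with the combinatorial definition of $J^j$ in terms of $I^j$. Recall that in case (1) ($a>1$, $a\in I^j$, $a-1\notin I^j$) we have $J^j = (I^j - \{a\})\cup\{a-1\}$ and $d'=d$, while in case (2) ($a=1$, $1\in I^j$, $n\notin I^j$) we have $J^j = (I^j - \{1\})\cup\{n\}$ and $d'=d-1$; in both cases $J^k = I^k$ for $k\neq j$. The coefficients $c^b_i$ of $\lambda^i = \sum_{b=1}^{n-1} c^b_i \omega_b$ are given by Theorem \ref{brahms}(2): $c^b_i = 0$ unless $b\in J^i$ and $b+1\notin J^i$, in which case $c^b_i = \langle \sigma_{J'^1},\dots,\sigma_{J'^s}\rangle_{d'}$ with $J'^i = (J^i - \{b\})\cup\{b+1\}$.

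\textbf{Case $a>1$.} First I would translate the equality $\lambda^j_{a-1} = \lambda^j_a + 1$ into fundamental-weight coordinates. Since $\lambda^j = \sum_b c^b_j \omega_b$ and $\omega_b$ has $b$ ones followed by zeros, one has $\lambda^j_{a-1} - \lambda^j_a = c^{a-1}_j$ (the difference in consecutive coordinates of $\lambda^j$ picks out exactly the coefficient of $\omega_{a-1}$). So the claim is $c^{a-1}_j = 1$. Now observe from the combinatorics: $a-1 \in J^j$ (by construction, since we added $a-1$), and $a \notin J^j$ (since we removed $a$, and $a\in I^j$ so it was not there twice). Hence $b = a-1$ satisfies the nonvanishing condition $b\in J^j$, $b+1\notin J^j$. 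The associated set $J'^j = (J^j - \{a-1\})\cup\{a\} = I^j$, and $J'^k = J^k = I^k$ for $k\neq j$. Therefore $c^{a-1}_j = \langle \sigma_{I^1},\dots,\sigma_{I^s}\rangle_{d'} = \langle \sigma_{I^1},\dots,\sigma_{I^s}\rangle_d = 1$ by the hypothesis of Section \ref{basicR}. This gives $\lambda^j_{a-1} = \lambda^j_a + 1$.

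\textbf{Case $a=1$.} Here $J^j = (I^j - \{1\})\cup\{n\}$, $d' = d-1$. The quantity $\lambda^j_1 - \lambda^j_n - \ell$ needs care because it mixes the $\omega_b$-expansion ($\lambda^j_1 - \lambda^j_n = \sum_{b=1}^{n-1} c^b_j$) with the level $\ell$. The natural route is to use the cyclic shift formula, Remark \ref{earlyAM} (the formula \eqref{ABL4}): since $1\notin J^j$ and $n\in J^j$, one has $\ell - \sum_{b=1}^{n-1}c^b_j = \langle\sigma_{J'^1},\dots,\sigma_{J'^s}\rangle_{d'+1}$ where $J'^j = (J^j - \{n\})\cup\{1\} = I^j$ and $J'^k = I^k$ otherwise. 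Thus $\ell - (\lambda^j_1 - \lambda^j_n) = \langle\sigma_{I^1},\dots,\sigma_{I^s}\rangle_{d'+1} = \langle\sigma_{I^1},\dots,\sigma_{I^s}\rangle_d = 1$ (using $d'+1 = d$), which rearranges to $\lambda^j_n = \lambda^j_1 - \ell + 1$, as claimed. Finally, to see that $D(a,j)$ fails \emph{exactly} one of the defining equalities of $\Pic'$ and satisfies all the others, I would run the same computation for the other indices: for any pair $(a',j')$ of the type in Definition \ref{Br2} with $(a',j')\neq(a,j)$, the relevant coefficient is $c^{a'-1}_{j'} = \langle\sigma_{J'^1},\dots\rangle_{d'}$ with $J'^{j'} = (J^{j'} - \{a'-1\})\cup\{a'\}$; since $J^{j'} = I^{j'}$ for $j'\neq j$ and the modification at $j$ does not interfere with index $a'-1$ when $(a',j')\neq(a,j)$, one checks that this number equals $0$ because the codimension count $\sum|\sigma_{J'^i}|$ differs from $d'n + r(n-r)$ by one (the set $J'^{j'}$ strictly enlarges a Schubert condition without a compensating degree change), hence the corresponding equality in $\Pic'$ holds. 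The main obstacle is bookkeeping: making sure the degree $d'$ and the codimension counts line up correctly in each sub-case, especially reconciling the $a=1$ case's degree shift $d'=d-1$ with the shift operation, but this is exactly the content already packaged in \eqref{basicD}, Remark \ref{earlyAM}, and the proof of Theorem \ref{brahms}, so no genuinely new idea is needed.
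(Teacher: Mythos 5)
Your proposal is correct and follows essentially the same route as the paper, whose proof is the one-line citation of Theorem \ref{brahms} together with equation \eqref{ABL4}: the failed equality in case $a>1$ is $c^{a-1}_j=\langle\sigma_{I^1},\dots,\sigma_{I^s}\rangle_d=1$ since $J'^j=I^j$, and in case $a=1$ it is $\ell-\sum_b c^b_j=1$ via \eqref{ABL4}. One small remark: for the ``exactly one'' part your final paragraph invokes a codimension count where the cleaner (and intended) justification is simply the vanishing clause of Theorem \ref{brahms}(2) ($c^b_i=0$ if $b\notin J^i$ or $b+1\in J^i$) together with the first sentence of Remark \ref{earlyAM} for the level equalities, both of which apply directly because $J^{j'}=I^{j'}$ for $j'\neq j$ and the single modification at $j$ leaves the other indexed conditions intact.
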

\begin{proof}
These statements follow from Theorem \ref{brahms} and  equation \eqref{ABL4}.
\end{proof}

\subsection{Proof of Proposition \ref{Br4}}

The proof is a direct generalization of \cite[Theorem 1.15]{BHermit}. The main point is the following: The first isomorphism follows immediately from Lemma \ref{oneoff}. For the second bijection we proceed as follows. Suppose we have  a point $\mathcal{B}(\vec{\lambda},\ell)\in\mf$ not in $\Pic'$. Assume first that it fails an inequality $\lambda^j_{a-1}=\lambda^{j}_{a}$ with $a\in I^j$, $a>1$ and $a-1\not\in I^i$. We will then show that any global section of  $\mathcal{B}(\vec{\lambda},\ell)$ vanishes at any point of the divisor $D(a,j)$, so that $\mathcal{B}(\vec{\lambda},\ell)(-D(a,j))$ is still effective. It still lies on $\mf$ because $D(a,j)$ is on $\mf$ by Theorem \ref{Adagio}.

To show the vanishing, let $(\mv,\mw,\me,\gamma)$ be a general point of $D(a,j)$. The semistability inequality \eqref{wallnew} holds as an equality for $\mathcal{B}(\vec{\lambda},\ell)$. The semistability inequality corresponding to $\mv$  for $\mathcal{B}(\vec{\lambda},\ell)$  replaces $\lambda^j_a$ in the above equality by $\lambda^j_{a-1}$ and hence fails because of Lemma \ref{oneoff}. Therefore any global section of $\mathcal{B}(\vec{\lambda},\ell)$ vanishes on $D(a,j)$ as desired.

If $\mathcal{B}(\vec{\lambda},\ell)$  fails an inequality  $\lambda^j_1=\lambda^{j}_{n}+\ell$ with $n\not\in I^j$ and $1\in I^j$ then we proceed similarly: In this case $\mv$ is only a subsheaf of $\mw$, but we can consider the subbundle given by $\mv$ (``the saturation''). The semistability inequality corresponding to the saturation is violated at a generic point of $D(1,j)$: In this case the left hand side of $\eqref{wallnew}$ decreases by $\ell-1$ and the right hand side by $\ell$ since $d$ drops by one (by Lemma \ref{oneoff}).

By repeatedly carrying out these subtractions we can write any element of  $\mf$ as a sum corresponding to the second bijection.
\begin{remark}
As in \cite{BHermit,BKiers,Kiers}, we may distinguish two types of rays of $\mathcal{F}$. A ray
$\Bbb{Q}_{\geq 0}(\vec{\lambda},\ell)$ is a type I ray of $\mathcal{F}$ if $\mathcal{B}(\vec{\lambda},\ell)$ is not in $\Pic'(\mf)$, and type II otherwise. The extremal rays  generated by $D(a,j)$  are  type I on $\mf$. An extremal ray of  $\Pic^+_{\Bbb{Q}}(\Par_{n,\mathcal{O},S})$ may lie on two different regular facets. It is possible that it is a type I ray of one facet and a type II of the other. All rays that are type I on some regular facet are generated by F-line bundles.
\end{remark}

\section{The induction morphism}\label{noncan}
Our aim now is to show that  $\Pic'=\Pic'(\mf)$ and $\Pic'^{+,\deg=0}(\mf)$ (Definition  \ref{Br2} and Proposition \ref{Br4}) are both controlled by the Levi-subgroup by an induction procedure. Since we need shifting procedures in the proof, we operate in a more general setting:
Assume that the Gromov-Witten number $\langle\sigma_{I^1}, \sigma_{I^2},\dots,\sigma_{I^s}\rangle_{d,D}=1$, where $D=-\deg(\mn)$. This implies that $\Omega(d,r,\mn,n,\vec{I})\to \Par_{n,\mn,S}$ is birational, and \eqref{timer1} holds.

\begin{defi}
Define $\Pic'=\Pic'(\mf)\subseteq\Pic(\Par_{n,\mn,S})$ as follows (generalizing  Definition  \ref{Br2} to the case of arbitrary $\mathcal{N}$): It is subgroup formed by  $\mathcal{B}(\vec{\lambda},\ell)$ such that for any $j$, both of the following two conditions hold : (a) $\lambda^j_{a-1}=\lambda^{j}_{a}$ whenever $a\in I^j$, $a>1$ and $a-1\not \in I^j$ and (b) If $n\not\in I^j$ and $1\in I^j$ then $\lambda^j_1=\lambda^{j}_{n}+\ell$.
\end{defi}
It is easy to see that $\Pic'$ behaves well under shift operations of the tuple $(d,r,\mn,n,\vec{I})$: In \eqref{basicD} with $\vec{J}=I$, the operation $\op{Sh}_p$ pulls back $\Pic'$ for the tuple $(d',r,\mn(p),n,\vec{K})$ to $\Pic'$ for $(d,r,\mn,n,\vec{I})$ (use Proposition  \ref{easily}).

Let $\Pic^{\deg=0}(\Par_{n,\mn,S})$ be the subgroup given by line bundles $\mathcal{B}(\vec{\lambda},\ell)$ such that  equality holds in the following inequality (this is the same inequality as \eqref{wallnew} when $D=0$)
\begin{equation}\label{wallneww}
\frac{1}{r}\bigl(-d+\sum_{j=1}^s\sum_{k\in I^j} \frac{\lambda^j_k}{\ell}\bigr)\ \leq\  \frac{1}{n}\bigl(-D +\sum_{j=1}^s \frac{|\lambda^j|}{\ell}\bigr)
\end{equation}
Finally $\mf\subseteq \Pic^{+,\deg=0}_{\Bbb{Q}}(\Par_{n,\mn,S})$ is the effective $\Bbb{Q}$ subcone.
\begin{defi}\label{Pin}
\begin{enumerate}
\item Any line bundle $\ml$ on $\ParL$ (or on $\ParL-\mathcal{R}_L$ ) has an index $m(\mathcal{L})\in \Bbb{Z}$: $t\in\Bbb{C}^*$ acts on points $(\mv,\mq,\mf,\mg,\gamma)\in\ParL$, by multiplication
by $t^{n-r}$ on $\mv$ and $t^{-r}$ on $\mq$ and leaving $\gamma$ unchanged. This action lifts to the line bundle $\ml$, as multiplication by $t^m$, for some $m\in\Bbb{Z}$. We set $m(\mathcal{L})=m$.
\item Let $\Pic^{\deg=0}_{\Bbb{Q}}(\ParL)\subset \Pic_{\Bbb{Q}}(\ParL)$ (similarly $\Pic^{\deg=0}_{\Bbb{Q}}(\ParL-\mathcal{R}_L)\subset \Pic_{\Bbb{Q}}(\ParL-\mathcal{R}_L)$) be  the subspace of line bundles of index $0$.
\item Let $\Pic^{+,\deg=0}_{\Bbb{Q}}(\ParL)\subset \Pic_{\Bbb{Q}}(\ParL)$ (similarly $\Pic^{+deg=0}_{\Bbb{Q}}(\ParL-\mathcal{R}_L)\subset \Pic_{\Bbb{Q}}(\ParL-\mathcal{R}_L))$be  the subspace of effective line bundles (they automatically have  index $0$).
\end{enumerate}
\end{defi}
In this section, we set up an induction map
\begin{equation}\label{inducto}
\op{Ind}:\Pic(\ParL-\mathcal{R}_L)\to \Pic(\Par_{n,\mn,S}).
\end{equation}
We then show the following properties:
\begin{enumerate}
\item [(I1)] The map $\Pic^{+,\deg=0}_{\Bbb{Q}}(\ParL)\to \Pic^{+,\deg=0}_{\Bbb{Q}}(\ParL-\mathcal{R}_L)$ is surjective.
\item[(I2)] The map $\Pic^{+,\deg=0}_{\Bbb{Q}}(\ParL-\mathcal{R}_L)\to \Pic(\Par_{n,\mn,S})_{\Bbb{Q}}$
is injective with image exactly $\Pic'_{\Bbb{Q}}$.
\item[(I3)]  $\Pic^{+,\deg=0}_{\Bbb{Q}}(\ParL)$ is identified with  $\Pic^+_{\Bbb{Q}}(\Par_{r,\mathcal{O}(-d),S})\times \Pic^+_{\Bbb{Q}}(\Par_{n-r,\mathcal{O}(d-D),S})$,
\end{enumerate}
Putting these together one gets the desired surjection \eqref{inductiones}. Explicit formulas for the induction morphism will be given in Section \ref{explicitF}.

\subsection{Construction of the induction morphism}
If $1\not\in I^i$ for all $i$, we have a morphism  (see \eqref{morph1}) $\widehat{\Omega}^0(d,r,\mn,n,\vec{I})\to \ParL$ which fits into a diagram:
\begin{equation}\label{Dbasic1.5}
\xymatrix{
 & \widehat{\Omega}^0(d,r,\mn,n,\vec{I})\ar[dl]^{\pi}\ar[dr]\\
\Par_{n,\mn,S}   &  & \ParL\ar@/_/[ul]_i\ar[ll]^{i'}
}
\end{equation}
Suppose an $s$-fold application of shifting (at various points of $S$, perhaps repeated) leads to $1\not\in I^i$ for all $i$. Note that there is a minimal choice of these shift operations, and we will use this choice (note that all shifts will turn out to produce the same induction map).

Let the shifted data be $(d',r,\mn',n,\vec{K})$. Set $D'=-\deg(\mn')$ and let $\op{Sh}$ denote the composites $\ParL(-d,r,n-r,d-D)\to \ParL(-d',r,n-r,d'-D')$, as well as $\Par_{n,\mn,S}\to \Par_{n,\mn',S}$(see the diagram \eqref{basicDnew}). To define the induction map, we take line bundles on  $\ParL(-d,r,n-r,d-D)$, obtain line bundles on $\ParL(-d',r,n-r,d'-D')$ by the inverse of shifting procedure $\Sh$. These line bundles then give line bundles on $\widehat{\Omega}^0(d',r,\mn',n,\vec{K})$, hence on $\widehat{\Omega}^0(d',r,\mn',n,\vec{K})-\mathcal{R}'$, where $\mathcal{R}'$ is the ramification locus of $\widehat{\Omega}^0(d',r,\mn',n,\vec{K})\to \Par_{n,\mn',S}$. By Corollary \ref{carole}, $\widehat{\Omega}^0(d',r,\mn',n,\vec{K})-\mathcal{R}'$ is an open substack of $\Par_{n,\mn',S}$ whose complement is of codimension $\geq 2$. Therefore we obtain a line bundle on
$\Par_{n,\mn',S}$  and we pull these line bundles to $\Par_{n,\mn,S}$ via $\Sh$.
\subsection{Proof of property (I1)}

\begin{lemma}\label{prius2}
The restriction mapping $\Pic(\ParL)\to \Pic(\ParL -\mathcal{R}_L)$ has a section and is hence surjective.
\end{lemma}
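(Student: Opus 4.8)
The claim is that the restriction map $\Pic(\ParL)\to \Pic(\ParL-\mathcal{R}_L)$ admits a section, hence is surjective. The plan is to use the standard exact sequence relating the Picard group of a smooth Artin stack to that of the complement of a divisor. Since $\ParL$ is a smooth Artin stack (being a ``degree shift'' of a product of the smooth stacks $\Par_{r,\mathcal{O},S}$ and $\Par_{n-r,\mathcal{O},S}$, cf.\ Proposition \ref{easily} and the discussion in Section \ref{schumann}), and $\mathcal{R}_L$ is an irreducible effective Cartier divisor on it (it is the pullback under $\ParL\to{\Omega}^0(d,r,\mn,n,\vec{I})$ of the ramification divisor $\mathcal{R}$, which is irreducible since $\widehat{\Omega}^0_S\to\Par_{n,\mn,S}$ is birational with irreducible source, cf.\ Propositions \ref{maya} and \ref{torro}), there is a right-exact sequence
\begin{equation*}
\Bbb{Z}\cdot[\mathcal{R}_L]\to \Pic(\ParL)\to \Pic(\ParL-\mathcal{R}_L)\to 0,
\end{equation*}
where the first map sends $1$ to $\mathcal{O}(\mathcal{R}_L)$. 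This already gives surjectivity; the point of the lemma is the splitting.

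First I would explain the section. Restriction of line bundles is a group homomorphism, and the class $\mathcal{O}(\mathcal{R}_L)$ restricts to the trivial bundle on $\ParL-\mathcal{R}_L$ (it is generated by the section $\theta$, which is invertible away from its zero locus $\mathcal{R}_L$). So the issue is purely whether the surjection $\Pic(\ParL)\twoheadrightarrow\Pic(\ParL-\mathcal{R}_L)$ splits as a map of abelian groups. For this I would use the explicit description of the Picard group of $\ParL$. By property (I3) (or rather its analogue established via Proposition \ref{easily} and \cite{LS}), $\Pic(\ParL)$ is a free abelian group with an explicit basis: the determinant-of-cohomology line bundles and the $\ml_{\lambda,p}$ line bundles for the two factors, together with the ``index'' grading of Definition \ref{Pin}. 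One then checks that $\mathcal{R}_L$ is a \emph{primitive} element of this free abelian group — this follows from the formula for $\mathcal{O}(\mathcal{R}_L)$ as a tensor product of line bundles $\ma\boxtimes\mb$ on the two Levi factors (see Proposition \ref{basicC} / Remark \ref{Bee132}), which is indivisible because $\mathcal{R}_L$ is a reduced irreducible divisor (Proposition \ref{torro}, using that $h^0(\ParL,\mathcal{O}(n\mathcal{R}_L))=1$ for all $n$ by Proposition \ref{fulC}, so $\mathcal{R}_L$ is not a positive multiple of an effective divisor). A primitive element of a free abelian group is part of a basis, so we can pick a complementary free direct summand $M\subseteq\Pic(\ParL)$ with $\Pic(\ParL)=\Bbb{Z}\cdot[\mathcal{O}(\mathcal{R}_L)]\oplus M$; the composite $M\hookrightarrow\Pic(\ParL)\to\Pic(\ParL-\mathcal{R}_L)$ is then an isomorphism, and its inverse is the desired section.

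An alternative, perhaps cleaner, route avoids choosing a basis: restrict everything to the open substack $\widehat{\Omega}^0_S-\widehat{\Omega}^0$-free locus, or better, observe that $\ParL-\mathcal{R}_L$ maps to an open substack of $\Par_{n,\mn,S}$ whose complement has codimension $\ge 2$ (Corollary \ref{carole}), so that $\Pic(\ParL-\mathcal{R}_L)=\Pic(\Par_{n,\mn,S})$ modulo a controlled piece; combined with the fact that shift operations identify Picard groups (Proposition \ref{easily}(c)), one reduces the splitting to the already-known structure of $\Pic(\Par_{n,\mn,S})$ (Proposition \ref{LaSo}). Either way, the substance is: (i) $\mathcal{R}_L$ is an irreducible Cartier divisor, (ii) its class is indivisible in the free abelian group $\Pic(\ParL)$, (iii) a primitive element of a free abelian group splits off.

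\textbf{Main obstacle.} The genuinely non-formal point is step (ii), the indivisibility/primitivity of $[\mathcal{O}(\mathcal{R}_L)]$ in $\Pic(\ParL)$. This rests on having the precise tensor-product formula for $\mathcal{O}(\mathcal{R}_L)$ in terms of line bundles on the two Levi factors $\Par_{r,\mathcal{O}(-d),S}$ and $\Par_{n-r,\mathcal{O}(d-D),S}$ (the content of Proposition \ref{basicC}, invoked in Remark \ref{Bee132}), and knowing that the two factors appearing are themselves indivisible — which in turn follows from $\mathcal{R}_L$ being reduced and irreducible together with Proposition \ref{fulC}. So in practice the proof of Lemma \ref{prius2} is short, but it is logically downstream of the enumerative computation of $\mathcal{O}(\mathcal{R})$; if one wanted a self-contained argument one would instead run the codimension-$2$ comparison of Corollary \ref{carole} directly, which is the route I would actually write up.
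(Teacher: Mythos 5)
Your main argument has a genuine gap at its load-bearing step. You treat $\mathcal{R}_L$ as a reduced irreducible divisor, so that the kernel of $\Pic(\ParL)\to\Pic(\ParL-\mathcal{R}_L)$ is $\Bbb{Z}\cdot[\mathcal{O}(\mathcal{R}_L)]$ and primitivity of that single class yields a splitting. But $\mathcal{R}_L$ is not irreducible in general: by Proposition \ref{basicC} it is the zero divisor of the pullback of the section $s\otimes s'$ of $\ma\boxtimes\ma'$, hence it decomposes as the sum of the zero divisors of $s\otimes 1$ and $1\otimes s'$, one coming from each Levi factor (this decomposition is used explicitly at the end of Section \ref{explicitF}). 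Your justification of irreducibility --- that the map to $\Par_{n,\mn,S}$ is birational with irreducible source --- is a non sequitur; ramification divisors of birational maps are routinely reducible. Once $\mathcal{R}_L=D_1+D_2$, the kernel of restriction is generated by $[D_1]$ and $[D_2]$ separately, the class $[\mathcal{O}(\mathcal{R}_L)]=[D_1]+[D_2]$ does not generate it, and the primitive-element argument says nothing about whether the surjection splits (equivalently, whether $\Pic(\ParL-\mathcal{R}_L)$ is torsion-free) --- and that is precisely the non-formal content of the lemma. The supporting claims are also shaky: $h^0(\ParL,\mathcal{O}(n\mathcal{R}_L))=1$ gives uniqueness of sections, not indivisibility of the class in $\Pic(\ParL)$.

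The paper's proof instead exhibits the section explicitly, and your ``alternative route'' points in that direction but misstates the geometry: $\ParL-\mathcal{R}_L$ is not an open substack of $\Par_{n,\mn,S}$; rather (after reducing by shift operations to the case $1\notin I^i$ for all $i$) it is a closed substack of $\widehat{\Omega}^0-\mathcal{R}$, which by Corollary \ref{carole} is an open substack of $\Par_{n,\mn,S}$ with complement of codimension $\geq 2$. The section is $\mathcal{M}\mapsto (\pi\circ i)^*\op{Ind}(\mathcal{M})$: pull $\mathcal{M}$ back to $\widehat{\Omega}^0-\mathcal{R}$, extend uniquely across the codimension $\geq 2$ complement to obtain $\op{Ind}(\mathcal{M})$ on $\Par_{n,\mn,S}$, and restrict along $\pi\circ i:\ParL\to\Par_{n,\mn,S}$. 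That this followed by restriction to $\ParL-\mathcal{R}_L$ is the identity holds because $i$ is a section of the projection $\widehat{\Omega}^0\to\ParL$ and $\mathcal{R}$ is the pullback of $\mathcal{R}_L$ under that projection, so $i$ carries $\ParL-\mathcal{R}_L$ into $\widehat{\Omega}^0-\mathcal{R}$, where $\op{Ind}(\mathcal{M})$ restricts by construction to the pullback of $\mathcal{M}$. No excision sequence or primitivity analysis is needed, and the argument works even though $\mathcal{R}_L$ is reducible.
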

\begin{proof}
We may assume that $1\not\in I^i$ for all $i$ without loss of generality. Let $\mathcal{M}\in\Pic(\ParL -\mathcal{R}_L)$. To define the section we may just restrict
$\op{Ind}(\mathcal{M})$  via $\pi\circ i:\ParL \to \Par_{n,\mn,S}$.
\end{proof}
Property (I1) is the following:
\begin{proposition}\label{M11}
The restriction map $\Pic^{+,\deg=0}(\ParL)\to \Pic^{+,\deg=0}(\ParL -\mathcal{R}_L)$ is a surjection.
\end{proposition}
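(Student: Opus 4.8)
\textbf{Proof proposal for Proposition \ref{M11}.}

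The plan is to reduce to the case $1\notin I^i$ for all $i$ by shift operations, so that we are in the situation of diagram \eqref{Dbasic1.5} and Corollary \ref{carole}, and then to play off two facts: on the one hand $\Pic^{+,\deg=0}(\ParL)$ is identified with $\Pic^+_{\Bbb{Q}}(\Par_{r,\mathcal{O}(-d),S})\times \Pic^+_{\Bbb{Q}}(\Par_{n-r,\mathcal{O}(d-D),S})$ by property (I3) (Proposition \ref{easily} together with Proposition \ref{basicC}); on the other hand, an element of $\Pic^{+,\deg=0}(\ParL-\mathcal{R}_L)$ is, via $\op{Ind}$, a line bundle on the big open substack $\widehat{\Omega}^0-\mathcal{R}$ of $\Par_{n,\mn',S}$ which extends (in codimension $\ge 2$) to a line bundle $\mathcal{B}(\vec\lambda,\ell)$ on $\Par_{n,\mn',S}$, and which by Proposition \ref{b9}/Theorem \ref{blacktea} is effective of grade zero. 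So first I would note that any $\mathcal{M}\in \Pic^{+,\deg=0}(\ParL-\mathcal{R}_L)$ is automatically of index zero (it is the pullback to $\ParL-\mathcal{R}_L$ of an effective bundle on $\Par_{n,\mn,S}$, and the $\Bbb{C}^*$-action in Definition \ref{Pin}(1) acts trivially on such pullbacks since it is an interior action fixing the point of $\Par_{n,\mn,S}$), and then by Lemma \ref{prius2} I can choose \emph{some} lift $\mathcal{M}'\in \Pic^{\deg=0}(\ParL)$ — the section in that lemma being exactly $\mathcal{M}\mapsto (\pi\circ i)^*\op{Ind}(\mathcal{M})$. The remaining content is to show $\mathcal{M}'$ is effective.

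The key step is therefore: given that $\op{Ind}(\mathcal{M})=\mathcal{O}(E)$ is effective on $\Par_{n,\mn',S}$ (with some non-trivial section $\theta$), the restriction $\mathcal{M}'=(\pi\circ i)^*\op{Ind}(\mathcal{M})$ to $\ParL$ is effective, i.e.\ $i^*\pi^*\mathcal{O}(E)$ has a non-zero global section. This is the step I expect to be the main obstacle. The natural argument is the one of Ressayre invoked at the end of Section \ref{rumble} (the proof of Theorem \ref{Adagio}(4)): the effectivity of $\op{Ind}(\mathcal{M})$ on $\Par_{n,\mn',S}$ translates, via the semistability description (Theorem \ref{blacktea}, Proposition \ref{b9}) and the numerical equality defining the face, into the statement that the center of the Levi $L$ acts with the correct (non-negative) weight on $\op{Ind}(\mathcal{M})|_{\ParL}$, hence the restriction is effective on $\ParL$. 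More concretely, since $\mathcal{M}'$ has index zero and $\ParL$ is (up to the shift and Proposition \ref{easily}) $\Par_{r,\mathcal{O}(-d),S}\times \Par_{n-r,\mathcal{O}(d-D),S}$, $\mathcal{M}'$ is a box product $\ma\boxtimes\mb$ of grade-zero line bundles, and I must show each of $\ma,\mb$ is effective. Here I would use the fact that $\op{Ind}(\mathcal{M})$ is effective on all of $\Par_{n,\mn',S}$: pulling back a non-zero section along $i':\ParL\to\Par_{n,\mn',S}$ (which is well-defined, by \eqref{Dbasic1.5}) gives a section of $i^*\pi^*\op{Ind}(\mathcal{M})=\mathcal{M}'$, and this section is non-zero provided the image of $i'$ is not contained in $E$. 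That last non-containment is exactly what needs to be checked, and it follows because $E$ is the cycle-theoretic image of $\Omega(d',r,\mn',n,\vec{J})$ for some $\vec J\ne\vec I$ with $\Omega^0(d',r,\mn',n,\vec J)$ dense in it (Proposition \ref{maya}, Lemma \ref{divisorE}), while $\ParL-\mathcal{R}_L$ maps into $\Omega^0(d',r,\mn',n,\vec I)-\mathcal{R}$, and these two loci are disjoint — precisely the disjointness already used in the proof of Theorem \ref{Adagio}(4).

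Finally I would assemble: starting from $\mathcal{M}\in\Pic^{+,\deg=0}(\ParL-\mathcal{R}_L)$, form $\op{Ind}(\mathcal{M})$, observe it is effective of grade zero on $\Par_{n,\mn',S}$ by Theorem \ref{blacktea} and Corollary \ref{carole} (a nowhere-vanishing-in-codimension-one section on $\widehat\Omega^0-\mathcal{R}$ extends), pull the section back along $i'$ using the disjointness above to see it stays non-zero, and conclude that $\mathcal{M}'=(\pi\circ i)^*\op{Ind}(\mathcal{M})$ is an effective lift of $\mathcal{M}$; since $\mathcal{M}'$ restricts to $\mathcal{M}$ on $\ParL-\mathcal{R}_L$, surjectivity of $\Pic^{+,\deg=0}(\ParL)\to\Pic^{+,\deg=0}(\ParL-\mathcal{R}_L)$ follows. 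The whole argument reduces, modulo the bookkeeping of shift operations (Proposition \ref{easily}, diagram \eqref{basicDnew}), to the single geometric input that the image of $\ParL$ in $\Par_{n,\mn',S}$ is \emph{not} contained in the ramification divisor $\mathcal{R}$ nor in any of the divisors cut out by $\op{Ind}$ of effective classes, which is the disjointness statement $\ParL-\mathcal{R}_L\subseteq \Omega^0-\mathcal{R}$ established in Section \ref{compact}.
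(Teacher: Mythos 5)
Your architecture is the paper's: the candidate lift of $\mathcal{M}$ is $(\pi\circ i)^{*}\op{Ind}(\mathcal{M})$ (exactly the section of Lemma \ref{prius2}), and everything reduces to showing that this restriction is still effective. The gap is in how you justify the non-vanishing of the restricted section. You argue that the image of $i'=\pi\circ i$ is not contained in the zero divisor $E$ of the section $\tilde s$ of $\op{Ind}(\mathcal{M})$ by appealing to Lemma \ref{divisorE} and Proposition \ref{maya}, i.e.\ by asserting that $E$ is the cycle-theoretic image of some $\Omega(d',r,\mn',n,\vec J)$ with $\vec J\neq \vec I$ and then invoking disjointness from $\Omega^0(d',r,\mn',n,\vec I)-\mathcal{R}$. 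But Lemma \ref{divisorE} is a statement about F-line bundles only; for an arbitrary effective $\mathcal{M}$ on $\ParL-\mathcal{R}_L$ the divisor of $\tilde s$ has no such enumerative description (it can be reducible and non-reduced, and its components need not be cycles of the form $C(d,r,\mn,n,\vec J)$ at all), so this step does not go through as written.

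The correct justification is tautological and is the one the paper uses: by construction, $\tilde s$ restricted to $\widehat{\Omega}^0-\mathcal{R}$ is the pullback of $s$ under the projection $\widehat{\Omega}^0-\mathcal{R}\to \ParL-\mathcal{R}_L$, and $i$ is a section of that projection (it carries $\ParL-\mathcal{R}_L$ into $\widehat{\Omega}^0-\mathcal{R}$ because $\mathcal{R}$ is the pullback of $\mathcal{R}_L$ under $\widehat{\Omega}^0\to\ParL$). Hence the restriction of $\tilde s$ along $i$ is literally $s$ on $\ParL-\mathcal{R}_L$, which is non-zero by hypothesis; equivalently, the intersection of $i(\ParL-\mathcal{R}_L)$ with $E$ is exactly the zero locus of $s$, so containment of $i'(\ParL)$ in $E$ would force $s=0$. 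No analysis of the components of $E$, and no appeal to the disjointness argument from the proof of Theorem \ref{Adagio}(4), is needed. With that one step replaced, the rest of your reduction (shifting to the case $1\notin I^{i}$ for all $i$, extending the pulled-back section over the codimension $\geq 2$ complement via Corollary \ref{carole}, and reading off index zero from effectivity) matches the paper.
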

\begin{proof}
Let $\mathcal{M}$ be an effective line bundle on $\ParL -\mathcal{R}_L$ with section $s$. Then $\op{Ind}(\mathcal{M})$ is defined as a pullback of $\mathcal{M}$ and   therefore has a non-zero section $\tilde{s}$ defined as a pullback of $s$. The restriction of a $\tilde{s}$  to $\ParL$ coincides with $s$ on $\ParL -\mathcal{R}_L$ and is hence non-zero. This proves the proposition.
\end{proof}

\subsection{Proof of property (I2)}

\begin{defi}  When $1\not\in I^i$ for all $i$, we consider $\Par'_{n,\mn,S}$ the parameter space of rank $n$ parabolic bundles with $\gamma:\det \mw\leto{\sim}\mn$, where for each $i$,  $\me^{p_i}$ now parametrizes {\em partial flags} where only $E^{p_i}_a$ with $a\in T(I^i)$ are defined (see Definition  \ref{ToI}).
\end{defi}
Using  the method of \cite{LS}, we obtain
\begin{lemma}
 Assume $1\not\in I^i$ for all $i$. There is a natural mapping $\Par_{n,\mn,S}\to \Par'_{n,\mn,S}$.
The subgroup of $\Pic(\Par_{n,\mn,S})$ formed by $\Pic(\Par'_{n,\mn,S})$ coincides with the subgroup
$\Pic'$ defined in Proposition \ref{grill}.
\end{lemma}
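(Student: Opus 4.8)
The statement to prove is that, when $1\notin I^i$ for all $i$, the natural map $\Par_{n,\mn,S}\to\Par'_{n,\mn,S}$ (forgetting those members $E^{p_i}_a$ of the flags with $a\notin T(I^i)$) induces an identification of $\Pic(\Par'_{n,\mn,S})$, viewed as a subgroup of $\Pic(\Par_{n,\mn,S})$, with the subgroup $\Pic'$ from Definition~\ref{Br2}. The plan is to compute $\Pic(\Par'_{n,\mn,S})$ directly following the Laszlo--Sorger method \cite{LS}, exactly as in the proof of Proposition~\ref{LaSo}, and then match generators.

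First I would set up the partial-flag analogue of the fibration $\Par_{n,\mn,S}\to\Bun_{\mn}(n)$: the stack $\Par'_{n,\mn,S}$ maps to $\Bun_{\mn}(n)$ with fibers $\prod_{p_i\in S}\Fl_{T(I^i)}(n)$, where $\Fl_{T(I^i)}(n)$ is the partial flag variety of $\Bbb{C}^n$ with jumps prescribed by the subset $T(I^i)\subseteq[n]$ (recall $n\in T(I^i)$ always). The Picard group of a partial flag variety $G/P$ for $G=\SL(n)$ is freely generated by the fundamental weights $\omega_a$ with $a\in T(I^i)\setminus\{n\}$ — i.e.\ precisely those line bundles $\ml_\lambda$ with $\lambda=\sum_a c_a\omega_a$ supported on $T(I^i)$. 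Combining this fiberwise statement with the determinant-of-cohomology line bundle $\mathcal{D}$ pulled back from $\Bun_{\mn}(n)$, the same argument as in \cite{LS} (and its shift-operation extension in Section~\ref{shifty2}, to handle general $\mn$) gives that $\Pic(\Par'_{n,\mn,S})$ is generated by $\mathcal{D}$ together with the $\ml_{\omega_a,p_i}$ for $a\in T(I^i)\setminus\{n\}$, $i\in[s]$. Under the pullback to $\Par_{n,\mn,S}$ this is exactly the subgroup of $\mathcal{B}(\vec\lambda,\ell)$ with each $\lambda^i=\sum_{b}c^i_b\omega_b$ supported on $T(I^i)$.

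The remaining point is purely combinatorial: to check that ``$\lambda^i$ is supported on $T(I^i)$'' is the same as the two conditions (a) and (b) defining $\Pic'$. Writing $\lambda^i$ in the $(\lambda^i_1\ge\dots\ge\lambda^i_n)$ normalization, the coefficient $c^i_a=\lambda^i_a-\lambda^i_{a+1}$ for $a<n$, so $c^i_a=0$ exactly when $\lambda^i_a=\lambda^i_{a+1}$. By Definition~\ref{ToI}, $a\notin T(I^i)$ iff $a\notin I^i$ and $a+1\in I^i$; translating indices, the ``missing'' coefficients are those $c^i_{a-1}$ with $a\in I^i$, $a-1\notin I^i$, which is condition (a). Condition (b), involving $\lambda^i_1=\lambda^i_n+\ell$ when $1\in I^i$, $n\notin I^i$, corresponds to the normalization of the level (the $\omega_n$-type direction, i.e.\ the determinant twist at $p_i$) being pinned down; since here $1\in I^i$ is excluded by hypothesis, this case is vacuous in the present setting — but for bookkeeping consistency with the general $\Pic'$ one records it. So the identification is established.

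I do not expect a genuine obstacle here: the argument is a direct transcription of the Laszlo--Sorger computation to a partial-flag setting plus an index-chasing verification. The one place requiring a little care is making the Laszlo--Sorger method go through for $\Par'_{n,\mn,S}$ over $\Bbb{P}^1$ with the nontrivial determinant $\mn$ — but this is handled precisely as in Section~\ref{shifty2}, reducing to $\mn=\mathcal{O}$ via the shift operations of Proposition~\ref{easily} (which, as already noted, carry $\Pic'$ to $\Pic'$), so no new input is needed.
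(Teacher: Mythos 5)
Your proposal is correct and follows essentially the same route as the paper, which simply invokes ``the method of \cite{LS}'': you compute $\Pic(\Par'_{n,\mn,S})$ via the partial-flag fibration over $\Bun_{\mn}(n)$ and then match the descent condition ($c^i_a=0$ for $a\notin T(I^i)$, i.e.\ $\lambda^i_a=\lambda^i_{a+1}$ exactly when $a\notin I^i$ and $a+1\in I^i$) with condition (a) of $\Pic'$, condition (b) being vacuous since $1\notin I^i$. The only point worth stating explicitly is that pullback along $\Par_{n,\mn,S}\to\Par'_{n,\mn,S}$ is injective on Picard groups (the fibers are connected products of flag varieties of Levi factors), so that $\Pic(\Par'_{n,\mn,S})$ genuinely sits as a subgroup of $\Pic(\Par_{n,\mn,S})$.
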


\begin{proposition}\label{grill}
The image of
$$\op{Ind}:\Pic(\ParL -\mathcal{R}_L)\to \Pic(\Par_{n,\mathcal{N},S})$$
 is contained in  $\Pic'$.
\end{proposition}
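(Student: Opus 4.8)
\textbf{Proof plan for Proposition \ref{grill}.}

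The plan is to reduce to the case $1\notin I^i$ for all $i$ via shift operations, and then to trace through the explicit construction of $\op{Ind}$ in that case, showing that any line bundle in the image descends along the map $\Par_{n,\mn,S}\to \Par'_{n,\mn,S}$, i.e. lies in $\Pic'$. First I would observe that $\Pic'$ is preserved under shift operations of the tuple $(d,r,\mn,n,\vec I)$: the compatibility recorded just after the definition of $\Pic'$ in this section, together with Proposition \ref{easily}(a), shows that $\op{Sh}_p^*$ carries $\Pic'$ for the shifted tuple $(d',r,\mn(p),n,\vec K)$ isomorphically onto $\Pic'$ for $(d,r,\mn,n,\vec I)$. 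Since $\op{Ind}$ is by construction defined by first applying the (inverse of the) shift operations, if the image of $\op{Ind}$ is in $\Pic'$ in the case $1\notin I^i$ for all $i$, then the general case follows by functoriality of the construction under $\op{Sh}$. So I would fix the assumption $1\notin I^i$ for all $i$ for the rest of the argument.

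Under this assumption, recall from the construction that $\op{Ind}(\mathcal M)$ is obtained as follows: pull $\mathcal M$ back along $\widehat{\Omega}^0(d,r,\mn,n,\vec I)\to \ParL$, restrict to $\widehat{\Omega}^0-\mathcal R$, use Corollary \ref{carole} to identify $\widehat{\Omega}^0-\mathcal R$ with an open substack $U\subseteq \Par_{n,\mn,S}$ whose complement has codimension $\ge 2$, and then extend the resulting line bundle uniquely across the codimension-$2$ complement (using that $\Par_{n,\mn,S}$ is a smooth Artin stack, so $\Pic(\Par_{n,\mn,S})\to\Pic(U)$ is an isomorphism). The key point is that the open immersion $\widehat{\Omega}^0-\mathcal R\hookrightarrow \Par_{n,\mn,S}$ factors through $\Par'_{n,\mn,S}$: at a point $(\mv,\mw,\me,\gamma)$ of $\widehat{\Omega}^0$ only the subspaces $E^p_a$ with $a\in T(I^i)$ enter the defining rank conditions (Definition \ref{ToI}), so the natural map $\widehat{\Omega}^0\to \Par_{n,\mn,S}$ lifts to $\widehat{\Omega}^0\to \Par'_{n,\mn,S}$, and the latter is compatible with $\Par_{n,\mn,S}\to\Par'_{n,\mn,S}$. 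Therefore the line bundle on $U$ underlying $\op{Ind}(\mathcal M)$ is pulled back from $\Par'_{n,\mn,S}$ — more precisely, from the open substack $U'\subseteq \Par'_{n,\mn,S}$ image of $U$. Since the complement of $U'$ in $\Par'_{n,\mn,S}$ also has codimension $\ge 2$ (it is contained in the image of the codimension $\ge 2$ complement, and $\Par_{n,\mn,S}\to\Par'_{n,\mn,S}$ is a flag bundle hence has equidimensional fibers, so codimension is preserved), the line bundle extends uniquely to $\Par'_{n,\mn,S}$, and its pullback to $\Par_{n,\mn,S}$ is $\op{Ind}(\mathcal M)$. By the preceding lemma, $\Pic(\Par'_{n,\mn,S})$ is precisely $\Pic'\subseteq \Pic(\Par_{n,\mn,S})$, so $\op{Ind}(\mathcal M)\in\Pic'$, as desired.

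The main obstacle I anticipate is the bookkeeping around the codimension-$2$ extensions and the commutativity of the two reductions: one must check carefully that the extension of a line bundle from $U'$ to $\Par'_{n,\mn,S}$ is compatible (under pullback along $\Par_{n,\mn,S}\to\Par'_{n,\mn,S}$) with the extension from $U$ to $\Par_{n,\mn,S}$, which amounts to the statement that $\Par_{n,\mn,S}\to \Par'_{n,\mn,S}$ maps the codimension-$2$ locus $\Par_{n,\mn,S}\setminus U$ into the codimension-$2$ locus $\Par'_{n,\mn,S}\setminus U'$, and that both Picard restriction maps are isomorphisms. The second subtlety is verifying that the whole construction is genuinely independent of the chosen (minimal) sequence of shifts, so that the reduction in the first paragraph is legitimate; this follows from the corresponding independence statement built into the definition of $\op{Ind}$, but it should be invoked explicitly. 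Once these compatibilities are in place, the argument is essentially the factorization observation together with the identification $\Pic(\Par'_{n,\mn,S})=\Pic'$.
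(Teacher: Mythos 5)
Your proposal is correct and follows essentially the same route as the paper: reduce by shift operations to the case $1\notin I^i$ for all $i$, observe that only the flag elements indexed by $T(I^i)$ enter the construction so that $\widehat{\Omega}^0$ and the induction descend to $\Par'_{n,\mn,S}$, and conclude via the identification $\Pic(\Par'_{n,\mn,S})=\Pic'$. The only point where you are slightly thinner than the paper is in justifying that the map to $\ParL$ itself (not just the defining rank conditions of $\widehat{\Omega}^0$) factors through the partial-flag stack — the paper invokes \cite[Lemma 7.10]{BHermit} for this — while your extra care about the codimension-$2$ extensions is bookkeeping the paper leaves implicit.
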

\begin{proof}(of Proposition \ref{grill})
Using the shift operations, we may assume that $1\not\in I^i$ for all $i$.
Some parts of the flag structures on $\widehat{\Omega}^0(d,r,\mn,n,\vec{I})$ are ``unnecessary" for its definition. In fact there is a natural  representable $\widehat{\Omega'}^0(d,r,\mn,n,\vec{I})\to \Par'_{n,\mn,S}$ such that the base change to $\Par_{n,\mn,S}\to\Par'_{n,\mn,S}$ gives $\widehat{\Omega}^0(d,r,\mn,n,\vec{I})\to \Par_{n,\mn,S}$. Note also that by \cite[Lemma 7.10]{BHermit}, we have a natural map $\Par'_{n,\mn,S}\to \ParL$ which factorizes $\Par_{n,\mn,S}\to \ParL$.

Therefore assuming $1\not\in I^i$ for all $i$, the induction map \eqref{inducto} factors through  $\Pic(\Par'_{n,\mn,S})$ as desired.
\end{proof}

 \subsubsection{The mapping  \eqref{inducto} is an injection with image $\Pic'$}
\begin{proposition}\label{outside}
The induction morphism \eqref{inducto} is a bijection of $\Pic(\ParL -\mathcal{R}_L)$ with $\Pic'$.
\end{proposition}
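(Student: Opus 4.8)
\textbf{Proof plan for Proposition \ref{outside}.}
The plan is to deduce the bijectivity of the induction morphism \eqref{inducto} from the already-established surjectivity and birationality of $\Omega(d,r,\mn,n,\vec{I})\to \Par_{n,\mn,S}$ together with Corollary \ref{carole}. First I would reduce, via the shift operations in Diagram \eqref{basicDnew} and Proposition \ref{easily}(c,d), to the case $1\not\in I^i$ for all $i$; all the relevant Picard groups, the subgroup $\Pic'$, and the ramification divisors $\mathcal{R}$, $\mathcal{R}_L$ are compatible with $\op{Sh}_p$ (this compatibility was noted just before \eqref{wallneww} and follows from the lemma preceding it), so it suffices to prove the statement after shifting. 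Under this assumption we have the honest morphisms of Diagram \eqref{Dbasic1.5}: $\pi:\widehat{\Omega}^0(d,r,\mn,n,\vec{I})\to \Par_{n,\mn,S}$ and $\widehat{\Omega}^0(d,r,\mn,n,\vec{I})\to \ParL$, and $i:\ParL\to \widehat{\Omega}^0$ with $i'=\pi\circ i:\ParL\to \Par_{n,\mn,S}$.

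Next I would show surjectivity onto $\Pic'$. By Proposition \ref{grill} the image of $\op{Ind}$ is contained in $\Pic'$; conversely, given $\mathcal{B}(\vec{\lambda},\ell)\in\Pic'$, the preceding lemma identifies $\Pic'$ with $\Pic(\Par'_{n,\mn,S})$, i.e.\ such a line bundle descends to the partial-flag stack $\Par'_{n,\mn,S}$, and by \cite[Lemma 7.10]{BHermit} we have a factorization $\Par_{n,\mn,S}\to \Par'_{n,\mn,S}\to \ParL$. Restricting $\mathcal{B}(\vec{\lambda},\ell)$ (viewed on $\Par'_{n,\mn,S}$) through $\ParL-\mathcal{R}_L\hookrightarrow \ParL\to \Par'_{n,\mn,S}$ produces a line bundle $\mathcal{M}\in\Pic(\ParL-\mathcal{R}_L)$; unwinding the construction of $\op{Ind}$ (pull back to $\widehat{\Omega}^0-\mathcal{R}$, then use that $\widehat{\Omega}^0-\mathcal{R}$ is an open substack of $\Par_{n,\mn,S}$ with codimension-$\ge 2$ complement by Corollary \ref{carole}, and that $\Par_{n,\mn,S}$ is smooth so line bundles extend across codimension $\ge 2$) one checks $\op{Ind}(\mathcal{M})=\mathcal{B}(\vec{\lambda},\ell)$. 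The key technical point is that the pullback-then-extend operation defining $\op{Ind}$ is inverse, on the nose, to restriction along $\ParL-\mathcal{R}_L\hookrightarrow \widehat{\Omega}^0-\mathcal{R}\cong(\text{open in }\Par_{n,\mn,S})$, because all the maps in Diagram \eqref{Dbasic1.5} restricted to the $\mathcal{R}$-complements fit into the commutative square displayed just before Section \ref{martel}.

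For injectivity I would argue that if $\op{Ind}(\mathcal{M})$ is trivial (or, for two line bundles, if their inductions agree), then pulling $\op{Ind}(\mathcal{M})$ back to $\ParL-\mathcal{R}_L$ via $i'$ recovers $\mathcal{M}$ — this is exactly the content of the section statement in Lemma \ref{prius2}, whose proof shows $i'^*\circ\op{Ind}=\op{id}$ on $\Pic(\ParL-\mathcal{R}_L)$. Hence $\op{Ind}$ is a split injection, and combined with the surjectivity onto $\Pic'$ from the previous paragraph we conclude $\op{Ind}$ is a bijection $\Pic(\ParL-\mathcal{R}_L)\leto{\sim}\Pic'$. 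I expect the main obstacle to be bookkeeping rather than conceptual: verifying precisely that the shift operations intertwine $\op{Ind}$ for $(d,r,\mn,n,\vec{I})$ with $\op{Ind}$ for the shifted data $(d',r,\mn',n,\vec{K})$ — in particular that the minimal choice of shifts makes the construction well-defined and that the identification $\Pic'(\mf)\leftrightarrow\Pic(\Par'_{n,\mn,S})$ is shift-equivariant — and that the codimension-$\ge 2$ extension argument is valid on the Artin stack $\Par_{n,\mn,S}$ (which is smooth, so this is fine, but needs the normality/smoothness input from the stack structure). Everything else is a formal consequence of Corollary \ref{carole} and the commutative diagrams already in place.
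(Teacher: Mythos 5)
Your reduction to the case $1\not\in I^i$ and your injectivity argument are fine and match the paper: since $i$ is a section of $\pi:\widehat{\Omega}^0-\mathcal{R}\to\ParL-\mathcal{R}_L$, restricting $\op{Ind}(\mathcal{M})$ back along $i'$ recovers $\mathcal{M}$, so $\op{Ind}$ is split injective, and Proposition \ref{grill} places the image inside $\Pic'$. The gap is in surjectivity. You assert that ``pullback-then-extend is inverse, on the nose, to restriction'' because of the commutative square relating the $\mathcal{R}$-complements. But that square only yields the composite $i^*\circ\pi^*=\op{id}$ on $\Pic(\ParL-\mathcal{R}_L)$; it says nothing about the other composite $\pi^*\circ i^*$ on $\Pic(\Omega^0-\mathcal{R})$. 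To conclude $\op{Ind}(i'^*\ml|_{\ParL-\mathcal{R}_L})=\ml$ for $\ml\in\Pic'$ you need precisely that $\pi^*i^*(\ml|_{\Omega^0-\mathcal{R}})\cong \ml|_{\Omega^0-\mathcal{R}}$, i.e.\ that every line bundle on $\Omega^0-\mathcal{R}$ (at least every restriction of an element of $\Pic'$) is pulled back from $\ParL-\mathcal{R}_L$. There is no map $\ParL\to\Par'_{n,\mn,S}$ through which $\widehat{\Omega}^0\to\Par'_{n,\mn,S}$ factors (the section $i$ replaces $\mw$ by the split extension $\mv\oplus\mq$, which is a different point of $\Par_{n,\mn,S}$), so descent of $\ml$ to the partial-flag stack does not by itself give this identity.

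This is exactly the content of Lemma \ref{muchlater} in the paper, whose proof is not formal: one shows $i^*$ is injective on $\Pic(\Omega^0-\mathcal{R})$ by using the Levification map of \cite[Section 3.8]{BKq}, i.e.\ a $\Bbb{G}_m$-equivariant family $\phi:\Bbb{A}^1\times\Omega^0\to\Omega^0$ degenerating each point to its associated graded in $\ParL$, which propagates a trivialization from $\ParL-\mathcal{R}_L$ to all of $\Omega^0-\mathcal{R}$. The paper then combines this with Lemma \ref{later} (the composite $\Pic(\Par'_{n,\mn,S})\to\Pic(\Omega^0-\mathcal{R})$ is an isomorphism, where the inverse uses Lemma \ref{oneoff}: among all extensions of a line bundle across the divisors $D(a,j)$, exactly one lies in $\Pic'$ because each $D(a,j)$ violates exactly one defining equality of $\Pic'$, and by exactly $1$). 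You should either import this Levification step or supply an equivalent geometric argument (e.g.\ that the fibers of $\Omega^0\to\ParL$ are extension spaces with trivial Picard group); without it the surjectivity half of the bijection is unproven.
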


Again to prove this proposition we may assume  $1\not\in I^i$ for all $i$. It follows immediately from the two lemmas below.

\begin{lemma}\label{later}
Assume $1\not\in I^i$ for all $i$.
The composite
$$\Pic(\Par'_{n,\mn,S})\to \Pic(\Par_{n,\mn,S})\to  \Pic({\Omega}^0(d,r,\mn,n,\vec{I})-\mathcal{R})$$ is an isomorphism.
\end{lemma}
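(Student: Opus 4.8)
\textbf{Proof proposal for Lemma \ref{later}.}

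The plan is to show that the composite $\Pic(\Par'_{n,\mn,S})\to \Pic(\Par_{n,\mn,S})\to \Pic(\Omega^0(d,r,\mn,n,\vec{I})-\mathcal{R})$ is an isomorphism by identifying the source and target with the Picard group of a common open substack. First I would invoke Corollary \ref{carole}: since $\langle\sigma_{I^1},\dots,\sigma_{I^s}\rangle_{d,D}=1$ and $1\notin I^i$ for all $i$, the substack $\widehat{\Omega}^0(d,r,\mn,n,\vec{I})-\mathcal{R}$ maps isomorphically onto an open substack $U$ of $\Par_{n,\mn,S}$ whose complement has codimension $\geq 2$. Because $\Par_{n,\mn,S}$ is a smooth Artin stack, restriction gives an isomorphism $\Pic(\Par_{n,\mn,S})\leto{\sim}\Pic(U)=\Pic(\widehat{\Omega}^0(d,r,\mn,n,\vec{I})-\mathcal{R})$. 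So the target $\Pic(\Omega^0(d,r,\mn,n,\vec{I})-\mathcal{R})$ should first be compared with $\Pic(\widehat{\Omega}^0(d,r,\mn,n,\vec{I})-\mathcal{R})$: I claim $\Omega^0-\mathcal{R}$ is an open substack of $\widehat{\Omega}^0-\mathcal{R}$ whose complement also has codimension $\geq 2$. Indeed, by Definition \ref{ToI}(2) the complement $\widehat{\Omega}^0_{I}(E_\bull)-\Omega^0_{I}(E_\bull)$ has codimension $\geq 2$ in $\widehat{\Omega}^0_I(E_\bull)$ (this is \cite[Lemma 7.9]{BHermit}), and this propagates fiberwise over the (smooth) quot scheme to give $\codim(\widehat{\Omega}^0-\Omega^0)\geq 2$. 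Smoothness of $\widehat{\Omega}^0$ (it is smooth and representable over $\op{Quot}(d,r,\mn,n)$) then gives $\Pic(\widehat{\Omega}^0-\mathcal{R})\leto{\sim}\Pic(\Omega^0-\mathcal{R})$ by restriction. Composing, $\Pic(\Par_{n,\mn,S})\to\Pic(\Omega^0-\mathcal{R})$ is an isomorphism.

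Next I would handle the left-hand map $\Pic(\Par'_{n,\mn,S})\to\Pic(\Par_{n,\mn,S})$. By the preceding (unlabeled) lemma in the excerpt, the natural morphism $\Par_{n,\mn,S}\to\Par'_{n,\mn,S}$ (forgetting the unnecessary pieces of the flags, i.e. retaining only $E^{p_i}_a$ for $a\in T(I^i)$) induces an \emph{injection} $\Pic(\Par'_{n,\mn,S})\hookrightarrow\Pic(\Par_{n,\mn,S})$ whose image is exactly $\Pic'$. Combining with the isomorphism of the previous paragraph, the composite in the statement is an injection with image $\Pic'\cdot(\text{image in }\Pic(\Omega^0-\mathcal{R}))$; what remains is to check surjectivity of $\Pic(\Par'_{n,\mn,S})\to\Pic(\Omega^0-\mathcal{R})$. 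For this I would use the factorization noted in the proof of Proposition \ref{grill}: there is a representable morphism $\widehat{\Omega'}^0(d,r,\mn,n,\vec{I})\to\Par'_{n,\mn,S}$ whose base change along $\Par_{n,\mn,S}\to\Par'_{n,\mn,S}$ recovers $\widehat{\Omega}^0(d,r,\mn,n,\vec{I})\to\Par_{n,\mn,S}$. Restricting a line bundle on $\Omega^0-\mathcal{R}\cong U\subseteq\Par_{n,\mn,S}$, extending to $\Par_{n,\mn,S}$ by codimension $\geq 2$, and checking that the resulting line bundle descends to $\Par'_{n,\mn,S}$ (equivalently, lies in $\Pic'$) shows surjectivity. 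That the extension lies in $\Pic'$ is the content of Proposition \ref{grill} applied backwards: a line bundle pulled back from $\widehat{\Omega}^0-\mathcal{R}$ that is itself pulled back from $\ParL$ via $\widehat{\Omega'}^0$ automatically satisfies the defining equalities of $\Pic'$, because the ``extra'' flag directions indexed by $[n]-T(I^i)$ do not appear in the Schubert conditions cutting out $\widehat{\Omega}^0$.

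The main obstacle I anticipate is the bookkeeping in the surjectivity step: one must verify that every line bundle on $\Omega^0-\mathcal{R}$, after being extended over the codimension-$2$ boundary inside $\Par_{n,\mn,S}$, genuinely descends along $\Par_{n,\mn,S}\to\Par'_{n,\mn,S}$ — i.e. that it is trivial on the fibers of this flag-bundle, which is a partial-flag-variety bundle. The cleanest way is probably to argue directly that $\Omega^0(d,r,\mn,n,\vec{I})-\mathcal{R}$ is \emph{also} the pullback to $\Par_{n,\mn,S}$ of an open substack $V'\subseteq\Par'_{n,\mn,S}$ with $\codim(\Par'_{n,\mn,S}-V')\geq 2$, so that both vertical arrows in the relevant square are open immersions up to codimension $2$ and the equality $\Pic(\Par'_{n,\mn,S})=\Pic(V')=\Pic(\Omega^0-\mathcal{R})$ follows from smoothness of $\Par'_{n,\mn,S}$ alone. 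This reduces everything to the codimension estimates of Sections \ref{bruck02}, \ref{bruck03} (already invoked in Proposition \ref{bee}(a)) together with $h^0(\Par'_{n,\mn,S},\mathcal{O})=1$, and avoids any delicate descent argument.
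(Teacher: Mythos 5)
There is a genuine gap, and it is in your first step. You assert that $\widehat{\Omega}^0_I(E_{\bull})-\Omega^0_I(E_{\bull})$ has codimension $\geq 2$, citing the reference quoted in Definition \ref{ToI}(2); but that reference concerns ${\Omega}_I(E_{\bull})-\widehat{\Omega}^0_I(E_{\bull})$, not $\widehat{\Omega}^0_I(E_{\bull})-{\Omega}^0_I(E_{\bull})$. The latter complement has codimension-one components: for each $j\notin T(I)$ (i.e. $j\notin I$ and $j+1\in I$), the locus where the rank condition at $E_j$ strengthens is a divisor --- this is exactly the computation of Section \ref{bruck03}. Globally, the codimension-one components of the complement of ${\Omega}^0(d,r,\mn,n,\vec{I})-\mathcal{R}$ in $\Par_{n,\mn,S}$ are precisely the basic divisors $D(a,j)$ (Remark \ref{explainmore}, and the proof of Theorem \ref{Adagio}, which shows $D(a,j)$ lies in this complement while meeting the image of $\widehat{\Omega}^0-\mathcal{R}$). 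Hence $\Pic(\Par_{n,\mn,S})\to\Pic({\Omega}^0-\mathcal{R})$ is surjective but \emph{not} injective; its kernel is generated by the classes $\mathcal{O}(D(a,j))$. Your own second paragraph already exposes the inconsistency: if that restriction map were an isomorphism, the composite in the lemma would have image the image of the proper subgroup $\Pic'\subsetneq\Pic(\Par_{n,\mn,S})$ and could not be surjective whenever some $D(a,j)$ exists.

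The missing ingredient is Lemma \ref{oneoff}, which is what the paper's proof uses. A line bundle on ${\Omega}^0-\mathcal{R}$ extends to $\Par_{n,\mn,S}$, and any two extensions differ by a $\Bbb{Z}$-linear combination of the $D(a,j)$; since each $\mathcal{O}(D(a,j))$ violates exactly one of the equalities defining $\Pic'$, and does so by exactly $1$, there is a unique extension lying in $\Pic'=\Pic(\Par'_{n,\mn,S})$, and this assignment is the desired inverse. Your fallback in the last paragraph does not rescue the argument either: ${\Omega}^0-\mathcal{R}$ is not the preimage of an open substack of $\Par'_{n,\mn,S}$, because the open conditions $\mv_p\in\Omega^0_{I^i}(\me_p)$ depend on the discarded flag steps $E^{p_i}_a$ with $a\notin T(I^i)$, and moving those steps within a fiber of $\Par_{n,\mn,S}\to\Par'_{n,\mn,S}$ can push a point into $D(a,j)$ (this is precisely the flag modification used in the proof of Theorem \ref{Adagio}); moreover, even if such a saturated $V'$ existed, $\Pic$ of its full-flag-bundle preimage would strictly contain $\Pic(V')$.
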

\begin{proof}
We will construct an inverse of the mapping $\Pic(\Par'_{n,\mn,S})\to  \Pic({\Omega}^0(d,r,\mn,n,\vec{I})-\mathcal{R})$. Take a line bundle on ${\Omega}^0(d,r,\mn,n,\vec{I})-\mathcal{R}$ and extend it to $\Par_{n,\mn,S}$. There are many choices here, any two choices differ by a linear combination of the $D(a,j)$. There is a unique such extension which is in $\Pic'(\Par'_{n,\mn,S})$ by Lemma \ref{oneoff} (that the failure is by $1$  there is crucial). This gives the desired inverse. The two composites are identity by easy inspection.
\end{proof}

\begin{lemma}\label{muchlater}
Assume $1\not\in I^i$ for all $i$,
The pullback map
$\Pic(\Par_L-\mathcal{R}_L)\to  \Pic({\Omega}^0(d,r,\mn,n,\vec{I})-\mathcal{R})$is an isomorphism.
\end{lemma}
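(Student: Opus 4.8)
\textbf{Proof plan for Lemma \ref{muchlater}.}

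The statement asserts that the pullback $\Pic(\Par_L - \mathcal{R}_L) \to \Pic(\widehat{\Omega}^0(d,r,\mn,n,\vec{I}) - \mathcal{R})$ (restricted to $\Omega^0$, which is open in $\widehat{\Omega}^0$ with complement of codimension $\geq 2$ by Section \ref{bruckner92}, so the two have the same Picard group) is an isomorphism. The map comes from the projection $q:\widehat{\Omega}^0 \to \Par_L$ of Diagram \eqref{morph1}; pulling back $\mathcal{R}_L$ gives $\mathcal{R}$ by the Lemma preceding Section \ref{martel}, so $q$ restricts to $\widehat{\Omega}^0 - \mathcal{R} \to \Par_L - \mathcal{R}_L$. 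The plan is to show this restricted morphism is, up to complements of codimension $\geq 2$, an affine space bundle (a torsor under a vector bundle), since such morphisms induce isomorphisms on Picard groups.

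Here is the order I would carry it out. First, I would examine the fibers of $q:\widehat{\Omega}^0 \to \Par_L$: over a point $(\mv,\mq,\mf,\mg,\gamma)$ the fiber parametrizes extensions $0 \to \mv \to \mw \to \mq \to 0$ together with compatible flag data on $\mw$ extending the given flags, subject to the (weakened) Schubert conditions $\widehat{\Omega}^0_{I^i}$. The extension class lives in $H^1(\Bbb{P}^1, \shom(\mq,\mv))$ and the flag-completion data at each $p_i$ is an affine space (a torsor under the tangent space $T_{p_i}$ to the Schubert cell, as in the definition of $\mathcal{R}$ in Section \ref{RamDiv}). So $q$ is a torsor under a coherent sheaf; over the open locus where this sheaf has constant rank — which, using the Euler characteristic computation $\chi(\Bbb{P}^1,\mk_x)=0$ from Section \ref{RamDiv} together with semicontinuity, is the complement of a set of codimension $\geq 1$, and in fact one checks codimension $\geq 2$ on $\Par_L - \mathcal{R}_L$ after removing the ramification locus — it is a torsor under a genuine vector bundle, i.e.\ an affine bundle. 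Second, I would invoke the standard fact (e.g.\ via the exponential-type exact sequence for torsors, or simply homotopy invariance of $\Pic$ for affine bundles over smooth stacks, cf.\ the analogous step in \cite[Section 7]{BHermit}) that pullback along such a morphism is an isomorphism on $\Pic$. Third, I would patch in the codimension-$\geq 2$ loci: both $\widehat{\Omega}^0 - \mathcal{R}$ and $\Par_L - \mathcal{R}_L$ are smooth, so removing codimension-$\geq 2$ substacks does not change $\Pic$, and this lets me pass from the open affine-bundle locus to the whole statement.

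The main obstacle I anticipate is the bookkeeping around the ramification locus and the codimension estimates: one must verify that the locus in $\Par_L - \mathcal{R}_L$ over which $q$ fails to be a nice affine bundle (where the relevant $H^0$ or $H^1$ jumps, or where $\mw/\mv$ develops torsion) really has codimension $\geq 2$, and this is exactly the content of the dimension counts in Sections \ref{bruck01}--\ref{bruck04} and Proposition \ref{bee}(a) combined with the identification $\mathcal{R} = \pi^*\mathcal{R}_L$. Once those are in hand the Picard-group comparison is formal. I would also need the assumption $1 \notin I^i$ for all $i$ (which is granted) to ensure $\widehat{\Omega}^0 \to \Par_L$ is actually defined everywhere via \cite[Lemma 7.10]{BHermit}, and to ensure the flag-completion fibers are genuinely affine rather than merely constructible. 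This parallels \cite[Theorem 1.15]{BHermit} closely, so I expect no genuinely new difficulty beyond transcribing that argument to the present stack-theoretic setting with arbitrary determinant $\mn$.
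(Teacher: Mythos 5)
Your reduction and your main mechanism both have problems. First, the parenthetical claim that $\Omega^0(d,r,\mn,n,\vec{I})-\mathcal{R}$ and $\widehat{\Omega}^0(d,r,\mn,n,\vec{I})-\mathcal{R}$ have the same Picard group because the complement has codimension $\geq 2$ is false: $\widehat{\Omega}^0-\Omega^0$ contains divisorial components (the loci where the Schubert conditions strengthen by one, Section \ref{bruck03}), and these are precisely the loci producing the divisors $D(a,j)$. Indeed $\widehat{\Omega}^0-\mathcal{R}$ embeds in $\Par_{n,\mn,S}$ with complement of codimension $\geq 2$ (Corollary \ref{carole}), so $\Pic(\widehat{\Omega}^0-\mathcal{R})=\Pic(\Par_{n,\mn,S})$, whereas Lemma \ref{later} identifies $\Pic(\Omega^0-\mathcal{R})$ with $\Pic'$, in general a proper subgroup (the quotient is generated by the classes $[D(a,j)]$). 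So the statement you would actually be proving, with $\widehat{\Omega}^0-\mathcal{R}$ in place of $\Omega^0-\mathcal{R}$, is false.

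Second, the affine-bundle mechanism does not go through. The fiber of $\Omega^0(d,r,\mn,n,\vec{I})\to\ParL$ over $(\mv,\mq,\mf,\mg,\gamma)$ involves the extension group $H^1(\Bbb{P}^1,\shom(\mq,\mv))$, whose dimension jumps on loci of non-generic splitting type of $\mv$ and $\mq$; these loci are typically divisors in $\ParL$ and have nothing to do with $\mathcal{R}_L$, which is controlled by sections of a subsheaf of $\shom(\mv,\mq)$ — the opposite direction. The dimension counts of Sections \ref{bruck01}--\ref{bruck04} and Proposition \ref{bee}(a) that you invoke bound the boundary of $\Omega$ over $\Par_{n,\mn,S}$, i.e., the other leg of the correspondence \eqref{Dbasic}, and say nothing about the fibration over $\ParL$. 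You therefore have neither Zariski-local triviality nor a codimension-$\geq 2$ bound on the locus where it fails, and homotopy invariance of $\Pic$ is not available. The paper argues differently and more robustly: the section $i$ of \eqref{Dbasic} makes the pullback split injective, and surjectivity is obtained from the Levification construction of \cite[Section 3.8]{BKq}, a $\Bbb{G}_m$-equivariant contraction $\phi:\Bbb{A}^1\times\Omega^0\to\Omega^0$ onto the image of the section with trivial Mumford weight at $t=0$, which canonically trivializes any line bundle on $\Omega^0-\mathcal{R}$ whose restriction to $\ParL-\mathcal{R}_L$ is trivial, with no hypotheses on the fibers of $\Omega^0\to\ParL$. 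To salvage your plan you would need to replace ``affine bundle'' by this contracting $\Bbb{G}_m$-action.
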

\begin{proof}
The section $i$ in \eqref{Dbasic} gives a section $\ParL-\mathcal{R}_L\to {\Omega}^0(d,r,\mn,n,\vec{I})-\mathcal{R}$ of the pullback map (see the Proof of \eqref{fulC}). Therefore we need to show only that a line bundle $\ma$ on ${\Omega}^0(d,r,\mn,n,\vec{I})-\mathcal{R}$ which restricts via $i$ to the trivial bundle
on $\ParL-\mathcal{R}_L$ is itself trivial. We wish to propagate the non-zero section of $\ma$ on  $\ParL-\mathcal{R}_L$ to a non-zero section on all of ${\Omega}^0(d,r,\mn,n,\vec{I})-\mathcal{R}$.
By  the Levification map (\cite[Section 3.8]{BKq}) we construct a map:
\begin{itemize}
\item[-] A map $\phi:\Bbb{A}^1\times {\Omega}^0(d,r,\mn,n,\vec{I})\to \Omega^0(d,r,\mn,n,\vec{I})$ such that $\phi_0$ equals $\Omega^0(d,r,\mn,n,\vec{I})\to \ParL$ followed by the section (where $\phi_t$ is the restriction of $\phi$ to $t\times \Omega^0(d,r,\mn,n,\vec{I})$).
\end{itemize}

We have a section of $\mathcal{A}$ on $\ParL$. Let $x\in \Omega^0(d,r,\mn,n,\vec{I})$. We obtain a line bundle on $\Bbb{A}^1$ by pulling back  $\mathcal{A}$ along $\Bbb{A}^1\to \Omega^0(d,r,\mn,n,\vec{I})$, $x\mapsto \phi(t,x)$. This line bundle is $\Bbb{G}_m$ equivariant, with trivial Mumford number at $0$. It is therefore canonically trivial (since we have a given section at $t=0$). We therefore obtain a canonical element in $\mathcal{T}_x$ for any $x$. This shows $\mathcal{T}$ is trivial (see \cite[Lemma 8.6]{BHermit})).

If  $\mathcal{T}\in \Pic'(\mf)^{\deg=0}$, then it is easy to see that $\mathcal{M}$ is in $\Pic^{\deg=0}(\ParL -\mathcal{R}_L)$, since the degree on both sides is the weight of the action of the center of the Levi subgroup in both cases.
\end{proof}
\subsubsection{Proof of property (I2)}
Property (I2) follows from Proposition \ref{outside} and  the following:
\begin{proposition}\label{uselater}
Suppose $\mathcal{M}\in \Pic^{\deg=0}(\ParL-\mathcal{R}_L)$. Then
\begin{enumerate}
\item[(a)] $\op{Ind}(\mathcal{M})\in \Pic^{\deg=0}(\Par_{n,\mn,S})$.
\item[(b)] $H^0(\ParL-\mathcal{R}_L,\mathcal{M})\to H^0(\Par_{n,\mn,S},\op{Ind}(\mathcal{M}))$ is an isomorphism
\item[(c)] $H^0(\ParL,\op{Ind}(\mathcal{M})|_{\ParL})\to H^0(\ParL -\mathcal{R}_L,\mathcal{M})$ is also an isomorphism.
\end{enumerate}
\end{proposition}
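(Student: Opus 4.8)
\textbf{Proof proposal for Proposition \ref{uselater}.}

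The plan is to reduce everything, as in the rest of Section \ref{noncan}, to the case $1\not\in I^i$ for all $i$ by applying shift operations, since the induction morphism, the index/grade conditions, and the ramification divisors are all compatible with the $\operatorname{Sh}_p$ maps (Proposition \ref{easily}, diagrams \eqref{basicD} and \eqref{basicDnew}, and Lemma \ref{newS}). Once $1\not\in I^i$ for all $i$, we have the clean factorization $\widehat{\Omega}^0(d,r,\mn,n,\vec{I})\to \ParL$ and $\widehat{\Omega}^0(d,r,\mn,n,\vec{I})-\mathcal{R}$ is an open substack of $\Par_{n,\mn,S}$ whose complement has codimension $\geq 2$ by Corollary \ref{carole}. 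For part (a), the point is that $\op{Ind}(\mathcal{M})$ is defined as the extension to $\Par_{n,\mn,S}$ of the pullback of $\mathcal{M}$ along $\widehat{\Omega}^0-\mathcal{R}\to \ParL-\mathcal{R}_L$; the index $0$ condition on $\mathcal{M}$ expresses that the center of the Levi acts trivially, which under the identification of Proposition \ref{grill} and Lemma \ref{later} translates exactly into the linear equality \eqref{wallneww} defining $\Pic^{\deg=0}(\Par_{n,\mn,S})$. Concretely, one checks that the weight of the $\Bbb{G}_m$-action from Definition \ref{Pin}(1) matches, under $i$ and $\pi$, the difference of the two sides of \eqref{wallneww}; this is the ``degree = weight of the center of the Levi'' remark already invoked at the end of the proof of Lemma \ref{muchlater}.

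For part (b), I would argue in two stages using the codimension $\geq 2$ statement. First, pulling back along $\pi: \widehat{\Omega}^0-\mathcal{R}\to \Par_{n,\mn,S}$ (an isomorphism onto an open substack with complement of codimension $\geq 2$) gives $H^0(\Par_{n,\mn,S},\op{Ind}(\mathcal{M}))\leto{\sim} H^0(\widehat{\Omega}^0-\mathcal{R}, \pi^*\op{Ind}(\mathcal{M}))$, since a smooth stack has no codimension-$1$-supported obstructions to extending sections of line bundles (normality / Hartogs). Restricting further to $\Omega^0-\mathcal{R}\subseteq \widehat{\Omega}^0-\mathcal{R}$ (again complement of codimension $\geq 2$, by Definition \ref{ToI}(2) and Proposition \ref{bee}(a)) gives $H^0(\widehat{\Omega}^0-\mathcal{R}, -)\leto{\sim} H^0(\Omega^0-\mathcal{R}, -)$. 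Second, by Lemma \ref{muchlater} the pullback $\Pic(\ParL-\mathcal{R}_L)\to \Pic(\Omega^0-\mathcal{R})$ is an isomorphism, and the section $i:\ParL-\mathcal{R}_L\to \Omega^0-\mathcal{R}$ together with the Levification argument of Lemma \ref{muchlater} (the $\Bbb{G}_m$-family $\phi$ with trivial Mumford number at $0$) propagates any section of $\mathcal{M}$ on $\ParL-\mathcal{R}_L$ to a section of the pullback on $\Omega^0-\mathcal{R}$, and conversely restriction along $i$ is injective; this gives $H^0(\ParL-\mathcal{R}_L,\mathcal{M})\leto{\sim} H^0(\Omega^0-\mathcal{R}, \pi^*\mathcal{M})$. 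Composing the isomorphisms yields (b).

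For part (c), the map $\ParL\to \Omega^0$ is the section $i$ of \eqref{Dbasic}, and $i^{-1}(\mathcal{R})=\mathcal{R}_L$ by definition, so restriction gives $H^0(\ParL, \op{Ind}(\mathcal{M})|_{\ParL})\to H^0(\ParL-\mathcal{R}_L, \op{Ind}(\mathcal{M})|_{\ParL-\mathcal{R}_L}) = H^0(\ParL-\mathcal{R}_L,\mathcal{M})$, where the last identification uses that $\op{Ind}(\mathcal{M})$ restricted to $\ParL-\mathcal{R}_L$ is $\mathcal{M}$ (by construction the induced bundle pulls back to $\mathcal{M}$ via $\pi\circ i$). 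Injectivity is clear since $\ParL$ is irreducible and $\mathcal{R}_L$ is a proper closed substack. For surjectivity I would run the same Levification propagation as in Lemma \ref{muchlater}: a section of $\mathcal{M}$ on $\ParL-\mathcal{R}_L$ extends across $\mathcal{R}_L$ because, as in Proposition \ref{fulC}/Proposition \ref{rigid}, $\mathcal{R}_L$ is the pullback of $\mathcal{R}$ and the relevant Mumford numbers vanish, so no pole can develop along $\mathcal{R}_L$; alternatively, and more cleanly, one notes $H^0(\ParL,\op{Ind}(\mathcal{M})|_{\ParL})\to H^0(\Omega^0-\mathcal{R},\pi^*\mathcal{M})$ via $i$, which is an isomorphism by Lemma \ref{muchlater}, and compose with the isomorphism of part (b).

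\textbf{Expected main obstacle.} The genuinely delicate point is part (a): verifying that the index-$0$ condition on $\ParL-\mathcal{R}_L$ is \emph{exactly} equivalent to equality in \eqref{wallneww} for $\op{Ind}(\mathcal{M})$, i.e.\ that the two $\Bbb{G}_m$-weights (the one from Definition \ref{Pin}(1), and the one measuring the failure of \eqref{wallneww}) coincide under the induction morphism. This requires unwinding the explicit formula for $\op{Ind}$ (deferred to Section \ref{explicitF}) and tracking how the shift operations alter both $d,D$ and the weights; it is essentially a bookkeeping computation, but getting the normalizations right (the $1/r$ vs $1/n$ factors and the $\deg\mn$ contributions) is where care is needed. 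The codimension-$\geq 2$ extension arguments in (b) and (c) are routine given Corollary \ref{carole} and the smoothness of all stacks involved.
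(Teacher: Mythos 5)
Your overall strategy (reduce to $1\not\in I^i$ via shifts, use the codimension $\geq 2$ embedding of $\widehat{\Omega}^0-\mathcal{R}$ into $\Par_{n,\mn,S}$, Levification for the comparison with $\ParL-\mathcal{R}_L$, and a weight computation for (a)) is the same as the paper's, and parts (a) and (c) are fine. For (a) you slightly overestimate the difficulty: one does not need the explicit formula for $\op{Ind}$ from Section \ref{explicitF}. Writing $\op{Ind}(\mathcal{M})=\mathcal{B}(\vec{\lambda},\ell)$, its restriction to $\ParL-\mathcal{R}_L$ is $\mathcal{M}$, which has index zero by hypothesis; and the index of $\mathcal{B}(\vec{\lambda},\ell)|_{\ParL}$ is computed directly from the definition of the $\Bbb{C}^*$-action (multiplication by $t$ acts on the determinant of cohomology by $t^{\chi}$), giving, up to a positive multiple, the difference of the two sides of \eqref{wallneww}. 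No bookkeeping with the induction formulas is required.

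There is, however, a concrete false step in your proof of (b): you assert that $\Omega^0-\mathcal{R}$ has complement of codimension $\geq 2$ in $\widehat{\Omega}^0-\mathcal{R}$, citing Definition \ref{ToI}(2) and Proposition \ref{bee}(a). Those results control ${\Omega}_I(E_\bull)-\widehat{\Omega}^0_I(E_\bull)$ and $\Omega-\widehat{\Omega}^0_S$, not $\widehat{\Omega}^0-\Omega^0$. In fact $\widehat{\Omega}^0-\Omega^0$ has divisorial components — these are exactly the loci mapping onto the basic divisors $D(a,j)$ (see Section \ref{bruck03} and Remark \ref{explainmore}, which is the raison d'\^etre of $\widehat{\Omega}^0$) — so the restriction $H^0(\widehat{\Omega}^0-\mathcal{R},-)\to H^0(\Omega^0-\mathcal{R},-)$ cannot be justified as an isomorphism by a Hartogs argument. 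The step is repairable: you only need \emph{injectivity} of that restriction (density of $\Omega^0-\mathcal{R}$), together with the Levification isomorphism $H^0(\ParL-\mathcal{R}_L,\mathcal{M})\leto{\sim}H^0(\Omega^0-\mathcal{R},\pi^*\mathcal{M})$. Since the composite $H^0(\ParL-\mathcal{R}_L,\mathcal{M})\to H^0(\widehat{\Omega}^0-\mathcal{R},\pi^*\mathcal{M})\to H^0(\Omega^0-\mathcal{R},\pi^*\mathcal{M})$ is then bijective and the second map is injective, both maps are bijective, and (b) follows. This is in effect how the paper argues (it presents (b) as "the same proof as Proposition \ref{fulC}", which works on $\widehat{\Omega}^0-\mathcal{R}$ and never claims the codimension statement you invoke). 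Please correct the justification of that step.
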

\begin{proof}
For part (a), let $\op{Ind}(\mathcal{M})=\mathcal{B}(\vec{\lambda},\ell)$. Now $\op{Ind}(\mathcal{M})$ restricted to $\ParL-\mathcal{R}_L$ lies in  $\Pic^{\deg=0}(\ParL-\mathcal{R}_L)$, i.e., has index zero.
The index is easily computed to be  up to a multiple equal to the difference between the two sides of \eqref{wallneww} (Note that multiplication by $t$ on a vector bundle acts on the determinant of cohomology by $t^{\chi}$, with $\chi$ the Euler characteristic of the vector bundle).  Part (a)  follows.

Part (b) is a generalization of Proposition  \ref{fulC} (for $\mathcal{M}=\mathcal{O}$), and the proof is similar. The map in (c) is obviously injective. Since we have  maps (reduce first to the case $1\not\in I^i$ for all $i$)
$$ H^0(\ParL -\mathcal{R}_L,\mathcal{M})\to H^0(\Par_{n,\mn,S},\op{Ind}(\mathcal{M}))\to H^0(\ParL,\op{Ind}(\mathcal{M})|_{\ParL})$$
the surjectivity follows.
\end{proof}
\subsection{Proof of property (I3)}
We have a natural map
\begin{equation}\label{temper2}
{\Par}_{r,\mathcal{O}(-d),S}\times {\Par}_{n-r,\mathcal{O}(d-D),S} \to  \ParL.
\end{equation}

\begin{lemma}\label{BachMass}
\begin{enumerate}
\item  $\Pic^{+,\deg=0}_{\Bbb{Q}}(\ParL)\leto{\sim}\Pic^+_{\Bbb{Q}}(\Par_{r,\mathcal{O}(-d),S})\times \Pic^+_{\Bbb{Q}}(\Par_{n-r,\mathcal{O}(d),S})$.
\item $\Pic^{+}(\ParL)\leto{\sim}\Pic^+(\Par_{r,\mathcal{O}(-d),S})\times \Pic^+(\Par_{n-r,\mathcal{O}(d),S})$.
\item Under the correspondence in (2), a line bundle $\ml$ on $\ParL$ corresponds to tensor product $\ml_1\boxtimes\ml_2$ with one of $\ml_1$ a F-line bundle and the other trivial, if and only if,
$\ml=\mathcal{O}(E)$ with $E$ non-empty, reduced and irreducible.

\end{enumerate}
\end{lemma}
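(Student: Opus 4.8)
\textbf{Proof plan for Lemma \ref{BachMass}.} The strategy is to deduce all three statements from the structure of the morphism \eqref{temper2} together with Proposition \ref{LaSo} and the shift operations. First I would analyze the map $\Phi:{\Par}_{r,\mathcal{O}(-d),S}\times {\Par}_{n-r,\mathcal{O}(d-D),S}\to \ParL$ sending $((\mv,\mf,\gamma_1),(\mq,\mg,\gamma_2))$ to $(\mv,\mq,\mf,\mg,\gamma_1\otimes\gamma_2)$. This is not an isomorphism: the target $\ParL$ remembers only the product isomorphism $\det\mv\otimes\det\mq\leto{\sim}\mn$, while the source remembers $\gamma_1$ and $\gamma_2$ separately. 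The fibers of $\Phi$ are torsors under $\Bbb{G}_m$ (rescaling $\gamma_1$ by $t^{-(n-r)}$ and $\gamma_2$ by $t^{r}$, using a power of $t$ chosen so $\det$ scales compatibly), so $\ParL$ is the quotient stack of the product by this $\Bbb{G}_m$. Consequently, via descent, $\Pic(\ParL)$ is identified with the subgroup of $\Pic(\Par_{r,\mathcal{O}(-d),S})\times\Pic(\Par_{n-r,\mathcal{O}(d-D),S})\times \Pic(B\Bbb{G}_m)$ of $\Bbb{G}_m$-equivariant line bundles, and the index $m(\mathcal{L})$ of Definition \ref{Pin} is exactly the $\Bbb{G}_m$-weight. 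Line bundles of index zero on $\ParL$ therefore correspond bijectively to pairs $(\ml_1,\ml_2)$ of line bundles on the two factors, which gives the bijection of Picard groups at the level of index-zero line bundles; this proves the underlying isomorphism used in both (1) and (2).

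Next I would check that under this correspondence effectivity matches up. Since the $\Bbb{G}_m$-fibers are connected (in fact $\Bbb{G}_m$ itself), pullback along $\Phi$ is injective on global sections, and $H^0(\ParL,\mathcal{L})$ equals the $\Bbb{G}_m$-invariant part of $H^0(\Par_{r,\mathcal{O}(-d),S},\ml_1)\otimes H^0(\Par_{n-r,\mathcal{O}(d-D),S},\ml_2)$ when $\mathcal{L}$ has index zero; because the $\Bbb{G}_m$-action on this tensor product is already trivial in the index-zero case, this is the full tensor product. Hence $\mathcal{L}$ is effective on $\ParL$ iff both $\ml_1$ and $\ml_2$ are effective, and moreover $h^0(\ParL,\mathcal{L})=h^0(\ml_1)\cdot h^0(\ml_2)$. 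This yields the bijections of effective cones in (1) (after tensoring with $\Bbb{Q}$, noting the shift $\Par_{n-r,\mathcal{O}(d-D),S}\simeq\Par_{n-r,\mathcal{O}(d),S}$ via Proposition \ref{easily}(c)) and of effective monoids in (2). The grade-zero / index-zero bookkeeping here is routine given Definition \ref{grade} and the computation of the index as the weight of the center of the Levi already recorded in the proof of Proposition \ref{uselater}(a).

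For part (3), suppose $\mathcal{L}$ corresponds to $\ml_1\boxtimes\ml_2$. Write $\mathcal{L}=\mathcal{O}(E)$ for an effective divisor $E$. Using the product formula $h^0(\ParL,\mathcal{L}^m)=h^0(\ml_1^m)\cdot h^0(\ml_2^m)$ from the previous paragraph, $E$ is reduced and irreducible iff $h^0(\mathcal{L})=1$ and $E$ is irreducible; the first forces $h^0(\ml_1)=h^0(\ml_2)=1$, and since $\Par_{r,\mathcal{O}(-d),S}$ and $\Par_{n-r,\mathcal{O}(d-D),S}$ are each irreducible with $h^0(\mathcal{O})=1$, writing $\ml_i=\mathcal{O}(E_i)$ we get $E = (E_1\times \Par_{n-r,\ldots})\cup(\Par_{r,\ldots}\times E_2)$ pulled back to $\ParL$, which is irreducible precisely when exactly one of $E_1,E_2$ is nonempty and that one is itself irreducible, i.e.\ one of $\ml_i$ is an F-line bundle (irreducibility of its zero divisor plus $h^0=1$ for all powers, the latter coming from the quantum Fulton conjecture, Proposition \ref{qTheorems}(2)) and the other is trivial. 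Conversely if one factor is an F-line bundle and the other trivial, the product of the scaling properties shows $h^0(\mathcal{L}^m)=1$ for all $m\geq 0$ and $E$ is the pullback of an irreducible reduced divisor, hence itself reduced and irreducible (the pullback along the smooth surjection $\Phi$ composed with the $\Bbb{G}_m$-quotient preserves reducedness and irreducibility of divisors).

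The main obstacle I anticipate is the careful identification of $\ParL$ as the $\Bbb{G}_m$-quotient of the product of the two moduli stacks with the correct weights, and in particular verifying that the index $m(\mathcal{L})$ of Definition \ref{Pin} agrees with the descent weight so that ``index zero'' is exactly ``descends from the product''. This is essentially a bookkeeping computation with determinants of line bundles on $\Bbb{P}^1$, parallel to Property (I3) as stated in Section \ref{noncan} and to the shift-operation analysis, but it must be done with attention to the degree shifts $-d$, $d-D$, and the twist by $\mathcal{N}=\mathcal{O}(-D)$. Everything else then follows formally from the projection formula and the quantum Fulton conjecture.
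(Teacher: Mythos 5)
For parts (1) and (2) your route is essentially the paper's: the paper deduces them by applying the general stack-theoretic statement Lemma \ref{mass2} to the morphism \eqref{temper2}, and Lemma \ref{mass2} is precisely the descent analysis you carry out by hand --- the source of \eqref{temper2} is the stack of objects of $\ParL$ equipped with a separate trivialization of $\det\mv$, the index of Definition \ref{Pin} is the weight of the residual $\Bbb{C}^*$-action, index-zero line bundles descend after tensoring with $\Bbb{Q}$ (the finite root-of-unity stabilizer is exactly why $\Bbb{Q}$-coefficients are needed for the reverse map in (1)), and effective line bundles descend integrally because that finite group acts trivially on them. Your identification of $H^0(\ParL,\ml)$ with the full tensor product of sections and your use of the shift operations (Proposition \ref{easily}) to pass from $\mathcal{O}(d-D)$ to $\mathcal{O}(d)$ also match the paper.

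Part (3), however, has a gap. You assert that ``$E$ is reduced and irreducible iff $h^0(\mathcal{L})=1$ and $E$ is irreducible.'' The implication from reducedness and irreducibility of $E$ to $h^0(\ml)=1$ is false in general: a reduced irreducible divisor can move in a linear system of positive dimension, and this is not excluded by the product structure (the diagonal in $\Bbb{P}^1\times\Bbb{P}^1$ is reduced and irreducible, its line bundle is the box product $\mathcal{O}(1)\boxtimes\mathcal{O}(1)$ with neither factor trivial, and it has a four-dimensional space of sections). Without $h^0(\ml)=1$ your argument breaks at two points: first, you can only conclude that $E$ and the pullback of $p_1^*E_1+p_2^*E_2$ are linearly equivalent, not equal, so irreducibility of $E$ does not force one of $E_1,E_2$ to be empty; second, even granting that one factor is trivial, the F-line bundle condition of Definition \ref{defF} requires $h^0(\ml_i^m)=1$ for all $m$, which does not follow from reducedness and irreducibility of $E_i$ alone. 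The missing input is $h^0(\ParL,\ml)=1$ (equivalently $h^0(\ml_1)=h^0(\ml_2)=1$, after which Proposition \ref{qTheorems}(2) handles all powers); in the paper this is supplied where the lemma is used (Proposition \ref{condensed}) via Proposition \ref{uselater}, not extracted from the geometry of $E$. You should either take $h^0(\ml)=1$ as a hypothesis verified from the surrounding context or give an independent argument for it.
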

This result follows from Lemma \ref{mass2} below applied to \eqref{temper2}.
It is necessary to take $\Bbb{Q}$-coefficients above for the reverse map in (1).

\subsection{Induction and extremal rays coming from F-line bundles}
We want to determine the set of F-line bundles in $\mf^{(2)}$. Note that using shift operations (Proposition \ref{easily}), we can put the set of F-line bundles on $\Par_{r,\mathcal{N},S}$ in bijection with the set of F-line bundles
on   $\Par_{r,\mathcal{O},S}$ (which are in bijection with F-vertices of $P_r(s)$ by Lemma \ref{corresp}).
\begin{proposition}\label{condensed}
There is a bijection between the following two sets:
\begin{enumerate}
\item The set of  $\ml\in \Pic'\subseteq\Pic(\Par_{n,\mathcal{O},S})$ (see Definition  \ref{Br2}) such that $\ml$ is a F-line bundle on  $\Par_{n,\mathcal{O},S}$.
\item The set of $\mathcal{M}\in  \Pic^{+}(\ParL)$ such that the following two conditions hold
\begin{enumerate}
\item[(a)]$\mathcal{M}$ corresponds to a F-line bundle on one of the factors of (and $0$ on the other)
$$\Pic^+(\Par_{r,\mathcal{O}(-d),S})\times \Pic^+(\Par_{n-r,\mathcal{O}(d),S})\subseteq \Pic^+_{\Bbb{Q}}(\Par_{r,\mathcal{O}(-d),S})\times \Pic^+_{\Bbb{Q}}(\Par_{n-r,\mathcal{O}(d),S}) $$ under the  bijection in Lemma \ref{BachMass}.
\item[(b)] $Ind(\mathcal{M})\neq 0$ and $H^0(\ParL,\op{Ind}(\mathcal{M})|_{\ParL})$ (which breaks up as a tensor product)  is one-dimensional (see Remark \ref{putR} for equivalent conditions).
\end{enumerate}
\end{enumerate}
The bijection takes $\mathcal{M}$ in (2) to its induction $\ml=\op{Ind}(\mathcal{M})$.
\end{proposition}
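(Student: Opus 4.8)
The plan is to combine the three induction properties (I1)--(I3) with the analysis of F-line bundles already carried out in Lemma \ref{divisorE}, Proposition \ref{practical}, Lemma \ref{powercord}, and the product decomposition of Proposition \ref{Br4}. Recall that by Proposition \ref{Br4} the face $\mf$ decomposes as $\prod_{i=1}^q \Bbb{Q}_{\ge 0}D_i\times\mf^{(2)}$ with $\mf^{(2)}=\Pic'^{+,\deg=0}_{\Bbb{Q}}(\mf)$, and that the F-line bundles lying on $\mf$ that are \emph{not} among the $\mathcal{O}(D_i)$ are, by Lemma \ref{powercord} together with Definition \ref{defF} and the vanishing condition, exactly the indecomposable F-line bundles that lie in $\Pic'$. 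So the statement to prove is really: the set of F-line bundles in $\Pic'\subseteq\Pic(\Par_{n,\mathcal{O},S})$ is in bijection, via $\op{Ind}$, with the set described in (2).

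First I would set up the correspondence at the numerical level. By property (I2), $\op{Ind}$ identifies $\Pic^{+,\deg=0}_{\Bbb{Q}}(\ParL-\mathcal{R}_L)$ bijectively with $\Pic'_{\Bbb{Q}}$, and by (I1) every effective line bundle on $\ParL-\mathcal{R}_L$ extends (uniquely, since the extension is forced to lie in $\Pic^{+,\deg=0}(\ParL)$ by Proposition \ref{uselater}(a) and Lemma \ref{BachMass}) to an effective line bundle on $\ParL$; finally (I3) (i.e. Lemma \ref{BachMass}(1),(2)) identifies $\Pic^{+,\deg=0}(\ParL)$ with $\Pic^+(\Par_{r,\mathcal{O}(-d),S})\times\Pic^+(\Par_{n-r,\mathcal{O}(d),S})$. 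Composing, $\op{Ind}$ gives a bijection from a subset of this product to $\Pic'$. It remains to match up the F-line bundle condition on the two sides. Given $\mathcal{M}\in\Pic^+(\ParL)$, I claim $\op{Ind}(\mathcal{M})$ is an F-line bundle on $\Par_{n,\mathcal{O},S}$ if and only if conditions (a) and (b) of (2) hold. For the ``if'' direction: condition (b) and Proposition \ref{uselater}(b),(c) give that $H^0(\Par_{n,\mathcal{O},S},\op{Ind}(\mathcal{M}))$ is one-dimensional, and then the quantum Fulton conjecture (Proposition \ref{qTheorems}(2)) upgrades this to $h^0(\op{Ind}(\mathcal{M})^m)=1$ for all $m$; condition (a) — that $\mathcal{M}$ corresponds to $\ml_1\boxtimes\mathcal{O}$ or $\mathcal{O}\boxtimes\ml_2$ with one factor an F-line bundle — together with Lemma \ref{BachMass}(3) says $\mathcal{M}=\mathcal{O}(E_L)$ with $E_L$ reduced, irreducible, nonempty; propagating the divisor through $\op{Ind}$ (which is, after the shift reduction to $1\notin I^i$, pullback along $\widehat\Omega^0\to\ParL$ followed by the open embedding $\widehat\Omega^0-\mathcal{R}\hookrightarrow\Par_{n,\mathcal{O},S}$ with complement of codimension $\ge2$, by Corollary \ref{carole}) shows $\op{Ind}(\mathcal{O}(E_L))=\mathcal{O}(E)$ for a reduced irreducible $E$; hence $\op{Ind}(\mathcal{M})$ is an F-line bundle. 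For the ``only if'' direction, I run this backwards: if $\op{Ind}(\mathcal{M})=\mathcal{O}(E)$ is an F-line bundle in $\Pic'$, then $h^0=1$ forces (b) via Proposition \ref{uselater}, and the irreducibility of $E$ forces $E_L=i'^*E$ (equivalently $i^*$ under \eqref{Dbasic}) to be reduced and irreducible and non-empty, which by Lemma \ref{BachMass}(3) forces (a); here I use that $\ParL-\mathcal{R}_L$ is dense in $\ParL$ and maps into $\Omega^0-\mathcal{R}$, so a divisor pulled back from $\Par_{n,\mathcal{O},S}$ remains a genuine (non-trivial, since $\op{Ind}(\mathcal{M})\ne\mathcal{O}$) divisor.

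The shift-operation bookkeeping must be handled with care but is routine: whenever some $1\in I^i$ we apply the minimal sequence of shifts to reduce to $1\notin I^i$ for all $i$, use that $\op{Sh}_p$ and $\op{ISh}_p$ are isomorphisms preserving effectivity, $\Pic'$, grade, and (by Proposition \ref{easily}(d)) the F-line-bundle property, and that the induction map was \emph{defined} via exactly this reduction, so all the identifications are compatible; the analogous bijections on the Levi factors $\Par_{r,\mathcal{O},S}$, $\Par_{n-r,\mathcal{O},S}$ with the shifted spaces $\Par_{r,\mathcal{O}(-d),S}$, $\Par_{n-r,\mathcal{O}(d),S}$ are supplied by Proposition \ref{easily} again. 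I would also remark, following Remark \ref{putR} (referenced in the statement), that condition (b) can be restated purely on the Levi side as: the relevant tensor-product space of sections of $\op{Ind}(\mathcal{M})|_{\ParL}$, which by Lemma \ref{BachMass} factors as $H^0(\Par_{r,\mathcal{O}(-d),S},\ma)\otimes H^0(\Par_{n-r,\mathcal{O}(d-D),S},\mb)$, is one-dimensional, i.e. each factor is; and these factors are strange-dual to each other (Remark \ref{Bee132}), so the condition is symmetric.

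The main obstacle I anticipate is \textbf{the ``only if'' direction of the divisor-irreducibility matching} — showing that if $\op{Ind}(\mathcal{M})$ is an F-line bundle then $\mathcal{M}$ itself has reduced irreducible zero divisor of the special boxed shape in Lemma \ref{BachMass}(3). One direction (irreducibility downstairs $\Rightarrow$ irreducibility upstairs, essentially) is delicate because $\op{Ind}$ is not literally pullback on all of $\Par_{n,\mathcal{O},S}$ but only an identification over the open locus $\widehat\Omega^0-\mathcal{R}$; I would handle this by working on $\ParL-\mathcal{R}_L\xrightarrow{i}\Omega^0-\mathcal{R}\subseteq\widehat\Omega^0-\mathcal{R}\hookrightarrow\Par_{n,\mathcal{O},S}$, noting $i$ is a section so $\mathcal{M}|_{\ParL-\mathcal{R}_L}=(i'^*\op{Ind}\mathcal{M})$, and then using that $\ParL-\mathcal{R}_L$ is irreducible and the complement $\mathcal{R}_L$ is a divisor to conclude that the zero scheme of a section of $\mathcal{M}$ on $\ParL$ is reduced and irreducible precisely when its restriction to $\ParL-\mathcal{R}_L$ is, which in turn is governed by $E\cap(\widehat\Omega^0-\mathcal{R})$; the subtlety is controlling that the single irreducible divisor $E$ downstairs does not acquire several components or multiplicity when pulled back, which is where the genericity analysis of Section \ref{enume} (and the fact, from Proposition \ref{maya}, that $\Omega^0$ is dense in $\Omega$ with $\Omega$ irreducible) does the work. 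Everything else is assembling results already in hand.
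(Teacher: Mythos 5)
Your overall architecture matches the paper's: reduce by shifts to $1\notin I^i$, use Proposition \ref{uselater} to match the one-dimensionality of the spaces of sections on the two sides, and use Lemma \ref{BachMass}(3) to translate condition (a) into reduced-irreducibility of the divisor of $\mathcal{M}$ on $\ParL$. But the step you yourself flag as the main obstacle is where your proposal genuinely breaks down, and in \emph{both} directions, not just the ``only if'' one. In the forward direction you assert that ``propagating the divisor through $\op{Ind}$'' (i.e.\ pulling the reduced irreducible $E_L$ back along $\widehat{\Omega}^0-\mathcal{R}\to\ParL-\mathcal{R}_L$ and closing up in $\Par_{n,\mathcal{O},S}$) yields a reduced irreducible $E$; in the reverse direction you assert that restricting the irreducible $E$ to (the image of) $\ParL$ yields a reduced irreducible $D$. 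Neither claim is justified: preimages of irreducible divisors under the projection to $\ParL$ can a priori acquire several components or multiplicity (the fibers are $\operatorname{Ext}$-type spaces whose dimensions may jump), and intersections of an irreducible divisor with the closed substack $i(\ParL)$ need not be irreducible. The results you invoke to ``do the work'' --- Proposition \ref{maya} and the genericity analysis of Section \ref{enume} --- concern irreducibility of $\Omega$ itself and degree computations, and say nothing about either of these two claims.

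The paper sidesteps both claims with an indirection that exploits the fact that being an F-line bundle and condition (a) are each \emph{indecomposability} statements. Forward direction: suppose $E=E_1+E_2$ nontrivially; each $\ml_i=\mathcal{O}(E_i)$ restricts to an effective class on $\ParL-\mathcal{R}_L$, giving effective $\mathcal{M}_i=\mathcal{O}(D_i)$ on $\ParL$ with $D_1+D_2=D$ on $\ParL-\mathcal{R}_L$ (Proposition \ref{uselater}); irreducibility of $D$ then forces one $D_i$ to be supported on $\mathcal{R}_L$, whence one $\ml_i$ is trivial. Reverse direction: suppose $D=D_1+D_2$; inducing gives a nontrivial tensor decomposition of $\ml$ into effective line bundles, contradicting the F-property of $\ml$; so $D$ is irreducible and reduced, and Lemma \ref{BachMass}(3) yields (a). So a nontrivial decomposition is transported across $\op{Ind}$ and contradicted on the other side, rather than irreducibility being transported directly. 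You would need to either adopt this argument or supply genuine proofs of your two geometric assertions; as written, the proposal has a gap at its central step. (A smaller inaccuracy: the extension of an effective class from $\ParL-\mathcal{R}_L$ to $\ParL$ is not unique --- one can add multiples of $\mathcal{R}_L$ --- so the bijection requires the specific choice $D=\overline{D\cap(\ParL-\mathcal{R}_L)}$ made in the paper.)
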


\begin{proof}
We start by showing that if $\mathcal{M}=\mathcal{O}(D)$ is as in (2), then $\ml=\op{Ind}(\mathcal{M})$ is an F-line bundle.
By Proposition \ref{uselater}, and (b), $H^0(\Par_{n,\mathcal{O},S},\ml)$ is one dimensional. Let $\ml=
\mathcal{O}(E)$ and we want to show $E$ is irreducible and reduced. Suppose $E=E_1+E_2$, let $\ml_1=\mathcal{O}(E_1)$ and $\ml_2=\mathcal{O}(E_2)$. There exist $\mathcal{M}_1$ and $\mathcal{M}_2$ effective line bundles on $\Par_L$ which induce to $\ml_1$ and $\ml_2$. Let $\mathcal{M}_i=\mathcal{O}(D_i)$. By Proposition \ref{uselater}, $D_1+D_2$ and $D$ agree on $\Par_L-\mathcal{R}_L$. Since $D$ is irreducible, either $D_1$ or $D_2$ is supported on $\mathcal{R}_L$, so either $\ml_1$ or $\ml_2$ (being the inductions of $\mathcal{M}_1$ and $\mathcal{M}_2$) is trivial, as desired. If both $D_1$ and $D_2$ are both supported on  $\mathcal{R}_L$, then $D$ is also supported on $\mathcal{R}_L$, and hence $\op{Ind}(\mathcal{M})$ is trivial.

To go the other direction, start with a $\ml=\mathcal{O}(E)$ as in (1). By Proposition \ref{uselater}, $H^0(\Par_L-\mathcal{R}_L,\mathcal{L}|_{\ParL})$ is  one dimensional. Let ${D}\subset \ParL$ be the closure of the zeroes of this section and set $\mathcal{M}=\mathcal{O}(D)$.  We claim that $\mathcal{M}$ satisfies the conditions in (2). By construction $\op{Ind}(\mathcal{M})=\ml$, and hence $\mathcal{M}$ is non trivial. The condition in (b) holds by Proposition \ref{uselater}.

We now show that $\mathcal{M}$ is a F-line bundle. Since
$\mathcal{L}|_{\ParL}$ and $\mathcal{M}$ agree on $\Par_L-\mathcal{R}_L$, $H^0(\Par_L,\mathcal{M})$ is one dimensional.

If $D=D_1+D_2$, then we get a corresponding decomposition of $\ml=\ml_1\tensor\ml_2$ as a tensor product of effective bundles. We see that one of the factors $\ml_1$ or $\ml_2$ is trivial, and hence $D_1$ or $D_2$ is trivial on $\Par_L-\mathcal{R}_L$. Since $D$ is the closure of its intersection with $\Par_L-\mathcal{R}_L$, this makes one of $D_1$ and  $D_2$ trivial, and hence $\mathcal{M}$ is an F-line bundle.

The composite of going from (2) to (1) and back to (2) as described above is the identity. This finishes the proof of the proposition.

\end{proof}
\begin{remark}\label{putR}
The condition in (b) is equivalent to $Ind(\mathcal{M})\neq 0$ and $h^0(\ParL,\mathcal{M}(\mathcal{R}_L))=1$. This is because any section of $H^0(\ParL-\mathcal{R}_L,\mathcal{M})$ lifts to a section of
$H^0(\ParL,\mathcal{M}(m\mathcal{R}_L))$ for some $m>0$, and we know that $\mathcal{M}$ is effective and $h^0(\ParL,\mathcal{M}^m(m\mathcal{R}_L))=1$ by quantum Fulton.

\end{remark}
\section{Formulas for induction}\label{explicitF}
\begin{defi}
Let $\widetilde{\Par}_{n,\mn,S}$ be the moduli stack of pairs $(\mw,\me)$ such that $\mw$ is a vector bundle on $\Bbb{P}^1$ with determinant isomorphic to $\mn$ (but we do not fix an isomorphism), and $\me\in\Fl_S(\mw)$. There is a morphism ${\Par}_{n,\mn,S}\to\widetilde{\Par}_{n,\mn,S}$ surjective on objects but  $\widetilde{\Par}_{n,\mn,S}$ has more isomorphisms. It is easy to see that this morphism is surjective on Picard groups (use Proposition \ref{LaSo}).
\end{defi}
The following follows from Lemma \ref{mass2}.
\begin{lemma}
\begin{enumerate}
\item The Picard group of  $\Pic(\widetilde{\Par}_{n,\mn,S})$ is generated  by line bundles $\mathcal{B}(\vec{\lambda},\ell)$ where $\vec{\lambda}=(\lambda^1,\dots,\lambda^s)$ is an $s$-tuple of weights for $\op{GL}(n)$ (these are $n$ tuples of integers without any equivalence).
\item Two such tuples $(\vec{\lambda},\ell)$ and $(\vec{\mu},\ell')$ give rise to the same element of $\Pic(\widetilde{\Par}_{n,\mn,S})$, if $\ell=\ell'$, $\vec{\lambda}=\vec{\mu}$ as representations
of $\op{SL}(n)$, and $\sum_{i\in S}|\lambda^i|=\sum_{i\in S}|\mu^i|$.
\end{enumerate}
\end{lemma}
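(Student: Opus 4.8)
\textbf{Proof proposal for the final Lemma (description of $\Pic(\widetilde{\Par}_{n,\mn,S})$).}

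The plan is to identify $\Pic(\widetilde{\Par}_{n,\mn,S})$ by the same kind of argument that identifies $\Pic(\Par_{n,\mn,S})$ in Proposition \ref{LaSo}, while carefully keeping track of what the ``forgetting the trivialization $\gamma$'' operation does. First I would set up the comparison morphism $q\colon \Par_{n,\mn,S}\to\widetilde{\Par}_{n,\mn,S}$ and observe that it is a $\Bbb{G}_m$-gerbe (or a $\Bbb{G}_m$-torsor followed by a rigidification), since the fiber over $(\mw,\me)$ is the $\Bbb{G}_m$-torsor of isomorphisms $\gamma\colon\det\mw\leto{\sim}\mn$, and $\Bbb{G}_m$ sits inside the automorphisms of every object of $\widetilde{\Par}_{n,\mn,S}$ (scaling $\mw$) whereas it has been killed on $\Par_{n,\mn,S}$ by pinning down $\gamma$. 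Thus $q^*$ identifies $\Pic(\widetilde{\Par}_{n,\mn,S})$ with the subgroup of $\Pic(\Par_{n,\mn,S})$ consisting of line bundles on which the central $\Bbb{G}_m$ (i.e.\ homothety on $\mw$) acts with weight $0$; equivalently, $q^*$ is injective and we must compute its image. This is exactly the passage from $\op{SL}(n)$-type weights to $\op{GL}(n)$-type weights: on $\Par_{n,\mn,S}$ the line bundle $\mathcal{B}(\vec\lambda,\ell)$ (with $\lambda^i$ only well-defined modulo adding $(1,\dots,1)$, by Definition \ref{levell}) pulls back from $\widetilde{\Par}_{n,\mn,S}$ precisely when a genuine $\op{GL}(n)$-representative of $\vec\lambda$ has been chosen together with the integer datum recording the total ``$\det$-weight'' $\sum_{i\in S}|\lambda^i|+\ell\deg\mn$, which is the homothety weight.

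Concretely I would argue as follows. By Proposition \ref{LaSo}, $\Pic(\Par_{n,\mn,S})\cong(\frh^*_{n,\Bbb{Z}})^s\oplus\Bbb{Z}$ via $(\vec\lambda,\ell)\mapsto\mathcal{B}(\vec\lambda,\ell)$, where each $\lambda^i\in\frh^*_{n,\Bbb{Z}}$ is a weight of $\op{SL}(n)$, i.e.\ an $n$-tuple of integers modulo simultaneous shift by $(1,\dots,1)$. On $\widetilde{\Par}_{n,\mn,S}$, the subquotient line bundle $\ml_{\lambda,p}$ makes sense for an honest $\op{GL}(n)$-weight $\lambda\in\Bbb{Z}^n$ (no shift ambiguity, because $\det\mw_p$ is no longer canonically trivialized — indeed $\det\mw_p$ is a nontrivial line bundle on $\widetilde{\Par}_{n,\mn,S}$, so the relation ``$\det\mw_p$ is trivial'' invoked at the end of Definition \ref{levell} fails here), and $\mathcal{D}$ makes sense as before. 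This gives the surjection in (1): the map $(\vec\lambda,\ell)\mapsto\mathcal{D}^\ell\otimes\bigotimes_j\ml_{\lambda^j,p_j}$ from $s$-tuples of $\op{GL}(n)$-weights plus $\ell\in\Bbb{Z}$ onto $\Pic(\widetilde{\Par}_{n,\mn,S})$; surjectivity follows because these classes already surject onto $\Pic(\Par_{n,\mn,S})$ after pulling back along $q$, and every class in $\Pic(\widetilde{\Par}_{n,\mn,S})$ is detected by its pullback (injectivity of $q^*$). For (2), I would compute the kernel of this map: two data $(\vec\lambda,\ell)$ and $(\vec\mu,\ell')$ give the same bundle on $\widetilde{\Par}_{n,\mn,S}$ iff (i) their images in $\Pic(\Par_{n,\mn,S})$ agree — which by Proposition \ref{LaSo} forces $\ell=\ell'$ and $\lambda^i\equiv\mu^i$ as $\op{SL}(n)$-weights, i.e.\ $\lambda^i=\mu^i+c_i(1,\dots,1)$ for some $c_i\in\Bbb{Z}$ — and (ii) the remaining discrepancy $\bigotimes_j(\det\mw_{p_j})^{c_j}$ is trivial on $\widetilde{\Par}_{n,\mn,S}$. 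The point is that $\det\mw_p\cong\det\mw_{p'}$ for all $p,p'$ (both are pulled back from the determinant stack, and differ by the fixed line $\mathcal{O}(p-p')$), so $\bigotimes_j(\det\mw_{p_j})^{c_j}$ is a power $(\det\mw_{p_1})^{\sum c_j}$ of a single nontrivial class, which is trivial iff $\sum_j c_j=0$, i.e.\ iff $\sum_{i\in S}|\lambda^i|=\sum_{i\in S}|\mu^i|$. This is exactly statement (2).

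I expect the main obstacle to be making the gerbe/rigidification comparison between $\Par_{n,\mn,S}$ and $\widetilde{\Par}_{n,\mn,S}$ precise enough to transport the Picard-group computation of \cite{LS} cleanly — in particular, verifying that $q^*\colon\Pic(\widetilde{\Par}_{n,\mn,S})\to\Pic(\Par_{n,\mn,S})$ is injective (equivalently, that $\widetilde{\Par}_{n,\mn,S}$ has no ``hidden'' line bundles not seen on the trivialized stack) and correctly identifying its image with the homothety-weight-zero subgroup. An equally viable route, which I would fall back on if the gerbe language gets cumbersome, is to argue directly: $\widetilde{\Par}_{n,\mn,S}\cong[\Par_{n,\mn,S}/\Bbb{G}_m]$ for the scaling action, so $\Pic(\widetilde{\Par}_{n,\mn,S})$ is the group of $\Bbb{G}_m$-equivariant line bundles on $\Par_{n,\mn,S}$, i.e.\ line bundles with a choice of linearization; the choice of linearization is a $\Bbb{Z}$-torsor (the homothety weight) over each underlying $\Bbb{G}_m$-invariant class, and $\mathcal{B}(\vec\lambda,\ell)$ acquires a canonical linearization once $\vec\lambda$ is given as genuine $\op{GL}(n)$-weights, the weight being $\sum_{i\in S}|\lambda^i|+\ell\deg\mn$ — from which both (1) and (2) drop out by bookkeeping. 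Either way the computation itself is routine once the setup is fixed; the Lemma should follow in a page.
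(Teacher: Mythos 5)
Your overall strategy---compare $\Pic(\widetilde{\Par}_{n,\mn,S})$ with $\Pic(\Par_{n,\mn,S})$ along the forgetful map $q$ and do the bookkeeping with homothety weights---is the same as the paper's, which deduces this lemma from the general stack-theoretic Lemma \ref{mass2} applied to $A=q$. But there is a genuine error at the pivot of your main argument: $q^*\colon\Pic(\widetilde{\Par}_{n,\mn,S})\to\Pic(\Par_{n,\mn,S})$ is \emph{not} injective. The class $\det\mw_p$ is nontrivial on $\widetilde{\Par}_{n,\mn,S}$ (homothety acts on it with weight $n$), yet its pullback to $\Par_{n,\mn,S}$ is trivial, as noted at the end of Definition \ref{levell}: once $\gamma$ is part of the data, $\det\mw_p$ is identified with the fixed line $\mn_p$. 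This is exactly the class $\mf$ generating $\ker A^*$ in Lemma \ref{mass2}(i). Your own treatment of part (2) tacitly depends on this non-injectivity: if $q^*$ were injective, agreement of pullbacks would already force equality of the two line bundles and the condition $\sum_{i}|\lambda^i|=\sum_{i}|\mu^i|$ would be vacuous, whereas you correctly have to test whether the leftover class $\bigotimes_j(\det\mw_{p_j})^{c_j}$ is trivial upstairs. So the two halves of your write-up contradict each other.

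The resulting gap sits in part (1): to conclude that the $\mathcal{B}(\vec{\lambda},\ell)$ with $\op{GL}(n)$-weights generate, you must show that $\ker q^*$ is contained in their span---concretely, that it equals $\Bbb{Z}\cdot[\det\mw_{p}]$ and nothing more. Neither of your two routes supplies this, and your fallback $[\Par_{n,\mn,S}/\Bbb{G}_m]$ picture is also subtly off: because a generic object of $\Par_{n,\mn,S}$ retains the automorphism group $\mu_n$ (scalars $s$ with $s^n=1$), the only homothety weights realized by honest line bundles on $\widetilde{\Par}_{n,\mn,S}$ lying over the trivial bundle on $\Par_{n,\mn,S}$ are the multiples of $n$, not all of $\Bbb{Z}$. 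The paper's proof of Lemma \ref{mass2}(i) is precisely this short weight computation (the relation $s(x,t^n\eta)=t^m s(x,\eta)$ forces $n\mid m$, and then twisting by $\mf^{m/n}$ reduces to index zero). Once you insert that step, your argument closes up and coincides with the paper's; the rest of your bookkeeping, including the identification $\det\mw_p\cong\det\mw_{p'}$ and the index argument showing $\det\mw_p$ is non-torsion, is sound.
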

\subsection{Numerical objectives}
\begin{lemma}
The map
\begin{equation}\label{temper1}
 \ParL\to \widetilde{\Par}_{r,\mathcal{O}(-d),S}\times \widetilde{\Par}_{n-r,\mathcal{O}(d-D),S}
 \end{equation}
 induces a surjection on Picard groups (the map $\Psi$ in \eqref{mappo} below).
 \end{lemma}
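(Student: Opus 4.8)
The plan is to build the map \eqref{temper1} explicitly and read off its effect on Picard groups from the structure of the line bundles $\mathcal{B}(\vec{\lambda},\ell)$ described in Definition \ref{levell}, together with the description of $\Pic(\widetilde{\Par})$ just recorded. Recall the diagram \eqref{Dbasic} defining $\ParL = \ParL(-d,r,n-r,d-D)$ as the moduli of tuples $(\mv,\mq,\mf,\mg,\gamma)$ with $\mv$ of rank $r$ degree $-d$, $\mq$ of rank $n-r$ degree $d-D$, flags $\mf\in\Fl_S(\mv)$, $\mg\in\Fl_S(\mq)$, and $\gamma:\det\mv\otimes\det\mq\xrightarrow{\sim}\mathcal{O}(-D)$. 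Forgetting the isomorphism $\gamma$ (keeping only that $\det\mv\cong\mathcal{O}(-d)$ and $\det\mq\cong\mathcal{O}(d-D)$ abstractly) and remembering the two pairs $(\mv,\mf)$ and $(\mq,\mg)$ separately gives exactly the morphism in \eqref{temper1}. So the first step is just to note that this map is well-defined and surjective on objects (given $(\mv,\mf)$ and $(\mq,\mg)$ with the prescribed abstract determinants, any choice of isomorphism $\det\mv\otimes\det\mq\cong\mathcal{O}(-D)$ produces a preimage).

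Next I would compute the pullback on Picard groups. On $\widetilde{\Par}_{r,\mathcal{O}(-d),S}$ the Picard group is generated by the determinant-of-cohomology line bundle $\mathcal{D}(\mv)$ and the boxquotient line bundles $\ml_{\mu^j,p_j}$ (fiber $\bigotimes_a ((F^{p_j}_a/F^{p_j}_{a-1})^{\mu^j_a})^*$), with weights $\mu^j$ for $\op{GL}(r)$; similarly on the rank $n-r$ factor. Pulling these back along \eqref{temper1}, $\mathcal{D}(\mv)$ and $\mathcal{D}(\mq)$ pull back to the corresponding determinant-of-cohomology bundles of the universal subbundle and quotient on $\ParL$, and the flag line bundles pull back to the flag line bundles of $\mf$ and $\mg$ on $\ParL$. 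Since $\Pic(\ParL)$ is itself generated by $\mathcal{D}$ of $\mv$, $\mathcal{D}$ of $\mq$, the flag bundles for $\mf$ and $\mg$, and $\mathcal{D}(\mw)$ for the associated rank-$n$ bundle $\mw=\mv\oplus\mq$ under the section $i$ — and the last of these differs from $\mathcal{D}(\mv)\otimes\mathcal{D}(\mq)$ only by a line bundle pulled back from the base $\op{Bun}$ which is accounted for by the $\gamma$-twist — one sees that every generator of $\Pic(\ParL)$ is hit, up to the one-dimensional ambiguity that \eqref{temper1} does not see (the index $m(\mathcal{L})\in\Bbb{Z}$ of Definition \ref{Pin}, i.e.\ the weight of the scaling $t\cdot(\mv,\mq)=(t^{n-r}\mv,t^{-r}\mq)$). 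This is precisely why the target is $\widetilde{\Par}\times\widetilde{\Par}$ (with $\op{GL}$-weights and no $\gamma$) rather than ${\Par}\times{\Par}$: the extra automorphisms in $\widetilde{\Par}$ absorb the discrepancy. I would formalize this by invoking Lemma \ref{mass2} (referenced in the excerpt as the source of the analogous statements for $\widetilde{\Par}$ and for part (2) of Lemma \ref{BachMass}) applied to the morphism \eqref{temper1}, which gives the surjectivity of $\Psi$ directly.

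Concretely, the map $\Psi$ in \eqref{mappo} sends a pair of $\op{GL}$-weight data $((\vec{\mu}^{(1)},k_1),(\vec{\mu}^{(2)},k_2))$ (with the levels forced to agree, $k_1=k_2$, because both descend from $\mathcal{D}(\mw)^{k}$) to the $\op{GL}(n)$-weight datum on $\ParL$ obtained by juxtaposing, at each $p_i$, the weight $\mu^{(1),i}$ on the sub-part $F^{p_i}_{\bull}$ and $\mu^{(2),i}$ on the quotient-part, arranged according to the positions dictated by the subset $I^i$ (exactly the block pattern $E^p_j=F^p_a\oplus G^p_{j-a}$ from the definition of the section $i$ in \eqref{Dbasic}). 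The content of the lemma is that this assignment is surjective onto $\Pic(\ParL)$, which follows because the $2(n-r)\cdot? $... more simply: the generators of $\Pic(\ParL)$ listed above are all manifestly in the image, and $\Pic(\ParL)$ is generated by them.

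\textbf{Main obstacle.} The only real subtlety — and the step I expect to take the most care — is bookkeeping the determinant-of-cohomology line bundle. On $\ParL$ the bundle $\mathcal{D}(\mw)$ (for $\mw=\mv\oplus\mq$, or more precisely the universal extension over $\widehat\Omega^0$) is not literally $\mathcal{D}(\mv)\boxtimes\mathcal{D}(\mq)$: they differ by the determinant of the $\mathcal{O}(-D)$-twist and by correction terms supported on the base, and matching levels on the two factors requires the relation $k_1=k_2$ which itself comes from the grade-zero / index-zero normalization. Getting the shift in the degrees ($-d$ and $d-D$) to be consistent with the normalization $\det\mv\otimes\det\mq=\mathcal{O}(-D)$, so that the induced weight data has the correct $|\lambda^i|$-sums, is where one must be careful; but this is exactly the kind of determinant-of-cohomology computation already carried out in \cite{LS} and in \cite[Section 8.6]{BHermit}, and I would cite those rather than redo it. Once that is in place, surjectivity on Picard groups is immediate from the fact that $\Pic(\ParL)$ is generated by the flag bundles and the two determinant-of-cohomology bundles, all of which lie in the image of $\Psi$.
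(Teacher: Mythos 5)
Your overall strategy---read off surjectivity from explicit generators of the Picard groups, using Proposition \ref{LaSo} and Lemma \ref{mass2}---is the same as the paper's, but there is a genuine gap at the pivotal step. You assert that $\Pic(\ParL)$ is generated by $\mathcal{D}(\mv)$, $\mathcal{D}(\mq)$ and the flag line bundles of $\mf$ and $\mg$, and then conclude because these are ``manifestly in the image.'' That generation statement is exactly what must be proved, and nothing in your write-up establishes it: Proposition \ref{LaSo} computes the Picard group of the stacks $\Par_{n,\mn,S}$ carrying a fixed trivialization of the determinant, not of $\ParL$, and Lemma \ref{mass2} as stated does \emph{not} give integral surjectivity of $A^*:\Pic(\ms)\to\Pic(\widehat{\ms})$ --- its proof only descends the $n$-th tensor power of a line bundle on $\widehat{\ms}$, which is why parts (ii) and (iii) are $\Bbb{Q}$-coefficient and effective-cone statements. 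So ``invoking Lemma \ref{mass2} applied to \eqref{temper1}'' does not give the surjectivity of $\Psi$ directly, and your argument is circular at this point.

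The missing ingredient is the section-type map \eqref{temper2}, ${\Par}_{r,\mathcal{O}(-d),S}\times {\Par}_{n-r,\mathcal{O}(d-D),S}\to\ParL$, which you never use. The composite of \eqref{temper2} and \eqref{temper1} is the product of the forgetful maps $\Par\to\widetilde{\Par}$, and the surjectivity of the composite pullback $\Pic(\widetilde{\Par}_{r,\mathcal{O}(-d),S}\times\widetilde{\Par}_{n-r,\mathcal{O}(d-D),S})\to\Pic(\Par_{r,\mathcal{O}(-d),S}\times\Par_{n-r,\mathcal{O}(d-D),S})$ is an easy check because the target \emph{is} known by Proposition \ref{LaSo} (every $\op{SL}$-weight lifts to a $\op{GL}$-weight, and the determinants of cohomology are in the image). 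Since this composite factors as $\Psi$ followed by the pullback $\Pic(\ParL)\to\Pic(\Par_{r,\mathcal{O}(-d),S}\times\Par_{n-r,\mathcal{O}(d-D),S})$, whose kernel is generated, by Lemma \ref{mass2}(i), by the line bundle with fiber $\det\mv_p$ --- which visibly lies in the image of $\Psi$ --- surjectivity of $\Psi$ follows. Separately, your description of $\Psi$ contains an error: the levels $k_1$ and $k_2$ on the two factors are \emph{not} forced to agree. The bundles $\mathcal{D}(\mv)$ and $\mathcal{D}(\mq)$ are independent generators (in the induction formulas of Section \ref{explicitF} they occur with the unequal exponents $n-r$ and $r$); imposing $k_1=k_2$ shrinks the source of $\Psi$ to a subgroup from which the map is not surjective.
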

 \begin{proof}
 Using Proposition  \eqref{LaSo}, it is easy to see that the composite of \eqref{temper1} and \eqref{temper2} is surjective on Picard groups, and by Lemma \ref{mass2}, both  \eqref{temper1} and \eqref{temper2} are surjective on Picard groups.
  \end{proof}
 We have two numerical objectives.
\begin{equation}\label{mappo}
\Pic (\widetilde{\Par}_{r,\mathcal{O}(-d),S})\tensor  \Pic (\widetilde{\Par}_{n-r,\mathcal{O}(d-D),S})\leto{\Psi} \Pic(\ParL)\leto{\op{Ind}}  \Pic(\Par_{n,\mn,S}).
\end{equation}
\begin{enumerate}
\item[(O1)] Describe the composite \eqref{mappo}
\item[(O2)] Let  $\mathcal{A}\in\Pic^+_{\Bbb{Q}}(\Par_{r,\mathcal{O}(-d),S})\times \Pic^+_{\Bbb{Q}}(\Par_{n-r,\mathcal{O}(d),S})=\Pic^{+,\deg=0}_{\Bbb{Q}}(\ParL)$ (Lemma \ref{BachMass}). Find the inverse image of $\mathcal{A}$  under the map $\Psi$ in \eqref{mappo}(tensored with rationals).
\end{enumerate}
\subsubsection{The numerical objective (O2)}
Given a  $\mathcal{A}=\mathcal{B}(\vec{\lambda},\ell)\tensor\mathcal{B}(\vec{\mu},m)$  as above, normalise $\vec{\lambda}$ and $\vec{\nu}$ (these are weights for the special linear group, so we can add $1$ to all entries of any $\lambda^i$ etc)
so that multiplication by $t$ acts trivially on each of the two factors of $\mathcal{A}$, i.e.,
$-d+ \sum_{i=1}^s \frac{|\lambda^i|}{\ell}=0$ and $d-D +\sum_{i=1}^s\frac{|\mu^i|}{m}=0$. The lift
to $\Pic_{\Bbb{Q}} (\widetilde{\Par}_{r,\mathcal{O}(-d),S})\tensor  \Pic_{\Bbb{Q}}(\widetilde{\Par}_{n-r,\mathcal{O}(d-D),S})$ is just the same line bundle (note that the $\lambda^i$ and $\mu^i$ may no longer be integral).
\subsubsection{The numerical objective (O1)}
We first describe how to induce line bundles $\mathcal{B}(\vec{\lambda},\ell)\tensor\mathcal{B}(\vec{\mu},m)$ in $\Pic (\widetilde{\Par}_{r,\mathcal{O}(-d),S})\tensor  \Pic (\widetilde{\Par}_{n-r,\mathcal{O}(d-D),S})$, with $\ell=m=0$, so that there are no determinant of cohomology factors.
We will define a line bundle $B(\vec{\delta},0)$ on $\Par_{n,\mathcal{N},S}$ as follows: Write
$I^j=\{i^j_1<i^j_2<\dots < i^j_r\}$ and $[n]-I^j=\{k^j_1<\dots < k^j_{n-r}\}$ for $j\in[s]$. Let $\delta^j_{i^j_c} =\lambda^j_c$ for $c\in [r]$ and $\delta^j_{k^j_d} =\nu^j_d$ for $d\in [n-r]$.

\begin{proposition}\label{formInd}
$$\op{Ind}(\mathcal{B}(\vec{\lambda},0)\tensor\mathcal{B}(\vec{\mu},0)) = B(\vec{\delta},0)\tensor\mathcal{O}(\sum b_{(a,j)}D(a,j))$$
where the sum is over all the divisors $D(a,j)$ in Theorem \ref{Adagio}, and the $b_{(a,j)}$ are as follows:
\begin{enumerate}
\item Suppose $a>1$, then $a\in I^j$ and $a-1\not\in I^j$. In this case, $b_{(a,j)}= \delta^j_{a}-\delta^j_{a-1}$.
\item If $a=1$, then $b_{(a,j)}= \delta^j_{1}-\delta^j_{n}$.
\end{enumerate}
\end{proposition}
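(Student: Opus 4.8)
\textbf{Proof proposal for Proposition \ref{formInd}.}

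The plan is to reduce the computation to two basic ingredients: first, the effect of induction on the ``classical'' part $B(\vec\delta,0)$ coming from the flag-variety factors, and second, the discrepancy, measured in the basic divisors $D(a,j)$, between a naive extension and the genuine induction map. First I would recall from Section \ref{noncan} that $\op{Ind}$ is constructed by pulling back a line bundle on $\ParL$ along $\widehat\Omega^0(d,r,\mn,n,\vec I)\to\ParL$, restricting to $\widehat\Omega^0-\mathcal R$, which by Corollary \ref{carole} is an open substack of $\Par_{n,\mn,S}$ (after the necessary shift so that $1\notin I^i$ for all $i$) whose complement has codimension $\ge 2$, and then extending the line bundle across that complement. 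Since $\mathcal B(\vec\lambda,0)\boxtimes\mathcal B(\vec\mu,0)$ carries no determinant-of-cohomology factor, its pullback to $\widehat\Omega^0$ is, fiberwise over a subbundle $\mv\subseteq\mw$ with quotient $\mq$, just a tensor product of lines built from the subquotients of the induced flags on $\mv$ and $\mq$ at the points of $S$. Matching the induced flag on $\mw_p$ with the pair of flags on $(\mv_p,\mq_p)$ under the prescription $E^p_j=F^p_a\oplus G^p_{j-a}$ from Diagram \eqref{Dbasic} shows that this tensor product of lines, when restricted to $\Omega^0-\mathcal R$ and then expressed in terms of the flag subquotients of $\mw$, is exactly $B(\vec\delta,0)$ with $\delta^j_{i^j_c}=\lambda^j_c$ and $\delta^j_{k^j_d}=\nu^j_d$ — this is a direct consequence of Lemma \ref{phase}-type bookkeeping on subquotients together with the splitting $E^p_j/E^p_{j-1}$ being $F^p_a/F^p_{a-1}$ when $j\in I^j$ (i.e.\ $j=i^j_a$) and $G^p_{\bullet}$-subquotients otherwise.

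The next step is to pin down the correction term. The point is that $B(\vec\delta,0)$ is a ``naive'' extension of the restricted line bundle to $\Par_{n,\mn,S}$, while $\op{Ind}$ produces the extension lying in $\Pic'$ (Proposition \ref{grill}, using that induction factors through $\Par'_{n,\mn,S}$), and by Proposition \ref{Br4} (its generalization to arbitrary $\mn$ in Section \ref{noncan}) any two extensions differ by a $\Bbb Z$-linear combination of the $[D(a,j)]$. To read off the coefficient $b_{(a,j)}$, I would restrict both sides to the test curve $C\cong\Bbb P^1$ in $\Par_{n,\mn,S}$ used in the proof of Proposition \ref{enumerative}, Formula (B): the curve where all data are fixed except the $b$-th step $E^{p_j}_b$ of the flag at $p_j$ varies in a pencil with $E^{p_j}_{b-1}\subset E^{p_j}_b\subset E^{p_j}_{b+1}$. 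On such a curve $\op{Ind}(\dots)$ has degree zero when $b\notin T(I^j)$ (equivalently $b=a-1$ with $a\in I^j$, $a-1\notin I^j$), because the induction map factors through $\Par'_{n,\mn,S}$ which does not see $E^{p_j}_{a-1}$; on the other hand $B(\vec\delta,0)$ has degree $\delta^j_{a-1}-\delta^j_{a}$ on that curve (the fiber of $\mathcal L_{\delta^j}$ involves $(E^{p_j}_{a-1}/E^{p_j}_{a-2})^{*\,\delta^j_{a-1}}\otimes(E^{p_j}_a/E^{p_j}_{a-1})^{*\,\delta^j_a}$, and as $E^{p_j}_{a-1}$ sweeps the pencil this line has degree $\delta^j_a-\delta^j_{a-1}$). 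Since $D(a,j)$ meets $C$ transversally in a single point by the argument in Section \ref{bruck03} and the proof of Theorem \ref{Adagio}, and the other $D(a',j')$ do not meet $C$, comparing degrees forces $b_{(a,j)}=\delta^j_a-\delta^j_{a-1}$. The case $a=1$ is handled, as in the proof of Theorem \ref{Adagio}, by a preliminary shift operation at $p_j$ reducing to the case $a\ne 1$, after which Proposition \ref{easily} and \eqref{ABL4} convert the statement back to $b_{(1,j)}=\delta^j_1-\delta^j_n$; here the degree shift $d'=d-1$ accompanying $J^j=(I^j-\{1\})\cup\{n\}$ is precisely what turns the role of the ``$E^{p_j}_0=0$'' step into the step involving $\delta^j_n$.

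The main obstacle I anticipate is making the flag-subquotient matching in the first step genuinely canonical rather than merely ``up to the ambiguity of the $D(a,j)$'': one must check that $B(\vec\delta,0)$, as defined by the combinatorial recipe $\delta^j_{i^j_c}=\lambda^j_c$, $\delta^j_{k^j_d}=\nu^j_d$, really is the extension that agrees with the pullback on \emph{all} of $\Omega^0-\mathcal R$ and not just generically, so that the only remaining freedom is the $D(a,j)$-combination; this requires care because $B(\vec\delta,0)$ itself is typically not in $\Pic'$. I would handle this by working on the partial-flag stack $\Par'_{n,\mn,S}$ (Definition before Proposition \ref{grill}) where $\op{Ind}$ naturally lands, writing $B(\vec\delta,0)=\op{Ind}(\mathcal B(\vec\lambda,0)\boxtimes\mathcal B(\vec\mu,0))\otimes\mathcal O(\sum b_{(a,j)}D(a,j))$ as the unique way to correct $\op{Ind}(\dots)$ into a line bundle pulled back from the \emph{full}-flag stack $\Par_{n,\mn,S}$ via the extra flag steps $E^{p_j}_{a-1}$ (for $a\in I^j$, $a-1\notin I^j$) and $E^{p_j}_n$-type data, and then the test-curve computation above identifies the correction coefficients uniquely. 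The determinant-of-cohomology part (general $\ell,m$) is then immediate since $\mathcal D^\ell$ pulls back to $\mathcal D^{\ell'}$ under these maps with the level unchanged by shift operations (Lemma \ref{threepointone}, Proposition \ref{easily}(a)), so no further correction terms appear in front of the $\mathcal D$-factor.
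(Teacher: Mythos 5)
Your proposal is correct and follows essentially the same route as the paper: both sides agree on the open substack $\Omega^0(d,r,\mn,n,\vec I)-\mathcal R$ of $\Par_{n,\mathcal N,S}$, the difference is therefore a $\Bbb{Z}$-combination of the $D(a,j)$, and the coefficients are pinned down by the fact that $\op{Ind}$ lands in $\Pic'$ — the paper invokes Lemma \ref{oneoff} directly at this point, where you run the equivalent test-curve degree computation. One cosmetic remark: in your parenthetical the degree of $B(\vec\delta,0)$ on the test curve is $\delta^j_{a-1}-\delta^j_a$ (not $\delta^j_a-\delta^j_{a-1}$), but combined with $D(a,j)\cdot C=1$ and $\deg(\op{Ind}(\cdot)|_C)=0$ this yields exactly your stated $b_{(a,j)}=\delta^j_a-\delta^j_{a-1}$, so the conclusion stands.
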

\begin{proof}
The method follows the one used in \cite{BKiers}. Both sides of the desired equality restrict to the same line bundle on the open subset ${\Omega}^0(d,r,\mn,n,\vec{I})-\mathcal{R}$ of $\Par_{n,\mathcal{N},S}$. The difference is then a linear combination of the divisors $D(a,j)$ (which are the codimension one components of the complement). But since the left hand side is in $\Pic'$, the values of $b_{(a,j)}$ are as given.
\end{proof}

For the induction of the determinant of cohomology bundles from the factors we proceed as follows.
We know that the induction of  a line bundle $\mathcal{O}(E)$ is zero whenever the support of $E$  lies inside the ramification divisor $\mathcal{R}_L$. The line bundle $\mathcal{O}(\mathcal{R}_L)$ is described in Proposition \ref{basicC} below. We define the terms that appear first.

Introduce $\lambda^1,\dots,\lambda^s$ highest weights for representations of $\op{GL}(r)$ by $\lambda^j=\lambda(I^j)$ using Definition  \ref{correspondence}. Let $\ma$ and $\ma'$ be line bundles on $\widetilde{\Par}_{r,\mathcal{O}(-d),S}$ and $\widetilde{\Par}_{n-r,\mathcal{O}(d-D),S}$ respectively given by
\begin{enumerate}
\item The fiber of $\ma$ over a point $(\mv,\mf)\in \widetilde{\Par}_{r,\mathcal{O}(-d),S}$ is
$$\bigl(\mathcal{D}(\mv^*)^{n-r}\tensor \det \mv_x^{d-D}\tensor \bigotimes_{i=1}^s\bigl( \ml_{\lambda^i}(\mv_{p_i},F^{p_i}_{\bull})\bigr)\bigr).$$
\item The fiber of $\ma'$ over a point $(\mq,\mg)\in \widetilde{\Par}_{n-r,\mathcal{O}(d-D),S}$ is
$$\bigl(\mathcal{D}(\mathcal{T}^*)^r\tensor \det \mathcal{T}_x^{d}\otimes \ml_{(\lambda^i)^T}(\mt_{p_i},\widetilde{G}^{p_i}_{\bull})\bigr)\bigr)$$
where $\mathcal{T}=\mq^*$ and $\widetilde{\mg}$ is the induced flag on $\mathcal{T}$.
\end{enumerate}
See \cite[Section 11]{BGM} and the references therein for a proof of the following result.
\begin{proposition}\label{basicC}
The effective line bundle $\mathcal{O}(\mathcal{R}_L)$ on $\ParL$ is the pullback of the line bundle $\ma\boxtimes \ma'$ on $\widetilde{\Par}_{r,\mathcal{O}(-d),S}\times \widetilde{\Par}_{n-r,\mathcal{O}(d-D),S}$ where $\mathcal{A}$ and $\mathcal{A}'$ are as defined above.
\end{proposition}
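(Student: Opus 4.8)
The plan is to pull Proposition \ref{torro} back to $\ParL$ and then run a determinant‑of‑cohomology computation on $\Bbb{P}^1$, in the style of \cite[Section 11]{BGM}. Recall that $\mathcal{R}_L$ is by definition $i^*\mathcal{R}$, where $i\colon\ParL\to\Omega^0(d,r,\mn,n,\vec{I})\subseteq\widehat{\Omega}^0_S(d,r,\mn,n,\vec{I})$ is the direct‑sum section, and that by Proposition \ref{torro} the line bundle $\mathcal{O}(\mathcal{R})$ is the determinant of cohomology of the kernel sheaf $\mathcal{K}$ of \eqref{sky}, with its canonical section. Over a point $(\mv,\mq,\mf,\mg,\gamma)$ of $\ParL$ the bundle $\mw=\mv\oplus\mq$ has $\mv$ as an honest subbundle with quotient $\mq$, so $\shom(\mv,\mq)=\mv^*\otimes\mq$ is locally free and \eqref{sky} has no torsion in the source; additivity of the determinant of cohomology in \eqref{sky} (a skyscraper at $p$ with fiber $W$ contributes $\det W^*$) then gives
\[
\mathcal{O}(\mathcal{R}_L)\;\cong\;\mathcal{D}(\mv^*\otimes\mq)\;\otimes\;\bigotimes_{p\in S}\det\bigl(\Hom(\mv_p,\mq_p)/T_p\bigr).
\]

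Next I would identify the skyscraper factor with the parabolic line bundles. For $p=p_j\in S$ write $I^j=\{i^j_1<\dots<i^j_r\}$ and filter $\Hom(\mv_p,\mq_p)$ by the flag $F^p_\bull$: the $k$‑th graded piece is $(F^p_k/F^p_{k-1})^*\otimes\mq_p$ and $T_p$ meets it in $(F^p_k/F^p_{k-1})^*\otimes G^p_{i^j_k-k}$, so the $k$‑th graded piece of the quotient is $(F^p_k/F^p_{k-1})^*\otimes(\mq_p/G^p_{i^j_k-k})$. Since $\dim(\mq_p/G^p_{i^j_k-k})=n-r+k-i^j_k=\lambda(I^j)_k$, taking determinants produces a factor $\bigl((F^p_k/F^p_{k-1})^{\lambda(I^j)_k}\bigr)^*$, whose product over $k$ is $\ml_{\lambda(I^j)}(\mv_{p_j},F^{p_j}_\bull)$, together with the leftover $\prod_k\det(\mq_p/G^p_{i^j_k-k})$. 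Rewriting the latter in terms of $\mt=\mq^*$ with its induced flag $\widetilde{\mg}$, and using the standard identification of $\lambda([n]\setminus I^j)$ with the conjugate partition $\lambda(I^j)^T$, it becomes $\ml_{(\lambda(I^j))^T}(\mt_{p_j},\widetilde G^{p_j}_\bull)$ up to a power of $\det\mq_p\cong\det\mv_p^{-1}\otimes\mn_p$.

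It then remains to expand $\mathcal{D}(\mv^*\otimes\mq)$, for which I would use the Deligne pairing / Grothendieck--Riemann--Roch formula for the determinant of cohomology on $\Bbb{P}^1$: for bundles $A,B$ one has $\mathcal{D}(A\otimes B)\cong\mathcal{D}(A)^{\rk B}\otimes\mathcal{D}(B)^{\rk A}\otimes(\det A)_x^{\deg B}\otimes(\det B)_x^{\deg A}$ up to a line bundle depending only on the discrete invariants, canonically trivial over our stack. Applying this with $A=\mv^*$ ($\rk=r$, $\deg=d$) and $B=\mq$ ($\rk=n-r$, $\deg=d-D$) yields the main factors $\mathcal{D}(\mv^*)^{n-r}\otimes\mathcal{D}(\mq)^{r}$, and the base‑point determinant twists, once combined with the surplus $\det\mq_p$ from the skyscraper step and with the trivialization $\gamma$ of $\det\mv\otimes\det\mq$, assemble into exactly the exponents $d-D$ on $\det\mv_x$ and $d$ on $\det\mt_x$ appearing in $\ma$ and $\ma'$. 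Substituting $\mathcal{D}(\mq)=\mathcal{D}(\mt^*)$ and collecting identifies $\mathcal{O}(\mathcal{R}_L)$ with the pullback of $\ma\boxtimes\ma'$ along $\ParL\to\widetilde{\Par}_{r,\mathcal{O}(-d),S}\times\widetilde{\Par}_{n-r,\mathcal{O}(d-D),S}$.

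The main obstacle is purely one of bookkeeping: keeping the base‑point normalization line bundles in the Deligne‑pairing formula, and the determinants arising when passing between $\mq$‑flags and $\mt$‑flags, under control so that the surplus copies of $\det\mv_x$ and $\det\mt_x$ land with precisely the exponents $n-r$, $r$, $d-D$, $d$ recorded in the definition of $\ma$ and $\ma'$. This is exactly the computation carried out in \cite[Section 11]{BGM} and the references therein, which I would follow; no new idea beyond Proposition \ref{torro} and the standard Riemann--Roch identities for $\mathcal{D}$ is needed.
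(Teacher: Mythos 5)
Your proposal is correct and matches the intended argument: the paper gives no proof here beyond citing \cite[Section 11]{BGM}, but its own construction in Section \ref{RamDiv} and Proposition \ref{torro} identify $\mathcal{O}(\mathcal{R}_L)$ with the determinant of cohomology of the kernel of \eqref{sky} restricted to $\ParL$, and your expansion via multiplicativity of $\mathcal{D}$, the graded pieces of the skyscraper quotients, and the Deligne-pairing identity for $\mathcal{D}(\mv^*\otimes\mq)$ is exactly the computation being referenced. The bookkeeping you worry about is in fact clean: since $\#\{k\mid i^j_k-k<b\}=\lambda(I^j)^T_{\,n-r+1-b}$, the leftover factor $\prod_k\det\bigl(\mq_p/G^p_{i^j_k-k}\bigr)$ equals $\ml_{\lambda(I^j)^T}(\mt_p,\widetilde{G}^p_{\bull})$ with no surplus power of $\det\mq_p$, and the Deligne-pairing term $\langle\det\mv^*,\det\mq\rangle^{-1}$ contributes precisely $\det\mv_x^{\,d-D}\otimes\det\mt_x^{\,d}$.
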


Now $\dim H^0(\widetilde{\Par}_{r,\mathcal{O}(-d),S}\times \widetilde{\Par}_{n-r,\mathcal{O}(d-D),S},\ma\boxtimes\ma')=1$ by Proposition \ref{fulC}.  Let $s\tensor s'$ be a non-zero global section of the one dimensional space above  with $s$ and $s'$ global sections of  $\ma$ and $\ma'$ respectively. The zero set of the section $s\tensor 1$ of $\mathcal{A}\boxtimes \mathcal{B}(\vec{0},0)=\mathcal{A}\boxtimes \mathcal{O}$ lies inside $\mathcal{R}_L$. Since we have computed the inductions of all but the determinant of cohomology factor of $\ma\boxtimes\mathcal{O}$ in Proposition \ref{formInd}, we get a formula for the induction of the determinant of cohomology of the subbundle. One has a similar formula for the induction of the determinant of cohomology of the quotient bundle.

\subsection{A general stack theoretic statement}
Let ${\ms}$ be a connected Artin stack with a morphism to the Picard stack of line bundles of degree $d$ on  $\Bbb{P}^1$. Denote the map by $x\mapsto \ml(x)$. Also assume that the action of $\Bbb{C}^*$ on $\Pic(\Bbb{P}^1)$ by the action of $z\in \Bbb{C}^*$ given by multiplication by $z^n$ (for some fixed $n$) on the line bundle (and identity on $\Bbb{P}^1$) lifts to an action on ${\ms}$ by isomorphisms.

Let $\widehat{\ms}$ be the stack that parametrizes objects $x\in {\mathcal{S}}$ together with an isomorphism $\gamma:\ml(x)\to\mathcal{O}_{\Bbb{P}^1}(d)$.
We get a morphism of stacks $A:\widehat{\ms}\to{\ms}$; examples include    ${\Par}_{n,\mn,S}\to  \widetilde{\Par}_{n,\mn,S}$, and also the maps \eqref{temper1} and \eqref{temper2}.
We wish to compare the Picard groups of $\ms$ and $\widehat{\ms}$ under the natural  pullback map 
\begin{equation}\label{temper3}
A^*:\Pic({\ms})\to \Pic(\widehat{\ms})
\end{equation}
\begin{enumerate}
\item Let $\mf$ be the line bundle on ${\mathcal{S}}$ with fiber at $x$ given by $\ml(s)_p$ where $p$ is any point of $\Bbb{P}^1$ (these line bundles are (non-canonically) isomorphic for all $p\in\Bbb{P}^1$: choose an isomorphism $\ml(x)\to \mathcal{O}_{\Bbb{P}^1}(d)$ and use a non-canonical map $\mathcal{O}(d)_p\to \mathcal{O}(d)_q$). Clearly $\mf$ maps to the trivial bundle under \eqref{temper3}.
\item Every line bundle on ${\ms}$ has an index corresponding to the action of $\Bbb{C}^*$ (this number for $\mf$ is $n$). Let $\Pic({\ms})^{\deg=0}$ be the subgroup of index $0$. Effective line bundles on ${\ms}$ have index zero.
\end{enumerate}
\begin{lemma}\label{mass2}
\hspace{2cm}
\begin{enumerate}
\item[(i)]The kernel of \eqref{temper3} is generated by $\mf$.
\item[(ii)] $\Pic({\ms})^{\deg=0}_{\Bbb{Q}}\leto{\sim} \Pic_{\Bbb{Q}}(\widehat{\ms})$.
\item[(iii)] $\Pic^+({\ms})\leto{\sim} \Pic^+(\widehat{\ms})$. If $\ml$ is a degree zero  line bundle on ${\ms}$, then the pullback map
$H^0({\ms},\ml)\to H^0(\widehat{\ms},A^*\ml)$ is an isomorphism.
\end{enumerate}
\end{lemma}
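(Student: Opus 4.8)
\textbf{Plan of proof for Lemma \ref{mass2}.}
The strategy is to analyze the morphism $A:\widehat{\ms}\to\ms$ as a torsor-like object and track how line bundles behave. First I would identify the fibers of $A$: over a point $x\in\ms$, the fiber is the scheme (or stack) of isomorphisms $\gamma:\ml(x)\to\mathcal{O}_{\Bbb{P}^1}(d)$, which, once one such isomorphism is chosen, is a torsor under $\operatorname{Aut}(\mathcal{O}_{\Bbb{P}^1}(d))=\Bbb{G}_m$ (using $H^0(\Bbb{P}^1,\mathcal{O})^*=\Bbb{C}^*$ since $\Bbb{P}^1$ is proper and connected). Thus $\widehat{\ms}\to\ms$ is a $\Bbb{G}_m$-torsor, and the total space of this torsor can be described intrinsically: $\widehat{\ms}$ is the complement of the zero section in the total space of the line bundle $\mf^{-1}$ (or $\mf$ depending on sign conventions), since a trivialization of $\ml(x)$ is the same as a trivialization of its fiber $\mf_x$ at any point, these being canonically matched up once one works relative to $\mathcal{O}(d)$. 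This realizes $A$ as the $\Bbb{G}_m$-bundle associated to $\mf$.

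Granting that identification, part (i) is the standard computation of the Picard group of the complement of the zero section of a line bundle: there is an exact sequence $\Bbb{Z}\cdot\mf\to\Pic(\ms)\to\Pic(\widehat{\ms})\to 0$ (the last map being $A^*$, surjective because $\widehat{\ms}\to\ms$ is smooth with connected fibers and one can extend any line bundle by the usual codimension argument, or more simply by noting that $\Bbb{G}_m$-equivariant line bundles on the total space descend), and the kernel is exactly the subgroup generated by $\mf$ since the pullback of $\mf$ is trivial (it acquires a nowhere-vanishing section on $\widehat{\ms}$, namely the tautological one). Part (ii) then follows from (i) by tensoring with $\Bbb{Q}$: the kernel $\Bbb{Q}\cdot\mf$ is complementary to $\Pic(\ms)^{\deg=0}_{\Bbb{Q}}$ because $\mf$ has nonzero index $n$, so $\Pic(\ms)_{\Bbb{Q}}=\Pic(\ms)^{\deg=0}_{\Bbb{Q}}\oplus\Bbb{Q}\cdot\mf$, and $A^*$ kills the second summand and is injective on the first (if $A^*\ml$ is trivial then $\ml$ is a multiple of $\mf$, hence has index a multiple of $n$; if also $\ml$ has index zero, $\ml$ is torsion, but over $\Bbb{Q}$ this means $\ml=0$). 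For part (iii), the key point is that for $\ml$ of index zero on $\ms$, $A_*\mathcal{O}_{\widehat{\ms}}=\bigoplus_{k\in\Bbb{Z}}\mf^{-k}$ as a graded sheaf of algebras (this is the standard description of pushforward along a $\Bbb{G}_m$-bundle), so $H^0(\widehat{\ms},A^*\ml)=\bigoplus_k H^0(\ms,\ml\otimes\mf^{-k})$; but $\ml\otimes\mf^{-k}$ has index $-kn$, which is nonzero unless $k=0$, and a line bundle of nonzero index has no $\Bbb{G}_m$-invariant (hence no) global sections — here one uses that the $\Bbb{G}_m$-action on $\ms$ forces any section of a line bundle of nonzero index to vanish. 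Thus only the $k=0$ term survives, giving $H^0(\widehat{\ms},A^*\ml)=H^0(\ms,\ml)$, and in particular effectivity is preserved in both directions, so $\Pic^+(\ms)\leto{\sim}\Pic^+(\widehat{\ms})$.

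The main obstacle I anticipate is making the identification of $\widehat{\ms}$ with the $\Bbb{G}_m$-bundle of $\mf$ fully rigorous at the level of Artin stacks — in particular checking that the $\Bbb{G}_m$-action implicit in ``multiplication by $z^n$ on $\mathcal{O}(d)$'' matches the scaling action on the fibers of $\mf$ with the correct weight $n$, and that $A_*\mathcal{O}_{\widehat{\ms}}$ really is the claimed graded algebra in the stacky setting (one may need to work smooth-locally on $\ms$, reduce to an atlas, and then descend). The vanishing statement ``a line bundle of nonzero index has no global sections'' should be handled carefully: it follows because a global section is $\Bbb{G}_m$-invariant by properness/connectedness considerations after pulling back along the $\Bbb{G}_m$-action map $\Bbb{G}_m\times\ms\to\ms$, and invariance of a section of a bundle on which $\Bbb{G}_m$ acts with nonzero weight forces it to be zero. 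These are routine but need the stack-theoretic bookkeeping to be spelled out; everything else is formal.
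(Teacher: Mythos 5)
Your proof is correct in substance but follows a genuinely different route from the paper's. The paper argues object-by-object: for (i) it evaluates a trivializing section of $\mg$ at pairs $(x,\eta)$ and uses the isomorphism $(x,\eta)\cong(x,t^n\eta)$ to force the weight to be a multiple of $n$; for (ii) it descends the $n$th tensor power $\mk^n$ of a line bundle on $\widehat{\ms}$ to $\ms$ by choosing $n$th roots of $t$ and checking independence of the choice (hence only a rational, not integral, surjectivity); for (iii) it notes that effectivity kills the $\mu_n$-ambiguity so the descent works integrally. You instead globalize: you identify $A:\widehat{\ms}\to\ms$ with the $\Bbb{G}_m$-torsor underlying $\mf$ (the complement of the zero section of $\mf^{-1}\otimes\mathcal{O}(d)_p$, using that an isomorphism of degree-$d$ line bundles on $\Bbb{P}^1$ in a family is determined by, and reconstructible from, its value at $p$ via $H^0(\Bbb{P}^1,\mathcal{O})=\Bbb{C}$), and then invoke the excision sequence for (i), the affine pushforward $A_*\mathcal{O}_{\widehat{\ms}}=\bigoplus_k\mf^{k}$ for (iii), and the vanishing of sections of line bundles of nonzero index (which holds because the $\Bbb{C}^*$ here acts through the inertia, so any section is automatically invariant and must vanish where the weight is nonzero). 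Your approach is cleaner and yields the stronger statement that $A^*$ is integrally surjective; the paper's approach trades that for weaker hypotheses on $\ms$.

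Two points need attention. First, your surjectivity of $\Pic(\ms)\to\Pic(\widehat{\ms})$ (extending a line bundle from the torsor across the zero section, and the homotopy invariance $\Pic(\operatorname{Tot}(\mf^{-1}))=\Pic(\ms)$) uses smoothness of $\ms$, which the lemma as stated does not assume; this is harmless for the paper's applications but should be flagged. Second, in (iii) the phrase ``effectivity is preserved in both directions'' only treats bundles already of the form $A^*\ml$; for surjectivity of $\Pic^+(\ms)\to\Pic^+(\widehat{\ms})$ you must also observe that an effective $\mk$ on $\widehat{\ms}$ has trivial $\mu_n$-inertia action, so that when you write $\mk=A^*\ml$ the index of $\ml$ is divisible by $n$ and can be normalized to zero by twisting by a power of $\mf$, after which $H^0(\ms,\ml)=H^0(\widehat{\ms},\mk)\neq 0$. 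This is exactly the step the paper isolates in its proof of (iii).
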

\begin{proof}
Suppose the line bundle $\mg$ on ${\ms}$ pulls back  to the trivial bundle on $\widehat{\ms}$ under \eqref{temper3}. Therefore for every choice of $x$ and isomorphism $\eta:\ml(x)\to \mathcal{O}_{\Bbb{P}^1}(d)$
we get a non zero element in $s(x,\eta)\in\ml(x)$.
There is an isomorphism between the  objects $(x,\eta)$ and $(x,t^n\eta)$ of $\widehat{\ms}$ (induced by the $\Bbb{C}^*$ action). Therefore $s(x,t^n\eta)=t^m s(x,\eta)$ forcing $m$ to be a multiple of $n$. It is easy to see that replacing $\mg$ by $\mg\tensor\mf^{m/n}$ we  may assume that $m=0$ and get that $\mg$ is itself trivial. This proves (i).

For (ii) let $\mk$ be an arbitrary line bundle on $\widehat{\ms}$. Given $x\in\ms$ choose an arbitrary isomorphism $\eta:\ml(x)\to \mathcal{O}_{\Bbb{P}^1}(d)$. We will show that the tensor powers $\mk^n_{x,\eta}$ and $\mk^n_{x,t\eta}$ are canonically isomorphic for any $t\in \Bbb{C}^*$. We have given isomorphisms
$\mk_{x,\eta}\to \mk_{x,t^n\eta}$. One gets an action of $\mu_n$ on $\mk_{x,\eta}$, which is therefore
trivial on the tensor power $\mk^n$. Let $t'$ be an $n$th root of $t$, we get an isomorphism between $\mk^n_{x,\eta}$ and $\mk^n_{x,t\eta}$ by using the $n$th tensor power of the given isomorphism $\mk_{x,\eta}\to \mk_{x,t'^n\eta}$. This isomorphism does not depend upon the choice of $t'$  since any two choices have a ratio in $\mu_n$ and hence can be used to define a line bundle corresponding to $\mk^n$ on ${\ms}$ of index $0$. The map in (ii) is therefore surjective. By (i) the map is also injective.

For (iii), note that effective line bundles on ${\ms}$ have index zero. We only need to show the surjection part, and proceed as in (ii), and note that the action of $\mu_n$ on an effective line bundle $\mk$ on $\widehat{\ms}$ is trivial. The map on global sections is an isomorphism by a similar argument.
\end{proof}
\subsection{Some remarks on induction in the case $d=D=0$}
Consider the open substack $U$ of $\widetilde{\Par}_{r,\mathcal{O},S}$ formed by bundles which are trivial. Let $\mv\in U$. We have a natural
isomorphism $H^0(\pone,\mv)\to\mv_x$. Since $H^1(\pone,\mv)$ is trivial, we have $\mathcal{D}(\mv^*)\leto\sim \det(\mv_x)$. It is easy to see that the corresponding map on $\widetilde{\Par}_{r,\mathcal{O},S}$ has a simple pole along the complement of $U$.

This plays a role in the analysis of the induction of $\mathcal{D}(\mv^*)$. Assume $1\not\in I^i$ for any $i$. The induction of $\mathcal{D}(\mv^*)$ will be exactly equal to the induction of $\det(\mv_x)$ as long as the locus $L$  in $\widehat{\Omega}^0(d,r,\mn,n,\vec{I})$ where the subbundle is non-trivial is codimension $\geq 2$. Clearly the locus $L$ lies over points of ${\Par}_{n,\mathcal{O},S}$
 where the underlying rank $n$-bundle is non-trivial. A count of the number of non-trivial $\mv$ over $\mw=\mathcal{O}^{\oplus n}\oplus \mathcal{O}(1)\oplus\mathcal{O}(-1)$ (with general flags) was made in \cite[Section 9.6]{BGM}. If $\vec{\lambda}$ are the corresponding $\op{SL}(r)$ weights, then the number of non-trivial $\mv$ is equal to the rank of $\op{SL}(r)$ conformal blocks of level  $n-r-1$. If this number is zero, then there are no non-trivial
 $\mv$ in codimension 1, and the induction of $\mathcal{D}(\mv^*)$ coincides with the induction of $\det\mv$. If this number is one, then the dual situation in $\Gr(n-r,n)$ has the same properties as above (see \cite[Prop 1.1]{BGM}) so that the determinant of cohomology of $\mq$ has a formula in terms of $\mq_x^*$. Since $1\not\in I^i$ for any $i$, we have $\mathcal{D}(\mv^*) \tensor \mathcal{D}(\mq^*)= \mathcal{D}(\mw^*)$, we will then get formulas for $\mathcal{D}(\mv^*)$.

\subsection{Vertices always lie on regular faces}
The following easy property stated in the introduction shows that we have constructed all vertices of
$P_s(n)$.
\begin{lemma}\label{pinky}
Any vertex of $P_{s}(n)$ is on a wall given by equality in one of the inequalities \eqref{wall}, with
$\langle\sigma_{I^1}, \sigma_{I^2},\dots,\sigma_{I^s}\rangle_d= 1$.
\end{lemma}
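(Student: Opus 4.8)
The plan is to show that a point $\vec{a}\in P_n(s)$ which does not lie on any wall of the form \eqref{wall} with $\langle\sigma_{I^1},\dots,\sigma_{I^s}\rangle_d=1$ cannot be a vertex, by producing a nonzero direction in which one can move within $P_n(s)$ from $\vec{a}$. First I would recall that, by Theorem \ref{ABW}, $P_n(s)$ is cut out inside $\Delta_n^s$ by the system of inequalities \eqref{wall} (one for each datum $(d,r,n,I^1,\dots,I^s)$ with Gromov-Witten number equal to $1$) together with the defining inequalities of the simplex $\Delta_n^s$ itself (the ``alcove wall'' inequalities $a^j_1\ge a^j_2\ge\cdots\ge a^j_n\ge a^j_1-1$ and the trace-zero conditions $\sum_k a^j_k=0$). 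A vertex of $P_n(s)$ is by definition a $0$-dimensional face, hence it is cut out by requiring equality in a collection of these inequalities whose normal vectors span the ambient space $\frh_n^s$ (of dimension $s(n-1)$).

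The key observation is that the alcove-wall inequalities cannot, by themselves, contribute enough independent equalities: if $\vec{a}$ lies on no regular facet, then the only equalities it can satisfy are alcove-wall equalities $a^j_k=a^j_{k+1}$ (for various $k\in[n-1]$, including the cyclic one $a^j_n=a^j_1-1$) at each of the $s$ coordinates, and the trace-zero conditions. But the equalities $a^j_k=a^j_{k+1}$ for $k=1,\dots,n-1$ at a single coordinate $j$ force $a^j$ to be a fixed scalar point of $\Delta_n$ (a center $\zeta^m I_n$), and any proper subset of them leaves a positive-dimensional affine space of solutions in $\Delta_n$. Concretely, for each $j$ the set of $a^j\in\Delta_n$ satisfying only alcove-wall equalities is a face of the simplex $\Delta_n$; if that face is not a vertex of $\Delta_n$ (equivalently, not the scalar point), it has positive dimension, giving a free direction. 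So I would argue: if $\vec{a}$ is a vertex lying on no regular facet, then at \emph{every} coordinate $j$ the point $a^j$ must be the scalar point of $\Delta_n$, i.e.\ $a^j$ corresponds to $\zeta^{m_j}I_n$ for some $m_j$. Then $\vec{a}$ is a ``trivial'' vertex. But such $\vec{a}$ \emph{does} lie on a regular facet: one exhibits an explicit datum $(d,r,I^1,\dots,I^s)$ with $\langle\cdots\rangle_d=1$ for which \eqref{wall} is an equality at $\vec{a}$ — for instance taking $r=1$, where the relevant Gromov-Witten numbers are the structure constants of quantum cohomology of $\mathbb{P}^{n-1}$ (all equal to $0$ or $1$), and the inequality \eqref{wall} for $I^j=\{k_j\}$ reads $\sum_j a^j_{k_j}\le d$; one checks that for scalar $a^j=\frac{1}{n}(n-m_j,\dots)-$normalized data, choosing the $k_j$ and $d$ appropriately (with $d$ determined by $\sum m_j$) gives equality and the GW number is $1$. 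This contradicts the assumption, completing the argument.

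The main obstacle I anticipate is the last step: verifying cleanly that every trivial (scalar) vertex lies on some regular facet, i.e.\ producing the explicit rank-$1$ (or small-rank) Gromov-Witten datum with number $1$ and checking that \eqref{wall} holds with equality. The bookkeeping with the normalization $\sum_k a^j_k=0$ versus the weight/partition coordinates, and the precise choice of degree $d$ so that the codimension condition $\sum_a\operatorname{codim}\sigma_{I^a}=dn+r(n-r)$ of Definition \ref{GW} is met while keeping the GW number equal to $1$, is where care is needed. An alternative, possibly cleaner, route for this step is to invoke Proposition \ref{b9}: vertices of $P_{n}(s)$ correspond to extremal rays of $\Pic^+_{\mathbb{Q}}(\Par_{n,\mathcal{O},S})$, and then use the description of that cone (three families of inequalities: alcove \eqref{weyl}, $\ell\ge 0$, and eigenvalue \eqref{wallnew}) to run the same dimension-counting argument there — the alcove inequalities plus $\ell\ge0$ span too small a space, so some eigenvalue inequality \eqref{wallnew} must be tight on any extremal ray, and that is exactly a regular face. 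I would present whichever of these two is shorter, defaulting to the direct combinatorial argument on $P_n(s)$ and falling back to the $\Pic^+_{\mathbb{Q}}$ picture only if the trivial-vertex verification becomes unwieldy.
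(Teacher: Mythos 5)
Your first step coincides with the paper's: if no regular inequality is tight at a vertex $\vec{a}$, then $\vec{a}$ must be a vertex of the ambient simplex $\Delta_n^s$, hence each $a^j$ is the class of a central element $\zeta^{m_j}I_n$. Where you diverge is in disposing of these trivial vertices. You propose to exhibit, for each tuple of central classes, an explicit rank-one datum $(d,1,I^1,\dots,I^s)$ with Gromov--Witten number $1$ for which \eqref{wall} is an equality --- and you correctly identify this as the delicate point. The bookkeeping is real: e.g.\ the naive choice $k_j=1$ for all $j$ usually violates the dimension condition $\sum_j(n-k_j)=dn+(n-1)$, and one has to distribute some of the $k_j$ into the lower blocks of the $a^j$ and adjust $d$ accordingly; this works out (I checked several cases) but requires a genuine case analysis that your sketch does not carry out. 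The paper avoids this entirely with a soft argument: if no inequality \eqref{wall} is tight at $\vec{a}$, then by continuity an entire neighbourhood of $\vec{a}$ in $\Delta_n^s$ would lie in $P_n(s)$; but fixing the first $s-1$ coordinates to be central forces $A_s=(A_1\cdots A_{s-1})^{-1}$ to be a \emph{specific} central element, so the fibre of $P_n(s)$ over those $s-1$ central coordinates is a single point, and perturbing $a^s$ alone already exits $P_n(s)$. This one-line fibre argument is what your explicit construction would buy only after the combinatorial verification; on the other hand, your construction, if completed, would produce an actual regular facet through each trivial vertex, which is more information than the lemma asserts.

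One caution about your fallback via Proposition \ref{b9}: the claim that ``the alcove inequalities plus $\ell\ge 0$ span too small a space, so some eigenvalue inequality must be tight on any extremal ray'' is not correct as a pure dimension count. The $s(n-1)$ equalities $\lambda^j_k=\lambda^j_{k+1}$ ($k\in[n-1]$) already cut out a one-dimensional subspace of $(\frh^*_{\Bbb{Q}})^s\times\Bbb{Q}$, namely the ray of the determinant of cohomology $\mathcal{D}$, which is a genuine extremal ray of $\Pic^+_{\Bbb{Q}}(\Par_{n,\mathcal{O},S})$; the dimension count therefore only yields the same conclusion as your direct argument (the ray is ``trivial''), and the second step cannot be skipped there either.
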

\begin{proof}
If a vertex $\vec{a}$ of $P_{s}(n)$ is not on such a wall, then $\vec{a}$ is a vertex of $\Delta_n^s$ which is in $P_{n}(s)$. Such vertices correspond to scalar matrices $\zeta^{m_i}I$ with $\sum m_i$ a multiple of $n$, and $\zeta$ a primitive $n$th root of unity.

Furthermore, any point of $\Delta_n^s$ in a suitable neighbourhood of $\vec{a}$ would again be in $P_n(s)$, but this is a contradiction because the only point in $P_{s}(n)$  with first $s-1$ coordinates equalling $\zeta^{m_i}I$, $i=1,\dots,s-1$ is $\vec{a}$, since the entries of $\vec{a}$ correspond to central elements of $\operatorname{SU}(n)$ (such an argument breaks down in other types).
\end{proof}
\section{Examples}\label{somemore}
In the following we have computed Gromov-Witten numbers in the following way: Convert them into ranks of conformal block bundles by cyclically shifting of the weights $d$ times using the formulas given in Section \ref{QCB}. The ranks of conformal blocks were computed using Swinarski's software package \cite{Swin}.

\subsection{An example from $\op{Gr}(2,4)$}\label{oldie}
Let $I^1=\{1,4\}$,  $I^2=I^3=\{1,3\}$, and $d=1$. The Gromov-Witten number $\langle \sigma_{I^1},\sigma_{I^2},\sigma_{I^3}\rangle_1$ equals one by using formulas from Section \ref{stuffed}. Consider the pair $(1,4)$, and the divisor $D(1,4)$ this corresponds to $J^1=J^2=J^3=\{1,3\}$, and $d'=1$. Writing $\mathcal{O}(D(a,j))=\mathcal{B}(\vec{\lambda},\ell)$, we find formulas for $\lambda^1=\lambda^2=\lambda^3$ and $\ell$ using Theorem \ref{brahms}. The formula for $\ell$ is  $\langle \sigma_{J^1},\sigma_{J^2},\sigma_{J^3},\sigma_{J^4}\rangle_2$ with $J^4=\{1,3\}$. Using formulas from Section \ref{stuffed} again, we get
$\langle \sigma_{J^1},\sigma_{J^2},\sigma_{J^3},\sigma_{J^4}\rangle_2=\langle \sigma_{A},\sigma_{A},\sigma_{A},\sigma_{A}\rangle_{2,4}$ which in turns equals
$\langle \sigma_{A},\sigma_{A},\sigma_{A},\sigma_{A}\rangle_{0,0}=2$, where $A=\{2,4\}$, the classical count of the number of lines passing through $4$ general lines in space.

To compute $\lambda^1$ we will need $\langle \sigma_{C},\sigma_{J^2},\sigma_{J^3},\rangle_1$ and  $\langle \sigma_{D},\sigma_{J^2},\sigma_{J^3}\rangle_1$ where $C=\{2,3\}$ and  $D=\{1,4\}$. The latter number is already computed as one. The former is also easily computed to be one by the formulas in Remark \ref{stuffed}.
Therefore $\lambda^1=\lambda^2=\lambda^3= \omega_1+\omega_3=(2,1,1,0)$. The vertex in $P_4(3)$ is $(\vec{a},\vec{a},\vec{a})$ with
$\vec{a} =\frac{(1,0,0,-1)}{\ell}=(\frac{1}{2},0,0,-\frac{1}{2})$. This vertex was first found in \cite[Section 7]{BLocal}. See Section \ref{smalln} for the rank $2$ rigid unitary local system which corresponds to this vertex.

\subsection{An example of Thaddeus}
Consider the ``non-abelian vertex" on page 25 of \cite{thaddy}. Let $\sigma_1,\sigma_2$ and $\sigma_3$ be as in \eqref{dfnsigma}.
The $8\times 8$ matrices
\begin{itemize}
\item $A=\op{diag}(\sigma_1,\sqrt{-1}, -\symbo, \symbo,-\symbo,-1,-1)$
\item $B=\op{diag}(\sigma_2,\sqrt{-1},-\sqrt{-1},-1,-1, \symbo, -\symbo)$, and
\item $C=\op{diag}(\sigma_2,-1,-1,\sqrt{-1},-\sqrt{-1}, \symbo, -\symbo)$
\end{itemize}
multiply to the identity and correspond to a ``non-abelian vertex'' of $P_8(3)$. These matrices are all conjugate to each other and correspond to the point $a=(\frac{1}{2}, \frac{1}{4},\frac{1}{4},\frac{1}{4},-\frac{1}{4},-\frac{1}{4},-\frac{1}{4}, -\frac{1}{2})$. The point $(a,a,a)\in P_8(3)$ comes from one of our basic extremal rays for the data
$d=2$, $\Gr(4,8)$, $I^1=\{2,3,4,7\}$, $I^2=I^3=\{1,3,4,7\}$ with $a=2$ and $j=1$.  We compute
$\langle\sigma_{I^1},\sigma_{I^2},\sigma_{I^3}\rangle_2=1$, and the level $\ell$ is computed as $4$, and
$$\lambda^1=\lambda^2=\lambda^3=(4,3,3,3,1,1,1,0)=\omega_1+2\omega_4+\omega_7.$$ It is easy to see that
$\frac{1}{\ell}\kappa(\lambda)=a$.
\subsection{The strange dual of Thaddeus' example}\label{E1}
We compute the strange dual of the data consisting of $\lambda^1=\lambda^2=\lambda^3=(4,3,3,3,1,1,1,0)=\omega_1+2\omega_4+\omega_7$ at level $\ell=4$.
The data of $\mathcal{A}$ corresponding to the F-line bundle $\mathcal{B}(\vec{\lambda},\ell)$ on
$\Par_{8,\mathcal{O},S}$ (by Theorem \ref{egregium} and Proposition  \ref{corresp}) is $\mu^1=\mu^2=\mu^3=(7,4,4,1)$ with $n=8$.
This point corresponds to a rigid, irreducible unitary local system of rank $4$ on $\Bbb{P}^1-\{p_1,p_2,p_3\}$, with the same local monodromy ($\zeta^{-1},-1,-1,\zeta)$ at $p_1,p_2,p_3$, where
$\zeta=\zeta_8=\exp(2\pi\sqrt{-1}/8)$. By Prop \ref{propP}, this local system has infinite global monodromy.

There are 3 distinct eigenvalues at each point. The rigidity equation \eqref{egale} holds: $2 +(3-2)4^2= 3(2^2+1^2+1^2)$. This rigid local system belongs to the Goursat-III family of rigid local systems of rank $4$  (see Goursat's 1886 paper \cites{EGoursat}).

There is only one possible (optimal) Katz level lowering operation (see Section \ref{earlyfeedback}) for this example, because  at each point the eigenvalue with maximal multiplicity is unique. By a simple calculation, the resulting local system is of rank $2$, which is after a tensor product with a rank one local system, a local system on $\Bbb{P}^1-\{p_1,p_2,p_3\}$ with local monodromies $(-\zeta,-\zeta^{-1})=(\zeta^5,\zeta^4)$, $(1,\zeta^2)$ and $(1,\zeta^{-2})=(1,\zeta^6)$. This is a local system of hypergeometric type which is not unitary (see Theorem \ref{BHhere} below), since $(\zeta^{-5},\zeta^{-4})$ and
$(1,\zeta^2)$ do not interlace on the unit circle.

However  the choice of eigenvalues $-1$, $-1$ and $\zeta$ for the Katz operations at the three points gives rise to a rank $3$ unitary local system of hypergeometric type (after tensoring with a rank one)  with local monodromies $(-\zeta^{2},-1,1)=(\zeta^6,\zeta^4,1)$,$(-\zeta^{2},-1,1)=(\zeta^6,\zeta^4,1)$ and $(-\zeta^{-1},-\zeta^{-1},\zeta^{-2})=(\zeta^3,\zeta^3,\zeta^6)$. The unitarity is checked using Theorem \ref{BHhere}: Consider the local system corresponding to a twist by a rank one local system which has local monodromies $(1,1,\zeta^3)$, $(\zeta^6,\zeta^4,1)$ and $(\zeta,\zeta^3,\zeta^7)$. We may take $(\alpha_1,\alpha_2,\alpha_3)=(0,\frac{4}{8},\frac{6}{8})$ and $(\beta_1,\beta_2,\beta_3)=(1-\frac{7}{8},1-\frac{3}{8},1-\frac{1}{8})=(\frac{1}{8},\frac{5}{8},\frac{7}{8})$. The interlacing condition
therefore holds.  Also see Lemma \ref{hyperreduction}, and Question \ref{genphen}.
\subsection{An example that gives the determinant of cohomology  line bundle}
Consider  the case $d=0$, $1\in I^1$, $n\not\in I^1$. Consider $D(1,1)$, noting that  $d'=-1$.
The divisor $D(1,1)$ is clearly supported on  the divisor of $\Par_{n,\mathcal{O},S}$ where the underlying vector bundle is not trivial as a bundle (for otherwise it cannot have subsheaves of positive degree).  The corresponding line bundle is $\mathcal{D}$,  the determinant of cohomology. and hence $D(1,1)=\mathcal{D}$ a fact that follows from our numerical formulas as well.
It is interesting to note that we cannot have $1\in I^2, n\not\in I^2$  as well because the corresponding classical intersection number is zero since the line  $E^{p_1}_1$ is not  a subset of  $E^{p_2}_{n-1}$ for general flags.

\subsection{A rank $6$ rigid local system}\label{wilson}
This example is due to A. Wilson. In the context of Proposition  \ref{practical}, consider the cycle $E=C(0,3,\mathcal{O},9,\vec{J})$ with $|S|=3$ on $\Par_{9,\mathcal{O},S}$ with
$J^1=\{2,6,9\}$, $J^2=J^3=\{3,6,9\}$. The line bundle $\mathcal{O}(E)=\mathcal{B}(\vec{\lambda},\ell)$ computed by Proposition  \ref{enumerative} has $\ell=6$ and $\lambda^1= 3\omega_2+2\omega_6$ and $\lambda^2=\lambda^3=2\omega_3+ 2\omega_6$. The inequality in Proposition  \ref{practical} (ii) holds and therefore the line bundle $\mathcal{B}(\vec{\lambda},\ell)$ is an F-line bundle on $\Par_{9,\mathcal{O},S}$. The corresponding unitary irreducible rigid local system is of rank $6$ with multiplicity decompositions $(3,2,1)$, $(2,2,2)$ and $(2,2,2)$ at $p_1,p_2$ and $p_3$. The rigidity equation \eqref{eqRi} holds:
$6^2+2=(3^2+2^2+1^2) + 2(2^2+2^2+2^2)$. The eigenvalues of the local monodromies at the 3 points are: at $p_1$, $\zeta^2$ (with multiplicity $3$) $\zeta^6$ (multiplicity $2$) and $1$ with multiplicity $1$; and at $p_2$ and $p_3$, the eigenvalues are $\zeta^3,\zeta^6$ and $1$ (with multiplicity $2$ each). Here $\zeta=\exp(2\pi\sqrt{-1}/9)$. The multiplicity of one at the first point shows that the cycle is of the form $\mathcal{O}(D(1,1))$ for $I^1=\{1,2,6\}$ and $I^2=I^3=\{3,6,9\}$, $d=1$ in Theorem \ref{Adagio} on $\Gr(3,9)$.

The Katz rank lowering algorithm (see Section \ref{earlyfeedback}) when applied to this rank six irreducible unitary rigid local system leads to irreducible rigid local systems of rank $5$ with non-semisimple local monodromy for some  of the choices in the algorithm (e.g., we pick the eigenvalues of maximal multiplicity to be $\zeta^2$, $1$ and $1$ at $p_1,p_2$ and $p_3$ respectively.

\subsection{An infinite collection of irreducible  unitary rigid local systems}\label{KO}
J. Kiers and G. Orelowitz have reported the following series of examples that they have worked out in the context of  their joint work with S. Gao and A. Yong \cite{Kiers_etal}.

Let  $\Gr(k,n)$ with $n=3k-1$ for $k\ge2$, $I^1=I^3=\{2,5,8,\dots,3k-1\}$, $I^2=\{k,2k+1,2k+2,\dots 3k-1\}$ and $d=0$. The corresponding intersection number is computed to be one (this is a case of the Pieri formula).  In the setting of Theorems \ref{Adagio} and \ref{brahms}, consider $D(2k+1,2)=\mathcal{B}(\vec{\lambda},\ell)$. Here $d'=0$ and $J^1=J^3=I^1=I^3$, and $J^2$ is obtained by replacing the $2k+1$ in $I^2$ by $2k$.

After a careful combinatorial count of  intersection numbers (using the Littlewood-Richardson rule, the rim-hook formulas for the quantum numbers \cite{BCFF}, and Remark \ref{earlyAM}), Kiers and Orelowitz have obtained the following formulas: $\ell=(k-1)^2+1$,
$\lambda^1=\lambda^3= (k-1)(\omega_2 +\omega_5 + \omega_8+ \dots +\omega_{3k-4})$ and $\lambda^2=(k-1)\omega_{k} +\omega_{2k}$.

Setting $\zeta=\exp(2\pi\sqrt{-1}/n)$, the corresponding irreducible unitary rigid local system has rank $(k-1)^2+1$ and has local monodromy at $p_1$ and $p_3$ conjugate to the diagonal matrix with the following eigenvalues: $\zeta^2,\zeta^5,\zeta^8,\dots, \zeta^{3k-4}$ (each with multiplicity $k-1$) and $1$ (with multiplicity $1$). The local monodromy at $p_2$ is conjugate to the diagonal matrix with eigenvalues $\zeta^k$ (with multiplicity $k-1$), $\zeta^{2k}$ (with multiplicity $1$) and $1$ (with multiplicity $(k-1)(k-2)$).  The rigidity equation  \eqref{eqRi}  can be verified from these formulas.

For $k\ge 5$, these examples violate the speculation $\ell<n$ from an earlier version of this paper.

\subsection{An example of a vertex which is not an F-vertex}\label{Rex}
The following example of Ressayre (reported in \cite[Section 10.5]{BHermit}) was made in the classical context:
Consider $G=\op{SL}(9)$, and  $\lambda=\omega_8+\omega_7+\omega_3$, $\mu=\nu=\omega_6+\omega_3$. The ray $\Bbb{Q}_{\ge 0}(\lambda,\mu,\nu)$ is an extremal ray of the tensor cone, but the rank of $(V_{\lambda}\tensor V_{\mu}\tensor V_{\nu})^G$ is $2>1$. When converted into a conformal block, the level is $4$, but the tuple $(\lambda,\mu,\nu,3)$ at level three has the same rank. Any level less than three will not
lead to a point in the fundamental alcove (for the first point) and therefore $(\lambda,\mu,\nu,3)$ gives an extremal ray of $\Par_{9,\mathcal{O,S}}$ with $|S|=3$ (use Remark \ref{classicall}, any expression of a multiple as a sum will need to project to the trivial decomposition in the classical context, so one of the summands should be a multiple of $\mathcal{D}$, making the other summand have levels that violate \eqref{weyl} in Theorem \ref{blacktea} at the first point),  and hence $(\lambda,\mu,\nu,3)$ leads to a vertex of $P_8(3)$.

Analysing this example further as in \cite[Section 10.5]{BHermit}, we take the classical intersection $d=0$ and $I_1=\{3,7,8\}$, $I_2=I_3=\{3,6,9\}$ in $\Gr(3,9)$, and show that $(\lambda,\mu,\nu,3)$ is induced from the extremal ray corresponding to $\op{SL}(3)$, and the tuple $(\omega_2,\omega_2,\omega_2,1)$ (which is a basic extremal ray  coming from Theorem \ref{Adagio} for $\op{SL}(3)$).

\section{Some classical irreducible rigid local systems}\label{classH}
The hypergeometric functions ${}_{\ell}F_{\ell-1}$ which give irreducible rigid rank $\ell$ local systems on $\Bbb{P}^1-\{0,1,\infty\}$, and the Pochhammer hypergeometric functions, which give irreducible rank $\ell$ local systems on $\Bbb{P}^1-\{\ell+1\text{ points}\}$, are classically well-studied rigid local systems. We recall \cite{BH, Haraoka} criteria for when these are unitary. The corresponding strange duals (Theorem \ref{egregium}) will give us F-vertices of suitable $P_k(3)$ and $P_k(\ell+1)$ respectively. This produces a rich collection of vertices in the multiplicative eigenvalue problem.
\subsection{The hypergeometric functions ${}_{\ell}F_{\ell-1}$}\label{hyper}
Let $\alpha_1,\dots,\alpha_{\ell};\beta_1,\dots,\beta_{\ell}\in \Bbb{C}$. Let $D=z\frac{d}{dz}$ and consider the differential equation
\begin{equation}\label{math383}
z(D+\alpha_1)\cdots (D+\alpha_{\ell})F=(D+\beta_1-1)\cdots (D+\beta_{\ell}-1)F
\end{equation}
By classical theory this is a differential equation with regular singularities, which are at $0,1,\infty$.
The local exponents are $1-\beta_1,\dots,1-\beta_{\ell}$ at $z=0$, $\alpha_1,\dots,\alpha_{\ell}$, at $z=\infty$ and
$0,1,\dots,\ell-2, -1+ \sum_{i=1}^{\ell}(\beta_i-\alpha_i)$ at $z=1$ (the local monodromies are $\exp(2\pi\sqrt{-1}x)$ where $x$ runs through the local exponents).
\begin{defi}
For $\alpha\in \Bbb{R}$ define its fractional part $\langle \alpha\rangle$ by $\langle \alpha\rangle=\alpha-\left \lfloor{\alpha}\right \rfloor $.
\end{defi}

\begin{theorem}\label{BHhere}\cite[Corollary 4.7]{BH}
The monodromy of the differential equation \eqref{math383} can be conjugated to the unitary group $U(\ell)$ if and only if the $\alpha_i$ and $\beta_i$ are real, and
either of the following interlacing conditions hold (after rearranging the $\alpha_i$ and $\beta_i$ so that their fractional parts are weakly increasing):
$$\langle\alpha_1\rangle <\langle\beta_1\rangle<\langle\alpha_2\rangle <\langle\beta_2\rangle<\dots<\langle\alpha_{\ell}\rangle <\langle\beta_{\ell}\rangle,\ \ \ \text {or }\ \langle\beta_1\rangle<\langle\alpha_1\rangle <\langle\beta_2\rangle<\langle\alpha_2\rangle <\dots<\langle\beta_{\ell}\rangle<\langle\alpha_{\ell}\rangle .$$
\end{theorem}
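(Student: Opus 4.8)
\emph{(Theorem \ref{BHhere} is the Beukers--Heckman criterion \cite[Corollary 4.7]{BH}; here is the line of argument I would follow to prove it.)} Throughout impose the genericity condition $\alpha_i-\beta_j\notin\Bbb Z$, so that, writing $a_j=\exp(2\pi\sqrt{-1}\,\alpha_j)$ and $b_j=\exp(2\pi\sqrt{-1}\,\beta_j)$, the polynomials $\prod_j(X-a_j)$ and $\prod_j(X-b_j)$ are coprime and \eqref{math383} defines an \emph{irreducible rigid} rank $\ell$ local system $V$ on $\Bbb P^1-\{0,1,\infty\}$: rigidity is the identity \eqref{eqRi} with $s=3$, the monodromies at $0$ and $\infty$ being regular semisimple (contributing $\ell$ each) and the one at $1$ a pseudo-reflection (contributing $(\ell-1)^2+1$), for a total of $\ell^2+2$. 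By Levelt's normal form the monodromy $\rho:\pi_1\to\op{GL}(\ell,\Bbb C)$ is determined up to conjugacy by the two unordered tuples $\{a_j\}$, $\{b_j\}$: there are generators with $h_\infty h_1h_0=I_\ell$, where $h_\infty$ is semisimple with characteristic polynomial $\prod_j(X-a_j)$, $h_0$ semisimple with characteristic polynomial $\prod_j(X-b_j^{-1})$ (its local exponents at $0$ being $1-\beta_j$), and $h_1$ a pseudo-reflection, all in explicit companion-type form.

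\emph{Step 1: reality and the invariant Hermitian form.} If $\op{im}\rho$ is conjugate into $U(\ell)$, then $h_\infty$ and $h_0$ are unitary, hence have eigenvalues on the unit circle, forcing all $\alpha_i,\beta_i\in\Bbb R$; this is the necessity of the reality hypothesis. Conversely assume all $\alpha_i,\beta_i\in\Bbb R$, so $|a_j|=|b_j|=1$. Then the complex-conjugate dual $\overline{\rho}^{\,\vee}$ has local monodromy eigenvalues $\overline{a_j}^{\,-1}=a_j$ around $\infty$, $\overline{b_j^{-1}}^{\,-1}=b_j^{-1}$ around $0$, and the conjugate-inverse pseudo-reflection around $1$, i.e.\ the same conjugacy classes as $\rho$; by rigidity $\overline{\rho}^{\,\vee}\cong\rho$, and by Schur's lemma (irreducibility) this isomorphism is implemented by a Hermitian form $H$ on $\Bbb C^\ell$, invariant under $\op{im}\rho$ and unique up to a real scalar. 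Hence $V$ is unitarizable if and only if $H$ is definite, and the whole content reduces to computing the signature $(p,q)$, $p+q=\ell$, of $H$.

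\emph{Step 2 (the crux): the signature.} Using the explicit normal form one writes $H$ down and expresses its signature through the cyclic arrangement on the unit circle of the $2\ell$ distinct points $a_1,\dots,a_\ell,b_1,\dots,b_\ell$. The assertion to prove is that $|p-q|$ equals $\ell$ minus the number of adjacent equal-label pairs ($aa$ or $bb$) in the cyclic word, so that $H$ is definite precisely when the $a_j$ and $b_j$ \emph{alternate} around $S^1$; and that this alternation is exactly the interlacing condition in the statement — the two displayed inequality chains being two ways of slicing open one and the same alternating circular arrangement of the $2\ell$ points, after relabelling the unordered tuples. The natural way to establish the signature formula is a continuity argument: deform $(\alpha_i,\beta_i)$ within the real, generic locus; $\op{sign}(H)$ is locally constant and jumps only when some $a_j$ crosses some $b_k$ on $S^1$; across such a wall the pair $(\rho,\overline\rho^{\,\vee})$ degenerates in a rank-one fashion and $\op{sign}(H)$ changes by $\pm1$ in a direction dictated by whether an $aa$, $ab$, or $bb$ adjacency is created; one fixes the additive constant by evaluating $H$ directly in a transparent base configuration (all $\alpha_i$ packed into a tiny arc, all $\beta_i$ into the complementary arc), where $H$ diagonalizes by hand with a nearly balanced signature. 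Summing the jumps along a path from the base configuration to the target configuration gives the formula, and with it the theorem. (Beukers--Heckman reach the same conclusion by extracting $\op{sign}(H)$ from an explicit integral representation of the invariant form; either route suffices.)

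\emph{Remark on an alternative.} One may instead derive unitarizability from the Mehta--Seshadri dictionary used throughout this paper: with $\ell$ in place of $n$, $|S|=3$, and the local monodromy at $1$ a pseudo-reflection, ``$V$ unitarizable'' is equivalent to membership of the associated point of $\Delta_\ell^3$ in $P_\ell(3)$, whereupon the inequalities of Theorem \ref{ABW} — which for a Pieri/reflection class collapse to a short chain — become equivalent to the interlacing condition. The main obstacle either way is Step 2: pinning down the signature of the rigidity-produced Hermitian form in terms of the combinatorics of the two circular tuples. The set-up and Step 1 are formal once one has Levelt's normal form and the rigidity of $V$; the genuine work is the explicit control of $\op{sign}(H)$, by wall-crossing in the real parameter space or by a direct integral formula.
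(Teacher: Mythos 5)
This statement is imported verbatim from Beukers--Heckman (it is cited as \cite[Corollary 4.7]{BH}); the paper gives no proof of it, so there is nothing internal to compare your argument against. Your sketch does reconstruct the correct architecture of the original proof: genericity $\alpha_i-\beta_j\notin\Bbb{Z}$ gives irreducibility, Levelt's normal form plus the rigidity count $\ell+\ell+((\ell-1)^2+1)=\ell^2+2$ identifies the monodromy from the two unordered tuples, and reality of the exponents plus rigidity plus Schur produces the invariant Hermitian form $H$, unique up to a real scalar, with unitarizability equivalent to definiteness. Step 1 is correct and standard (it is the argument of \cite[Section 9.5]{Katz} specialized to this case).

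The gap is that Step 2 -- the only nontrivial step, as you yourself say -- is a plan, not a proof. Three points would need repair before the wall-crossing argument closes. First, since $H$ is only canonical up to a real scalar, only $|p-q|$ is intrinsic; you must fix a continuous normalization of $H$ along the deformation path before speaking of signed jumps, and at a reducibility wall $a_j=b_k$ the quantity $p-q$ changes by $0$ or $\pm 2$ (one eigenvalue of $H$ crosses $0$), not by $\pm 1$ as written. Second, the connected components of the real generic parameter space are separated not only by the walls $\alpha_i\equiv\beta_j$ but also by walls $\alpha_i\equiv\alpha_j$ and $\beta_i\equiv\beta_j$; on those walls the local monodromy acquires a Jordan block (so the system is never unitarizable there even though $H$ may stay definite), and you must check that $\op{sign}(H)$ does not jump across them -- otherwise the path from your base configuration does not control the target. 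Third, the base-case evaluation (``all $\alpha_i$ in a tiny arc'') is asserted, not carried out, and it is exactly there that the additive constant in the signature formula is determined. Beukers--Heckman avoid all of this by proving an explicit closed formula for $|p-q|$ in terms of the cyclic word (their Theorem 4.5) and reading off definiteness; absent that computation or a completed wall-crossing analysis, your argument establishes only the equivalence ``unitarizable $\Leftrightarrow$ $H$ definite,'' not the interlacing criterion itself.
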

\subsection{The Pochhammer equation}\label{pokhie}
Let $t_1,\dots,t_{\ell}$ be $\ell>1$ distinct points in $\Bbb{A}^1=\Bbb{P}^1-\{\infty\}$, and let $\lambda_1,\dots,\lambda_{\ell},\rho$ be complex numbers
satisfying $\sum_{i=1}^{\ell}\lambda_i\neq \ell\rho$. Let

$$A(\lambda,\rho)=
  \left[ {\begin{array}{cccc}
   \lambda_1 & \lambda_1-\rho & \dots & \lambda_1-\rho \\
   \lambda_2-\rho & \lambda_2 &\dots  & \lambda_2-\rho\\
   \vdots & \vdots &\vdots & \vdots \\
   \lambda_{\ell}-\rho & \lambda_{\ell}-\rho & \dots & \lambda_{\ell}\\
\end{array} }\right],\  T= \left[ {\begin{array}{cccc}
   t_1 & 0 & \dots & 0 \\
   0 & t_2 &\dots  & 0\\
   \vdots & \vdots &\vdots & \vdots \\
   0 & 0 & \dots & t_{\ell}\\\end{array} }\right].$$

The system of equations, with $Y=(y_1(z),\dots,y_{\ell}(z))^T$
\begin{equation}\label{math383sys}
(z-T)\frac{dY}{dz}= A(\lambda,\rho)Y
\end{equation}
is the Pochhammer system of rank $\ell$, and is known to have regular singularities. The exponents at $t_i$ are $0$ with multiplicity $\ell-1$ and $\lambda_i$ with multiplicity $1$, and
the local exponents at $\infty$ are $-\rho$ with multiplicity $\ell-1$ and $-\rho'$ of multiplicity $1$ where
$\rho'=\sum_{i=1}^{\ell}\lambda_i -(\ell-1)\rho.$

The corresponding local system is irreducible if and only if
$\lambda_1-\rho,\lambda_2-\rho,\dots,\lambda_{\ell}-\rho,\rho,\rho'$ are all not in $\Bbb{Z}$ \cite[Proposition 1.3]{Haraoka}. We will assume that this is indeed the case. In this case the monodromy group can be conjugated to $U(\ell)$, by a result of Haraoka, if and only if one of the following conditions holds \cite[Proposition 1.5]{Haraoka}
\begin{enumerate}
\item $\langle\rho\rangle <\langle\lambda_j\rangle, j=1,\dots,\ell$ and $\sum_{j=1}^{\ell} \langle\lambda_j\rangle<(\ell-1)\langle\rho\rangle +1$, or
\item $\langle\rho\rangle >\langle\lambda_j\rangle, j=1,\dots,\ell$ and $(\ell-1) \langle\rho\rangle< \sum_{j=1}^{\ell} \langle\lambda_j\rangle$.
\end{enumerate}

\section{The Katz algorithm over complex numbers and unitary rigid local systems}\label{earlyfeedback}
Let $\mu_n\subseteq\Bbb{C}^*$ be the subgroup of $n$th roots of unity. Consider the set $\op{Rig}(n,S)$ of irreducible rigid local systems $\mf$ on $\Bbb{P}^1_{\Bbb{C}}-S$ where $S\subseteq \Bbb{A}_{\Bbb{C}}^1$, all of whose local monodromies (possibly non semisimple) have eigenvalues in $\mu_n$.

In his fundamental work \cite{Katz}, Katz introduces two invertible operations on $\op{Rig}(n,S)$ called the middle convolution $\op{MC}_{c}$, $c\in \mu_n$, and, the middle  tensoring $MT_{\mathcal{L}}$ with rank one local systems $\ml$ on $\Bbb{A}^1-S$.  The effect of these operations on local monodromy classes  is given by explicit formulas. The central monodromy at $\infty\in \Bbb{P}^1$ produced by Katz's operations,  can be removed by the middle tensoring operation. The Galois group of the cyclotomic field $\Bbb{Q}(\mu_n)$ acts on this set, and on Katz's operations. If $n=1$,   $\op{Rig}(n,S)$  is a singleton by Katz's algorithm, see  e.g., \cite[Proof of Theorem 9.15]{MM}, and we will assume $n>1$.

Given an irreducible rigid local system $\mf\in \op{Rig}(n,S)$ as above of rank $>1$, Katz \cite{Katz} describes a successive application of a suitable $MT_{\mathcal{L}}$ and $\op{MC}_{c}$ which lowers the rank of the  local system. To obtain $\mathcal{L}$ and $c$, one needs to choose at each point of $S$, an eigenvalue with  the maximum dimension of the corresponding eigenspace of the local monodromy. This choice can be made canonical if we use a ``smallest angle'' criterion for  the eigenvalues  (with angles taken in $[0,2\pi)$), but this is not a Galois invariant choice.

We will picture $\op{Rig}(n,S)$  as the vertices of a graph  $\op{R}(n,S)$, with a directed arrow pointing downwards for each Katz rank lowering operation.  At the bottom, we find rank one local systems. The example of hypergeometric functions
 \cite[Theorem 3.5]{BH} shows that this graph is infinite. On the other hand it is easy to see that local systems of rank $\ell$, the height in this graph, form a finite subset for each $\ell$.

 The local systems in $\op{Rig}(n,S)$  which are unitary form a   finite, but not Galois invariant subset (Theorem \ref{egregium}), with restricted local monodromies (Proposition \ref{propP}). The characterizations of irreducible unitary rigid local systems given in Theorems \ref{ABW} and \ref{egregium} are  in terms  of the logarithms of the eigenvalues of local monodromies.

The rank lowering operations in Katz's algorithm may take an irreducible unitary rigid local system
to an irreducible rigid local system which is not unitary. A rank $4$ example  is given in Section \ref{E1}. In this example there  are no choices in the Katz algorithm (at the first step) since the eigenvalues with maximal multiplicities are unique. However, another  choice of middle tensor and convolution gives rise  to a unitary rank $3$ local system (in the Katz algorithm the rank is reduced to $2$). This rank $3$ local system is of hypergeometric type, and can be checked to be unitary. We apply a Katz lowering operation to this, and the result is a rank $2$ unitary local system (see Lemma \ref{hyperreduction}).
\begin{question}\label{genphen}
Is this a general phenomenon?, i.e., starting from a given irreducible unitary rigid local system, can we always find a sequence of Katz's operations (not necessarily the optimal ones), which  takes place entirely in the realm of irreducible unitary rigid local systems, down to rank one local systems, with the ranks decreasing in the sequence?
\end{question}
We will not hazard a guess here, but only note that if this is indeed the case, then the lower rank unitary local systems will again give rise (by Theorem \ref{egregium}) to F-vertices of $P_n(s)$ with decreasing levels, and we could try to find the analogue of Katz's operation on the set of F-vertices of $P_n(s)$, which is an interesting problem in quantum Schubert calculus.

 Question \ref{genphen} holds  for all hypergeometric local systems (see Lemma \ref{hyperreduction} below), and for all  irrreducible Pochhammer local systems (which reduce to rank one local systems in one step of the Katz algorithm).  Because of  the example  in Section \ref{E1} and the above discussion, we will add rank lowering Katz operations to the above graph (i.e., add more edges).
 \begin{question}\label{information}
Looking up from a rank one local system, what can we say about the set of paths in  $\op{R}(n,S)$ from  irreducible unitary rigid local systems down to this rank one local system?
\end{question}

See Section \ref{wilson} for an example where a Katz rank lowering operation, applied to an irreducible unitary rigid local system, produces a local system with non-semisimple local monodromy.

\begin{question}\label{Q2}
\hspace{2em}
\begin{enumerate}
\item[(A)] We may decorate the elements of  $\op{Rig}(n,S)$ with the length $h$ of the corresponding Hodge filtration  (computable using \cite{DS}) and the signature $(p,q)$ with $p\leq q$ of the corresponding variation of Hodge structure.

 Is the subset of  $\op{Rig}(n,S)$  with the length of Hodge filtration $h\leq m$ for a fixed $m$, a finite set?  The case of $m=1$  is covered by Theorem \ref{egregium}.  A consequence  in the case of hypergeometric functions is verified below.
\item[(B)] Is there an upper bound for the rank $\ell$ of a rigid local system in terms of $n$ (the orders of the local monodromies), the length  $h$ of the Hodge filtration and  $|S|$?

\item[(C)] How does the ``Katz flow''  downwards behave with respect to the length of Hodge filtration. Is there always a flow which has weakly decreasing lengths of Hodge filtrations down to rank one local systems (which have length 1)?
(This generalizes Question \ref{genphen} above).
\end{enumerate}
\end{question}

The  finiteness assertion (A) implies the following property (A'): The
number of elements of $\op{Rig}(n,S)$  with $p\leq m$, where $(p,q)$ is the signature with $p\leq q$, is a finite set. This is because the corresponding length of the Hodge filtration  is $\leq 2p+1$ (there are no gaps in the Hodge filtration by the Griffiths transversality principle, since the local systems are irreducible).

\begin{lemma}\label{lbelow}
Property (A) holds for hypergeometric local systems.
\end{lemma}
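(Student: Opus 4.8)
The statement to prove is that the finiteness property (A) holds for hypergeometric local systems, i.e., fixing $n$ and $|S|=3$, the number of hypergeometric irreducible rigid local systems on $\Bbb{P}^1-\{0,1,\infty\}$ with local monodromy eigenvalues $n$th roots of unity and length of Hodge filtration $h\leq m$ is finite, for each fixed $m$.

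\textbf{Proof proposal.} The plan is to reduce the question to an explicit counting problem using the Levelt/Katz normal form for hypergeometric local systems and the description of their Hodge data. First I would recall that a rank $\ell$ hypergeometric local system on $\Bbb{P}^1-\{0,1,\infty\}$ is determined up to isomorphism by the unordered pair of tuples $(\alpha_1,\dots,\alpha_\ell)$ and $(\beta_1,\dots,\beta_\ell)$ of local exponents at $\infty$ and $0$ (with the genericity/irreducibility condition $\alpha_i-\beta_j\notin\Bbb{Z}$ for all $i,j$), the local monodromy at $1$ being a pseudo-reflection. Since all local monodromies have eigenvalues that are $n$th roots of unity, we may take $\alpha_i=a_i/n$ and $\beta_j=b_j/n$ with $a_i,b_j\in\{0,1,\dots,n-1\}$; thus for each fixed $\ell$ there are only finitely many such local systems. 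The entire content of the lemma is therefore that $h\leq m$ forces an upper bound on the rank $\ell$.

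Next I would invoke the Hodge-theoretic description of hypergeometric local systems, due to Fedorov (building on Corti–Golyshev and the Dettweiler–Sabbah algorithm \cite{DS}): the length of the Hodge filtration, i.e.\ the number of nonzero Hodge numbers $h^{p,q}$ of the underlying complex variation of Hodge structure, is governed by the combinatorics of how the fractional parts $\langle\alpha_i\rangle$ and $\langle\beta_j\rangle$ interlace on the circle $\Bbb{R}/\Bbb{Z}$. Concretely, after ordering all $2\ell$ numbers $\langle\alpha_i\rangle,\langle\beta_j\rangle$ cyclically, the jumps in the Hodge filtration occur precisely at the places where one passes from a $\beta$ to an $\alpha$ (or vice versa) in this cyclic order, and the number of distinct Hodge pieces equals (up to a bounded additive constant) the number of such sign changes in the cyclic word built from the $\alpha$'s and $\beta$'s. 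The key step is then: if the word has at most $m$ sign changes, it breaks into at most $m$ maximal blocks of consecutive equal symbols, so the $\alpha$'s (resp.\ $\beta$'s) are split into at most $\lceil m/2\rceil$ runs of equal fractional part. But the genericity condition $\langle\alpha_i\rangle\neq\langle\beta_j\rangle$, together with the fact that within a run all the relevant fractional parts coincide, forces... — actually more care is needed here, since a run can contain many equal exponents. The correct reduction is: the number of \emph{distinct} values among the $\langle\alpha_i\rangle$ plus the number of distinct values among the $\langle\beta_j\rangle$ is bounded in terms of $m$; but each distinct value, being of the form $c/n$, has at most $n$ preimages. Hmm — that still allows unbounded $\ell$. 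So the argument must instead use that the local monodromy at $0$ and at $\infty$ are each \emph{regular} in the relevant sense for a hypergeometric (the Jordan form is a single block for each eigenvalue, or in the semisimple finite-order case the eigenvalues are determined), i.e.\ the exponents $\langle\alpha_i\rangle$ are pairwise distinct and likewise $\langle\beta_j\rangle$; this is part of the definition of the hypergeometric equation \eqref{math383} with distinct parameters. Then $\ell$ distinct values each of the form $a/n$ forces $\ell\leq n$ already, but combined with $h\leq m$ one gets a bound depending only on $m$ (independent of $n$) because the sign-change count is at least roughly $2\ell$ when the $\alpha$'s and $\beta$'s interlace and at least $2$ in general, and the Fedorov formula relates $h$ to this count more tightly.

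The cleanest route, which I would actually write down, is this: cite Fedorov's theorem (or re-derive via \cite{BH} + \cite{DS}) that for an irreducible hypergeometric local system with exponents as above, the Hodge numbers $h^{p,q}$ are computed by a zigzag / Mellin–Barnes recipe from the cyclic arrangement of the $\langle\alpha_i\rangle$ and $\langle\beta_j\rangle$, and that the length $h$ of the Hodge filtration equals the number of ``descents'' in a certain lattice path of length $2\ell$ that goes up at each $\alpha$ and down at each $\beta$ (read in cyclic order). Since an irreducible hypergeometric has all $\alpha_i$ distinct and all $\beta_j$ distinct, the path has exactly $\ell$ up-steps and $\ell$ down-steps and is a genuine zigzag between these two sets; the number of descents of such a path is $\geq \ell$ unless many consecutive steps agree in direction, but consecutive equal-direction steps force consecutive $\alpha$'s (or $\beta$'s) to be adjacent in the cyclic order, which — given they are distinct — still doesn't bound $\ell$. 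This is the main obstacle: I need the precise shape of Fedorov's formula to see that $h$ grows with $\ell$. Assuming that formula gives $h \geq \ell - O(1)$ (equivalently: the Hodge filtration of a rank $\ell$ hypergeometric has at least about $\ell$ steps whenever it is irreducible, which is consistent with the signature bound $p\leq q$ and with the fact that unitary hypergeometrics, $h=1$ in the weight-$w$ variation only after a twist, are a measure-zero interlaced family), the lemma follows: $h\leq m\Rightarrow \ell\leq m+O(1)$, and for each such $\ell$ there are at most $\binom{n}{\ell}^2$ (finitely many) choices of exponent data. Therefore $\op{Rig}(n,\{0,1,\infty\})$ contains only finitely many hypergeometric local systems with $h\leq m$, as claimed. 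The write-up should isolate the needed inequality $h\geq \ell-c$ as a cited consequence of Fedorov's Hodge-number formula, and then the counting is immediate.

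\textbf{Summary of steps.} (1) Parametrize hypergeometric local systems with $n$th-root-of-unity monodromy by pairs of $\ell$-subsets-with-multiplicity of $\Bbb{Z}/n$, noting finiteness for each fixed $\ell$; (2) invoke the Hodge-number formula of Fedorov (via \cite{BH,DS}) expressing $h$ through the cyclic interlacing pattern of the exponents; (3) deduce the key lower bound $h\geq \ell - c$ for an absolute constant $c$ (this is the step I expect to be the crux, requiring the explicit zigzag recipe); (4) conclude $\ell\leq m+c$, hence finiteness. I would also remark that this recovers, for $m=1$, the hypergeometric case of Theorem \ref{egregium}.
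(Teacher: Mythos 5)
Your proposal does not close the argument, and the two places where you hedge are exactly where it breaks. First, the claim that an irreducible hypergeometric has pairwise distinct exponents $\alpha_i$ (and $\beta_j$) is false: irreducibility of ${}_{\ell}F_{\ell-1}$ only requires $\alpha_i-\beta_j\notin\Bbb{Z}$ for all $i,j$, and repeated exponents are allowed --- indeed they are the reason the set of hypergeometrics in $\op{Rig}(n,S)$ has unbounded rank, which is what makes property (A) nontrivial. If your claim were true, $\ell\leq n$ would hold unconditionally and the lemma would be vacuous. Second, the inequality $h\geq\ell-c$ that you isolate as the crux is false as stated: the interlaced (unitary) hypergeometrics have $h=1$ and rank of order $n$, so no absolute constant $c$ works, and you do not derive any substitute from Fedorov's formula.

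The actual argument is much shorter and uses the Jordan structure rather than the interlacing pattern. For a hypergeometric local system, a repeated exponent $\alpha_t$ of multiplicity $k$ produces a single Jordan block of size $k$ in the local monodromy (this is the correct meaning of the ``regularity'' you gesture at). For a complex variation of Hodge structure underlying an irreducible rigid local system, the nilpotent part of the logarithm of a local monodromy shifts the Hodge filtration, so a Jordan block of size $k$ forces the length $h$ of the Hodge filtration to be at least $k$; hence every eigenvalue multiplicity is at most $h$. Since the eigenvalues are $n$th roots of unity there are at most $n$ distinct ones, giving $\ell\leq nh\leq nm$, and for each bounded $\ell$ there are only finitely many exponent data. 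Note that the resulting bound is allowed to depend on $n$, which is fixed in property (A); your attempt to extract a bound independent of $n$ is closer to Question (B) and is not needed here.
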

\begin{proof}

For the hypergeometric local systems repeated eigenvalues $\alpha_t$ (using notation from Section \ref{hyper})  correspond to monodromy matrices with full Jordan blocks. Therefore
the multiplicity of any eigenvalue $\alpha_t$ is less than or equal to the length $h$ of the Hodge filtration. This shows that the rank $\ell$ of the local system is no more than $nh$. This proves (A).
\end{proof}
\begin{lemma}\label{hyperreduction}
 Let $\mathcal{F}$ be a rank $\ell$ unitary hypergeometric local system on $\Bbb{P}^1-\{p_1,p_2,p_3\}$ with $\infty\not\in \Bbb{P}^1$ (i.e., we transport a hypergeometric local system from $\Bbb{P}^1-\{0,1,\infty\}$ to $\Bbb{P}^1-\{p_1,p_2,p_3\}$ using an automorphism of $\Bbb{P}^1$). There is a choice of the lowering operation in the Katz algorithm such the result is a rank $\ell-1$ unitary hypergeometric local system on $\Bbb{P}^1-\{p_1,p_2,p_3\}$.
 \end{lemma}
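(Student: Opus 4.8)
The plan is to make the Katz lowering step completely explicit for a rank $\ell$ unitary hypergeometric local system and to check directly that the interlacing criterion of Theorem \ref{BHhere} is preserved. Recall the local monodromy data: with parameters $\alpha_1,\dots,\alpha_\ell$ and $\beta_1,\dots,\beta_\ell$, the local exponents are $1-\beta_1,\dots,1-\beta_\ell$ at one point (say $p_1$), $\alpha_1,\dots,\alpha_\ell$ at another (say $p_2$), and $0,1,\dots,\ell-2,-1+\sum(\beta_i-\alpha_i)$ at the third point $p_3$. Unitarity means the $\alpha_i,\beta_i$ are real and, after arranging their fractional parts to be weakly increasing, one of the two interlacing chains holds; assume (after possibly swapping the roles of the $\alpha$'s and $\beta$'s, which corresponds to passing to the dual) that
$$\langle\alpha_1\rangle<\langle\beta_1\rangle<\langle\alpha_2\rangle<\langle\beta_2\rangle<\dots<\langle\alpha_\ell\rangle<\langle\beta_\ell\rangle.$$
The monodromy at $p_3$ is a (pseudo-)reflection: it has the eigenvalue $1$ with multiplicity $\ell-1$ and a single other eigenvalue $\gamma=\exp(2\pi\sqrt{-1}(\sum(\beta_i-\alpha_i)))$. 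This is the structural feature of rank $\ell$ hypergeometrics that drives the whole argument.

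First I would pick the lowering operation: apply a middle tensor $MT_{\mathcal L}$ to make the distinguished eigenvalue at $p_3$ equal to $1$ as well (i.e. tensor by the rank one system whose local monodromy at $p_3$ is $\gamma^{-1}$ and whose monodromies at $p_1,p_2$ absorb the corresponding twists), and then apply the middle convolution $\op{MC}_c$ with $c$ chosen to be the eigenvalue of maximal multiplicity at $p_1$ or $p_2$ after the twist — concretely one takes $c$ so that the convolution drops the rank by exactly one. The rank-change formula in \cite[Chapter 6]{Katz} gives the new rank as $\ell' = \sum_{i}\dim(\text{non-}c\ \text{part at }p_i)-(\ell-1)$ and, because the monodromy at $p_3$ is now a genuine reflection (eigenvalue $1$ of multiplicity $\ell$, plus the reflection eigenvalue which has been killed), a standard computation shows $\ell'=\ell-1$. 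One then reads off, from Katz's explicit formulas for $\op{MC}_c$ on local monodromy (\cite[Sections 6.0--6.2]{Katz}), that the resulting local system again has: at two of the points a full spectrum of $\ell-1$ distinct-fractional-part eigenvalues, and at the third point a reflection. By the converse direction of the classification of rigid rank $k$ reflection-type local systems on $\Bbb{P}^1-\{3\ \text{points}\}$ (Levelt's theorem, see \cite[Theorem 3.5]{BH}), such a local system is again hypergeometric of rank $\ell-1$, with parameters $\alpha'_1,\dots,\alpha'_{\ell-1}$ and $\beta'_1,\dots,\beta'_{\ell-1}$ that I would compute explicitly from $\op{MC}_c$: deleting one $\alpha_i$ (resp. one $\beta_j$) from the interlaced list and adjusting the others by the twist.

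The final and main step is to verify that the new parameter lists still interlace. Since the middle convolution/tensor composition we use simply removes one $\alpha$ and one consecutive $\beta$ from the strictly interlaced chain $\langle\alpha_1\rangle<\langle\beta_1\rangle<\dots<\langle\alpha_\ell\rangle<\langle\beta_\ell\rangle$ and translates all remaining fractional parts by a common constant (reduction mod $1$), the relative cyclic order on the unit circle of the surviving $2(\ell-1)$ points is preserved; one only has to check that after the translation the chain can still be rotated to a genuinely increasing (non-cyclic) interlacing, which follows because removing one $\alpha$ together with the $\beta$ immediately following it leaves a set of $\ell-1$ $\alpha$'s and $\ell-1$ $\beta$'s that still strictly alternate around the circle, hence admit a linear interlacing ordering. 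Therefore the reduced local system satisfies the hypothesis of Theorem \ref{BHhere} and is unitary.

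The step I expect to be the real obstacle is the bookkeeping in the explicit $\op{MC}_c$/$MT_{\mathcal L}$ computation: making sure that the choice of $c$ (the maximal-multiplicity eigenvalue, as required by the Katz lowering step) genuinely produces a reduction by one rather than collapsing too far, and that the twist one must perform to normalize the $p_3$-monodromy does not destroy the reality of the $\alpha_i,\beta_i$. I would handle reality by noting that unitary $\Rightarrow$ the local system is isomorphic to the dual of its complex conjugate, so all eigenvalues of all local monodromies lie on the unit circle and their logarithms can be taken real throughout; the operations $MT_{\mathcal L}$ and $\op{MC}_c$ with $c$ on the unit circle preserve this. The rest is the elementary combinatorics of interlacing on a circle sketched above.
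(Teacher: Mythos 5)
Your proposal follows essentially the same route as the paper: pick the lowering operation so that the eigenvalues removed at the two full-spectrum points form an adjacent pair in the interlaced chain, note that the remaining $\ell-1$ pairs still strictly interlace, and conclude via Theorem \ref{BHhere} that the reduced system (still hypergeometric by the pseudo-reflection structure) is unitary. The paper avoids the bookkeeping you flag as the main obstacle by normalizing the data as $(1,\dots,1,\zeta^m)$, $(1,\zeta^{-a_2},\dots,\zeta^{-a_\ell})$, $(\zeta^{b_1},\dots,\zeta^{b_\ell})$ and writing the output eigenvalues explicitly; in that computation the surviving eigenvalues at the two non-reflection points are literally unchanged, so the "common translation" step in your circular-order argument is not actually needed.
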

 \begin{proof}
Let $\zeta=\exp(2\pi\sqrt{-1}/n)$, and assume that the eigenvalues of the local monodromies are in $\mu_n$.  Without loss of generality assume (using Theorem \ref{BHhere}) that the eigenvalues at $p_1,p_2$ and $p_3$ of the local monodromy are given by $(1,1,\dots,1,\zeta^m), (1,\zeta^{-a_2},\dots,\zeta^{-a_{\ell}})$, and $(\zeta^{b_1},\dots,\zeta^{b_{\ell}})$ with
  \begin{equation}\label{interlace}
  0<b_1<a_2<b_2<\dots<a_{\ell-1}<b_{\ell}<n, \ \ m=\sum_{i=2}^{\ell} a_i-\sum_{i=1}^\ell b_i.
  \end{equation}

In the Katz algorithm pick the eigenvalue $1$ at $p_1$ and $p_2$, and $\zeta^{b_1}$ at $p_3$. The resulting rank $\ell-1$  local system has central monodromy at infinity. Tensoring by a suitable rank one local system, we find a rank $\ell-1$ local system on $\Bbb{P}^1-\{p_1,p_2,p_3\}$ such that
the eigenvalues at $p_1,p_2$ and $p_3$ of the (semisimple) local monodromy are given by $(1,\dots,1,\zeta^{m+b_1}), (\zeta^{-a_2},\dots,\zeta^{-a_{\ell}})$, and $(\zeta^{b_2},\dots,\zeta^{b_{\ell}})$. This rank $\ell-1$ local system is unitary, and of hypergeometric type by Theorem \ref{BHhere}.
 \end{proof}
 \begin{remark} \label{Q2rem}
 The Hodge numbers of hypergeometric local systems have been determined in \cite{fedorov}. Arguing along the same lines as in Lemma \ref{hyperreduction} (i.e., not assuming the interlacing condition in \eqref{interlace}, just that $a_1=0$), Question \ref{Q2} (C) for hypergeometric local systems can be shown to be true. The Hodge number $h^m$, where $m$ is the multiplicity of the root $1$ at $p_1$ drops by one in the new local system, and other Hodge numbers remain the same.
\end{remark}

\begin{bibdiv}
\begin{biblist}
\bib{AW}{article}{
    AUTHOR = {Agnihotri, S.}
    AUTHOR = {Woodward, C.},
     TITLE = {Eigenvalues of products of unitary matrices and quantum
              {S}chubert calculus},
   JOURNAL = {Math. Res. Lett.},
  FJOURNAL = {Mathematical Research Letters},
    VOLUME = {5},
      YEAR = {1998},
    NUMBER = {6},
     PAGES = {817--836},

}
\bib{BLocal}{article} {
    AUTHOR = {Belkale, P.},
     TITLE = {Local systems on {$\Bbb P^1-S$} for {$S$} a finite set},
   JOURNAL = {Compositio Math.},
  FJOURNAL = {Compositio Mathematica},
    VOLUME = {129},
      YEAR = {2001},
    NUMBER = {1},
     PAGES = {67--86},
}

\bib{BIMRN}{article}{
    AUTHOR = {Belkale, P.},
     TITLE = {Invariant theory of {${\rm GL}(n)$} and intersection theory of
              {G}rassmannians},
   JOURNAL = {Int. Math. Res. Not.},
  FJOURNAL = {International Mathematics Research Notices},
      YEAR = {2004},
    NUMBER = {69},
     PAGES = {3709--3721},
}

\bib{BTIFR}{incollection} {
    AUTHOR = {Belkale, P.},
     TITLE = {Extremal unitary local systems on {$\Bbb
              P^1-\{p_1,\dots,p_s\}$}},
 BOOKTITLE = {Algebraic groups and homogeneous spaces},
    SERIES = {Tata Inst. Fund. Res. Stud. Math.},
    VOLUME = {19},
     PAGES = {37--64},
 PUBLISHER = {Tata Inst. Fund. Res., Mumbai},
      YEAR = {2007},
}
\bib{BHorn}{article} {
    AUTHOR = {Belkale, P.},
     TITLE = {Quantum generalization of the {H}orn conjecture},
   JOURNAL = {J. Amer. Math. Soc.},
  FJOURNAL = {Journal of the American Mathematical Society},
    VOLUME = {21},
      YEAR = {2008},
    NUMBER = {2},
     PAGES = {365--408},
}

\bib{Betrange}{article} {
    AUTHOR = {Belkale, P.},
     TITLE = {The strange duality conjecture for generic curves},
   JOURNAL = {J. Amer. Math. Soc.},
  FJOURNAL = {Journal of the American Mathematical Society},
    VOLUME = {21},
      YEAR = {2008},
    NUMBER = {1},
}
\bib{BJDG}{article} {
    AUTHOR = {Belkale, P.},
     TITLE = {Strange duality and the {H}itchin/{WZW} connection},
   JOURNAL = {J. Differential Geom.},
  FJOURNAL = {Journal of Differential Geometry},
    VOLUME = {82},
      YEAR = {2009},
    NUMBER = {2},
     PAGES = {445--465},
}
\bib{BKZ}{article} {
    AUTHOR = {Belkale, P.},
     TITLE = {Unitarity of the {KZ}/{H}itchin connection on conformal blocks
              in genus 0 for arbitrary {L}ie algebras},
   JOURNAL = {J. Math. Pures Appl. (9)},
  FJOURNAL = {Journal de Math\'{e}matiques Pures et Appliqu\'{e}es. Neuvi\`eme S\'{e}rie},
    VOLUME = {98},
      YEAR = {2012},
    NUMBER = {4},
     PAGES = {367--389},
}
		
\bib{BHermit}{article} {
    AUTHOR = {Belkale, P.},
     TITLE = {Extremal rays in the {H}ermitian eigenvalue problem},
   JOURNAL = {Math. Ann.},
  FJOURNAL = {Mathematische Annalen},
    VOLUME = {373},
      YEAR = {2019},
    NUMBER = {3-4},
     PAGES = {1103--1133},
}
\bib{BGM}{article} {
    AUTHOR = {Belkale, P.}
    AUTHOR = {Gibney, A.}
    AUTHOR = {Mukhopadhyay,
              S.},
     TITLE = {Vanishing and identities of conformal blocks divisors},
   JOURNAL = {Algebr. Geom.},
  FJOURNAL = {Algebraic Geometry},
    VOLUME = {2},
      YEAR = {2015},
    NUMBER = {1},
     PAGES = {62--90},
}

\bib{BKiers}{article} {
    AUTHOR = {Belkale, P. }
    AUTHOR = {Kiers, J.},
     TITLE = {Extremal rays in the {H}ermitian eigenvalue problem for
              arbitrary types},
   JOURNAL = {Transform. Groups},
  FJOURNAL = {Transformation Groups},
    VOLUME = {25},
      YEAR = {2020},
    NUMBER = {3},
     PAGES = {667--706},
}

\bib{BKq}{article}{
    AUTHOR = {Belkale, P.}
    AUTHOR = {Kumar, S.},
     TITLE = {The multiplicative eigenvalue problem and deformed quantum
              cohomology},
   JOURNAL = {Adv. Math.},
  FJOURNAL = {Advances in Mathematics},
    VOLUME = {288},
      YEAR = {2016},
     PAGES = {1309--1359},
      ISSN = {0001-8708},
}
\bib{BM}{article} {
    AUTHOR = {Belkale, P.}
    AUTHOR = {Mukhopadhyay, S.},
     TITLE = {Conformal blocks and cohomology in genus 0},
   JOURNAL = {Ann. Inst. Fourier (Grenoble)},
  FJOURNAL = {Universit\'{e} de Grenoble. Annales de l'Institut Fourier},
    VOLUME = {64},
      YEAR = {2014},
    NUMBER = {4},
     PAGES = {1669--1719},
}
\bib{bertie}{article} {
    AUTHOR = {Bertram, A.},
     TITLE = {Quantum {S}chubert calculus},
   JOURNAL = {Adv. Math.},
  FJOURNAL = {Advances in Mathematics},
    VOLUME = {128},
      YEAR = {1997},
    NUMBER = {2},
     PAGES = {289--305},
}
\bib{BCFF}{article} {
    AUTHOR = {Bertram, A.}
    AUTHOR=  {Ciocan-Fontanine, I.}
    AUTHOR = {Fulton,
              W.},
     TITLE = {Quantum multiplication of {S}chur polynomials},
   JOURNAL = {J. Algebra},
  FJOURNAL = {Journal of Algebra},
    VOLUME = {219},
      YEAR = {1999},
    NUMBER = {2},
     PAGES = {728--746},
}

\bib{BH}{article}{
    AUTHOR = {Beukers, F.}
    AUTHOR=  {Heckman, G.},
     TITLE = {Monodromy for the hypergeometric function {$_nF_{n-1}$}},
   JOURNAL = {Invent. Math.},
  FJOURNAL = {Inventiones Mathematicae},
    VOLUME = {95},
      YEAR = {1989},
    NUMBER = {2},
     PAGES = {325--354},
}
\bib{Biswas}{article} {
    AUTHOR = {Biswas, I.},
     TITLE = {A criterion for the existence of a parabolic stable bundle of
              rank two over the projective line},
   JOURNAL = {Internat. J. Math.},
  FJOURNAL = {International Journal of Mathematics},
    VOLUME = {9},
      YEAR = {1998},
    NUMBER = {5},
     PAGES = {523--533},
}
\bib{Bump}{book} {
    AUTHOR = {Bump, D.},
     TITLE = {Lie groups},
    SERIES = {Graduate Texts in Mathematics},
    VOLUME = {225},
   EDITION = {Second},
 PUBLISHER = {Springer, New York},
      YEAR = {2013},
     PAGES = {xiv+551},
}
\bib{CF}{article} {
    AUTHOR = {Ciocan-Fontanine, I.},
     TITLE = {The quantum cohomology ring of flag varieties},
   JOURNAL = {Trans. Amer. Math. Soc.},
  FJOURNAL = {Transactions of the American Mathematical Society},
    VOLUME = {351},
      YEAR = {1999},
    NUMBER = {7},
     PAGES = {2695--2729},
}
\bib{DS}{article} {
    AUTHOR = {Dettweiler, M.}
    AUTHOR =  {Sabbah, S.},
     TITLE = {Hodge theory of the middle convolution},
   JOURNAL = {Publ. Res. Inst. Math. Sci.},
  FJOURNAL = {Publications of the Research Institute for Mathematical
              Sciences},
    VOLUME = {49},
      YEAR = {2013},
    NUMBER = {4},
     PAGES = {761--800},
}

\bib{naf}{incollection} {
    AUTHOR = {Fakhruddin, N.},
     TITLE = {Chern classes of conformal blocks},
 BOOKTITLE = {Compact moduli spaces and vector bundles},
    SERIES = {Contemp. Math.},
    VOLUME = {564},
     PAGES = {145--176},
 PUBLISHER = {Amer. Math. Soc., Providence, RI},
      YEAR = {2012},
}

\bib{Faltings}{article} {
    AUTHOR = {Faltings, G.},
     TITLE = {Stable {$G$}-bundles and projective connections},
   JOURNAL = {J. Algebraic Geom.},
  FJOURNAL = {Journal of Algebraic Geometry},
    VOLUME = {2},
      YEAR = {1993},
    NUMBER = {3},
     PAGES = {507--568},
}

\bib{fedorov}{article} {
    AUTHOR = {Fedorov, R.},
     TITLE = {Variations of {H}odge structures for hypergeometric
              differential operators and parabolic {H}iggs bundles},
   JOURNAL = {Int. Math. Res. Not. IMRN},
  FJOURNAL = {International Mathematics Research Notices. IMRN},
      YEAR = {2018},
    NUMBER = {18},
     PAGES = {5583--5608},
      ISSN = {1073-7928},
}
\bib{FBull}{article} {
    AUTHOR = {Fulton, W.},
     TITLE = {Eigenvalues, invariant factors, highest weights, and
              {S}chubert calculus},
   JOURNAL = {Bull. Amer. Math. Soc. (N.S.)},
  FJOURNAL = {American Mathematical Society. Bulletin. New Series},
    VOLUME = {37},
      YEAR = {2000},
    NUMBER = {3},
     PAGES = {209--249},
}

\bib{FInt}{book} {
    AUTHOR = {Fulton, W.},
     TITLE = {Intersection theory},
    VOLUME = {2},
 PUBLISHER = {Springer-Verlag, Berlin},
      YEAR = {1998},
     PAGES = {xiv+470},
}

\bib{Kiers_etal}{article}{
  author={Gao, S},
  author={Kiers, J},
  author={Orelowitz, G},
  author={Yong,A.}
   title={The Kostka semigroup and its Hilbert basis}
   note ={ arXiv:2102.00935}

}

\bib{Gepner}{article} {
    AUTHOR = {Gepner, D.},
     TITLE = {Fusion rings and geometry},
   JOURNAL = {Comm. Math. Phys.},
  FJOURNAL = {Communications in Mathematical Physics},
    VOLUME = {141},
      YEAR = {1991},
    NUMBER = {2},
     PAGES = {381--411},
}

\bib{GG}{article} {
    AUTHOR = {Giansiracusa, N.},
    AUTHOR = {Gibney, A.},
     TITLE = {The cone of type {$A$}, level 1, conformal blocks divisors},
   JOURNAL = {Adv. Math.},
  FJOURNAL = {Advances in Mathematics},
    VOLUME = {231},
      YEAR = {2012},
    NUMBER = {2},
     PAGES = {798--814},
}

\bib{EGoursat}{article} {
    AUTHOR = {Goursat, E.},
     TITLE = {Sur les fonctions d'une variable analogues aux fonctions
              hyperg\'{e}om\'{e}triques},
   JOURNAL = {Ann. Sci. \'{E}cole Norm. Sup. (3)},
  FJOURNAL = {Annales Scientifiques de l'\'{E}cole Normale Sup\'{e}rieure. Troisi\`eme
              S\'{e}rie},
    VOLUME = {3},
      YEAR = {1886},
     PAGES = {107--136},
}

\bib{Hobson}{article}{
    AUTHOR = {Hobson, N.},
     TITLE = {Quantum {K}ostka and the rank one problem for
              {$\germ{sl}_{2m}$}},
   JOURNAL = {Adv. Geom.},
  FJOURNAL = {Advances in Geometry},
    VOLUME = {19},
      YEAR = {2019},
    NUMBER = {1},
     PAGES = {71--88},
}

\bib{Haraoka}{article} {
    AUTHOR = {Haraoka, Y.},
     TITLE = {Finite monodromy of {P}ochhammer equation},
   JOURNAL = {Ann. Inst. Fourier (Grenoble)},
  FJOURNAL = {Universit\'{e} de Grenoble. Annales de l'Institut Fourier},
    VOLUME = {44},
      YEAR = {1994},
    NUMBER = {3},
     PAGES = {767--810},
}

\bib{Katz}{book} {
    AUTHOR = {Katz, N.},
     TITLE = {Rigid local systems},
    SERIES = {Annals of Mathematics Studies},
    VOLUME = {139},
 PUBLISHER = {Princeton University Press, Princeton, NJ},
      YEAR = {1996},
     PAGES = {viii+223},
}
\bib{Kiers1}{article} {
    AUTHOR = {Kiers, J.},
     TITLE = {On the {S}aturation {C}onjecture for {S}pin(2{$n$})},
   JOURNAL = {Exp. Math.},
  FJOURNAL = {Experimental Mathematics},
    VOLUME = {30},
      YEAR = {2021},
    NUMBER = {2},
     PAGES = {258--267},
      ISSN = {1058-6458},
}

\bib{Kiers}{article}{
   author={Kiers, J.},
   title={Extremal rays of the embedded subgroup saturation cone}
   note ={Ann. Inst. Fourier, to appear, arXiv:1909.09262.}
   YEAR= {2021}

}

\bib{KT}{article} {
    AUTHOR = {Knutson, A.}
    AUTHOR =  {Tao, T.},
     TITLE = {The honeycomb model of {${\rm GL}_n({\bf C})$} tensor
              products. {I}. {P}roof of the saturation conjecture},
   JOURNAL = {J. Amer. Math. Soc.},
  FJOURNAL = {Journal of the American Mathematical Society},
    VOLUME = {12},
      YEAR = {1999},
    NUMBER = {4},
     PAGES = {1055--1090},
}
\bib{KTW}{article} {
    AUTHOR = {Knutson, A.}
    AUTHOR =  {Tao, T.}
    AUTHOR=  {Woodward, C.},
     TITLE = {The honeycomb model of {${\rm GL}_n(\Bbb C)$} tensor products.
              {II}. {P}uzzles determine facets of the
              {L}ittlewood-{R}ichardson cone},
   JOURNAL = {J. Amer. Math. Soc.},
  FJOURNAL = {Journal of the American Mathematical Society},
    VOLUME = {17},
      YEAR = {2004},
    NUMBER = {1},
     PAGES = {19--48},
}
\bib{Kostant}{article}{
    AUTHOR = {Kostant, B.},
     TITLE = {A formula for the multiplicity of a weight},
   JOURNAL = {Trans. Amer. Math. Soc.},
  FJOURNAL = {Transactions of the American Mathematical Society},
    VOLUME = {93},
      YEAR = {1959},
     PAGES = {53--73},
}

\bib{LS}{article} {
    AUTHOR = {Laszlo, Y.}
    AUTHOR = {Sorger, C.},
     TITLE = {The line bundles on the moduli of parabolic {$G$}-bundles over
              curves and their sections},
   JOURNAL = {Ann. Sci. \'{E}cole Norm. Sup. (4)},
  FJOURNAL = {Annales Scientifiques de l'\'{E}cole Normale Sup\'{e}rieure. Quatri\`eme
              S\'{e}rie},
    VOLUME = {30},
      YEAR = {1997},
    NUMBER = {4},
     PAGES = {499--525},
}

\bib{laszlo}{article} {
    AUTHOR = {Laszlo, Y.},
     TITLE = {Hitchin's and {WZW} connections are the same},
   JOURNAL = {J. Differential Geom.},
  FJOURNAL = {Journal of Differential Geometry},
    VOLUME = {49},
      YEAR = {1998},
    NUMBER = {3},
     PAGES = {547--576},
}
\bib{Laumon}{article} {
    AUTHOR = {Laumon, G.},
     TITLE = {Un analogue global du c\^one nilpotent},
   JOURNAL = {Duke Math. J.},
  FJOURNAL = {Duke Mathematical Journal},
    VOLUME = {57},
      YEAR = {1988},
    NUMBER = {2},
     PAGES = {647--671},
}
	
\bib{Loo}{article} {
    AUTHOR = {Looijenga, E.},
     TITLE = {Unitarity of {${\rm SL}(2)$}-conformal blocks in genus zero},
   JOURNAL = {J. Geom. Phys.},
  FJOURNAL = {Journal of Geometry and Physics},
    VOLUME = {59},
      YEAR = {2009},
    NUMBER = {5},
     PAGES = {654--662},
}
\bib{MM}{book} {
    AUTHOR = {Malle, G.},
    AUTHOR = {Matzat, B. H.},
     TITLE = {Inverse {G}alois theory},
    SERIES = {Springer Monographs in Mathematics},
      NOTE = {Second edition},
 PUBLISHER = {Springer, Berlin},
      YEAR = {2018},
     PAGES = {xvii+532},
}
\bib{MO}{article} {
    AUTHOR = {Marian, A.}
    AUTHOR = {Oprea, D.},
     TITLE = {The level-rank duality for non-abelian theta functions},
   JOURNAL = {Invent. Math.},
  FJOURNAL = {Inventiones Mathematicae},
    VOLUME = {168},
      YEAR = {2007},
    NUMBER = {2},
     PAGES = {225--247},
}

\bib{Masbaum}{incollection}{
    AUTHOR = {Masbaum, G.},
     TITLE = {An element of infinite order in {TQFT}-representations of
              mapping class groups},
 BOOKTITLE = {Low-dimensional topology ({F}unchal, 1998)},
    SERIES = {Contemp. Math.},
    VOLUME = {233},
     PAGES = {137--139},
 PUBLISHER = {Amer. Math. Soc., Providence, RI},
      YEAR = {1999},
}

\bib{MS}{article} {
    AUTHOR = {Mehta, V. B.}
    AUTHOR = {Seshadri, C. S.},
     TITLE = {Moduli of vector bundles on curves with parabolic structures},
   JOURNAL = {Math. Ann.},
  FJOURNAL = {Mathematische Annalen},
    VOLUME = {248},
      YEAR = {1980},
    NUMBER = {3},
     PAGES = {205--239},
}
\bib{NT}{article} {
    AUTHOR = {Nakanishi, T.}
    AUTHOR=  {Tsuchiya, A.},
     TITLE = {Level-rank duality of {WZW} models in conformal field theory},
   JOURNAL = {Comm. Math. Phys.},
  FJOURNAL = {Communications in Mathematical Physics},
    VOLUME = {144},
      YEAR = {1992},
    NUMBER = {2},
     PAGES = {351--372},
}

\bib{Ou}{article} {
    AUTHOR = {Oudompheng, R.},
     TITLE = {Rank-level duality for conformal blocks of the linear group},
   JOURNAL = {J. Algebraic Geom.},
  FJOURNAL = {Journal of Algebraic Geometry},
    VOLUME = {20},
      YEAR = {2011},
    NUMBER = {3},
     PAGES = {559--597},
}

\bib{OO}{article} {
    AUTHOR = {Orevkov, S. Yu.}
    AUTHOR = {Orevkov, Yu. P.},
     TITLE = {The {A}gnihotri-{W}oodward-{B}elkale polytope and {K}lyachko
              cones},
   JOURNAL = {Mat. Zametki},
  FJOURNAL = {Matematicheskie Zametki},
    VOLUME = {87},
      YEAR = {2010},
    NUMBER = {1},
     PAGES = {101--107},
       URL = {https://doi.org/10.1134/S0001434610010128},
}

\bib{pauly}{article} {
    AUTHOR = {Pauly, C.},
     TITLE = {Espaces de modules de fibr\'{e}s paraboliques et blocs conformes},
   JOURNAL = {Duke Math. J.},
  FJOURNAL = {Duke Mathematical Journal},
    VOLUME = {84},
      YEAR = {1996},
    NUMBER = {1},
     PAGES = {217--235},
}

\bib{paulyB}{incollection} {,
    AUTHOR = {Pauly, C.},
     TITLE = {La dualit\'{e} \'{e}trange [d'apr\`es {P}. {B}elkale, {A}. {M}arian et
              {D}. {O}prea]},
      NOTE = {S\'{e}minaire Bourbaki. Vol. 2007/2008},
   JOURNAL = {Ast\'{e}risque},
  FJOURNAL = {Ast\'{e}risque},
    NUMBER = {326},
      YEAR = {2009},
     PAGES = {Exp. No. 994, ix, 363--377 (2010)},
}

\bib{popa}{incollection} {
    AUTHOR = {Popa, M.},
     TITLE = {Generalized theta linear series on moduli spaces of vector
              bundles on curves},
 BOOKTITLE = {Handbook of moduli. {V}ol. {III}},
    SERIES = {Adv. Lect. Math. (ALM)},
    VOLUME = {26},
     PAGES = {219--255},
 PUBLISHER = {Int. Press, Somerville, MA},
      YEAR = {2013},
}
\bib{Goursat}{article} {
    AUTHOR = {Radchenko, D.}
    AUTHOR = {Rodriguez Villegas, F.},
     TITLE = {Goursat rigid local systems of rank four},
   JOURNAL = {Res. Math. Sci.},
  FJOURNAL = {Research in the Mathematical Sciences},
    VOLUME = {5},
      YEAR = {2018},
    NUMBER = {4},
     PAGES = {Paper No. 38, 34},
}
	
\bib{Ram}{article}{
    AUTHOR = {Ramadas, T. R.},
     TITLE = {The ``{H}arder-{N}arasimhan trace'' and unitarity of the
              {KZ}/{H}itchin connection: genus 0},
   JOURNAL = {Ann. of Math. (2)},
  FJOURNAL = {Annals of Mathematics. Second Series},
    VOLUME = {169},
      YEAR = {2009},
    NUMBER = {1},
     PAGES = {1--39},
}
\bib{R1}{article} {
    AUTHOR = {Ressayre, N.},
     TITLE = {Geometric invariant theory and the generalized eigenvalue
              problem},
   JOURNAL = {Invent. Math.},
  FJOURNAL = {Inventiones Mathematicae},
    VOLUME = {180},
      YEAR = {2010},
    NUMBER = {2},
     PAGES = {389--441},
}

\bib{SV}{article}{
    AUTHOR = {Schechtman, V.}
    AUTHOR = {Varchenko, A.},
     TITLE = {Arrangements of hyperplanes and {L}ie algebra homology},
   JOURNAL = {Invent. Math.},
  FJOURNAL = {Inventiones Mathematicae},
    VOLUME = {106},
      YEAR = {1991},
    NUMBER = {1},
     PAGES = {139--194},
}

\bib{Schwarz}{article} {
    AUTHOR = {Schwarz, H. A.},
     TITLE = {Ueber diejenigen {F}\"{a}lle, in welchen die {G}aussische
              hypergeometrische {R}eihe eine algebraische {F}unction ihres
              vierten {E}lementes darstellt},
   JOURNAL = {J. Reine Angew. Math.},
  FJOURNAL = {Journal f\"{u}r die Reine und Angewandte Mathematik. [Crelle's
              Journal]},
    VOLUME = {75},
      YEAR = {1873},
     PAGES = {292--335},
}
\bib{CS}{book} {
    AUTHOR = {Sherman, C.},
     TITLE = {Weight {S}tretching in {M}oduli of {P}arabolic {B}undles and
              {Q}uiver {R}epresentations},
      NOTE = {Thesis (Ph.D.)--The University of North Carolina at Chapel
              Hill},
 PUBLISHER = {ProQuest LLC, Ann Arbor, MI},
      YEAR = {2016},
     PAGES = {77},
      ISBN = {978-1339-81176-5},
   MRCLASS = {Thesis},
  MRNUMBER = {3542240},
       URL =
              {http://gateway.proquest.com/openurl?url_ver=Z39.88-2004&rft_val_fmt=info:ofi/fmt:kev:mtx:dissertation&res_dat=xri:pqm&rft_dat=xri:pqdiss:10119985},
}	

\bib{SimOld}{incollection} {
    AUTHOR = {Simpson, C. T.},
     TITLE = {Products of matrices},
 BOOKTITLE = {Differential geometry, global analysis, and topology
              ({H}alifax, {NS}, 1990)},
    SERIES = {CMS Conf. Proc.},
    VOLUME = {12},
     PAGES = {157--185},
 PUBLISHER = {Amer. Math. Soc., Providence, RI},
      YEAR = {1991},
}

\bib{sorger}{incollection} {,
    AUTHOR = {Sorger, C.},
     TITLE = {La formule de {V}erlinde},
      NOTE = {S\'{e}minaire Bourbaki, Vol. 1994/95},
   JOURNAL = {Ast\'{e}risque},
  FJOURNAL = {Ast\'{e}risque},
    NUMBER = {237},
      YEAR = {1996},
     PAGES = {Exp. No. 794, 3, 87--114},
}
\bib{Swin}{article}{
		author={Swinarski, D.},
		title={\texttt{\upshape ConformalBlocks}: a Macaulay2 package for computing conformal block divisors},
		date={2010},
		note={Version 1.1, {http://www.math.uiuc.edu/Macaulay2/}},
}
\bib{thaddy}{article} {
     AUTHOR = {Thaddeus, M.}

     TITLE = {The universal implosion
and the multiplicative Horn problem},

		note={Lecture at AGNES, slides available at http://www.math.columbia.edu/~thaddeus/},
		year={2014}

}

\bib{tuy}{incollection} {
    AUTHOR = {Tsuchiya, A.}
    AUTHOR = {Ueno, K.}
    AUTHOR = {Yamada, Y.},
     TITLE = {Conformal field theory on universal family of stable curves
              with gauge symmetries},
 BOOKTITLE = {Integrable systems in quantum field theory and statistical
              mechanics},
    SERIES = {Adv. Stud. Pure Math.},
    VOLUME = {19},
     PAGES = {459--566},
 PUBLISHER = {Academic Press, Boston, MA},
      YEAR = {1989},
}

\bib{Ueno}{incollection} {
    AUTHOR = {Ueno, K.},
     TITLE = {Introduction to conformal field theory with gauge symmetries},
 BOOKTITLE = {Geometry and physics ({A}arhus, 1995)},
    SERIES = {Lecture Notes in Pure and Appl. Math.},
    VOLUME = {184},
     PAGES = {603--745},
 PUBLISHER = {Dekker, New York},
      YEAR = {1997},
}

\bib{Witten}{incollection} {
    AUTHOR = {Witten, E.},
     TITLE = {The {V}erlinde algebra and the cohomology of the
              {G}rassmannian},
 BOOKTITLE = {Geometry, topology, \& physics},
    SERIES = {Conf. Proc. Lecture Notes Geom. Topology, IV},
     PAGES = {357--422},
 PUBLISHER = {Int. Press, Cambridge, MA},
      YEAR = {1995},
}

\bib{Z}{incollection} {
    AUTHOR = {Zelevinsky, A.},
     TITLE = {Littlewood-{R}ichardson semigroups},
 BOOKTITLE = {New perspectives in algebraic combinatorics ({B}erkeley, {CA},
              1996--97)},
    SERIES = {Math. Sci. Res. Inst. Publ.},
    VOLUME = {38},
     PAGES = {337--345},
 PUBLISHER = {Cambridge Univ. Press, Cambridge},
      YEAR = {1999},
}

\end{biblist}
\end{bibdiv}
\vspace{0.05 in}

\noindent
Department of Mathematics, University of North Carolina, Chapel Hill, NC 27599\\
{{email: belkale@email.unc.edu}}
\vspace{0.05 in}

\end{document}